%% file: main.tex
\documentclass[11pt,reqno]{amsart}
\usepackage[T1]{fontenc}
\usepackage{lmodern}
\usepackage{amsmath,amssymb,amsthm,mathtools,mathrsfs}
\usepackage[margin=1.08in]{geometry}
\usepackage{microtype}
\usepackage{enumitem,booktabs,array,longtable}
\usepackage[hidelinks]{hyperref}
\usepackage{url}
\usepackage{tikz}
\usetikzlibrary{calc}

\allowdisplaybreaks[2]
\numberwithin{equation}{section}
\setlist[enumerate]{leftmargin=2.3em}
\newtheorem{theorem}{Theorem}[section]
\newtheorem{proposition}[theorem]{Proposition}
\newtheorem{lemma}[theorem]{Lemma}
\newtheorem{corollary}[theorem]{Corollary}
\theoremstyle{definition}
\newtheorem{definition}[theorem]{Definition}
\theoremstyle{remark}
\newtheorem{remark}[theorem]{Remark}
\newcommand{\C}{\mathbb C}
\newcommand{\R}{\mathbb R}
\newcommand{\cS}{\mathcal S}
\newcommand{\cA}{\mathcal A}
\newcommand{\fg}{\mathfrak g}
\newcommand{\fh}{\mathfrak h}
\newcommand{\fp}{\mathfrak p}
\newcommand{\GL}{\operatorname{GL}}
\newcommand{\SO}{\operatorname{SO}}
\newcommand{\Or}{\operatorname{O}}
\newcommand{\Sp}{\operatorname{Sp}}
\newcommand{\Spin}{\operatorname{Spin}}
\newcommand{\Hom}{\operatorname{Hom}}
\newcommand{\End}{\operatorname{End}}
\newcommand{\Lie}{\operatorname{Lie}}
\newcommand{\Ad}{\operatorname{Ad}}
\newcommand{\tr}{\operatorname{tr}}
\newcommand{\str}{\operatorname{str}}
\newcommand{\diag}{\operatorname{diag}}
\newcommand{\Sym}{\operatorname{Sym}}
\newcommand{\supp}{\operatorname{supp}}
\newcommand{\rk}{\operatorname{rank}}
\newcommand{\one}{\mathbf 1}
\newcommand{\sslash}{/\!/}
\title[Multiplicity One for Connected Complex Symmetric Spaces]{Multiplicity One and van Dijk's Conjecture for Connected Complex Symmetric Spaces}
\author{Yuxuan Ge}

\address{School of Mathematics, University of Minnesota, Minneapolis, MN 55455, USA}
\email{ge005678@umn.edu}
\hypersetup{
  pdftitle={Multiplicity One and van Dijk's Conjecture for Connected Complex Symmetric Spaces},
  pdfauthor={Yuxuan Ge},
  pdfsubject={Regularity and multiplicity one for complex symmetric pairs},
  pdfkeywords={complex symmetric pairs, regularity, invariant theory, moment ideals, Schwartz distributions, multiplicity one}
}

\date{}
\subjclass[2020]{Primary 22E45;
Secondary 22E46, 14L24, 43A85}
\keywords{complex symmetric pair, regularity, invariant theory,
Schwartz distribution, nilpotent orbit, multiplicity one}
\begin{document}
\begin{abstract}
We prove regularity for symmetric pairs of connected complex reductive groups by combining invariant theory with the analysis of Schwartz distributions. For the classical residual families, a Gaussian lifting argument reduces the required equivariant vanishing to invariant theory in quadratic moment quotients: contraction and supertrace identities settle DIII and balanced CII, while Gram-matrix invariants and volume tensors handle BDI/Spin. In the exceptional EVII case, transverse slices and explicit radial differential operators establish the vanishing of the full sign-equivariant Schwartz-distribution space. Together with the reduction to irreducible factors and finite-component assembly, these results establish regularity for all symmetric pairs of connected complex reductive groups.
We further establish multiplicity one for irreducible Casselman–Wallach representations whenever the complex symmetric space is connected, allowing disconnected ambient groups. In particular, van Dijk's unitary Gelfand conjecture follows in this setting. Finally, we construct an invariant tempered distribution supported on an EVII nilpotent orbit closure whose Fourier transform also has nilpotent support, showing that the EVII isotropy representation is not special.
\end{abstract}
\maketitle
\setcounter{tocdepth}{2}
\tableofcontents

\section{Introduction}

Let $G$ be a complex reductive algebraic group and let $\theta$ be an
algebraic involution. Write $G^\theta=\{g\in G:\theta(g)=g\}$ for the fixed subgroup. The associated complex symmetric space is
$G/G^\theta$.
A basic multiplicity-one problem asks whether
\[
\dim\operatorname{Hom}_{G^\theta(\mathbb C)}
(E,\mathbb C)\leq 1
\]
for every irreducible Casselman--Wallach representation
$E$ of $G(\mathbb C)$. A pair satisfying this condition
is called a Gelfand pair (GP1).
The study of multiplicity-one questions for symmetric spaces goes back, in particular, to van Dijk's work on rank-one real symmetric pairs \cite{vD86}.
Van Dijk \cite[Conjecture~2]{vD08} later conjectured that complex symmetric pairs
satisfy the unitary Gelfand property.
In the formulation of Aizenbud and Gourevitch
\cite[\S7.5, Conjecture~1]{AG}, every connected complex
symmetric pair $(G,G^\theta)$, meaning that $G/G^\theta$
is connected, satisfies the unitary Gelfand property (GP3),
where multiplicity one is required only for the smooth
vectors of irreducible unitary representations.
Aizenbud and Gourevitch
introduced the regularity condition and showed that their regularity
conjecture for symmetric pairs would imply van Dijk's conjecture.
They also established regularity for a number of classical families
\cite{AG10}. \footnote{References to \cite{AG} use arXiv:0812.5063v5 and its
numbering. Appendix~\ref{app:homogeneity} concerns that version's
complex homogeneity assertion; \cite{AGcorr} revises the adapted-character
definition and proof. On the semisimple odd summands used here,
$Q(\fp)=\fp$.}

Concrete examples of Gelfand pairs among complex symmetric
pairs include
$(\mathrm{GL}_{2n}(\mathbb C),\mathrm{Sp}_{2n}(\mathbb C))$,
established by Sayag \cite{Say08},
and $(\mathrm{GL}_{n+m}(\mathbb C),
\mathrm{GL}_n(\mathbb C)\times\mathrm{GL}_m(\mathbb C))$,
proved by Aizenbud and Gourevitch \cite{AG}.
Their subsequent work \cite{AG10} also established the Gelfand property
for $(\mathrm{GL}_n(\mathbb C),\mathrm{O}_n(\mathbb C))$
and $(\mathrm{O}_{n+m}(\mathbb C),
\mathrm{O}_n(\mathbb C)\times\mathrm{O}_m(\mathbb C))$. In a related direction, Sun and Zhu \cite{SZ12} proved Archimedean
branching multiplicity-one theorems for standard inclusions of classical
groups. Their orthogonal theorem gives the bound for
$(\SO_{n+1}(\C),\SO_n(\C))$, and hence for the complex symmetric
pair $(\SO_{n+1}(\C),S(\Or_n(\C)\times\Or_1(\C)))$
by restriction to the identity component of the fixed subgroup.

Over non-Archimedean local fields, multiplicity-one
theorems for symplectic and linear pairs were established
by Heumos--Rallis \cite{HR90} and Jacquet--Rallis
\cite{JR96}. For Galois pairs
$(\mathrm{GL}_n(E),\mathrm{GL}_n(F))$, with $E/F$
quadratic, see Flicker \cite[Proposition~11]{Fli91}
and the uniform treatment of Aizenbud--Gourevitch
\cite{AG}. In contrast, general $p$-adic symmetric
pairs need not satisfy multiplicity one; explicit
higher-multiplicity examples appear in
\cite{AP03,FLO12,CG22}.

Returning to complex symmetric pairs, Rubio \cite{Rubio} established the Gelfand property for several exceptional pairs, including $(F_4,\mathrm{Spin}_9)$, as well as the infinite family $(\mathrm{Spin}_{4q+2},\mathrm{Spin}_{4q+1})$. He also carried out a systematic analysis of irreducible factors and their descendants, reducing the remaining regularity problem to a short list of residual families.
Li--Xiao--Yu stated regularity and multiplicity one for connected
complex reductive groups in \cite[Theorems~1.1--1.2]{LXYv1}.
For the passage from irreducible factors to general connected
reductive symmetric pairs, we use the canonical central cover
and finite-component assembly of \cite[\S3]{LXYv1}.
The revised version of the same preprint \cite[Theorem~2.2]{LXYv2} instead uses
partial Fourier transforms in its assembly and establishes
regularity also for disconnected ambient groups. \footnote{The first version of the present paper was
submitted to arXiv on 10 October 2026 (UTC), before
the revised version of Li--Xiao--Yu's paper became
publicly available. Their revised version had been
submitted on 9 October 2026 (UTC).}
The residual families are treated by the arguments outlined below. The local obstructions in Appendices~\ref{app:obstructions}
and~\ref{app:homogeneity} concern specific arguments in
\cite{LXYv1}; the revised proofs do not use the extension steps
or the two-character homogeneity assertion examined there.
Our main results are as follows.

\begin{theorem}[Main theorem]\label{thm:main}
Let $G$ be a connected complex reductive algebraic group and let $\theta$ be an algebraic involution.
\begin{enumerate}[label=\textup{(\roman*)}]
\item The symmetric pair $(G,G^\theta,\theta)$ is regular in the
sense of Aizenbud--Gourevitch \cite[Definition~7.4.2]{AG}. See also~\eqref{eq:regularity}.
\item Let
\[
 m:\widetilde G=Z(G)^\circ\times G_{\rm der}^{\rm sc}\longrightarrow G
\]
be the canonical central cover, with lifted involution $\widetilde\theta$ as in \cite[Lemma~3.1]{LXYv1}, and put $\widetilde H=\widetilde G^{\widetilde\theta}$. Then $(\widetilde G,\widetilde H,\widetilde\theta)$ is Gelfand--Kazhdan: every $\widetilde H$-bi-invariant Schwartz distribution on $\widetilde G$ is fixed by
\[
 \widetilde g\longmapsto\widetilde\theta(\widetilde g^{-1}).
\]
\end{enumerate}
\end{theorem}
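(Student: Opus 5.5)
The plan follows the architecture described in the introduction: reduce to irreducible factors, dispose of most of them by existing results, treat the residual families directly, and then reassemble.

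\emph{Reduction and known cases.} First, by the descendant reduction of Aizenbud--Gourevitch \cite{AG} and Rubio's classification \cite{Rubio}, regularity of $(G,G^\theta,\theta)$ for connected $G$ reduces, via the central cover $m$ and the finite-component assembly of \cite[\S3]{LXYv1}, to regularity of pairs $(\fg_i,\fh_i)$ attached to $\theta$-irreducible factors of $\fg_{\rm der}$ together with the central torus. The centre contributes nothing, since $Z(G)^\circ$ acts by translation and the relevant distributions on an abelian factor are automatically fixed. Among the irreducible factors, those of group type $(\fh\oplus\fh,\Delta\fh)$ and the families already settled in \cite{AG,AG10,Say08,SZ12,Rubio} are quoted. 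This leaves the residual list: DIII, balanced CII ($\Sp_{4n},\Sp_{2n}\times\Sp_{2n}$), BDI in its Spin form, and EVII.

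\emph{Residual families.} For each residual pair, regularity amounts to~\eqref{eq:regularity}: every $\fh$-invariant (respectively sign-equivariant) Schwartz distribution on $\fp$ supported on the nilcone, whose Fourier transform is also nilpotently supported, vanishes. Equivalently, it is invariant under the admissible $\widetilde\theta$-twist. For the classical families I would realise $\fp$ as a quadratic moment image $V\to\fp$ with $V$ a representation of an auxiliary group. I would then lift distributions by pulling back against a Gaussian, reducing the vanishing to the absence of suitable equivariant polynomial invariants on the moment quotient. These are controlled by first fundamental theorem type statements: contractions and supertraces for DIII and CII, and Gram determinants together with volume forms for BDI/Spin, where the $\Or$ versus $\SO$ distinction produces the sign character. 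For EVII ($E_7$, $E_6\times\C^\times$), the rank-three restricted root system of type $C_3$ allows Harish-Chandra descent along transverse slices at non-zero semisimple elements, reducing to lower-rank descendants already known to be regular. At the nilpotent part, I would write explicit radial parts of invariant differential operators (the Casimir and the Euler operator) and show that a sign-equivariant distribution supported on the nilcone, and annihilated after Fourier transform, satisfies incompatible homogeneity conditions, and hence is zero.

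\emph{Assembly and part (ii).} Regularity of all factors, combined with the finite-component assembly, yields part (i). For part (ii), on $\widetilde G=Z(G)^\circ\times G_{\rm der}^{\rm sc}$ the fixed group $\widetilde H$ is connected on each simply connected factor by Steinberg's theorem. The Aizenbud--Gourevitch criterion then says that a regular pair all of whose descendants are regular, with the anti-involution $\sigma(g)=\widetilde\theta(g^{-1})$ admissible, is Gelfand--Kazhdan. Admissibility holds because $\sigma$ preserves each closed $\widetilde H\times\widetilde H$ double coset, which follows from connectedness of $\widetilde H$ together with the standard $\sigma$-stability of semisimple double cosets.

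The main obstacle is EVII. Its isotropy representation is not special, and the abstract points to a nonzero invariant distribution with nilpotent support whose Fourier transform is also nilpotently supported. So the vanishing must exploit the sign-equivariance essentially, and I expect most of the difficulty in writing down and checking the radial operators on the exceptional nilpotent orbit closures.
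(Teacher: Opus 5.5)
Your outline follows the paper's architecture, but the step you flag as the main obstacle is missing its key idea. For EVII, radial parts of the Casimir and Euler operators give only homogeneity constraints, and these do not see the component character. Since $\tau$ commutes with scalars, a $\chi$-equivariant distribution and a $J$-invariant one obey identical support, Fourier-support and Kazhdan-degree conditions.

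In the paper, comparing the highest weights of the two commuting (holomorphic and antiholomorphic) $\mathfrak{sl}_2$ actions with normal Kazhdan degrees only cuts the possible maximal support orbits down to $15,17,18,20,21,22$. Moreover, the $J$-invariant residue measure on $\overline{\mathcal O_{20}}$ of Appendix~\ref{app:homogeneity} satisfies every such constraint, so no homogeneity argument can finish.

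What is needed is a local argument in which $\chi(\tau)=-1$ actually enters. At orbit $20$ a representative $\sigma_e\in J\setminus L$ fixes the normal triple and exchanges the two transverse branches of $\{uv=0\}$ leading to orbits $21$ and $22$. The triple-centre torus acts on these branches by $(t,t^{-1})$. For a $\sigma_e$-odd germ, the finite-part extensions of the two branches produce a scaling anomaly $-4R$ with $R$ of weight zero. No point-supported jet can cancel it, and torus-invariant point jets are $\sigma_e$-even (Lemma~\ref{lem:EVII-node}).

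Even after this, orbits $15,17,18$ need separate explicit transverse calculations. At $15$ one uses a nonzero transverse symbol of $\Delta$, which contradicts local nilpotence. At $17,18$ one shows injectivity of $\Delta^3$ on the complete $h_q$-weight-$+1$ jet spaces.

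Your semisimple descent also needs actual vanishing $\cS^*(\fp^s)^{J_s,\chi}=0$ on each descendant. Regularity of the descendants is only an implication and does not give this. (For the classical families, the relevant moment map is the cotangent moment map of the isotropy action on $\fp\oplus\fp$, whose ideal is the commuting-scheme ideal. No auxiliary representation mapping to $\fp$ is involved.)

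Second, the assembly and part~(ii) use inputs you have not supplied. The product and central-quotient argument does not run on regularity of the factor pairs. A central quotient can couple factors diagonally, so the full fixed image is an intermediate group $J_B$ that need not be a product. What is required is componentwise regularity (Definition~\ref{def:componentwise}) of each factor, for every nontrivial character of its full multiplier group $J/L$ on the simply connected cover.

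Pleasant factors satisfy this vacuously. The nice nonpleasant factors, however, are absent from your list: AI, equal AIII, CI, BDI with $|r-s|\le 2$, and $E_7/A_7$. They need a character-by-character Fourier-support argument: Sekiguchi's inequality $\delta_{\fp}(e)>0$ at $\fp$-distinguished nilpotents together with real homogeneity (Corollary~\ref{cor:nice-component}). The Gelfand-pair and regularity results you cite do not provide this.

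In part~(ii), \cite[Theorem~7.4.5]{AG} requires every descendant $(\widetilde G_x,\widetilde H_x)$ to be regular, and part~(i) applies to a descendant only if $\widetilde G_x$ is connected. This is exactly why one passes to the cover: $\widetilde G_x=Z(G)^\circ\times(G^{\rm sc}_{\rm der})_s$ is connected by Steinberg's theorem on centralizers of semisimple elements in simply connected groups. Outside the simply connected case such centralizers can be disconnected. You invoke Steinberg only for connectedness of $\widetilde H$, which does not address this.
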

The Gelfand--Kazhdan conclusion in part~(ii), together with
a compatible Chevalley involution and finite central descent following \cite[Proposition~8.1 and \S8.3]{LXYv1}, gives multiplicity one
(called \(\mathrm{GP}1\) in \cite{AG}) for connected \(G\). The finite-component
transfer of Section~6.5 then yields the following full
multiplicity-one statement.
\begin{corollary}[Multiplicity one]\label{cor:multiplicity}
Let $G$ be a possibly disconnected complex reductive algebraic group, let $\theta$ be an algebraic involution, and suppose that $G/G^\theta$ is connected. Then the following assertions hold.
\begin{enumerate}[label=\textup{(\roman*)}]
\item For every irreducible Casselman--Wallach representation $E$ of $G(\mathbb{C})$,
\[
 \dim\Hom_{G^\theta(\mathbb{C})}(E,\C)\leq1.
\]
\item (van Dijk's conjecture) In particular, for every irreducible unitary representation $\pi$ of $G(\mathbb{C})$,
\[
 \dim\Hom_{G^\theta(\mathbb{C})}(\pi^\infty,\C)\leq1.
\]
Here $\pi^\infty$ denotes the space of smooth vectors of $\pi$.
\end{enumerate}

\end{corollary}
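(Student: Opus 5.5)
The plan is to deduce Corollary~\ref{cor:multiplicity} from Theorem~\ref{thm:main} in three stages: first connected $G$, then the finite-component transfer to disconnected $G$, and finally the unitary case.

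\textbf{Connected $G$.} Start from the Gelfand--Kazhdan statement of Theorem~\ref{thm:main}(ii) for $(\widetilde G,\widetilde H,\widetilde\theta)$. Choose a Chevalley involution $\sigma$ of $\widetilde G$ that commutes with $\widetilde\theta$ and acts by inversion on a $\widetilde\theta$-stable maximal torus. The composite anti-involution $\tau(g)=\sigma(\widetilde\theta(g^{-1}))$ preserves $\widetilde H$. By Gelfand--Kazhdan, $\tau$ fixes every $\widetilde H$-bi-invariant Schwartz distribution.

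The standard Gelfand--Kazhdan criterion for Casselman--Wallach representations (as in \cite{AG}) then gives
\[
\dim\Hom_{\widetilde H}(E,\C)\cdot\dim\Hom_{\widetilde H}(\widetilde E,\C)\le 1 .
\]
Since $\widetilde E\simeq E\circ\sigma$ for the Chevalley involution, and $\sigma$ preserves $\widetilde H$, the two factors coincide, so each is at most $1$.

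To pass from $\widetilde G$ to $G$, use that $m$ is a central isogeny onto its image up to the torus factor. An irreducible $E$ of $G(\C)$ pulls back to an irreducible representation of $\widetilde G(\C)$. The group $m(\widetilde H)$ has finite index in $G^\theta$, and is normalized by it, as in \cite[\S8.3]{LXYv1}. Hence
\[
\Hom_{G^\theta}(E,\C)\subset\Hom_{m(\widetilde H)}(E,\C),
\]
and the right-hand side has dimension at most $1$.

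\textbf{Disconnected $G$ (the main obstacle).} The key input is the hypothesis that $G/G^\theta$ is connected, which says $G=G^\circ G^\theta$. Restricting $E$ to $G^\circ$ yields a finite direct sum of irreducible Casselman--Wallach representations permuted transitively by $G^\theta$, since $G^\theta$ surjects onto $G/G^\circ$.

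Any $G^\theta$-invariant functional $\lambda$ is determined by its restriction to one summand $E_1$. On $E_1$ it is invariant under the stabilizer of $E_1$ in $G^\theta$, and in particular under $(G^\circ)^\theta$. I would check that $(G^\circ,(G^\circ)^\theta)$ satisfies multiplicity one by the connected case: the connected case gives the bound for the fixed group of $G^\circ$, and passing to a larger subgroup only shrinks $\Hom$. So $\lambda|_{E_1}$ lies in a space of dimension at most $1$.

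Injectivity of $\lambda\mapsto\lambda|_{E_1}$ follows from transitivity together with $G^\theta$-invariance. The delicate point is to ensure that nothing in this argument needs $G^\circ\cap G^\theta$ to be connected. The connected case must already be stated for the full fixed group $(G^\circ)^\theta$, and it is, since Theorem~\ref{thm:main} concerns $G^\theta$ itself.

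\textbf{Unitary case.} Part~(ii) follows from part~(i): for irreducible unitary $\pi$, the space $\pi^\infty$ contains the Casselman--Wallach globalization of its Harish-Chandra module. More precisely, $\pi^\infty$ is itself the smooth Casselman--Wallach representation, since $\pi$ is admissible and of finite length for reductive groups, and $\pi^\infty$ is irreducible. Applying part~(i) to $E=\pi^\infty$ gives the bound.
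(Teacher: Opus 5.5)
Your argument is essentially the paper's. It runs GK on the canonical cover, the Gelfand--Kazhdan criterion for GP2, a Chevalley involution commuting with $\widetilde\theta$ to equalize the two period dimensions, and descent along $m$ via $\Hom_{G^\theta}(E,\C)\subseteq\Hom_{m(\widetilde H)}(E,\C)$. It then transfers from $G^\circ$ using $G=G^\circ G^\theta$ and injectivity of restriction to one irreducible $G^\circ$-subrepresentation (Proposition~\ref{prop:GP1-transfer}), and gets GP3 because $\pi^\infty$ is an irreducible Casselman--Wallach representation.

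One sentence in your connected-case step is false and should be deleted. Theorem~\ref{thm:main}(ii) gives invariance of bi-$\widetilde H$-invariant distributions under $\widetilde g\mapsto\widetilde\theta(\widetilde g^{-1})$, not under $\tau=\sigma\circ\widetilde\theta\circ\mathrm{inv}$. The latter invariance fails in general: for $G=\C^\times$ with $\theta(z)=z^{-1}$, one has $\widetilde H=\{\pm1\}$ and $\tau(z)=z^{-1}$, and $\tau$ moves the bi-invariant distribution $\delta_2+\delta_{-2}$. Nothing depends on this claim. Apply \cite[Theorem~8.1.5]{AG} to the anti-involution $\widetilde\theta\circ\mathrm{inv}$ itself, and use $\sigma$ only for $E^\vee\simeq E\circ\sigma$ and $\sigma(\widetilde H)=\widetilde H$, as in Lemma~\ref{lem:compatible-C}.

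Your Clifford-type decomposition of $E|_{G^\circ}$ is true, but as stated it is unproved. It requires checking that $E|_{G^\circ}$ is Casselman--Wallach for $G^\circ$: the $C$-finite and $(C\cap G^\circ)$-finite vectors agree, and admissibility follows by Frobenius reciprocity. The paper proves only what is needed, namely one closed irreducible $G^\circ$-subrepresentation $E_1$ (Lemma~\ref{lem:CW-restriction}). Read "permuted transitively" as "the $G^\theta$-translates of $E_1$ span a dense subspace," which is all the injectivity argument uses.
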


The regularity and connected-group GP1 conclusions above were already
stated in \cite{LXYv1}; here they follow from alternative local proofs.
Proposition~\ref{prop:GP1-transfer} gives the finite-component transfer
used for disconnected ambient groups. By comparison, \cite{LXYv2}
proves regularity for arbitrary component groups and states GK, GP2,
and GP3 consequences for good pairs, including connected symmetric spaces.

\subsection{Reduction to irreducible factors}
The proof of Theorem~\ref{thm:main}(i) begins on the canonical central cover.
We decompose its simply connected derived group into
\(\widetilde\theta\)-orbits of simple factors and analyze the
regularity problem orbit by orbit. 
Following \cite[Definition~4.1]{Rubio}, put $\mathcal A_\theta
=\{g\in G:\theta(g)\in gZ(G)\}$. A symmetric pair is called pleasant if
\[
\operatorname{Ad}(\mathcal A_\theta)
=\operatorname{Ad}(G^\theta).
\]
 A two-factor orbit gives a
diagonal symmetric pair, which is pleasant
\cite[Example~4.3]{Rubio}. A one-factor orbit determines an
irreducible complex symmetric Lie algebra pair; in treating it we
retain the corresponding effective group action and finite multiplier
data, since regularity is a property of the symmetric group pair, not
of the Lie algebra pair alone.

Pleasant factors automatically satisfy the stronger componentwise
condition introduced in Section~\ref{sec:assembly}, since there is no nontrivial
effective component character to test. Nice factors are regular by \cite[Lemma~4.22]{Rubio}, and
Li--Xiao--Yu \cite[Lemma~3.7]{LXYv1} formulate the corresponding componentwise implication
via speciality. We retain the character-by-character argument of \cite[Lemma~3.7]{LXYv1},
but reprove its nice-factor Fourier-support input using real
homogeneity and the distinguished-defect calculations in Appendix~\ref{app:nice}.

The remaining cases requiring additional arguments are organized,
according to their underlying irreducible symmetric Lie algebra
types, into the following four Cartan families. Write $\mathfrak g=\mathfrak h\oplus\mathfrak p$
for the $(+1)$- and $(-1)$-eigenspace decomposition
of $d\theta$, respectively. The isotropy
representation of $H=G^\theta$ is the adjoint
action of $H$ on
$\mathfrak p\simeq T_{eH}(G/H)$, given by
\[
h\cdot X=\operatorname{Ad}(h)X,
\qquad h\in H,\ X\in\mathfrak p.
\]
The underlying isotropy modules for the four
residual families are as follows; the corresponding
connected effective group actions and finite
multiplier data are specified in
Sections~\ref{sec:DC}--\ref{sec:EVII}.

\[
\begin{aligned}
\mathrm{DIII}:&\quad
(\mathfrak g,\mathfrak h)=(\mathfrak{so}_{2r},\mathfrak{gl}_r),
\qquad
\mathfrak p\simeq\Lambda^2V\oplus\Lambda^2V^*,
\quad V=\mathbf \C^r;\\
\text{balanced CII}:&\quad
(\mathfrak g,\mathfrak h)
=(\mathfrak{sp}_{4r},\mathfrak{sp}_{2r}\oplus\mathfrak{sp}_{2r}),
\qquad
\mathfrak p\simeq\operatorname{Hom}(W,U),
\quad \dim U=\dim W=2r;\\
\mathrm{BDI/Spin}:&\quad
(\mathfrak g,\mathfrak h)
=(\mathfrak{so}_{r+s},\mathfrak{so}_r\oplus\mathfrak{so}_s),
\qquad
\mathfrak p\simeq\operatorname{Hom}(W,U),
\quad (\dim U,\dim W)=(r,s);\\
\mathrm{EVII}:&\quad
(\mathfrak g,\mathfrak h)
=(\mathfrak e_7,\mathfrak e_6\oplus\C),
\qquad
\mathfrak p\simeq J_{27}\oplus J_{27}^*.
\end{aligned}
\]
Here $U,W$ are symplectic in CII and quadratic in BDI, and $J_{27}$ is the exceptional simple Jordan algebra. Unequal
CII blocks are pleasant, so only the balanced CII family appears
above. Within the Spin family, unequal odd--odd blocks are pleasant
by Lemma~\ref{lem:odd-spin}, while the cases \(|r-s|\le 2\) belong to the nice
branch \cite[Lemma~4.21 and Table~3]{Rubio}; thus the residual Spin range has \(|r-s|>2\) and \(rs\)
even.

Sections~\ref{sec:DC}--\ref{sec:EVII} prove full-space vanishing
for the relevant finite-component characters in DIII, balanced CII,
the residual Spin range, and EVII. These statements imply
componentwise regularity and hence ordinary regularity. The nice
factors are treated in Section~\ref{sec:nice-factors}.
\subsection{Equivariant distribution vanishing}
The matrix groups below realize the connected effective isotropy actions through their images; ineffective central kernels leave the invariant rings, moment ideals, and distribution spaces considered here unchanged.

\begin{theorem}[Equivariant distribution vanishing]\label{thm:ambient}
The following character spaces of tempered distributions vanish.
\begin{enumerate}[label=\textup{(\roman*)}]
\item \textup{DIII.} For every $r\geq1$, fix a basis of $V=\C^r$ and put
\[
 P_D=\Lambda^2V\oplus\Lambda^2V^*,\qquad H_D=\GL(V),
\qquad
\tau_D:P_D\to P_D,\quad (B,C)\mapsto(C,B).
\]
The action is $g(B,C)=(gBg^t,g^{-t}Cg^{-1})$, and $\tau_D$ normalizes $H_D$ by $g\mapsto g^{-t}$. For the character $\chi_D$ trivial on $H_D$ and equal to $-1$ on $\tau_D$,
\begin{equation}\label{eq:intro-classical}
 \cS^*(P_D)^{H_D\rtimes\langle\tau_D\rangle,\chi_D}=0.
\end{equation}
\item \textup{Balanced CII.} For every $r\geq1$, identify symplectic spaces $U,W$ of dimension $2r$ and put
\[
 P_C=\Hom(W,U),\qquad H_C=\Sp(U)\times\Sp(W),
\qquad
\tau_C:P_C\to P_C,\quad X\mapsto -X^*.
\]
Here $X^*$ is the symplectic adjoint, $(g,h)X=gXh^{-1}$, and $\tau_C$ is the block swap, which exchanges the group factors. For the character $\chi_C$ trivial on $H_C$ and equal to $-1$ on $\tau_C$,
\begin{equation}\label{eq:intro-C}
 \cS^*(P_C)^{H_C\rtimes\langle\tau_C\rangle,\chi_C}=0.
\end{equation}
\item \textup{BDI/Spin.} For nondegenerate quadratic spaces $U,W$ of arbitrary positive dimensions, let $P=\Hom(W,U)$ with $(g,h)X=gXh^{-1}$. Then
\begin{equation}\label{eq:intro-orthogonal}
 \cS^*(P)^{\Or(U)\times\Or(W),\,\det_U\boxtimes\one}=0,\quad
 \cS^*(P)^{\Or(U)\times\Or(W),\,\one\boxtimes\det_W}=0,
\end{equation}
where $\det_U$ and $\det_W$ are the determinant characters of the two
orthogonal factors.
\end{enumerate}
\end{theorem}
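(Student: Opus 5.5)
The plan is to treat the three families by a common mechanism: an induction on dimension via Harish-Chandra descent, combined with a Fourier-transform/homogeneity argument at the nilpotent cone. The abstract's hint, \emph{Gaussian lifting} plus invariant theory in \emph{quadratic moment quotients}, suggests the reduction to the nilcone should be replaced by the following device. We realize $P$ as the moment-map target of an auxiliary symplectic or orthogonal representation on which the group (and $\tau$) act with the relevant character lifted. We then pull a $\chi$-equivariant distribution back along the quadratic moment map, multiplied by a Gaussian. The resulting tempered distribution lives on a larger space, where its vanishing can be read off from polynomial invariants.

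\textbf{Step 1 (descent).} Let $\xi$ be in the relevant character space. Using the Luna slice theorem and the Bernstein localization principle, we first show that $\xi$ is supported on the null cone $\mathcal N(P)$. At a semisimple point $x\neq0$, the stabilizer $H_x$ acting on the normal slice $N_x$ is again a product of smaller instances of the same kind. For DIII, $B$ and $C$ semisimple give smaller $\GL\times$ DIII pieces, plus a $\GL$ diagonal part. For CII, the singular values of $X^*X$ split the problem into smaller balanced CII pieces and $\GL_2$-type Galois pieces. For BDI, the eigenvalues of $X^tX$ split off smaller $\Hom(W',U')$ blocks together with a diagonal $\Or_k$ adjoint part. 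The character restricts to one of the same form, with $\tau$ or $\det$ still acting by $-1$. Induction on $\dim P$, together with the fact that the diagonal and adjoint pieces carry no $-1$-equivariant invariant distributions, kills $\xi$ off $\mathcal N$. The same argument applied to $\widehat\xi$ gives that $\xi$ and its Fourier transform (for the invariant form on $P$) are both supported in $\mathcal N$.

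\textbf{Step 2 (invariant-theoretic vanishing).} Next we exclude distributions supported on $\mathcal N$ with Fourier transform also supported on $\mathcal N$. Such a $\xi$ is a finite sum of derivatives of orbit measures and is annihilated by a power of the ideal generated by $\C[P]^{H}_+$. Taking the Gaussian lift, we consider $\xi\cdot e^{-q}$ paired against polynomial test vectors. This reduces the problem to showing that the space of polynomial functions (or polynomial multiples of orbital data) in the moment quotient transforming by $\chi$ is zero modulo the moment ideal. Concretely:
\begin{itemize}
\item In DIII, the invariants of $(B,C)$ are the traces $\tr((BC)^k)$, all $\tau_D$-symmetric. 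Contraction identities show that any $\chi_D$-odd element of the relevant quotient, such as $\operatorname{Pf}(B)-\operatorname{Pf}(C)$ type terms, lies in the moment ideal.
\item In balanced CII, the invariants are $\tr((X^*X)^k)$. A supertrace identity shows that $\tau_C$ acts trivially on them, so the $\chi_C$-isotypic part vanishes.
\item In BDI, the first fundamental theorem for $\Or(U)\times\Or(W)$ says the invariants are generated by the Gram matrix $X^tX$. A $\det_U$-semi-invariant requires a volume tensor $\wedge^{r}$ of columns. We show that this volume tensor is a multiple of the moment ideal generators, or is absent, because of the rank bound on $X$ in the null fibre.
\end{itemize}

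\textbf{Step 3 (homogeneity).} To conclude, we combine Step 2 with the real homogeneity of the Fourier transform. This means $\xi$ is an eigendistribution of the Euler operator with eigenvalue $-\tfrac12\dim_\R P$ up to sign. The distinguished-defect computation then shows that no nilpotent orbit in the $\chi$-isotypic support admits such a homogeneous $\chi$-equivariant measure-like distribution.

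\textbf{Main obstacle.} I expect the main obstacle to be Step 2 for BDI/Spin in the range $|r-s|>2$. There the volume tensor can survive in the invariant ring, and ruling it out requires controlling all transversal derivatives along nilpotent orbits, not just the orbit measures. I would first try the Gaussian lift to a space where $\det_U$ becomes the sign of a Weyl reflection, and use the Gram-matrix description there.
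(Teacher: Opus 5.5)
\textbf{The Gaussian lift and the moment quotient.} Your proposal is missing the mechanism that makes the Gaussian lift work, and as a result Step~2 does its invariant theory in the wrong ring.

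In the paper, $P\oplus P\simeq T^*P$ is the \emph{source} of the quadratic moment map, whose target is $\mathfrak l^*$. The two copies are $z$ and $cz$, where $c$ is a compact-equivariant antilinear map with $\beta(x,cz)=h(x,z)$. One tests $T$ against $Q(z,cz)\gamma(z)$, with $\gamma(z)=e^{-h(z,z)/2}$ and $Q\in\cA=\C[P\oplus P]$. Infinitesimal invariance then reads $T\bigl((F_aQ+\mu_aQ)\gamma\bigr)=0$, where $F_a$ preserves degree and $\mu_a(x,y)=-\beta(ax,y)$.

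Suppose $(\cA/I_\mu)^{K,\chi}=0$. Then multiplication by moment coordinates maps onto each $\cA_d^{K,\chi}$. A graded right inverse $\sigma$ gives $\mathcal D\sigma=1+N$, with $N$ lowering degree by two. A finite geometric series then puts every $\chi$-polynomial Gaussian in the image of the infinitesimal action. Hermite density gives $T=0$ on all of $P$ (Theorem~\ref{thm:gaussian}). No descent, nilcone support, Fourier transform or homogeneity is involved. Your Step~2 also never says how ``$\supp\xi,\supp\widehat\xi\subset\mathcal N$'' would become a statement about a moment quotient.

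The input actually required is $(\C[P\oplus P]/I_{\mathrm{comm}})^{K,\chi}=0$ for \emph{two} copies. Your one-copy observations, that $\tr((BC)^k)$ and $\tr((X^*X)^k)$ are $\tau$-fixed, are true but carry no information. On two copies the invariants are traces of mixed words such as $\tr(B_1C_1B_1C_2B_2C_2)$, which $\tau$ does not fix. The supertrace identity (Lemma~\ref{lem:supertrace}) is what puts $t-\tau t$ in $I_{\mathrm{comm}}$. Your example $\operatorname{Pf}(B)-\operatorname{Pf}(C)$ is not even $\GL(V)$-invariant.

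In BDI the mechanism is not a rank bound in a null fibre. One reduces to the Gram blocks $A=x^tx$, $B=x^ty$, $C=y^ty$ and replaces $B$ by $B_s=B-\mathsf R/2$. One checks that $[A,B_s]$, $[B_s,C]$ and $[A,C]$ lie in $I$. Then Lemma~\ref{lem:determinant-ideal} applies: in a volume-tensor contraction a path word $M$ may be replaced by $(M-M^t)/2$, and $M-M^t$ telescopes into commutators. This argument is uniform in $r,s$, so the range $|r-s|>2$ presents no special difficulty.

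\textbf{Step~3 fails, and Step~1 is incomplete.} Step~3 would fail even granting the rest. These families are exactly the non-nice ones: restricted-root multiplicity $4$ in DIII and CII, and $|r-s|>2$ in BDI. So $\delta_\fp(e)>0$ is not available, and the homogeneity count \eqref{eq:nice-real-degree} becomes $A+A'=-2\delta_\fp(e)-2\epsilon$.

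Consider the $\fp$-distinguished orbit $5_+^2\,1_-^2$ of $(\Sp_{12},\Sp_6\times\Sp_6)$ in Appendix~\ref{app:obstructions}. Its highest weights on $\fp^e$ are $6,4,4,4,4,2$, so $\delta_\fp(e)=36-36=0$, and $A=A'=\epsilon=0$ is allowed. The orbit is not swap-stable, and your defect count never uses $\chi$. So nothing in Step~3 excludes a $\chi_C$-odd distribution supported on this orbit and its swap.

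Step~1 is also incomplete. At a semisimple BDI point, an eigenvalue of $X^tX$ of multiplicity $k$ contributes the AI piece $(\Sym^2\C^k,\Or_k)$ with the $\det$ character, not an adjoint piece. For a genuine adjoint piece $\mathfrak{so}_{2m}$ your vanishing claim would be false, since the Pfaffian is $\det$-equivariant. For the AI piece with $k$ even, the needed vanishing is exactly Lemma~\ref{lem:determinant-ideal} plus Gaussian lifting, i.e.\ the argument missing above.
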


The component calculations in Sections~\ref{sec:DC}--\ref{sec:orthogonal} show that these vanishing
statements exhaust the nontrivial effective component characters
occurring in the corresponding residual classical cases.
Consequently, the associated finite-component data are componentwise regular
in the sense of Section~\ref{sec:assembly}.

The exceptional residual factor admits the following full-space
vanishing statement, proved in Section~\ref{sec:EVII}.

\begin{theorem}[EVII sign vanishing]\label{thm:intro-EVII}
Let $\fp=J_{27}\oplus J_{27}^*$ be the EVII isotropy representation and let
\[
 L=\operatorname{im}(E_6^{\mathrm{sc}}\times\C^\times\to\GL(\fp))
 \simeq(E_6^{\mathrm{sc}}\times\C^\times)/\mu_3
\]
be its connected effective group, acting by
$(h,t)(x,\phi)=(t\rho(h)x,t^{-1}\rho^\vee(h)\phi)$.
Here $\rho$ is the minuscule representation of $E_6^{\mathrm{sc}}$ on
$J_{27}$.
Let $\tau$ be the restriction to $\fp$ of a Chevalley involution of
$E_7$ commuting with the EVII involution; it exchanges $J_{27}$ and
$J_{27}^*$. Put $J=L\rtimes\langle\tau\rangle$, and let $\chi$ be the
character trivial on $L$ with $\chi(\tau)=-1$. Then
\[
 \cS^*(\fp)^{J,\chi}=0.
\]
In particular, the EVII finite-component datum is componentwise regular.
\end{theorem}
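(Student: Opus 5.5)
We would prove Theorem~\ref{thm:intro-EVII} by the standard Harish-Chandra descent/induction scheme of Aizenbud--Gourevitch, combined with an explicit analysis at the nilpotent cone. Let $\xi\in\cS^*(\fp)^{J,\chi}$.

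\textbf{Step 1: semisimple descent.} The invariants of $L$ on $\fp$ are generated by the quadratic moment-type invariants. In the Jordan model these are $\langle x,\phi\rangle$, the pairing of $x^\#$ with $\phi^\#$, and $\det x\cdot\det\phi$. The last two arise after twisting by $t$, so only $t$-balanced monomials survive. The involution $\tau$ preserves each generator, possibly up to a sign which we check. Hence the categorical quotient $\fp\sslash L$ is a $3$-dimensional affine space on which $\tau$ acts trivially.

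By the Luna slice theorem and Frobenius descent \cite{AG}, the restriction of $\xi$ to a neighbourhood of a nonzero closed orbit is governed by the slice representation of its stabilizer. The stabilizers of nonzero semisimple elements are descendants of EVII. By Rubio's descendant analysis \cite{Rubio}, these are products of DIII, balanced CII (via the $\mathfrak{so}_{12}$/$\mathfrak{sp}$ descendants), BDI/Spin, and pleasant or nice factors. For each such descendant the relevant sign character is one of those already killed in Theorem~\ref{thm:ambient} or in Section~\ref{sec:nice-factors}. We therefore conclude that $\supp\xi\subset\mathcal N$, the nullcone of $\fp$. The main bookkeeping is to check that $\chi$ restricts on each slice to a nontrivial character of exactly the form treated in Theorem~\ref{thm:ambient}.

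\textbf{Step 2: Fourier transform.} The Fourier transform $\hat\xi$, taken with respect to the $J$-invariant pairing $\fp\times\fp\to\C$ induced by the Killing form, lies in the same space. Hence $\supp\hat\xi\subset\mathcal N$ as well. The naive route would conclude from speciality, but speciality fails for EVII: the paper later constructs an invariant distribution on a nilpotent orbit closure with nilpotent Fourier support. So we must genuinely use the sign $\chi(\tau)=-1$.

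\textbf{Step 3: the sign argument (main obstacle).} We stratify $\mathcal N$ by the finitely many $L$-orbits. These correspond to pairs of Jordan ranks $(\rk x,\rk\phi)$ subject to incidence constraints, and we handle them from the open stratum downward. At a representative $e$ of each orbit, take a transverse slice $e+S$ via an $\mathfrak{sl}_2$-triple, with $S$ the centralizer of $f$. By Frobenius reciprocity, $\xi$ near the orbit corresponds to a distribution on the slice, equivariant under $\mathrm{Stab}_J(e)$ twisted by $\chi$.

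There are two cases:
\begin{enumerate}
\item If $\tau$ (composed with an element of $L$) fixes $e$, then the combined conditions of nilpotent support, homogeneity under the Euler field of the slice, and the Fourier condition restrict the transverse distribution to a finite-dimensional space of derivatives of $\delta$ along normal directions. We plan to write the invariant radial differential operators explicitly; these are the images of the Casimir-type operators attached to the invariants $\langle x,\phi\rangle$ and so on. We then check that the eigenvalues permitted by the homogeneity degrees (real homogeneity, as in Appendix~\ref{app:nice}) are incompatible with $\chi$-equivariance under the element of $\mathrm{Stab}(e)$ induced by $\tau$, which acts on the normal derivatives by computable signs.
\item If no such element exists, $\tau$ exchanges two orbits, and equivariance forces the restriction to be determined by a single orbit piece.
\end{enumerate}

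We expect this orbit-by-orbit computation to be the main obstacle: in particular, at the orbits of rank type $(1,1)$ and $(2,2)$ the normal homogeneity degrees are borderline. There we would first test whether the second-order radial operator attached to the cubic norm already separates the $\chi$- and $\one$-eigenspaces.

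\textbf{Step 4: conclusion.} Once the vanishing is shown on all strata, we get $\xi=0$. Componentwise regularity then follows, since $\chi$ is the only nontrivial effective component character for the EVII datum.
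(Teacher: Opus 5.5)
Steps 1, 2 and 4 follow the paper's outline. Step 3, however, has a genuine gap, and it lies exactly where the sign has to be used.

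\textbf{Case (2) gives no vanishing.} If $\tau$ exchanges two orbits, $\chi$-equivariance only says that the piece on one orbit is minus the $\tau$-transport of the piece on the other. You still have to kill an $L$-invariant piece with no sign constraint. This is precisely the situation at the top of the cone. The two open orbits $\mathcal O_{21},\mathcal O_{22}$ of $\mathcal N$ (where $N_+\neq0$, respectively $N_-\neq0$) are exchanged by $\tau$. Nothing local near them rules out a nonzero piece: with $T(e)=84$, the orbital measure itself ($D=0$, $m=15$) satisfies \eqref{eq:EVII-degree}. So a top-down orbit-by-orbit argument cannot even start.

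\textbf{The missing idea} is an extension obstruction at the $50$-dimensional orbit $20$, where these two branches meet. On the slice $e+\fp^f$ with coordinates $(b,u,v,a)$:
\begin{itemize}
\item the cone is $\{a=b=uv=0\}$;
\item the connected centre of the triple centralizer acts by $(u,v)\mapsto(tu,t^{-1}v)$;
\item a representative $\sigma_e\in J\setminus L$ fixing the triple swaps $u$ and $v$.
\end{itemize}
Torus-invariant pieces on the punctured branches are combinations of $u^{-k}\bar u^{-\ell}|u|^{-2}\,dA(u)\otimes\partial_v^k\partial_{\bar v}^{\ell}\delta(v)$ and their mirror images. Their finite-part extensions have radial scaling anomalies $-2R_{k\ell}$ and $+2R_{k\ell}$, which add up to $-4R_{k\ell}$ in a $\sigma_e$-odd combination. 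Since $R_{k\ell}$ is a weight-zero point jet and the radial generator has no weight-zero image on point-supported jets, no correction at the node cancels this anomaly. Moreover, torus-invariant point jets are $\sigma_e$-even. This excludes orbits $20,21,22$ at once. For the trivial character both halves of the mechanism fail, consistent with the invariant measure on $\overline{\mathcal O_{20}}$ in Appendix~\ref{app:homogeneity}.

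\textbf{Case (1) does not dispose of the remaining orbits either.} The nilpotent orbits are not indexed by rank pairs: there are $22$ nonzero orbits, labelled by characteristics. Among the survivors, $17$ and $18$ are again exchanged by $\tau$ and $15$ is $\tau$-stable. The paper excludes all three without using the sign:
\begin{itemize}
\item The holomorphic triple $(q,-\tfrac12\Delta,E_z+27)$ and its antiholomorphic counterpart act locally finitely on the double-support space; this is where both support conditions enter. They give the separate constraints $D=T(e)-54-2m$ and $D'=T(e)-54-2n$, which leave only $15,17,18,20,21,22$.
\item At $15$, the transverse quadratic symbol of $\Delta$ is nonzero, which contradicts local nilpotence of $\Delta$.
\item At $17,18$, one has $m\in\{1,3\}$, so there is a weight-$(+1)$ vector for $E_z+27$ killed by $\Delta^3$. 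An explicit radial-operator computation shows $\Delta^3$ is injective on the complete two-dimensional candidate jet space.
\end{itemize}
Real homogeneity alone only constrains $A+A'$, and leaves degree-admissible candidates at all six of these orbits.

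\textbf{Step 1 needs correcting.} The descendants are not DIII or balanced CII. The odd centralizers of nonzero semisimple points are built from $A_1/\C$, $D_5/B_4$, $D_6/(D_5+\C)$ and $E_6/F_4$. The multiplier acts through the connected fixed group on the $E_6/F_4$ and $D_5/B_4$ factors. The orthogonal blocks $\operatorname{Mat}_{1,2}$ and $\operatorname{Mat}_{10,2}$ are killed outright by Theorem~\ref{thm:ambient}(iii); the nice-factor corollary would only give a conditional implication.
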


Combining the above componentwise regularity statements for all
irreducible factors, the product and intermediate-component argument
of Section~\ref{sec:assembly}, following \cite[\S3]{LXYv1}, yields regularity of
\((G,G^\theta,\theta)\) for every connected complex reductive \(G\).

\subsection{Outline of the distributional argument}
Let a connected reductive group $L$ act on a complex vector space $P$ with an invariant nondegenerate bilinear form $\beta$. Differentiating the invariance condition gives first-order differential
operators annihilating invariant distributions. Using $\beta$ to identify
$P \simeq P^*$, write $x,y$ for the two copies of $P$ in
$T^*P \simeq P \oplus P$. On polynomial Gaussians, the corresponding
operators have quadratic terms
\[
\mu_a(x,y)=-\beta(ax,y), \qquad a\in\operatorname{Lie}L.
\]
The key algebraic problem is to show that the relevant finite-component
character space vanishes in the moment quotient $\cA/I_\mu$, where
\[
 \cA=\C[P\oplus P],\qquad
 I_\mu=(\mu_a:a\in\Lie L).
\]
For isotropy representations of reductive symmetric pairs, the
moment coordinates generate the same ideal as the coordinate functions
of $[x,y]$. Thus $I_\mu=I_{\rm comm}$ as ideals, without passing to
radicals, and the algebraic problem becomes an invariant-theoretic
calculation on the commuting scheme.

In DIII and balanced CII, polarization and the general-linear and symplectic first fundamental theorems express invariants as polynomials in trace contractions. The relevant component exchanges the two block traces, and the supertrace identity places their difference in $I_\mu$. In BDI, the orthogonal first fundamental theorem reduces invariants to Gram matrices. Determinant semi-invariants use one volume tensor, and the moment equations place the resulting Gram commutators in $I_\mu$.

Theorem~\ref{thm:gaussian} then gives the common implication
\[
 (\cA/I_\mu)^{K,\chi}=0
 \quad\Longrightarrow\quad
 \cS^*(P)^{K,\chi}=0,
\]
where $F$ is a finite group of linear symmetries of $P$ normalizing
$L$ and preserving $\beta$, $K=L\rtimes F$, and
$\chi:K\to\{1,-1\}$ is trivial on $L$.
On polynomial Gaussians, the exact operator is
\[
\mathcal D=m+\mathcal F,
\]
where $m(a\otimes Q)=\mu_aQ$
is multiplication by the moment coordinates and $\mathcal{F}$ is the first-order
differential part. The vanishing of the relevant character space in
$\cA/I_\mu$ implies that $m$ is surjective on each homogeneous character
space. We may therefore choose a graded right inverse $\sigma$ of $m$.
Consequently,
\[
\mathcal{D}\sigma=1+N,\qquad N=\mathcal{F}\sigma.
\]
Since $N$ lowers degree by two, it is locally nilpotent, and hence the finite geometric sum for $(1+N)^{-1}$ expresses each polynomial
Gaussian in the character space as lying in the image of the
infinitesimal action.
Finally, Hermite density in the Schwartz topology gives the desired
distributional vanishing.

For EVII, we use a separate argument that retains the sign character
throughout. Semisimple descent first forces both a sign-equivariant
distribution $T$ and its Fourier transform to be supported on the
nilpotent cone. At a nodal transverse slice, exchanging the two branches
makes their extension residues contribute with the same sign, so the
resulting scaling anomaly cannot be cancelled by a distribution supported
at the node. This excludes that orbit and the two larger orbits whose
closures contain it. The holomorphic and antiholomorphic quadratic
operators then give two commuting, locally finite
$\mathfrak{sl}_2$ actions. Comparing their highest weights with the
Kazhdan degrees reduces the remaining support to three nilpotent
orbits. A nonzero transverse quadratic symbol excludes one; exact
transverse operator calculations exclude the other two, including mixed
holomorphic--antiholomorphic jets. Thus no nonzero sign distribution
remains, proving Theorem~\ref{thm:intro-EVII}.

The classical arguments here and in \cite[Theorem~3.2 and
Section~4]{LXYv2} share the moment-map, Gaussian degree-lowering,
and classical invariant-theoretic methods. Our algebraic inputs are
the supertrace identity of Lemma~\ref{lem:supertrace} and the
determinant-ideal calculation of Lemma~\ref{lem:determinant-ideal};
Theorem~\ref{thm:gaussian} retains a prescribed sign character of a
finite symmetry group. The full-space DIII and balanced CII symmetries
also occur in \cite[Propositions~4.3 and~4.6]{LXYv2}.
For unequal orthogonal block dimensions,
\cite[Proposition~4.9]{LXYv2} proves that every
$\SO(U)\times\SO(W)$-invariant tempered distribution is
$\Or(U)\times\Or(W)$-invariant. Theorem~\ref{thm:ambient}(iii)
treats the two single-determinant characters in all positive dimensions,
including the balanced case. In that case, the double-determinant
class $\det x$ is nonzero in the moment quotient, as observed in
Section~\ref{sec:orthogonal}.

In EVII, both proofs use the node at orbit~20 and the two commuting
$\mathfrak{sl}_2$ actions. The treatment here of orbits~17 and~18
uses the radial coefficient maps
\eqref{eq:EVII-M17}--\eqref{eq:EVII-M18} on the complete candidate
jet spaces, whereas \cite[Sections~8.4--8.6]{LXYv2} uses a boundary
calculation. Appendix~\ref{app:homogeneity} gives the additional
construction of an invariant residue measure on
$\overline{\mathcal O_{20}}$ whose Fourier transform also has
nilpotent support. This proves failure of EVII speciality and is
compatible with sign vanishing, since the measure is invariant under
the effective multiplier group.

\subsection{Conventions}\label{sec:conventions}
All algebraic groups, Lie algebras, and finite-dimensional algebraic representations are over $\C$. Ambient groups are connected through the regularity and canonical-cover arguments; the final transfer permits disconnected groups. 
When discussing smooth representations and distributions,
we identify complex algebraic groups with their groups
of complex points, regarded as real Lie groups.
Throughout, $H=G^\theta$ is the fixed subgroup, $H^\circ$ is its identity component, and $L=\operatorname{im}(H^\circ\to\GL(\fp))$ is its connected effective image. Schwartz spaces are taken on the underlying real Nash manifolds. We write $\cS(X)$ for Schwartz functions and $\cS^*(X)$ for their continuous dual, whose elements are called tempered distributions or Schwartz distributions. For a linear action $\rho$, our conventions are
\begin{equation}\label{eq:actions}
 (g_*T)(f)=T(f\circ\rho(g)),\qquad
 U_gf=f\circ\rho(g^{-1}).
\end{equation}
Thus distributions act on functions. The notation $\cS^*(X)^{K,\chi}$ means $g_*T=\chi(g)T$ for all $g\in K$. All representation-theoretic $\Hom$ spaces consist of continuous complex-linear maps. A Casselman--Wallach representation is a smooth Fr\'echet representation of moderate growth with an admissible, finitely generated Harish--Chandra module. Here Harish--Chandra modules are defined using the complexification of the underlying real Lie algebra. The normalized complex absolute value is $|z|_{\C}=z\bar z$.

For a reductive group $L$ acting on $V$, we use
\[
 Q_L(V)=(V/V^L)(\C),\qquad
 \Gamma_L(V)=\pi^{-1}(\pi(0)),\qquad
 R_L(V)=Q_L(V)\setminus\Gamma_L(V),
\]
where $\pi:Q_L(V)\to Q_L(V)\sslash L$ is the categorical quotient. An invariant complement identifies $Q_L(V)$ with its canonical linear model. For an isotropy representation, $\Gamma_H(\fp)$ is the nilpotent cone.

\subsection{Organization of the paper}
Section~\ref{sec:gaussian} develops the moment-quotient and finite-component framework,
including central covers and effective components, characterwise
regularity and the product argument, and the Gaussian lifting theorem. Sections~\ref{sec:DC} and \ref{sec:orthogonal} treat
the classical residual families. Section~\ref{sec:EVII} proves direct
EVII sign vanishing. Section~\ref{sec:consequences} combines these results with the nice-factor calculation and
Rubio's irreducible reduction to prove regularity for symmetric pairs
of connected complex reductive groups and then derives the
multiplicity-one consequences. Appendix~\ref{app:nice}
contains the distinguished-nilpotent calculations;
Appendix~\ref{app:obstructions} treats the classical local obstructions;
Appendix~\ref{app:homogeneity} treats
the complex homogeneity obstruction and constructs an invariant
distribution showing that the EVII isotropy representation is not
special.

\subsection{Logical structure of the argument}
The following diagram summarizes the main logical dependencies.  Citations beside
arrows indicate the source of the corresponding implication, while section
references indicate where the required input is established here.

\clearpage

\begin{center}
\begin{tikzpicture}[
  scale=0.78,
  transform shape,
  >=stealth,
  box/.style={
    draw,
    rectangle,
    align=center,
    inner sep=4pt,
    font=\small,
    text width=3.9cm
  },
  main/.style={
    draw,
    rectangle,
    align=center,
    inner sep=4pt,
    font=\small,
    text width=7.6cm
  },
  mid/.style={
    draw,
    rectangle,
    align=center,
    inner sep=4pt,
    font=\small,
    text width=4.5cm
  },
  tiny/.style={
    draw,
    rectangle,
    align=center,
    inner sep=3pt,
    font=\small,
    text width=2.8cm
  },
  src/.style={
    font=\scriptsize,
    align=center,
    fill=white,
    inner sep=1.3pt
  }
]

% -------------------------------------------------
% Factorwise inputs
% -------------------------------------------------

\node[box] (rubio) at (-4.35,0)
{Rubio's pleasant/nice analysis:\\
\cite[Section~4 and Tables~2--4]{Rubio}};

\node[box] (ours) at (4.35,0)
{Four residual families and nice factors:\\ 
Sections~\ref{sec:DC}-\ref{sec:EVII} and \ref{sec:nice-factors},\\ Appendix~\ref{app:nice}};

\node at ($(rubio.east)!0.5!(ours.west)$)
{\Large $+$};

\coordinate (j1) at (0,-1.30);

\draw (rubio.south) |- (j1);
\draw (ours.south) |- (j1);

\node[main] (irr) at (0,-2.25)
{All irreducible-factor data are
componentwise regular};

\draw[->] (j1) -- (irr.north);

% -------------------------------------------------
% Global regularity
% -------------------------------------------------

\node[main] (reg) at (0,-4.15)
{Every symmetric pair of a connected complex reductive group is regular};

\draw[->] (irr.south) -- (reg.north);

\node[src] at (4.25,-3.20)
{Section~\ref{sec:assembly}; cf.~\cite[Section~3]{LXYv1}};

% -------------------------------------------------
% Canonical cover
% -------------------------------------------------

\node[mid] (cover) at (0,-6.00)
{Central cover $\widetilde G$:\\
Section~\ref{sec:gk-consequence}; cf.~\cite[Lemma~3.1]{LXYv1}};

\draw[->] (reg.south) -- (cover.north);

% -------------------------------------------------
% Descendants + good
% -------------------------------------------------

\coordinate (s2) at (0,-7.18);

\draw (cover.south) -- (s2);

\node[box] (desc) at (-4.35,-8.52)
{All descendants of\\
$(\widetilde G,\widetilde H,\widetilde\theta)$\\
are regular};

\node[box] (good) at (4.35,-8.52)
{$(\widetilde G,\widetilde H,\widetilde\theta)$\\
is good};

\draw (s2) -| (desc.north);
\draw (s2) -| (good.north);

\node at ($(desc.east)!0.5!(good.west)$)
{\Large $+$};

\node[src,anchor=east] at (-5.05,-7.42)
{\cite{Steinberg}};

\node[src,anchor=west] at (5.05,-7.42)
{\cite[Corollary~7.1.7]{AG}};

\coordinate (j2) at (0,-9.92);

\draw (desc.south) |- (j2);
\draw (good.south) |- (j2);

\node[src] at (0,-9.33)
{\cite[Theorem~7.4.5]{AG}};

\node[tiny] (gk) at (0,-10.88)
{Gelfand--Kazhdan\\
(GK)};

\draw[->] (j2) -- (gk.north);

% -------------------------------------------------
% GK -> GP2
% -------------------------------------------------

\node[tiny] (gp2) at (0,-12.65)
{$\mathbf{GP_2}$ on\\
the cover};

\draw[->] (gk.south) -- (gp2.north);

% -------------------------------------------------
% GP2 + compatible Chevalley involution
% -------------------------------------------------

\node[mid] (chev) at (5.10,-12.65)
{Chevalley involution + Central descent:\\
Section~\ref{sec:multiplicity-consequence}; cf.~\cite[Proposition~8.1]{LXYv1}};

\node at ($(gp2.east)!0.5!(chev.west)$)
{\Large $+$};

\coordinate (j3) at (0,-14.28);

\draw (gp2.south) -- (j3);
\draw (chev.south) |- (j3);

% -------------------------------------------------
% GP1 for connected G + connected quotient
% -------------------------------------------------

\node[mid] (gp1c) at (-3.75,-15.55)
{$\mathbf{GP_1}$ for connected $G$};

\draw[->] (j3) -| (gp1c.north);

\node[mid] (quot) at (4.35,-15.35)
{Finite-component transfer:\\
Section~\ref{sec:disconnected-GP1}};

\node at ($(gp1c.east)!0.5!(quot.west)$)
{\Large $+$};

\coordinate (j4) at (0,-16.95);

\draw (gp1c.south) |- (j4);
\draw (quot.south) |- (j4);

\node[main] (gp1d) at (0,-17.92)
{$\mathbf{GP_1}$ for possibly disconnected $G$\\
whenever $G/G^\theta$ is connected};

\draw[->] (j4) -- (gp1d.north);

% -------------------------------------------------
% van Dijk
% -------------------------------------------------

\node[main] (gp3) at (0,-19.82)
{$\mathbf{GP_3}$: van Dijk's conjecture};

\draw[->] (gp1d.south) -- (gp3.north);

\end{tikzpicture}
\end{center}

\clearpage

\subsection{Acknowledgements and disclosure of AI use}
The author thanks his advisor, Dihua Jiang, for his guidance and
encouragement. ChatGPT-6 Astra and Claude Opus 5.5 were used for proof
exploration and drafting. The author is responsible for the mathematical
content, references, and presentation.

\section{Effective data and distributional vanishing}\label{sec:gaussian}\label{sec:geometry}

\input{finite_components}

\subsection{The moment quotient and finite group actions}

Choose a nondegenerate symmetric invariant form $B$ on $\fg$ which is also $\theta$-invariant. Its restrictions to $\fh$ and $\fp$ are nondegenerate, and these summands are orthogonal. Using $B|_{\mathfrak p}$ to identify
$T^*\mathfrak p\simeq \mathfrak p\oplus\mathfrak p$, the cotangent moment map
is given by
\[
\langle \mu(x,y),a\rangle=-B([a,x],y)
=-B(a,[x,y]),\qquad a\in\mathfrak h,
\]
where the second equality follows from the invariance of $B$.
The choice of sign in the moment map is immaterial for the ideals below. Since $[\mathfrak p,\mathfrak p]\subseteq\mathfrak h$ and
$B|_{\mathfrak h}$ is nondegenerate, the moment coordinates generate the
same ideal as the coordinate functions of $[x,y]$. Writing $\cA=\C[\mathfrak p\oplus\mathfrak p]$ and letting $\mathfrak C^2(\mathfrak p)$ denote the closed subscheme defined
by $[x,y]=0$, we obtain
\[
I_\mu=I_{\mathrm{comm}},\qquad
\cA/I_\mu=\C[\mathfrak C^2(\mathfrak p)].
\]
For a matrix model $P\simeq\mathfrak p$, we also write
$\mathfrak C^2(P)$ for this scheme. Ineffective Lie directions contribute zero moment coordinates and do not change the ideal.

If a finite group $F$ of linear symmetries preserves $B$ and normalizes
the connected effective group $L$, its diagonal action preserves the
moment ideal and hence acts on
\begin{equation}\label{eq:connected-quotient}
 \C[\mathfrak C^2(\fp)]^{L}
 =\C[\mathfrak C^2(\fp)\sslash L].
\end{equation}
For a character $\chi:F\to\{1,-1\}$, inflated to $L\rtimes F$,
the following vanishing is the hypothesis of
Theorem~\ref{thm:gaussian}: 
\[
\bigl(\C[\mathfrak C^2(\mathfrak p)]^L\bigr)^{F,\chi}=0,
\]
where the character space is taken after $L$-invariants, retaining the action of $F$.

\subsection{Lifting to Schwartz distributions}

Let $L_0$ be a connected complex reductive group acting on a complex vector space $P$, and let a finite group $F$ act linearly on $P$ and by compatible automorphisms on $L_0$. Put $K=L_0\rtimes F$. Suppose that $P$ has a $K$-invariant nondegenerate complex bilinear form $\beta$. Let $\chi:K\to\{1,-1\}$ be a character trivial on $L_0$.
The action of $L_0$ need not be effective; in applications it may be a matrix group mapping onto $L$.
The proof uses the polynomial-degree filtration on polynomial Gaussians. Fix an $F$-stable maximal compact subgroup $L_c\subset L_0$.\footnote{Such a subgroup exists in general by the Cartan fixed-point theorem.} Put $K_c=L_c\rtimes F$; this compact subgroup meets every component of $K$, and $\Lie L_0=\mathfrak l_c\otimes_\R\C$, where $\mathfrak l_c=\Lie L_c$.

Write $\mathfrak l=\Lie L_0$ and introduce the polynomial ring and moment ideal
\begin{equation}\label{eq:moment}
 \cA=\C[P\oplus P],\qquad
 \mu_a(x,y)=-\beta(ax,y)\quad(a\in\mathfrak l),\qquad
 I_\mu=(\mu_a:a\in\mathfrak l).
\end{equation}
The action on the two formal copies is diagonal, and the action on polynomials is inverse pullback. The form $\beta$ identifies the second copy with the dual representation, so \eqref{eq:moment} is the quadratic cotangent moment ideal, up to a common sign.

\begin{theorem}[Gaussian lifting]\label{thm:gaussian}
In this setting, if
\begin{equation}\label{eq:moment-vanishing}
 (\cA/I_\mu)^{K,\chi}=0,
\end{equation}
then $\cS^*(P)^{K,\chi}=0$.
\end{theorem}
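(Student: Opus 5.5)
Let $T\in\cS^*(P)^{K,\chi}$. The plan is to show that $T$ vanishes on a dense subspace of $\cS(P)$ built from polynomial Gaussians. Choose a $K_c$-invariant positive definite Hermitian form $h$ on $P$; it exists by averaging over the compact group $K_c$. Put $g(x)=e^{-h(x,x)}$ and let $\mathcal G=\{pg\}$, where $p$ ranges over polynomials in the real coordinates. Hermite function theory shows that $\mathcal G$ is dense in $\cS(P)$.

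The first reduction uses the equivariance of $T$. Since $T$ is $(K_c,\chi)$-equivariant, $T(f)=T(\pi_\chi f)$ with $\pi_\chi=\int_{K_c}\chi(k)U_k\,dk$. Since $g$ is $K_c$-invariant, it suffices to show $T(pg)=0$ for $p$ in the $(K_c,\chi)$-isotypic part of the polynomials. By Weyl's unitary trick this is the same as the $(K,\chi)$-isotypic part: $L_c$ is Zariski dense in $L_0$ for the algebraic action on $\C[P\oplus\bar P]$. Here the real polynomial ring is identified with $\C[x,\bar x]$, and $L_0$ acts holomorphically on $x$ and antiholomorphically on $\bar x$.

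Next comes the infinitesimal invariance. For $a\in\mathfrak l$, the vector fields $v_a$ (holomorphic) and $\bar v_a$ (antiholomorphic) annihilate $T$, since $L_0$ acts by real Lie group automorphisms and $\chi|_{L_0}=1$. So $T$ kills $v_a(qg)$ and $\bar v_a(qg)$ for all polynomials $q$. Now compute $v_a(qg)=(v_aq+q\,v_a\log g)g$. The term $v_a\log g=-h(ax,x)$ is a multiple of $\beta(ax,\cdot)$ evaluated at $y=\bar x$ only up to the identification by $\beta$. So I would instead choose $h(x,x)=\beta(x,\sigma x)$, where $\sigma$ is a $K_c$-equivariant antilinear map with $\beta(x,\sigma x)>0$, that is, a compact real form compatible with $\beta$. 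Writing $y=\sigma x$ then turns $v_a\log g$ into $\mu_a(x,y)$. In this way $\C[x,\bar x]\simeq\cA$ $K$-equivariantly, and the operator
\[
\mathcal D:\mathfrak l\otimes\cA\to\cA,\qquad a\otimes Q\mapsto \mu_aQ+v_aQ,
\]
satisfies $v_a(Qg)=(\mathcal D(a\otimes Q))g$. Its image is killed by $T$. This decomposition $\mathcal D=m+\mathcal F$ has $m$ raising degree by $2$ and $\mathcal F$ of degree $0$. The antiholomorphic fields $\bar v_a$ produce the mirror expressions $\mu_a(\sigma^{-1}y,\ldots)$, and I would include them if needed.

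The key step is to invert $\mathcal D$ on the character space. Work in $\cA^{K,\chi}$, graded by total degree. Hypothesis \eqref{eq:moment-vanishing} and reductivity of $K$ (exactness of $(\cdot)^{K,\chi}$) give that $m:(\mathfrak l\otimes\cA)^{K,\chi}\to\cA^{K,\chi}$ is surjective in each degree $d$, from degree $d-2$. Choose a $K$-equivariant graded right inverse $s$ of $m$ with $\deg s=-2$; this is possible degree by degree since each graded piece is finite-dimensional. Then $\mathcal D s=1+N$ with $N=\mathcal F s$ lowering degree by $2$, hence nilpotent on each polynomial. For $p\in\cA^{K,\chi}$ put $q=s\sum_j(-N)^jp$, a finite sum. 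Then $\mathcal D q=p$, so $pg=\sum_i v_{a_i}(q_ig)$ and $T(pg)=0$. Density finishes the proof.

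The main obstacle is the bookkeeping in the previous two paragraphs. One must arrange the Gaussian and the identification of $\bar x$ with $y$ so that $\mathcal D$ on $\cA$ is exactly moment multiplication plus a degree-preserving part, and so that all maps remain $K$- rather than merely $K_c$-equivariant, with $F$ acting compatibly through $\sigma$. I would try first to construct $\sigma$ via the $F$-stable compact form and to verify that $F$ preserves $\beta$ and commutes with $\sigma$ after averaging.
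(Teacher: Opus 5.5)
Your outline is the paper's proof: a $K_c$-averaged Hermitian form, an antilinear map converting it into $\beta$, the identification of real polynomials with $\cA$, the operator $\mathcal D=m+\mathcal F$ on polynomial Gaussians, a graded right inverse of $m$ between character spaces, the degree-lowering correction $N=\mathcal F\circ s$, and Hermite density. You use the holomorphic fields $v_a=\tfrac12(\xi_a-i\xi_{ia})$ for all $a\in\mathfrak l$, where $\xi_b$ is the real vector field of $b$. The paper instead uses the noncompact directions $F_a=(ax)\cdot\partial_x-(ay)\cdot\partial_y$ with $a\in\mathfrak l_c$. Your choice is a harmless variant. With $g=e^{-h(z,z)}$ one gets $v_a(Qg)=(\mu_aQ+(ax)\cdot\partial_xQ)g$, and $a\otimes Q\mapsto(ax)\cdot\partial_xQ$ is equivariant for the diagonal action, so the inversion goes through verbatim.

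The two points you flag as the main obstacle are, however, stated incorrectly or left open. The paper disposes of both.

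\emph{The map $\sigma$.} No compact real form compatible with $\beta$ is needed. Once $h$ has been averaged over $K_c$, define $\sigma$ by $\beta(x,\sigma z)=h(x,z)$. Nondegeneracy of $\beta$ makes $\sigma$ well defined, antilinear and invertible. Invariance of $h$ and $\beta$ under $K_c$, including $F$, gives $\sigma k=k\sigma$ for $k\in K_c$. The positivity $\beta(x,\sigma x)=h(x,x)>0$ is automatic, and $\sigma$ need not be an involution.

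\emph{Equivariance.} The identification $(x,y)=(z,\sigma z)$ is only $K_c$-equivariant, and it cannot be made $K$-equivariant for the real action. For $a\in\mathfrak l_c$, antilinearity gives $\sigma(\exp(ia)z)=\exp(-ia)\sigma z$, so a noncompact element moves $y$ by its Cartan conjugate rather than by itself. For the same reason, your unitary-trick sentence is false for the action you describe (holomorphic on $x$, antiholomorphic on $\bar x$). For example, $h(z,z)$ is $L_c$-invariant but not $L_0$-invariant under that action unless $L_0$ acts trivially.

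The correct step has two parts. First, transport the $(K_c,\chi)$-space to $\cA$ through the $K_c$-equivariant identification. Then observe that, for the \emph{diagonal algebraic} action on each finite-dimensional degree of $\cA$, $L_c$-invariance equals $L_0$-invariance: differentiate, complexify $\mathfrak l_c$, and use connectedness. That diagonal character space is exactly the one in the hypothesis $(\cA/I_\mu)^{K,\chi}=0$. It is also the only equivariance needed for $m$, $\mathcal F$, and the right inverse.
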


\begin{proof}
Start with any positive definite Hermitian form on $P$ and average
it over $K_c$ with Haar probability measure. The result $h$ is still
positive definite and is $K_c$-invariant; take it linear in its first
variable. Nondegeneracy of $\beta$ defines a unique map $c:P\to P$ by
\begin{equation}\label{eq:antilinear}
 \beta(x,cz)=h(x,z)\qquad(x,z\in P).
\end{equation}
Since the right side is conjugate-linear in $z$, this map is
antilinear. If $cz=0$, then $h(x,z)=0$ for every $x$, hence $z=0$;
finite dimensionality makes $c$ invertible. For $k\in K_c$,
\[
 \beta(kx,c(kz))=h(kx,kz)=h(x,z)=\beta(kx,kcz).
\]
Nondegeneracy in the first variable gives $c(kz)=kcz$.

The substitution $(x,y)=(z,cz)$ identifies $\cA$ with the complex
polynomial functions on the underlying real vector space of $P$.
To see this, choose complex coordinates and write
$cz=C\bar z$ with $C$ an invertible complex matrix. If $z=u+iv$,
then
\[
 x=u+iv,\qquad y=C(u-iv),\qquad
 u=\tfrac12(x+C^{-1}y),\quad
 v=\tfrac1{2i}(x-C^{-1}y).
\]
These are inverse complex-linear changes of $2\dim_\C P$ formal
coordinates. Thus $\C[x,y]\simeq\C[u,v]$, the algebra of
complex-valued real polynomial functions. Compact equivariance of
$c$ makes this an identification of $K_c$-modules. Set
\[
 q(x,y)=\beta(x,y),\qquad
 \gamma(z)=\exp\bigl(-h(z,z)/2\bigr).
\]
We write $P_0\gamma$ for $P_0(z,cz)\gamma(z)$ when $P_0\in\cA$.
The identity $q(z,cz)=h(z,z)$ will determine the multiplication
term in the infinitesimal action.

For $a\in\mathfrak l_c=\Lie L_c$, the real one-parameter subgroup $t\mapsto\exp(ita)$ acts on these coordinates with velocities
\[
 \dot x=iax,\qquad \dot y=-iay.
\]
Indeed, differentiating $c(\exp(ta)z)=\exp(ta)cz$ for real $t$
first gives $c(az)=acz$. Along $z(t)=\exp(ita)z$, antilinearity then
gives $c(iaz)=-ia\,cz$, which is the second displayed velocity.
Consequently the forward vector field, divided by $i$, is
\[
 F_a=(ax)\cdot\partial_x-(ay)\cdot\partial_y.
\]
Infinitesimal invariance of $\beta$ gives
$F_aq=\beta(ax,y)-\beta(x,ay)=2\beta(ax,y)$.
Thus the exact polynomial operator on Gaussian tests is
\begin{equation}\label{eq:gaussian-operator}
 \mathcal D_aP_0=F_aP_0+\mu_aP_0,\qquad
 dU(ia)(P_0\gamma)=-i(\mathcal D_aP_0)\gamma.
\end{equation}
The polynomial identity $\mathcal D_a=F_a+\mu_a$ extends complex-linearly in $a$ to $\mathfrak l$; the identity involving $dU(ia)$ in \eqref{eq:gaussian-operator} is asserted for $a\in\mathfrak l_c$. There is no divergence term, since the test objects in \eqref{eq:actions} are functions. The diagonal algebraic action on $(x,y)$ is the complexification of the compact action; the opposite signs above describe the noncompact directions of the original real group.

On $\mathfrak l\otimes\cA$ define
\[
 m(a\otimes P_0)=\mu_aP_0,\qquad
 \mathcal F(a\otimes P_0)=F_aP_0,\qquad
 \mathcal D=m+\mathcal F.
\]
Here $K_c$ acts on the Lie-generator factor $\mathfrak l$ by the adjoint
action, including the differentials of the automorphisms from $F$:
\[
 k\cdot(a\otimes Q)=\Ad(k)a\otimes U_kQ.
\]
For this action one has
\[
 U_k\mu_a=\mu_{\Ad(k)a},\qquad
 U_k(F_aQ)=F_{\Ad(k)a}(U_kQ).
\]
The first identity follows by moving $k$ through the invariant form
$\beta$; the second follows by conjugating the two linear vector
fields. Thus $m$, $\mathcal F$, and $\mathcal D$ are
$K_c$-equivariant on the whole tensor. If $\cA_d$ denotes homogeneous
degree $d$, then $m$ raises the coefficient degree by two and
$\mathcal F$ preserves it.

On every finite-dimensional polynomial or tensor space, invariance
under $L_c$ is equivalent to invariance under $L_0$. In fact,
differentiation gives invariance under $\mathfrak l_c$, complexification
gives invariance under $\mathfrak l$, and $L_0$ is connected. Retaining $F$
therefore identifies the $K_c$- and $K$-character spaces.
By \eqref{eq:moment-vanishing}, every
$P_0\in\cA_d^{K_c,\chi}$ belongs to $I_\mu$.
Choose a basis $a_1,\ldots,a_l$ of $\mathfrak l$. Ideal membership gives a
finite expression $P_0=\sum_j\mu_{a_j}\widetilde Q_j$.
Taking its degree-$d$ part, since every $\mu_{a_j}$ has degree two,
gives
\[
 P_0=\sum_j\mu_{a_j}Q_j,\qquad Q_j\in\cA_{d-2}.
\]
We take $\cA_e=0$ for $e<0$.

The character projector on the tensor space is
\begin{equation}\label{eq:tensor-projector}
 \Pi_\chi^{(1)}\left(\sum_j a_j\otimes Q_j\right)
 =\int_{K_c}\chi(k)^{-1}
       \sum_j\Ad(k)a_j\otimes U_kQ_j\,dk.
\end{equation}
Equivariance of $m$ shows that its image is
$\Pi_\chi P_0=P_0$. Thus
\begin{equation}\label{eq:homogeneous-surjection}
 m:(\mathfrak l\otimes\cA_{d-2})^{K_c,\chi}
       \longrightarrow\cA_d^{K_c,\chi}
       \quad\hbox{is surjective}.
\end{equation}
In particular the target is zero for $d<2$.
The averaging in \eqref{eq:tensor-projector} includes the Lie-generator factor: in DIII, for example, the component sends the central Lie element $I_r$ to $-I_r$.

Choose a linear right inverse $\sigma_d$ of
\eqref{eq:homogeneous-surjection} in each degree and set
$\sigma=\bigoplus_d\sigma_d$. Both character spaces carry the scalar action $\chi$, so each $\sigma_d$ is $K_c$-equivariant. The degree changes are
\[
 \cA_d^{K_c,\chi}\xrightarrow{\ \sigma_d\ }
 (\mathfrak l\otimes\cA_{d-2})^{K_c,\chi}
 \xrightarrow{\ \mathcal F\ }\cA_{d-2}^{K_c,\chi}.
\]
Consequently $N=\mathcal F\sigma$ lowers degree by two, preserves
the character space, and is locally nilpotent. Moreover
$\mathcal D\sigma=m\sigma+\mathcal F\sigma=1+N$.
For $P_0$ of degree at most $d$, take $M=\lfloor d/2\rfloor$;
then $N^{M+1}P_0=0$. The finite geometric sum gives the exact identity
\begin{equation}\label{eq:finite-correction}
\begin{split}
 \mathcal D\left(\sigma\sum_{j=0}^{M}(-N)^jP_0\right)
 &=(1+N)\sum_{j=0}^{M}(-N)^jP_0\\
 &=P_0+(-1)^M N^{M+1}P_0=P_0.
\end{split}
\end{equation}
This finite correction shows that every polynomial in the character
space lies in the image of $\mathcal D$.

Let $T\in\cS^*(P)^{K,\chi}$. Differentiating its $L_0$-invariance in \eqref{eq:gaussian-operator} gives $T((\mathcal D_aQ)\gamma)=0$ for $a\in\mathfrak l_c$. Continuity of linear vector fields on Schwartz space justifies the differentiation. Since $a\mapsto\mathcal D_a$ is complex-linear, this vanishing extends to every $a\in\mathfrak l$. By \eqref{eq:finite-correction}, $T(P_0\gamma)=0$ for $P_0\in\cA^{K_c,\chi}$.
For arbitrary $f\in\cS(P)$ put
\[
 \Pi_\chi f=\int_{K_c}\chi(k)^{-1}U_kf\,dk.
\]
Since $\chi^2=1$, equivariance gives $T(\Pi_\chi f)=T(f)$. The Gaussian is $K_c$-invariant, hence
$\Pi_\chi(P_0\gamma)=(\Pi_\chi P_0)\gamma$.
It follows that $T$ annihilates every polynomial Gaussian.

After a real-linear change of coordinates, $h(z,z)=|u|^2$ on $\R^{2\dim_\C P}$, so these tests are the polynomial Gaussians $p(u)e^{-|u|^2/2}$. Their span is dense in the Schwartz topology by the usual Hermite expansion \cite{Toft}, and continuity gives $T=0$; in dimension zero the test space already consists of constants.
\end{proof}

\begin{remark}\label{rem:actual-ideal}
Condition~\eqref{eq:moment-vanishing} is equivalent to $\cA^{K,\chi}\subset I_\mu$, since taking a character space is exact in each finite-dimensional degree for the reductive group $K$.
\end{remark}

\section{DIII and balanced CII}\label{sec:DC}

In this section we treat the DIII and balanced CII families. In both families, the first fundamental theorem gives trace generators, and a supertrace identity proves that the component acts trivially on the invariant moment quotient.

\subsection{Invariant tensors and the supertrace identity}\label{sec:tensors}

We use the first fundamental theorems in their tensor form. For $\GL(V)$, invariant tensors in mixed powers of $V$ and $V^*$ are spanned by complete contractions pairing a $V$-index with a $V^*$-index; the numbers of the two kinds of index must agree. For symplectic groups, invariant tensors in powers of the standard representation are spanned by pairings using the defining form. These generation statements hold in every dimension. See \cite[\S5.3.1, Theorem 5.3.1 and Corollary 5.3.2]{GoodmanWallach} for $\GL(V)$ and \cite[\S5.3.2, Theorem 5.3.3]{GoodmanWallach} for symplectic groups.

Here is the trace identity used in both nonorthogonal cases. If $E=E_+\oplus E_-$ and $T$ is an endomorphism preserving the grading, write
$\str T=\tr(T|_{E_+})-\tr(T|_{E_-})$. With respect to the grading $E=E_+\oplus E_-$, we call a block matrix
odd if it interchanges the two summands, equivalently if it has the form
\[
\begin{pmatrix}
0 & A\\
C & 0
\end{pmatrix}.
\]

\begin{lemma}[Supertrace identity]\label{lem:supertrace}
Let $Z_1,\ldots,Z_{2k}$ be odd block matrices over a commutative $\C$-algebra. With ordinary matrix commutators,
\begin{equation}\label{eq:supertrace}
\begin{split}
 2\str(Z_1\cdots Z_{2k})
 =\sum_{j=2}^{2k}\str\bigl(
 Z_2\cdots Z_{j-1}[Z_1,Z_j]
 Z_{j+1}\cdots Z_{2k}\bigr).
\end{split}
\end{equation}
Consequently, if $W=Z_{i_1}\cdots Z_{i_{2k}}$
is an even word, that is, a product of an even number of odd block matrices,
then the difference between the upper and lower block traces of $W$ belongs
to the ideal generated by the entries of the pairwise commutators of its
letters.
\end{lemma}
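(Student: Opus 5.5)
The plan is to obtain \eqref{eq:supertrace} by combining a telescoping commutator identity with the super-cyclicity of the supertrace on odd matrices.

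\textbf{Step 1: parity bookkeeping.} I will first record how parity behaves. A product of odd block matrices is odd if it has an odd number of factors and even (block-diagonal) if it has an even number. In particular, $M:=Z_2\cdots Z_{2k}$ is odd. Each summand $Z_2\cdots Z_{j-1}[Z_1,Z_j]Z_{j+1}\cdots Z_{2k}$ contains $2k-2$ odd letters together with the even matrix $[Z_1,Z_j]$, so it is even. Hence every supertrace appearing in \eqref{eq:supertrace} is defined.

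\textbf{Step 2: super-cyclicity.} For odd $A=\begin{pmatrix}0&a\\ c&0\end{pmatrix}$ and $B=\begin{pmatrix}0&b\\ d&0\end{pmatrix}$ with entries in a commutative algebra, a direct computation gives
\[
\str(AB)=\tr(ad)-\tr(cb),\qquad \str(BA)=\tr(bc)-\tr(da).
\]
Since the entries commute, the ordinary trace is cyclic for rectangular blocks, so $\tr(ad)=\tr(da)$ and $\tr(cb)=\tr(bc)$. Therefore $\str(BA)=-\str(AB)$. Applying this with $A=Z_1$ and $B=M$ gives $\str(MZ_1)=-\str(Z_1M)$.

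\textbf{Step 3: telescoping.} Next I use the purely associative identity obtained by moving $Z_1$ to the right one letter at a time:
\[
Z_1Z_2\cdots Z_{2k}-Z_2\cdots Z_{2k}Z_1=\sum_{j=2}^{2k}Z_2\cdots Z_{j-1}[Z_1,Z_j]Z_{j+1}\cdots Z_{2k}.
\]
The right-hand side telescopes: consecutive terms cancel, leaving the first and last products. Taking supertraces and using Step~2,
\[
\str(Z_1M)-\str(MZ_1)=2\str(Z_1\cdots Z_{2k}),
\]
which is exactly \eqref{eq:supertrace}.

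\textbf{Step 4: the consequence.} Apply \eqref{eq:supertrace} with $Z_j=Z_{i_j}$. Each entry of a summand on the right is a sum of products in which one factor is an entry of some $[Z_{i_1},Z_{i_j}]$. Hence the right-hand side lies in the ideal generated by the entries of the pairwise commutators of the letters. Since we work over $\C$, we may divide by $2$. This shows that $\str W$, which is the upper block trace minus the lower block trace, lies in that ideal.

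The only step needing any care is the sign in Step~2. It relies on commutativity of the entries, which is why the lemma is stated over a commutative $\C$-algebra. The remaining steps are formal.
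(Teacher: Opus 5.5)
Your proposal is correct and follows the paper's proof step for step. The paper likewise uses the sign rule $\str(BA)=-\str(AB)$ for odd $A,B$, obtained from rectangular trace cyclicity over a commutative algebra, together with the Leibniz (telescoping) expansion of $[Z_1,Z_2\cdots Z_{2k}]$. It then divides by $2$ to place $\str W$ in the ideal of commutator entries.
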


\begin{proof}
Put $B=Z_2\cdots Z_{2k}$, which is odd. To see the cyclic sign
directly, write two arbitrary odd matrices as
\[
 Z=\begin{pmatrix}0&A\\C&0\end{pmatrix},\qquad
 B=\begin{pmatrix}0&D\\E&0\end{pmatrix}.
\]
Then
\[
 \str(ZB)=\tr(AE)-\tr(CD),\qquad
 \str(BZ)=\tr(DC)-\tr(EA)=-\str(ZB).
\]
The rectangular trace identities used here follow by summing
matrix entries, so remain valid over any commutative coefficient
algebra. In particular $\str([Z_1,B])=2\str(Z_1B)$ for the
commutator $[Z_1,B]=Z_1B-BZ_1$.
The ordinary Leibniz identity is
\[
 [Z_1,Z_2\cdots Z_{2k}]
 =\sum_{j=2}^{2k}Z_2\cdots Z_{j-1}[Z_1,Z_j]
                         Z_{j+1}\cdots Z_{2k}.
\]
Applying supertrace proves \eqref{eq:supertrace}. The grading supplies the cyclic trace sign, while the commutator supplies the commuting equations.

More explicitly, each entry of a product $A[Z_i,Z_j]C$ is
$\sum_{u,v}A_{pu}[Z_i,Z_j]_{uv}C_{vq}$. Hence every summand of
\eqref{eq:supertrace} lies in the ideal of commutator
entries. This identity, including division by two, holds over every commutative $\C$-algebra, including those with nilpotents.
\end{proof}

\begin{corollary}\label{cor:trace-ideal}
Let $H$ be a connected reductive group acting by block-diagonal
conjugation on a linear space $P$ of odd block matrices. Suppose that
$\C[P\oplus P]^H$ is generated by upper block traces of even words.
Let $\tau$ be an involution of $P$ induced by a conjugation interchanging
the two blocks, and assume that $\tau$ normalizes the image of $H$ in
$\GL(P)$. Let $I\subset\C[P\oplus P]$ be the ideal generated by the entries
of the commutator of the two copies. Then
\[
 \bigl(\C[P\oplus P]/I\bigr)^{H,\tau=-1}=0.
\]
\end{corollary}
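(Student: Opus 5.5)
The plan is to show that $\tau$ acts trivially on the $H$-invariants of the quotient. Once that holds, every $\tau=-1$ eigenvector must vanish. Concretely, the claim to prove is
\[
 (1-\tau)\,\C[P\oplus P]^H\subset I .
\]
We then pass to the quotient using exactness of taking invariants for the reductive group $H$ (the same reasoning as in Remark~\ref{rem:actual-ideal}). Since $\tau$ normalizes the image of $H$, the group $H\rtimes\langle\tau\rangle$ is reductive, so
\[
 \bigl(\C[P\oplus P]/I\bigr)^{H,\tau=-1}
\]
is the image of $\C[P\oplus P]^{H,\tau=-1}$. We must also check that $I$ is $\tau$-stable. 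Here $\tau$ is induced by a conjugation interchanging the blocks, so it acts diagonally on both copies through an (anti)automorphism of the matrix algebra. The commutator $[x,y]$ is therefore carried to $\pm[\tau x,\tau y]$ or its transpose, and the entries of the image span the same space of functions. If $f$ is $H$-invariant with $\tau f=-f$, then $2f=(1-\tau)f\in I$, and hence $f\in I$.

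\textbf{Reduction to generators.} By hypothesis, $\C[P\oplus P]^H$ is generated as an algebra by functions $t_W=\tr_+(W)$, the upper block traces of even words $W$ in the two copies $x,y$. Since $1-\tau$ is not multiplicative, we use
\[
 fg-\tau(f)\tau(g)=(f-\tau f)g+\tau(f)(g-\tau g).
\]
This shows that the set of invariants $f$ with $f-\tau f\in I$ is a subalgebra, provided $\tau$ preserves both $I$ and the invariant ring. The first was checked above. The second holds because $\tau$ normalizes $H$. It therefore suffices to treat the generators $t_W$.

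\textbf{Generators.} Since $\tau$ is a conjugation swapping blocks, $\tau(t_W)$ is the lower block trace $\tr_-$ of the corresponding word. The main point to verify is that this word is $W$ itself, or its reversal, up to an overall sign in $\tau(x),\tau(y)$. Because $W$ has even length, a sign $-1$ per letter cancels. A transpose-type adjoint reverses the word, but the trace of the reversed word of adjoints equals the trace of the original word. If the word is reversed, we first pass, modulo $I$, from the reversed word to $W$: since the letters commute modulo $I$, any two words with the same letters in different orders agree modulo $I$. This agreement holds entrywise, because each swap of adjacent letters changes the product by a term $A[Z_i,Z_j]C$, which lies in $I$.

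Thus
\[
 t_W-\tau(t_W)\equiv \tr_+(W)-\tr_-(W)=\str(W)\pmod I,
\]
and $\str(W)\in I$ by Lemma~\ref{lem:supertrace}, since the letters of $W$ are drawn from $\{x,y\}$ and their pairwise commutators are $[x,x]=[y,y]=0$ and $[x,y]$. The step I expect to require the most care is this bookkeeping of exactly how $\tau$ transforms an upper block trace: the sign conventions, and whether transpose or adjoint reverses the word. The commutation-mod-$I$ argument is designed to make the answer insensitive to those details.
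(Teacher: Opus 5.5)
Your proposal is correct and follows essentially the paper's route. The supertrace lemma gives $t_W-\tau(t_W)\in I$ on the generators, your product identity plays the role of the telescoping identity~\eqref{eq:polynomial-telescope}, and exactness of invariants for the reductive extension replaces the paper's compact averaging followed by the replacement $F\mapsto (F-\tau F)/2$. Since $\tau$ is an honest conjugation $Z\mapsto SZS^{-1}$, $\tau(t_W)$ is exactly the lower trace of the same word, so your reversal-modulo-$I$ safeguard is harmless but not needed.
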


\begin{proof}
Let $t_i$ denote the upper trace generators. The involution takes
$t_i$ to the lower trace of the same word. Every letter of that
word is one of the two formal copies, so its pairwise commutators
are zero or the defining commutator with either sign.
Lemma~\ref{lem:supertrace} therefore gives $t_i-\tau(t_i)\in I$.

An invariant $F$ has an expression $f(t_1,\ldots,t_N)$ involving
finitely many generators. For indeterminates $u_i,v_i$, ordinary
polynomial division, or successive replacement of one variable,
gives polynomials $g_i$ such that
\begin{equation}\label{eq:polynomial-telescope}
 f(u_1,\ldots,u_N)-f(v_1,\ldots,v_N)
 =\sum_{i=1}^N(u_i-v_i)g_i(u,v).
\end{equation}
Substitute $u_i=t_i$, $v_i=\tau(t_i)$. It follows that
$F-\tau(F)\in I$. The identity in the free polynomial ring remains valid after evaluation in the invariant algebra. If $\tau(F)=-F$, we obtain
$2F\in I$, and hence $F\in I$.

Finally $I$ is stable under $H$ and $\tau$. In every homogeneous
degree, compact averaging lifts an invariant class in the quotient
to an $H$-invariant polynomial. If the class is odd, replacing this
lift by $(F-\tau F)/2$ gives an odd invariant lift. Such a lift
belongs to $I$ by the preceding argument, so the class is zero.
\end{proof}

\subsection{DIII: the moment equations}
Fix a basis of $V=\C^r$. Write the elements of $P_D$ as pairs of skew-symmetric matrices. The action and normalizing involution are
\begin{equation}\label{eq:D-action}
 g(B,C)=(gBg^t,g^{-t}Cg^{-1}),\qquad
 \tau_D(B,C)=(C,B).
\end{equation}
Thus $\tau_D$ acts on $\GL(V)$ by $g\mapsto g^{-t}$. For
$x=(B_1,C_1)$ and $y=(B_2,C_2)$, put
\[
 Z_x=\begin{pmatrix}0&B_1\\ C_1&0\end{pmatrix},\qquad
 Z_y=\begin{pmatrix}0&B_2\\ C_2&0\end{pmatrix},\qquad
 M=B_1C_2-B_2C_1.
\]
The matrices $Z_x$ and $Z_y$ lie in the orthogonal Lie algebra
for the hyperbolic form on $V\oplus V^*$. Direct multiplication gives
\begin{equation}\label{eq:D-bracket}
 [Z_x,Z_y]=\diag(M,-M^t).
\end{equation}
Use the nondegenerate symmetric invariant form
\[
 \beta_D(x,y)=-\frac12\tr(B_1C_2+B_2C_1).
\]
It is also $\tau_D$-invariant. For $a\in\mathfrak{gl}(V)$ the infinitesimal action is
$a(B,C)=(aB+Ba^t,-a^tC-Ca)$, so
\begin{align*}
 \beta_D(ax,y)
 &=-\frac12\tr\bigl(aB_1C_2+B_1a^tC_2
                         -B_2a^tC_1-B_2C_1a\bigr)\\
 &=-\tr(aM).
\end{align*}
For instance, transposition and skew-symmetry give
$\tr(a^tC_2B_1)=\tr(aB_1C_2)$, accounting for the first two equal contributions. Therefore
\begin{equation}\label{eq:D-moment}
 \mu_a(x,y)=\tr(aM),\qquad I_D=(M_{ij}:1\leq i,j\leq r).
\end{equation}
Indeed, $a=E_{ij}$ gives $\mu_a=M_{ji}$, so each entry is a moment
coordinate and every moment coordinate is a linear combination of
entries. The central element $a=I_r$ gives
\[
 a(B,C)=(2B,-2C),\qquad \mu_{I_r}=\tr M.
\]
For $r\geq2$, the nontrivial central action contributes the trace
equation. In \eqref{eq:D-bracket}, the lower commutator block is
the negative transpose of the upper block.

\subsection{DIII: invariant contractions}
\begin{lemma}\label{lem:D-FFT}
On any finite number of copies of
$\Lambda^2V\oplus\Lambda^2V^*$, the $\GL(V)$-invariant algebra is
generated by
\begin{equation}\label{eq:D-traces}
 \tr(B_{i_1}C_{i_2}\cdots B_{i_{2k-1}}C_{i_{2k}}),\qquad k\geq1,
\end{equation}
where labels may repeat. This generation statement holds in every
dimension of $V$.
\end{lemma}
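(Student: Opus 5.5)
Because $\GL(V)$ is reductive, the invariant algebra of the linear representation $\bigoplus_i(\Lambda^2V\oplus\Lambda^2V^*)$ is spanned by multihomogeneous invariants. So we only need to show that every multihomogeneous invariant is a polynomial in the traces \eqref{eq:D-traces}. The route is polarization followed by the tensor first fundamental theorem for $\GL(V)$ cited in Section~\ref{sec:tensors}. An invariant of multidegree $(d_i,e_i)$ in $(B_i,C_i)$ is the restriction to the diagonal of a multilinear invariant. That multilinear invariant is defined on a tensor product of $\sum d_i$ copies of $\Lambda^2V$ and $\sum e_i$ copies of $\Lambda^2V^*$. We embed $\Lambda^2V\subset V\otimes V$ and $\Lambda^2V^*\subset V^*\otimes V^*$ as $\GL(V)$-stable summands. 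Averaging over the compact form, or using the equivariant projection, then extends the multilinear invariant to an invariant functional on $V^{\otimes 2m}\otimes V^{*\otimes 2n}$. Such an invariant is nonzero only if $m=n$, and by \cite[Theorem~5.3.1]{GoodmanWallach} it is a linear combination of complete contractions. Each contraction pairs every $V$-index with a $V^*$-index.

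Next we evaluate a single complete contraction on the restricted tensor $B_{j_1}\otimes\cdots\otimes B_{j_m}\otimes C_{l_1}\otimes\cdots\otimes C_{l_m}$. We form a graph whose vertices are the $B$'s and $C$'s, each vertex carrying exactly two indices. An edge joins two vertices whenever a $V$-index of a $B$ is contracted with a $V^*$-index of a $C$. Every vertex has degree two and every edge runs between a $B$-vertex and a $C$-vertex. The graph is therefore a disjoint union of even cycles $B,C,B,C,\dots$. A cycle of length two is also allowed: there a single $B$ and a single $C$ are joined by both of their index pairs.

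We read the contraction along each cycle. Whenever an index sits in the "wrong" slot we replace the matrix by its transpose, using $B^t=-B$ and $C^t=-C$. Each cycle then becomes $\pm\tr(B_{i_1}C_{i_2}\cdots B_{i_{2k-1}}C_{i_{2k}})$, and the whole contraction becomes a signed product of such traces. The contractions span the invariants, so the traces generate the algebra. The argument never uses a bound on $\dim V$; the first fundamental theorem in tensor form holds in every dimension, so the generation statement holds in every dimension as well.

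The only delicate point is the sign and orientation bookkeeping when we walk around a cycle. Each $B_i$ or $C_i$ may be entered through either of its two indices, and we must check that the traversal always gives a trace of an alternating word $BCBC\cdots$ up to a global sign. Skew-symmetry handles this: reversing the order in which we read a vertex's two indices only contributes a factor $-1$.
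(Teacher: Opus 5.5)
Your proposal is correct and follows essentially the same route as the paper. You polarize, compose with the equivariant antisymmetrizers to apply the $\GL(V)$ tensor FFT, and read each complete contraction as a disjoint union of alternating even cycles, each giving a signed trace of an alternating $B$--$C$ word; the only cosmetic difference is that the paper derives the balance $m=n$ from the action of scalars $tI$ rather than from the FFT statement itself.
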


\begin{proof}
First take a multihomogeneous invariant and polarize separately
all its matrix occurrences. In characteristic zero the resulting
multilinear function recovers the original polynomial by setting
the new copies equal; polarization is equivariant. If there are
$p$ occurrences of $B$ and $q$ of $C$, a scalar matrix $tI$ sends
its arguments to $t^2B$ and $t^{-2}C$. Invariance forces
$t^{2p-2q}=1$ for all $t\in\C^\times$, hence $p=q$.

In index notation each $B$ has two upper indices and each $C$ two
lower indices. The equivariant antisymmetrizers
$V^{\otimes2}\to\Lambda^2V$ and
$(V^*)^{\otimes2}\to\Lambda^2V^*$ allow us to apply the
general-linear tensor FFT before restricting to the skew tensors.
Every multilinear invariant is therefore a linear combination of
complete contractions pairing each upper index with a lower one.
Make a vertex for each polarized occurrence and an edge for each
paired pair of indices. Every vertex has valence two, and every
edge joins a $B$-vertex to a $C$-vertex. Each connected component is
therefore an alternating even cycle. Two parallel edges are
allowed and give a cycle of length two.

Orient such a cycle and number its consecutive indices
$a_1,\ldots,a_{2k}$. Up to the sign from reversing the two indices
at individual skew vertices, its contraction is
\[
 \sum_{a_1,\ldots,a_{2k}}
 (B_{i_1})^{a_1a_2}(C_{i_2})_{a_2a_3}
 \cdots
 (B_{i_{2k-1}})^{a_{2k-1}a_{2k}}
 (C_{i_{2k}})_{a_{2k}a_1}.
\]
This is exactly \eqref{eq:D-traces}. Different connected components
have disjoint summation indices, so their contractions multiply.
Depolarization proves the assertion, with repeated labels obtained by setting polarized arguments equal.

\end{proof}

The trace \eqref{eq:D-traces} is the upper block trace of the corresponding word in $Z_x,Z_y$. Conjugation by
$S=\left(\begin{smallmatrix}0&I\\I&0\end{smallmatrix}\right)$
realizes $\tau_D$. Equations \eqref{eq:D-bracket} and \eqref{eq:D-moment} identify the commutator ideal with the moment ideal. Corollary~\ref{cor:trace-ideal} yields
\begin{equation}\label{eq:D-quotient}
 \bigl(\C[P_D\oplus P_D]/I_D\bigr)^{H_D,\tau_D=-1}=0.
\end{equation}
Since $F-\tau_DF\in I_D$ for every $H_D$-invariant polynomial, exactness of connected invariants shows that $\tau_D$ acts identically on $\C[\mathfrak C^2(P_D)]^{\GL(V)}$.

\subsection{Balanced CII: symplectic contractions}
Choose symplectic bases of the identified spaces $U,W$ with
$J_U=J_W=\diag(J_2,\ldots,J_2)$ and
$J_2=\left(\begin{smallmatrix}0&1\\-1&0\end{smallmatrix}\right)$.
For $X:W\to U$, put
\[
 X^*=J_W^{-1}X^tJ_U,\qquad
 Z_X=\begin{pmatrix}0&X\\-X^*&0\end{pmatrix}
 \in\mathfrak{sp}(U\oplus W).
\]
The action is $(g,h)X=gXh^{-1}$. Conjugation by
$S=\left(\begin{smallmatrix}0&I\\I&0\end{smallmatrix}\right)$
gives $SZ_XS^{-1}=Z_{-X^*}$, so the component is exactly
$\tau_CX=-X^*$ and exchanges the two group factors.
The form $\beta_C(X,Y)=\tr(X^*Y)$ is symmetric, nondegenerate,
and invariant under both the group and this component.
Put
\begin{equation}\label{eq:C-LR}
 \mathsf{L}=XY^*-YX^*,\qquad \mathsf{R}=X^*Y-Y^*X.
\end{equation}
Then $\mathsf{L}^*=-\mathsf{L}$, $\mathsf{R}^*=-\mathsf{R}$, and
\begin{equation}\label{eq:C-bracket}
 [Z_X,Z_Y]=\diag(-\mathsf{L},-\mathsf{R}).
\end{equation}
For $(a,b)\in\mathfrak{sp}(U)\oplus\mathfrak{sp}(W)$,
\begin{equation}\label{eq:C-moment}
 \mu_{a,b}(X,Y)=-\tr((aX-Xb)^*Y)
 =-\tfrac12(\tr_U(a\mathsf{L})+\tr_W(b\mathsf{R})).
\end{equation}
Indeed, $a^*=-a$ gives
$\tr(aXY^*)=-\tr(aYX^*)$, and similarly for $b$.
The trace pairing is nondegenerate on each star-skew-adjoint Lie algebra, since its orthogonal complement in the full matrix algebra is the star-self-adjoint subspace. Thus every entry
of $\mathsf{L},\mathsf{R}$ is a linear combination of their trace-dual coordinates,
and
\begin{equation}\label{eq:C-ideal}
 I_C=(\text{entries of }\mathsf{L},\text{ entries of }\mathsf{R})=I_{\rm comm}.
\end{equation}
Both Lie factors occur, including in symplectic dimension two.

\begin{proposition}\label{prop:C-quotient}
Every $H_C$-invariant polynomial $F$ on $P_C\oplus P_C$ satisfies $F-\tau_CF\in I_C$. In particular,
\begin{equation}\label{eq:C-quotient}
 \bigl(\C[P_C\oplus P_C]/I_C\bigr)^{H_C,\tau_C=-1}=0.
\end{equation}
\end{proposition}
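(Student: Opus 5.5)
The plan is to mirror the DIII argument and reduce the statement to Corollary~\ref{cor:trace-ideal}. The group $H_C=\Sp(U)\times\Sp(W)$ acts on the odd block matrices $Z_X$ by block-diagonal conjugation: $\diag(g,h)Z_X\diag(g,h)^{-1}=Z_{gXh^{-1}}$, because $(gXh^{-1})^*=hX^*g^{-1}$ for symplectic $g,h$. The component $\tau_C$ is realized by conjugation with $S$, which interchanges the blocks and normalizes $H_C$. By \eqref{eq:C-bracket} and \eqref{eq:C-ideal}, the ideal $I_C$ is generated by the entries of $[Z_X,Z_Y]$. What remains is the generation hypothesis: $\C[P_C\oplus P_C]^{H_C}$ should be generated by upper block traces of even words in $Z_X,Z_Y$.

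For the generation step I will proceed as in Lemma~\ref{lem:D-FFT}.

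\begin{enumerate}
\item Polarize a multihomogeneous invariant in all its occurrences of $X$ and $Y$, giving a multilinear invariant in $m$ copies of $\Hom(W,U)\simeq U\otimes W^*\simeq U\otimes W$, where the last identification uses $J_W$.
\item Apply the symplectic FFT \cite[Theorem~5.3.3]{GoodmanWallach} to each factor separately. A multilinear $\Sp(U)\times\Sp(W)$-invariant is a linear combination of products of a complete $J_U$-pairing of the $U$-indices and a complete $J_W$-pairing of the $W$-indices.
\item Form a graph with one vertex per occurrence, one $U$-edge and one $W$-edge at each vertex. The components are cycles alternating between $U$-edges and $W$-edges, so each cycle has even length $2k$.
\item Read a cycle as $\tr_U(X_{1}X_{2}^*X_{3}X_{4}^*\cdots X_{2k-1}X_{2k}^*)$, up to sign. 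Each $J$-pairing inserts $X_j^*=J_W^{-1}X_j^tJ_U$; orientation reversals at individual vertices only change the sign.
\item Observe that $(Z_{X_1}Z_{X_2}\cdots Z_{X_{2k}})$ has upper block $(-1)^kX_1X_2^*\cdots X_{2k-1}X_{2k}^*$. So every cycle contraction is, up to sign, the upper block trace of an even word.
\item Depolarize to recover the original invariant as a polynomial in these traces.
\end{enumerate}

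Corollary~\ref{cor:trace-ideal} then gives \eqref{eq:C-quotient}. Its proof also gives the stronger statement $F-\tau_CF\in I_C$ for every $H_C$-invariant $F$, via the telescoping identity \eqref{eq:polynomial-telescope} applied to a trace presentation of $F$.

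The main obstacle is the bookkeeping in step~4: checking that an arbitrary pair of perfect matchings really yields an alternating word. The point is that a $U$-pairing joins $U$-indices of two occurrences, which corresponds to $X_iJ\,X_j^t$, and this can always be written as $X_iX_j^*$ after absorbing $J_W$. One must also confirm that a cycle of length two, $\tr(X_1X_2^*)$, is allowed. Signs need no care, since the generation statement is only up to scalars. Finally, $\tau_C$ sends the upper trace of a word to the lower trace of the same word, because $SZ_XS^{-1}=Z_{\tau_CX}$. This is exactly what Corollary~\ref{cor:trace-ideal} requires.
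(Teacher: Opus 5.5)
Your proposal is correct and is essentially the paper's proof. The paper likewise polarizes, applies the symplectic FFT to obtain alternating cycle traces $\tr(X_{i_1}X_{i_2}^*\cdots X_{i_{2k-1}}X_{i_{2k}}^*)$, and identifies them, up to the sign $(-1)^k$, with upper block traces of words in $Z_X,Z_Y$. It then runs the supertrace, telescoping and averaging argument, but writes it out inline instead of citing Corollary~\ref{cor:trace-ideal} as you do.

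One small slip in your last paragraph: contracting the $U$-indices of $X_i,X_j$ gives $X_i^tJ_UX_j$. It is the $W$-pairing that produces the factor of the form $X_iJX_j^t$, which equals $X_iX_j^*$ up to the invertible factor $J_U$. This does not affect the reading of a cycle as an alternating word.
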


\begin{proof}
Polarize a multihomogeneous invariant, giving a separate argument to each matrix occurrence. Under $P_C\simeq U\otimes W^*$, each occurrence has one $U$-index and one $W^*$-index. The symplectic FFT for $\Sp(U)\times\Sp(W)$ expresses every multilinear invariant as a linear combination of complete contractions, pairing the $U$-indices and the $W$-indices independently by their symplectic forms.

Give each matrix occurrence a vertex and each pairing an edge, marked by its factor. Each vertex is incident to one edge of each kind, so every connected component is an even cycle with alternating edge types. Traversing successive $U$- and $W$-pairings alternates a map $X$ with its symplectic adjoint $X^*$; the cycle therefore gives, up to the sign determined by the orientations,
\begin{equation}\label{eq:C-traces}
 t_{i_1,\ldots,i_{2k}}
 =\tr(X_{i_1}X_{i_2}^*\cdots X_{i_{2k-1}}X_{i_{2k}}^*).
\end{equation}
Disjoint cycles give products of these traces. Depolarization therefore proves that \eqref{eq:C-traces} generates the invariant algebra on any number of copies. Labels may repeat, and the FFT gives this generation in every positive symplectic dimension.

The upper block trace of $Z_{X_{i_1}}\cdots Z_{X_{i_{2k}}}$ is $(-1)^kt_{i_1,\ldots,i_{2k}}$. Since $SZ_XS^{-1}=Z_{\tau_CX}$, the component exchanges the upper and lower block traces of the same word. Lemma~\ref{lem:supertrace} expresses their difference in the ideal of its pairwise commutator entries. On two copies every letter is $Z_X$ or $Z_Y$, so these entries are zero or entries of $[Z_X,Z_Y]$ with either sign. Equations \eqref{eq:C-bracket} and \eqref{eq:C-ideal} identify their ideal with $I_C$. Hence each generator $t$ satisfies
\[
 t-\tau_Ct\in I_C.
\]

Write an invariant polynomial as $F=f(t_1,\ldots,t_N)$. The telescoping identity \eqref{eq:polynomial-telescope} gives $F-\tau_CF\in I_C$. Finally, compact averaging in each degree lifts an $H_C$-invariant quotient class to an invariant polynomial. For an odd class, replacing the lift by $(F-\tau_CF)/2$ gives an odd invariant lift in $I_C$, so the class is zero.
\end{proof}

\subsection{Distributional vanishing and regularity}

\begin{proof}[Proof of Theorem~\ref{thm:ambient}\textup{(i), (ii)}]
For DIII take $H_{D,c}=U(r)$ and
\[
 c_D(B,C)=(\bar C,\bar B).
\]
This commutes with $U(r)$ and $\tau_D$, and
\[
 \beta_D(x,c_Dx)
 =-\tfrac12\tr(B\bar B+\bar C C)
 =\sum_{i<j}(|B_{ij}|^2+|C_{ij}|^2).
\]
The form is positive definite on the independent skew entries. The moment operator in Theorem~\ref{thm:gaussian} is therefore
$\mathcal D_a=F_a+\tr(aM)$, with $F_a$ obtained from the action \eqref{eq:D-action}. Equations \eqref{eq:D-moment} and \eqref{eq:D-quotient}
verify the hypothesis of Theorem~\ref{thm:gaussian}. This proves the first assertion. For $r=1$ the representation is zero and the nontrivial component character itself forces vanishing.

For CII take $H_{C,c}=\operatorname{USp}(U)\times\operatorname{USp}(W)$ and
\[
 c_CX=-J_U\bar XJ_W.
\]
It commutes with the compact group and $\tau_C$. Moreover
$\beta_C(X,c_CX)=\tr(X^t\bar X)=\sum_{i,j}|X_{ij}|^2$.
On the two formal copies the operator is explicitly
\[
 \mathcal D_{a,b}
 =(aX-Xb)\cdot\partial_X+(-aY+Yb)\cdot\partial_Y
       -\tfrac12(\tr_U(a\mathsf{L})+\tr_W(b\mathsf{R})).
\]
Here the last term denotes multiplication. Apply Theorem~\ref{thm:gaussian} using \eqref{eq:C-moment} and \eqref{eq:C-quotient}. In each case the chosen compact subgroup is stable under the component involution.
\end{proof}

\begin{lemma}\label{lem:DC-components}
For DIII, the connected effective group is the image of $\GL_r$ on
$\Lambda^2V\oplus\Lambda^2V^*$. Its maximal multiplier quotient is
trivial for odd $r$ and has order two for even $r$, represented by
$\tau_D(B,C)=(C,B)$. For balanced CII, it is the image of
$\Sp(U)\times\Sp(W)$ with multiplier quotient of order two,
represented by $\tau_CX=-X^*$.
\end{lemma}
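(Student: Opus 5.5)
The plan is to compute the multiplier quotient directly from the matrix realizations already fixed in Section~\ref{sec:DC}. I take the ambient groups to be $G=\SO_{2r}$ with $\theta$ given by conjugation by the grading element $\diag(I,-I)$ on $V\oplus V^*$, and $G=\Sp(U\oplus W)$ with $\theta$ conjugation by $\diag(I,-I)$. I read the maximal multiplier quotient as the image of $\mathcal A_\theta=\{g:\theta(g)\in gZ(G)\}$ in $\GL(\fp)$, modulo the connected effective group $L$. This is the definition I expect from the finite-component framework of Section~\ref{sec:gaussian}; passing to the adjoint or simply connected form changes only central kernels, which act trivially on $\fp$.

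\emph{Step one: identify $L$ and bound the quotient.} The fixed groups are $G^\theta=\GL_r$ (block diagonal $\diag(g,g^{-t})$) and $G^\theta=\Sp(U)\times\Sp(W)$. Both are connected, so their images in $\GL(\fp)$ are exactly the connected effective groups $L$ named in the statement. The only ineffective part is the kernel $\pm I$.

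For $g\in\mathcal A_\theta$ put $z(g)=g^{-1}\theta(g)\in Z(G)=\{\pm I\}$. Since $\theta$ fixes $Z(G)$, the map $g\mapsto z(g)$ is a homomorphism $\mathcal A_\theta\to\{\pm I\}$ with kernel $G^\theta$. Hence $\mathcal A_\theta/G^\theta$ has order at most two, and so does the multiplier quotient.

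\emph{Step two: decide when the nontrivial coset occurs, and compute its action.} If $z(g)=-I$, then $g$ anticommutes with the grading element, so it interchanges the two summands. Any such element is a product of the swap
\[
 S=\left(\begin{smallmatrix}0&I\\I&0\end{smallmatrix}\right)
\]
with an element of the fixed group, after adjusting signs where needed so that $S$ lies in the ambient group.

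In DIII, $S$ preserves the hyperbolic form and has $\det S=(-1)^r$. For even $r$, $S\in\SO_{2r}$ and $\theta(S)=-S$. For odd $r$, every element swapping $V$ and $V^*$ while preserving the form has determinant $-1$, so no element of $\SO_{2r}$ does this. The cases $r=1$ (where $\fp=0$) and odd $r$ therefore give a trivial quotient.

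In CII, $\diag(I,-I)S$, or $S$ itself after the sign normalisation, lies in $\Sp(U\oplus W)$ once the bases are identified; here $\dim U=\dim W$ is exactly what is needed.

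The action on $\fp$ of these elements was already computed in Section~\ref{sec:DC}: $SZ_xS^{-1}=Z_{\tau_D x}$ and $SZ_XS^{-1}=Z_{-X^*}$. This recovers $\tau_D$ and $\tau_C$.

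\emph{Step three: show the coset is nontrivial modulo $L$.} This is the main point to handle carefully. In CII, $\tau_C$ induces on $\Lie L=\mathfrak{sp}(U)\oplus\mathfrak{sp}(W)$ the automorphism exchanging the two simple ideals. An element of $L$ induces an inner automorphism, which preserves each ideal, so $\tau_C\notin L$.

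In DIII with $r$ even, $L$ preserves the two summands $\Lambda^2V$ and $\Lambda^2V^*$, which are nonzero for $r\ge2$. Since $\tau_D$ interchanges them, $\tau_D\notin L$. Hence the quotient has order exactly two in these cases, and is trivial for odd $r$.

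The obstacle I expect is matching the paper's precise definition of the multiplier quotient (adjoint versus simply connected, effective versus abstract), and checking that sign normalisations such as $-X^*$ versus $X^*$ are absorbed by $L$. The latter is easy for CII, since $-I\in\Sp(U)$ acts on $\fp$ by $-1$.
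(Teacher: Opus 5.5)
Your proposal is correct and follows essentially the paper's route: the sign $g^{-1}\theta(g)=\pm I$ bounds the quotient by two, $\det S=(-1)^r$ decides existence in DIII, $S$ itself lies in $\Sp(U\oplus W)$ in the balanced case, and conjugation by $S$ yields $\tau_D$ and $\tau_C$; your step three makes explicit the nontriviality modulo $L$ that the paper asserts in one line. The one point to tighten is independence of the isogeny form, which the paper obtains from Lemma~\ref{lem:central-interface}: multipliers on $\SO_{2r}$ lift to multipliers on $\Spin_{2r}$ because $m^{-1}(Z(G))=Z(\widetilde G)$ by connectedness, so "central kernels act trivially on $\fp$" alone does not settle it.
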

\begin{proof}
We reproduce the calculations of \cite[Lemma 4.2, Corollary 4.3,
Lemma 4.4, and the proof
of Lemma 5.1]{LXYv1}. In the orthogonal model on $V\oplus V^*$,
the DIII involution is conjugation by $s=\diag(iI,-iI)$.
A multiplier $a\in\SO_{2r}$ satisfies $asa^{-1}=\pm s$.
For the plus sign it is $\diag(g,g^{-t})$. For the minus sign it
is $\left(\begin{smallmatrix}0&g\\g^{-t}&0\end{smallmatrix}\right)$,
whose determinant is $(-1)^r$. Such elements exist exactly when
$r$ is even and form the coset of
$S=\left(\begin{smallmatrix}0&I\\I&0\end{smallmatrix}\right)$.
Conjugation by $S$ gives $\tau_D$. The two cosets remain distinct
in the effective action for the simple DIII ranks. Central lifts
and quotients have the same maximal effective multiplier group by
Lemma~\ref{lem:central-interface}, and their connected fixed images
have the displayed Lie algebra and hence the same connected image.

For CII use $s=\diag(I_U,-I_W)$ in $\Sp(U\oplus W)$.
The center is $\{\pm I\}$, so a multiplier either preserves both
blocks or exchanges them. In the balanced case every exchange is
a block-diagonal element times $S$. Direct multiplication gives
$SZ_XS^{-1}=Z_{-X^*}$, and $S$ exchanges the two symplectic factors.
The block-diagonal subgroup is connected, and the exchange gives
the other effective component. These two cosets give the full
multiplier group.
\end{proof}

\begin{corollary}\label{cor:DC-regular}
Every DIII pair $(D_r,A_{r-1}+\C)$, $r\geq4$, in every connected
isogeny form, and every balanced CII pair $(C_{2r},C_r+C_r)$,
$r\geq1$, is regular. Their finite-component data are componentwise regular.
\end{corollary}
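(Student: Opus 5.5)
The corollary is an assembly statement, and I will prove it through the componentwise-regularity framework of Section~\ref{sec:assembly}. For a fixed irreducible factor, componentwise regularity asks for two things. First, identify the effective finite-component datum, meaning the connected effective image $L$ of $H^\circ$ in $\GL(\fp)$ and the maximal multiplier quotient. Second, for every nontrivial character $\chi$ of that quotient, show that the relevant space of $\chi$-equivariant tempered distributions on $\fp$ vanishes. Lemma~\ref{lem:DC-components} supplies the first input in both families. For DIII with $r$ odd the multiplier quotient is trivial, so there is no nontrivial character to test. In that case the datum is pleasant, and componentwise regularity reduces to the ordinary invariance statement, which holds automatically. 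For DIII with $r$ even the quotient has order two and is represented by $\tau_D$. For balanced CII it has order two and is represented by $\tau_C$.

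The next step is to match each nontrivial character with the vanishing already established. The only nontrivial character is the sign on $\tau_D$ (respectively $\tau_C$), and it is trivial on $L$. Theorem~\ref{thm:ambient}(i),(ii) shows that $\cS^*(P_D)^{H_D\rtimes\langle\tau_D\rangle,\chi_D}=0$ and $\cS^*(P_C)^{H_C\rtimes\langle\tau_C\rangle,\chi_C}=0$. To use these, I will check that the matrix models realize the isotropy representation together with its multiplier action. For DIII this means $\fp\simeq\Lambda^2V\oplus\Lambda^2V^*$, with $\tau_D$ induced by conjugation by $S$. For CII it means $\fp\simeq\Hom(W,U)$, with $\tau_C$ induced by $S$. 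I will also use the convention recorded before Theorem~\ref{thm:ambient}: ineffective central kernels do not change the distribution spaces, because $H_D=\GL(V)$ and $H_C=\Sp(U)\times\Sp(W)$ surject onto $L$. Full-space vanishing for the sign character then implies vanishing on any invariant open subset or descendant that the definition requires. It therefore gives componentwise regularity for these data.

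To cover every connected isogeny form, I will invoke Lemma~\ref{lem:central-interface}. Central lifts and central quotients of $\mathrm{Spin}_{2r}$ (respectively $\Sp_{4r}$) have the same maximal effective multiplier group and the same connected effective image. The finite-component datum, and so the verification above, is therefore independent of the isogeny form. The restriction $r\ge4$ in DIII is only there to keep $D_r$ simple and avoid overlap with the low-rank coincidences. The argument itself works for all $r$. Finally, ordinary regularity follows from componentwise regularity by the product and intermediate-component argument of Section~\ref{sec:assembly}. This is the implication recorded there, following \cite[\S3]{LXYv1}.

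The main obstacle is the bookkeeping in the identification step. I need to confirm that the representative $\tau_D$ or $\tau_C$ of the nontrivial multiplier coset, acting on $\fp$, agrees with the involution used in Theorem~\ref{thm:ambient}, and that the character being tested is exactly $\chi_D$ or $\chi_C$. Any other representative differs by an element of $L$, on which $\chi$ is trivial, so the character space is unchanged. I also need to check that the definition of componentwise regularity in Section~\ref{sec:assembly} asks for no more than this single sign-character vanishing on $\fp$. I would first verify this against the definition there. If the definition also requires vanishing on descendants, I would deduce it from the full-space statement by restricting to slices through semisimple descent.
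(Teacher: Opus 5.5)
Your proposal is correct and follows the paper's proof essentially step for step. Lemma~\ref{lem:DC-components} leaves at most one nontrivial character, Theorem~\ref{thm:ambient}\textup{(i), (ii)} gives its full-space vanishing, and Lemma~\ref{lem:central-interface} with Proposition~\ref{prop:intermediate} passes to every connected isogeny form. One point to tidy: that vanishing is literally the consequent of \eqref{eq:componentwise}, so the implication holds whatever its premise, and your detour through vanishing on invariant open subsets or descendants should simply be dropped. That detour is not needed, and it is also not generally valid, since equivariant restriction to an open set need not be surjective (cf.\ Lemma~\ref{lem:axes}).
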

\begin{proof}
Lemma~\ref{lem:DC-components} leaves at most one nontrivial character.
Theorem~\ref{thm:ambient}\textup{(i), (ii)} proves its full-space
vanishing, which is the consequent of \eqref{eq:componentwise}.
The fixed-vector quotient is the displayed isotropy space by
Lemma~\ref{lem:cover-center}. Proposition~\ref{prop:intermediate}
then handles every full fixed subgroup after a central quotient.
\end{proof}

\section{BDI/Spin: Gram matrices and determinant tensors}\label{sec:orthogonal}

We now turn to the BDI/Spin family, whose proof uses orthogonal invariant theory. The orthogonal first fundamental theorem reduces the two-copy invariants to Gram matrices. We first prove the determinant-tensor statement that will be applied to their symmetric parts.

\subsection{A determinant ideal}
The orthogonal tensor FFT spans invariant tensors by pairings with the quadratic form; see \cite[\S5.3.2, Theorem 5.3.3]{GoodmanWallach}. If $E$ is an $n$-dimensional quadratic space, with inverse quadratic tensor $g_E$ and a nonzero volume tensor $\varepsilon_E\in\Lambda^nE$, then
\begin{equation}\label{eq:volume-tensors}
 (E^{\otimes d})^{\Or(E),\det}
 =\operatorname{span}\left\{
 \text{permutations of }\varepsilon_E\otimes
 g_E^{\otimes(d-n)/2}\right\}.
\end{equation}
The space is zero unless $d\geq n$ and $d-n$ is even
\cite[Theorem 2.1 and Remark 2.2]{LZvolume}.

For symmetric $n\times n$ matrices $A_1,\ldots,A_m$, let
$\mathcal B=\C[(\Sym^2\C^n)^{\oplus m}]$, with simultaneous orthogonal conjugation, and let $\mathfrak c$ be the ideal generated by the entries of $[A_i,A_j]$.

\begin{lemma}[Determinant character in the commutator ideal]\label{lem:determinant-ideal}
For $n\geq1$,
\[
 \mathcal B^{\Or_n,\det}\subset\mathfrak c,
 \qquad
 (\mathcal B/\mathfrak c)^{\Or_n,\det}=0.
\]
\end{lemma}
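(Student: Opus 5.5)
The plan is to reduce, as in Lemma~\ref{lem:D-FFT} and Proposition~\ref{prop:C-quotient}, to multilinear semi-invariants and read them off from the volume-tensor description \eqref{eq:volume-tensors}.

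\emph{Reduction to multilinear invariants.} Let $F\in\mathcal B^{\Or_n,\det}$ be multihomogeneous. Polarize it so that each matrix occurrence gets its own symmetric argument $A_{i_1},\ldots,A_{i_d}$. Polarization is equivariant and can be undone in characteristic zero, and $\mathfrak c$ is stable under the specialization that sets polarized arguments equal. So it suffices to show that every multilinear $\det$-semi-invariant of symmetric matrices lies in the ideal generated by entries of commutators of its arguments. Viewing $\Sym^2E\subset E\otimes E$, such a function is a contraction of $A_{i_1}\otimes\cdots\otimes A_{i_d}$ against an element of $((E^*)^{\otimes 2d})^{\Or(E),\det}$. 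By \eqref{eq:volume-tensors} this element is a combination of permutations of $\varepsilon\otimes g^{\otimes(2d-n)/2}$, where $\varepsilon$ and $g$ are the dual volume and quadratic tensors. (For odd $n$ the space is already zero: $-I$ acts trivially on $\Sym^2$ but $\det(-I)=-1$.)

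\emph{Graph analysis.} Draw a vertex for each matrix occurrence, of valence two, together with one special vertex for $\varepsilon$, of valence $n$. Each $g$-pairing is an edge. An edge joining two slots of $\varepsilon$ contributes zero by antisymmetry. The connected components not meeting $\varepsilon$ are cycles, which give traces of words. The components meeting $\varepsilon$ are paths whose two ends are distinct slots of $\varepsilon$, each passing through at least one matrix vertex. Each such path contracts to an entry $(W)_{ab}$ of a nonempty word $W=A_{j_1}\cdots A_{j_\ell}$. Since the arguments are symmetric, $W^t=A_{j_\ell}\cdots A_{j_1}$. Hence every term has the form
\[
\prod(\text{traces})\cdot\sum\varepsilon_{a_1\cdots a_n}(W_1)_{a_1a_2}\cdots(W_{n/2})_{a_{n-1}a_n}.
\]
Antisymmetry of $\varepsilon$ lets us replace $W_1$ by $\tfrac12(W_1-W_1^t)$.

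\emph{Key step.} It remains to show that every entry of $W-W^{\mathrm{rev}}$ lies in $\mathfrak c$, where $W^{\mathrm{rev}}$ is the reversed word. Reversal is a product of adjacent transpositions of letters. Each adjacent swap changes the word by $U[A_p,A_q]U'$, whose entries lie in $\mathfrak c$, exactly as in the entrywise argument at the end of the proof of Lemma~\ref{lem:supertrace}. Telescoping over the transpositions gives $W-W^t\in\mathfrak c$ entrywise. The displayed term is linear in the entries of $W_1-W_1^t$, so it lies in $\mathfrak c$. This proves $\mathcal B^{\Or_n,\det}\subset\mathfrak c$.

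\emph{Quotient statement.} The second assertion follows as in Corollary~\ref{cor:trace-ideal}. The ideal $\mathfrak c$ is $\Or_n$-stable, and averaging over the compact form $\Or_n(\R)$, which is Zariski dense and meets both components, in each degree lifts a $\det$-class in $\mathcal B/\mathfrak c$ to a $\det$-semi-invariant polynomial. That polynomial lies in $\mathfrak c$ by the first part, so the class vanishes.

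I expect the main obstacle to be the bookkeeping in the graph step. One must check that no component attaches to $\varepsilon$ in a way other than a path with both ends on $\varepsilon$: an edge between two $\varepsilon$-slots gives zero, and loops on a single matrix vertex are just traces of length one. One must also track signs from orientations, but these only affect scalar coefficients.
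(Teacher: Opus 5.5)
Your proposal is correct and follows essentially the same route as the paper: polarization and the volume-tensor first fundamental theorem, a contraction graph with one $n$-valent volume vertex whose removal leaves trace cycles and path words, antisymmetry of $\varepsilon$ to replace a path word $M$ by $(M-M^t)/2$, adjacent-transposition telescoping to put $M-M^t$ entrywise in $\mathfrak c$, and compact averaging with the determinant character for the quotient statement. The only cosmetic difference is that you discard a direct pairing of two volume slots as zero at the outset, whereas the paper treats it as an empty path contributing the symmetric matrix $I_n$; the two treatments amount to the same thing.
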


\begin{proof}
The central element $-I_n$ acts trivially on symmetric matrices
by conjugation. If $n$ is odd, its determinant is $-1$, so the
determinant-semi-invariant space is zero. Suppose henceforth that
$n>0$ is even, and polarize a homogeneous determinant-semi-invariant
polynomial, giving a separate argument to each matrix occurrence.
The equivariant symmetrization from two standard indices to a
symmetric matrix allows us to apply \eqref{eq:volume-tensors}.
The result is a linear combination of contractions with exactly
one volume tensor and all other indices paired by the quadratic
form.

In the contraction graph, each symmetric-matrix occurrence supplies two tensor indices and hence a bivalent vertex, each orthogonal pairing gives an edge, and the unique volume tensor gives one $n$-valent vertex. Deleting the volume vertex leaves paths and cycles: a cycle contributes the trace of a matrix word, while a path contributes a word whose two endpoint indices occupy two slots of the volume tensor. Since the matrices are symmetric, each path word has the form $M=A_{i_1}\cdots A_{i_l}$; a direct pairing of two slots of the volume tensor is an empty path and contributes $I_n$. Thus every contraction, up to the sign from ordering the slots of the volume tensor, has the form
\begin{equation}\label{eq:volume-graph}
 \left(\prod_\ell\tr(W_\ell)\right)
 \sum_{j_1,\ldots,j_n}\varepsilon_{j_1\cdots j_n}
       \prod_{a=1}^{n/2}(M_a)_{j_{2a-1},j_{2a}}.
\end{equation}

Modulo $\mathfrak c$, the matrices $A_i$ commute as matrices over
the quotient algebra. Since each is symmetric, a path word then
satisfies
\[
 M^t=A_{i_l}\cdots A_{i_1}
       =A_{i_1}\cdots A_{i_l}=M\pmod{\mathfrak c}.
\]
For explicit ideal membership, choose a sequence of adjacent
interchanges taking the word $W_0=M$ to $W_N=M^t$. At the
$\nu$th step write the two successive words as
$W_{\nu-1}=B_\nu A_{i_\nu}A_{j_\nu}C_\nu$ and
$W_\nu=B_\nu A_{j_\nu}A_{i_\nu}C_\nu$, where $B_\nu,C_\nu$
are the surrounding words (possibly empty). Telescoping gives
\begin{equation}\label{eq:word-reversal}
 M-M^t=\sum_{\nu=1}^N(W_{\nu-1}-W_\nu)
      =\sum_{\nu=1}^N
          B_\nu[A_{i_\nu},A_{j_\nu}]C_\nu.
\end{equation}
Thus every entry of $M-M^t$ is a polynomial combination of
commutator entries.

There is at least one path in \eqref{eq:volume-graph}, since $n>0$.
Fix one of its factors $M$, and interchange its two volume indices
in the summation. All other path factors use different slots of the volume tensor, so they are unchanged, whereas the volume tensor changes
sign. The contraction with $M^t$ is consequently the negative of
the contraction with $M$. Replacing $M$ by $(M-M^t)/2$ leaves the
contraction unchanged. Equation~\eqref{eq:word-reversal} now
puts that contraction in $\mathfrak c$. For an empty path it is
zero already, since $I_n$ is symmetric. Summing contractions and
depolarizing proves $\mathcal B^{\Or_n,\det}\subset\mathfrak c$.

The ideal is homogeneous and orthogonally stable. In each finite
degree, average a polynomial lift of a determinant-equivariant
quotient class over $\Or_n(\R)$ with the determinant character.
The resulting lift is determinant-equivariant for $\Or_n(\C)$,
by complexifying the compact connected Lie algebra and retaining
a reflection. It lies in $\mathfrak c$ by the inclusion just
proved, so the quotient class is zero.
\end{proof}

\subsection{The BDI/Spin moment quotient}
Let $U,W$ be quadratic spaces of positive dimensions $r,s$, and choose orthonormal complex bases. Put $P=\Hom(W,U)$, with action
$(g,h)X=gXh^{-1}$.
For two formal $r\times s$ matrices $x,y$, write
\[
 \mathsf{L}=xy^t-yx^t,\qquad \mathsf{R}=x^ty-y^tx,\qquad
 I=(\text{entries of }\mathsf{L},\text{ entries of }\mathsf{R}).
\]
The invariant bilinear form is $\beta(x,y)=\tr(x^ty)$. If
$(a,b)\in\mathfrak{so}(U)\oplus\mathfrak{so}(W)$, then
\begin{equation}\label{eq:O-moment}
\begin{split}
 \mu_{a,b}(x,y)
 &=-\tr((ax-xb)^ty)\\ 
 &=\tr(x^tay)-\tr(bx^ty)
   =-\tfrac12(\tr_U(a\mathsf{L})+\tr_W(b\mathsf{R})).
\end{split}
\end{equation}
The trace pairings on the skew-symmetric Lie algebras show that $I$ is exactly the moment ideal. In dimension one the corresponding skew matrix and Lie algebra are both zero.

\begin{proposition}\label{prop:O-quotient}
If $\chi_W=\one\boxtimes\det_W$, then
\[
 \C[P\oplus P]^{\Or(U)\times\Or(W),\chi_W}\subset I.
\]
Equivalently,
\[
 \bigl(\C[P\oplus P]/I\bigr)^{\Or(U)\times\Or(W),\chi_W}=0.
\]
\end{proposition}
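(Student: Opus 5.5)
The plan is to reduce to Lemma~\ref{lem:determinant-ideal} by first taking $\Or(U)$-invariants and passing to Gram matrices. By the orthogonal first fundamental theorem for $\Or(U)$, on the two copies $x,y\in\Hom(W,U)$ the $\Or(U)$-invariants are generated by the entries of the Gram matrices $x^tx$, $x^ty$, $y^tx$, $y^ty$. In other words, $\C[P\oplus P]^{\Or(U)}$ is the image of the pullback along
\[
(x,y)\longmapsto G=\begin{pmatrix}x^tx&x^ty\\ y^tx&y^ty\end{pmatrix},
\]
which sends the pair to symmetric $2s\times 2s$ data and is $\Or(W)$-equivariant for simultaneous conjugation $h\cdot A=hAh^{-1}$ on each $s\times s$ block. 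It is convenient to record the data as three symmetric matrices $A_1=x^tx$, $A_2=y^ty$, $A_3=\tfrac12(x^ty+y^tx)$, together with the skew part $\tfrac12(x^ty-y^tx)=\tfrac12\mathsf R$. The skew part already lies in $I$. Hence every $\Or(U)\times\Or(W)$-invariant with character $\one\boxtimes\det_W$ can be written, modulo $I$, as $\phi(A_1,A_2,A_3)$ with $\phi\in\mathcal B^{\Or_s,\det}$, where $\mathcal B$ is the polynomial ring on three symmetric $s\times s$ matrices. To obtain this form, first write the invariant as a polynomial in the Gram entries, then substitute $x^ty=A_3+\tfrac12\mathsf R$ and drop every term containing $\mathsf R$. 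The resulting $\phi$ can be taken $\det$-equivariant by averaging over $\Or_s(\R)$ with the character. Averaging commutes with the substitution, because the substitution is $\Or(W)$-equivariant and $I$ is $\Or(W)$-stable.

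Next, Lemma~\ref{lem:determinant-ideal} with $n=s$ gives $\phi\in\mathfrak c$, the ideal generated by the entries of the $[A_i,A_j]$. It therefore remains to check that the pullbacks of these commutator entries lie in $I$. This is the main computation. Modulo $I$ we have $xy^t=yx^t$ (from $\mathsf L\in I$) and $x^ty=y^tx$ (from $\mathsf R\in I$), so $A_3\equiv x^ty$. Then
\[
x^tx\,y^ty=x^t(xy^t)y\equiv x^t(yx^t)y=(x^ty)(x^ty)\equiv(y^tx)(x^ty)=y^t(xx^t)y,
\]
and this last expression is not obviously equal to $y^ty\,x^tx$. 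I would instead argue symmetrically: $[A_1,A_2]=x^txy^ty-y^tyx^tx$, and the chain above together with its mirror image shows that both products are congruent to $(x^ty)^2$. Indeed,
\[
y^ty\,x^tx=y^t(yx^t)x\equiv y^t(xy^t)x=(y^tx)^2\equiv(x^ty)^2,
\]
so $[A_1,A_2]\in I$. In the same way,
\[
A_1A_3\equiv x^txx^ty\quad\text{and}\quad A_3A_1\equiv y^txx^tx\equiv x^tyx^tx=x^t(yx^t)x\equiv x^t(xy^t)x=x^tx\,y^tx\equiv x^txx^ty,
\]
which gives $[A_1,A_3]\in I$, and symmetrically $[A_2,A_3]\in I$. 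Each congruence is an explicit product with an entry of $\mathsf L$ or $\mathsf R$, so ideal membership holds without passing to radicals.

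Finally, the equivalence with the quotient statement follows as in the proof of Lemma~\ref{lem:determinant-ideal}. The ideal $I$ is homogeneous and stable under $\Or(U)\times\Or(W)$, so compact averaging with the character lifts a semi-invariant quotient class to a semi-invariant polynomial, and that polynomial lies in $I$ by the inclusion. The degenerate cases need only routine checks: when $r=1$ or $s=1$ some skew matrices vanish, but the identities above are still valid. I expect the main subtlety to be justifying the reduction step: the pullback of the Gram parametrization must be surjective onto $\Or(U)$-invariants in every dimension, including $r<s$, where the Gram matrices satisfy rank relations. Surjectivity is all the argument needs, so the FFT suffices, and relations among the Gram entries do no harm.
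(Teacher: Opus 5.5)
Your proposal is correct and follows essentially the paper's proof: the orthogonal FFT reduction to Gram matrices, then replacing $x^ty$ by its symmetric part modulo the entries of $\mathsf R$, then determinant-character averaging, then Lemma~\ref{lem:determinant-ideal}, and finally the membership of the three symmetric Gram commutators in $I$. The differences are cosmetic. You check the commutator memberships by explicit congruences modulo $I$ rather than by the identities \eqref{eq:symmetric-Gram-identities}. You also average after the symmetrizing substitution rather than before it, which works equally well because the Gram evaluation is $\Or(W)$-equivariant and $I$ is $\Or(W)$-stable.
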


\begin{proof}
View the columns of $x$ and $y$ as $2s$ vectors in $U$. The
orthogonal pairing theorem says that their left-invariant ring
is generated by the entries of the Gram blocks
\[
 A=x^tx,\qquad B=x^ty,\qquad C=y^ty.
\]
Here $A,C$ are symmetric and $B$ is initially arbitrary. More
precisely, evaluation in these matrices is a surjection from the
polynomial algebra on a formal symmetric matrix, a formal arbitrary
matrix, and a second formal symmetric matrix onto
$\C[P\oplus P]^{\Or(U)}$. The right orthogonal action takes every
Gram block to $hAh^{-1}$, $hBh^{-1}$, or $hCh^{-1}$, respectively,
so this surjection is $\Or(W)$-equivariant in all dimensions.

Let $F$ have the character $\one\boxtimes\det_W$ and choose a
polynomial Gram lift $\widetilde F$. Replace this lift by
\[
 \widetilde F_{\det}
 =\int_{\Or_s(\R)}\det(h)^{-1}U_h\widetilde F\,dh.
\]
The averaging takes place in a finite-dimensional degree space,
and its image is still $F$. It gives a determinant-equivariant
polynomial in the three formal matrices. In this argument all
polynomial actions are inverse pullbacks; a determinant character
is equal to its inverse.

Direct multiplication gives
\begin{equation}\label{eq:Gram-identities}
\begin{aligned}
 x^t\mathsf{L}x&=AB^t-BA,& x^t\mathsf{L}y&=AC-B^2,\\
 y^t\mathsf{L}x&=(B^t)^2-CA,& y^t\mathsf{L}y&=B^tC-CB,\\
 \mathsf{R}&=B-B^t.
\end{aligned}
\end{equation}
Put $B_s=(B+B^t)/2=B-\mathsf{R}/2$. All three matrices $A,B_s,C$ are
symmetric. The following identities place their commutators in the moment ideal:
\begin{equation}\label{eq:symmetric-Gram-identities}
\begin{aligned}
 [A,B_s]&=x^t\mathsf{L}x+\tfrac12(A\mathsf{R}+\mathsf{R}A),\\
 [B_s,C]&=y^t\mathsf{L}y+\tfrac12(\mathsf{R}C+C\mathsf{R}),\\
 [A,C]&=x^t\mathsf{L}y+y^t\mathsf{L}x+B\mathsf{R}+\mathsf{R}B^t.
\end{aligned}
\end{equation}
For example, $x^t\mathsf{L}x=[A,B]-A\mathsf{R}$, while
$[A,B_s]=[A,B]-(A\mathsf{R}-\mathsf{R}A)/2$, proving the first identity.
For the second use $y^t\mathsf{L}y=[B,C]-\mathsf{R}C$; for the third use
$B^2-(B^t)^2=B\mathsf{R}+\mathsf{R}B^t$ in the sum of the middle two identities in
\eqref{eq:Gram-identities}. Every entry of every right-hand side
is a polynomial multiple of entries of $\mathsf{L}$ or $\mathsf{R}$.

Restrict $\widetilde F_{\det}$ in its formal middle variable to
the symmetric matrices. This restriction is still
determinant-equivariant, because transpose and symmetrization
commute with orthogonal conjugation. After evaluation, its value
is $\widetilde F_{\det}(A,B_s,C)$. By
\eqref{eq:polynomial-telescope}, applied to the individual middle
matrix entries, the difference
\[
 \widetilde F_{\det}(A,B,C)
       -\widetilde F_{\det}(A,B_s,C)
\]
is in the ideal of the entries of $B-B_s=\mathsf{R}/2$, hence in $I$.
Lemma~\ref{lem:determinant-ideal} expresses the restricted formal
polynomial as a combination of the commutator entries of three
formal symmetric matrices. Evaluating them at $A,B_s,C$ and
using \eqref{eq:symmetric-Gram-identities} puts its value in $I$.
The first inclusion follows. Finally compact character averaging
in each degree lifts a character class in the quotient to one in
the polynomial algebra; the inclusion forces that class to be
zero, proving the second assertion.
\end{proof}

Transposition gives the same assertion with $U,W$ interchanged.
Thus, in the moment quotient,
\begin{equation}\label{eq:O-connected-sectors}
\C[\mathfrak C^2(P)]^{\Or(U)\times\Or(W),\,
       \det_U\boxtimes\one}=0,\ \C[\mathfrak C^2(P)]^{\Or(U)\times\Or(W),\,
       \one\boxtimes\det_W}=0.
\end{equation}
Equivalently, these are the two single-determinant character spaces for the residual quotient
$(\Or(U)\times\Or(W))/(\SO(U)\times\SO(W))$.
When $r=s$, the double-determinant class $\det x$ has character $\det_U\boxtimes\det_W$ and survives in
the moment quotient, since it takes value one at the commuting
pair $(x,y)=(I_r,0)$.

\begin{proof}[Proof of Theorem~\ref{thm:ambient}\textup{(iii)}]
Apply Theorem~\ref{thm:gaussian} to the connected matrix group $L_0=\SO(U)\times\SO(W)$, which maps onto the connected effective image on $P$. Reflections in the chosen bases give a splitting
\[
 \Or(U)\times\Or(W)=L_0\rtimes(\{1,-1\}\times\{1,-1\}).
\]
Take $L_c=\SO_r(\R)\times\SO_s(\R)$, which is stable under these reflections, so $K_c=\Or_r(\R)\times\Or_s(\R)$. The map $cX=\bar X$ gives $\beta(X,cX)=\sum|X_{ij}|^2$. The actual Gaussian operators are
\[
 \mathcal D_{a,b}
 =(ax-xb)\cdot\partial_x+(-ay+yb)\cdot\partial_y
       -\tfrac12(\tr_U(a\mathsf{L})+\tr_W(b\mathsf{R})).
\]
Equations \eqref{eq:O-moment} and Proposition~\ref{prop:O-quotient} meet the hypotheses of Theorem~\ref{thm:gaussian} for $\chi_W$. Its finite correction applies in every degree and proves the $\one\boxtimes\det_W$ assertion. Transposition exchanges $U$ and $W$ and proves the $\det_U\boxtimes\one$ assertion.
\end{proof}

\subsection{The Spin component}
\begin{lemma}\label{lem:odd-spin}
If $r,s$ are odd and unequal, the Spin block pair is pleasant.
\end{lemma}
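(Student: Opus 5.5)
The plan is to compute both sides of $\Ad(\mathcal A_\theta)=\Ad(G^\theta)$ explicitly inside the adjoint group. Both images lie in $\Ad(G)=\SO_{r+s}/\{\pm I\}$, since $r+s$ is even. Realize the Spin block pair as $G=\Spin(U\oplus W)$ with $\dim U=r$, $\dim W=s$. Take $\theta$ to be conjugation by a lift $\hat s\in\Spin$ of $s=\diag(I_U,-I_W)$; for instance $\hat s$ is, up to a scalar, the Clifford product $e_1\cdots e_r$ of an orthonormal basis of $U$. Since $Z(G)$ lies in the kernel of $\Ad$, it suffices to describe the images in $\SO_{r+s}/\{\pm I\}$ of $\mathcal A_\theta$ and of $G^\theta$, and to show these two images coincide.

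\emph{Step 1: the image of $\mathcal A_\theta$.} If $\theta(g)\in gZ(G)$, then its image $a\in\SO_{r+s}$ satisfies $asa^{-1}=\pm s$. The minus sign would make $a$ carry the $(+1)$-eigenspace $U$ onto the $(-1)$-eigenspace $W$, which is impossible because $r\neq s$. Hence $a$ commutes with $s$, so $a\in S(\Or(U)\times\Or(W))$, and
\[
 \Ad(\mathcal A_\theta)\subseteq S(\Or(U)\times\Or(W))/\{\pm I\}.
\]
The group $S(\Or(U)\times\Or(W))$ has exactly two components: $\SO(U)\times\SO(W)$, and the coset of $g_0=\rho_{e_1}\rho_{f_1}$, the product of a reflection in $U$ and a reflection in $W$.

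\emph{Step 2: the image of $G^\theta$.} The identity component of $G^\theta$ maps onto $\SO(U)\times\SO(W)$, so $\Ad(G^\theta)$ contains $(\SO(U)\times\SO(W))/\{\pm I\}$. The remaining point is the second component. A Clifford computation shows that its lift $e_1f_1$ actually anticommutes with $\hat s$. Indeed, $e_1$ commutes with $e_1\cdots e_r$ because $r-1$ is even, while $f_1$ anticommutes with it because $r$ is odd. So this component lies in $\mathcal A_\theta$ but not in $G^\theta$, and it must be absorbed by the center instead.

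\emph{Step 3: absorbing the odd component.} This is the key observation and the only place where oddness of both $r$ and $s$ is used. Since $-I\in\SO_{r+s}$, we have $\Ad(g_0)=\Ad(-g_0)$. The blocks of $-g_0$ have determinants $(-1)^{r-1}=1$ and $(-1)^{s-1}=1$. Hence $-g_0\in\SO(U)\times\SO(W)$, and it lifts to the identity component of $G^\theta$. Therefore
\[
 \Ad(\mathcal A_\theta)\subseteq(\SO(U)\times\SO(W))/\{\pm I\}\subseteq\Ad(G^\theta),
\]
and the reverse inclusion is trivial because $G^\theta\subseteq\mathcal A_\theta$. This gives equality, so the pair is pleasant.

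Regularity concerns the symmetric group pair, not only its Lie algebra, so I would also note the following. The same computation applies to any central quotient of $\Spin_{r+s}$ carrying the induced involution, because $\Ad$ and the set $\mathcal A_\theta$ both factor through the adjoint group; this could be cited via Lemma~\ref{lem:central-interface}. I expect the main obstacle to be bookkeeping rather than substance. One must check that the choice of lift $\hat s$ (for example $e_1\cdots e_r$ versus $f_1\cdots f_s$) does not change $G^\theta$ beyond a central factor. One must also confirm that no multiplier of the form $asa^{-1}=-s$ arises from an outer automorphism, since $\mathcal A_\theta$ is defined inside $G$.
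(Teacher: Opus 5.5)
Your proof is correct and is essentially the paper's argument. The paper also rules out the multipliers $\pm\omega$ (your case $asa^{-1}=-s$) because $r\ne s$, and it absorbs the multiplier-$(-1)$ component with the central element $\omega$ lying over $-I_E$; it checks $\theta(\omega g)=\omega g$ using $\theta(\omega)=(-1)^s\omega=-\omega$, where you compute the block determinants of $-g_0$ instead.

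One slip: $\det\diag(I_U,-I_W)=(-1)^s=-1$, so $\hat s=e_1\cdots e_r$ lies in $\mathrm{Pin}$, not $\Spin$, and the involution is outer. Your argument never uses $\hat s\in\Spin$, however. Your worry about the choice of lift also disappears, because an automorphism of $\SO_{r+s}$ lifts uniquely to the simply connected cover.
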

\begin{proof}
This is \cite[Proposition 6.6]{LXYv1}; we include its Clifford proof.
Let $E=U\oplus W$, let $I=1_U\oplus(-1_W)$, and let $\theta$
be the induced involution of $\Spin(E)$. For an adapted orthonormal
basis the Clifford volume element $\omega$ satisfies
$\theta(\omega)=(-1)^s\omega=-\omega$. The center is
$\{\pm1,\pm\omega\}$, with $\pi(\pm\omega)=-I_E$ under
$\pi:\Spin(E)\to\SO(E)$. If $\theta(g)=\pm\omega g$,
then $I\pi(g)I^{-1}=-\pi(g)$, so the diagonal blocks of $\pi(g)$
vanish. Invertibility would imply $r=s$. Thus the multiplier is
$\pm1$. In the negative case $\theta(\omega g)=\omega g$ and
$\Ad(\omega g)=\Ad(g)$. Every multiplier therefore acts through
the fixed subgroup, which is pleasantness. This also treats
$r+s\equiv0\pmod4$, correcting the exclusion of that case in
\cite[Proposition 4.18 and Table 3]{Rubio}.
\end{proof}

\begin{lemma}\label{lem:spin-components}
Suppose $r\ne s$ and $rs$ is even. On $P=\Hom(W,U)$ the maximal
multiplier group is the effective image of
\[
 K_0=S(\Or(U)\times\Or(W))
   =\{(g,h):\det g\det h=1\}.
\]
Its identity component is the image of $\SO(U)\times\SO(W)$,
and the nontrivial quotient character is
$\chi_0(g,h)=\det g=\det h$.
\end{lemma}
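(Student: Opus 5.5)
Model the pair in $\SO(E)$ first and then pass to $\Spin(E)$ and the effective image on $P$, following the pattern of Lemma~\ref{lem:DC-components}. Put $E=U\oplus W$, $I=1_U\oplus(-1_W)$, and let $\theta$ be conjugation by $I$.

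\textbf{Multipliers in $\SO(E)$.} A multiplier $a$ satisfies $aIa^{-1}=\pm I$.
\begin{itemize}
\item[(a)] For $aIa^{-1}=-I$, the element $a$ must exchange the $\pm1$ eigenspaces $U$ and $W$. Since $r\neq s$, this is impossible.
\item[(b)] Hence $a$ commutes with $I$, so $a=\diag(g,h)$ with $g\in\Or(U)$, $h\in\Or(W)$ and $\det g\det h=1$. This is exactly $K_0=S(\Or(U)\times\Or(W))$.
\end{itemize}
Its action on $P=\Hom(W,U)$ is $X\mapsto gXh^{-1}$, the restriction of the adjoint action to $\fp$. The identity component is the image of $\SO(U)\times\SO(W)$. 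Because $rs$ is even, at least one of $r,s$ is even, and then there is a component with $\det g=\det h=-1$. (If both were odd, $-1_U\oplus 1_W$ would have determinant $-1$ and fail to lie in $\SO(E)$.) So the component group of $K_0$ has order two, with character $\chi_0(g,h)=\det g=\det h$.

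\textbf{Passing to $\Spin(E)$.} Here I would use the center $\{\pm1,\pm\omega\}$, as in Lemma~\ref{lem:odd-spin}.
\begin{itemize}
\item[(c)] A spin multiplier $g$ has $\theta(g)=zg$ with $z$ central. If $z=\pm\omega$, then $I\pi(g)I^{-1}=-\pi(g)$, which again forces $r=s$. So $z=\pm1$, and $\pi(g)$ is a multiplier in $\SO(E)$, hence lies in $K_0$.
\item[(d)] Conversely, each element of $K_0$ lifts to $\Spin(E)$, and any lift is a multiplier with $z=\pm1$.
\end{itemize}
Therefore the multiplier group of the spin form maps onto $K_0$, and the same holds for intermediate central quotients by Lemma~\ref{lem:central-interface}. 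Ad-images are insensitive to the central kernel, so the maximal multiplier group acting on $P$ is the effective image of $K_0$.

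\textbf{Main obstacle: effectiveness of the nontrivial component.} I must check that $\chi_0$ does not die on the effective image, i.e.\ that the component $\det g=\det h=-1$ does not act on $P$ through the identity component. Suppose an element $(g,h)$ of this component acts on $\Hom(W,U)$ like some $(g_0,h_0)\in\SO\times\SO$. Then $(g g_0^{-1},hh_0^{-1})$ acts trivially, which forces it to be a scalar pair $(c,c)$ with $c=\pm1$.

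This requires $\det(c1_U)=-1$ and $\det(c1_W)=-1$, so $c=-1$ and both $r$ and $s$ are odd, contradicting $rs$ even. Hence the effective quotient character is nontrivial, which is the final assertion of the lemma.
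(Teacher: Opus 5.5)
Your proposal is correct and follows essentially the same route as the paper. You rule out the volume multipliers $\pm\omega$ by the unequal-block argument so that multipliers project into $K_0$, and you recover all of $K_0$. Here you lift arbitrary elements and note that the multiplier lies in $\ker\pi$, where the paper instead exhibits the Clifford element $uw$ with $\theta(uw)=-uw$. You then check that the kernel $\{(I,I),(-I,-I)\}$ of the action on $P$ meets $K_0$ only in its identity component when $rs$ is even.

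One small correction: the component $\det g=\det h=-1$ of $K_0$ exists for all $r,s$ (take a reflection in each block), so your parenthetical parity justification there is beside the point. Parity matters only for effectiveness, which you handle correctly at the end.
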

\begin{proof}
This is the calculation underlying \cite[Lemma 6.8 and Proposition
6.9]{LXYv1}. If $r+s$ is odd, the Spin center is $\{\pm1\}$.
If it is even, the volume multipliers are excluded by the same
unequal-block argument as above. A multiplier therefore maps to a
block-diagonal element of $\SO(E)$, that is, an element of $K_0$.
The fixed Lie algebra has connected image $\SO(U)\times\SO(W)$.
For normalized nonisotropic $u\in U$, $w\in W$, the element
$g_0=uw$ of the Clifford algebra belongs to $\Spin(E)$ and satisfies
$\theta(g_0)=-g_0$. It induces a reflection in each block. Any two
multiplier-$-1$ elements differ by a fixed element, proving that these
are all the multiplier actions. Finally, the kernel of the orthogonal
action on $P$ is contained in $\{(I,I),(-I,-I)\}$. The latter lies
in $K_0$ only when $r+s$ is even; in the present even-product range
both dimensions are then even, so it belongs to the identity component.
The quotient and its character are therefore as asserted.
\end{proof}

Put $\widehat K=\Or(U)\times\Or(W)$. The index-two decomposition
of \cite[Lemma 6.11]{LXYv1} has the following direct proof. A reflection
in $U$ normalizes $K_0$ and acts involutively on
$\cS^*(P)^{K_0,\chi_0}$. Its negative eigenspace extends $\chi_0$
by $\det_U\boxtimes\one$, and its positive eigenspace extends it
by $\one\boxtimes\det_W$. Consequently
\begin{equation}\label{eq:index-two}
 \cS^*(P)^{K_0,\chi_0}
 =\cS^*(P)^{\widehat K,\det_U\boxtimes\one}
  \oplus\cS^*(P)^{\widehat K,\one\boxtimes\det_W}.
\end{equation}

\begin{corollary}\label{cor:Spin}
The space $\cS^*(P)^{K_0,\chi_0}$ is zero. In the residual Spin range
$|r-s|>2$, $rs$ even, the finite-component datum is componentwise regular.
\end{corollary}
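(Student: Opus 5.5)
The vanishing follows directly from the index-two decomposition \eqref{eq:index-two} and Theorem~\ref{thm:ambient}(iii). First I will verify \eqref{eq:index-two} with care. Fix a reflection $\rho\in\Or(U)$, so that $(\rho,1)$ normalizes $K_0$ and represents the nontrivial coset of $K_0$ in $\widehat K$. Its square lies in $K_0$ with $\chi_0=1$, so it acts as an involution on $\cS^*(P)^{K_0,\chi_0}$. Each eigenspace consists of distributions that are equivariant under all of $\widehat K$, for the character extending $\chi_0$ determined by the eigenvalue. On $K_0$ the characters $\det_U\boxtimes\one$ and $\one\boxtimes\det_W$ both restrict to $\chi_0$. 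At $(\rho,1)$ the first takes the value $-1$ and the second the value $+1$. This gives the two summands.

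Theorem~\ref{thm:ambient}(iii) holds for quadratic spaces of arbitrary positive dimensions and shows that both summands vanish. So $\cS^*(P)^{K_0,\chi_0}=0$. No hypothesis on $r,s$ is needed for this step.

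For componentwise regularity, I use Lemma~\ref{lem:spin-components}. In the range $r\ne s$ with $rs$ even, the maximal multiplier group is the effective image of $K_0$. Its identity component is the image of $\SO(U)\times\SO(W)$, and the quotient has order two. The only nontrivial effective component character is therefore $\chi_0$. One point needs checking: $\chi_0$ must descend to the effective image. The kernel $\{(I,I),(-I,-I)\}\cap K_0$ lies in the identity component, as shown in the proof of Lemma~\ref{lem:spin-components}, so $\chi_0$ is trivial on it.

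Full-space vanishing for this single character is the consequent required in \eqref{eq:componentwise}. The remaining assembly then follows Corollary~\ref{cor:DC-regular}. Lemma~\ref{lem:cover-center} identifies the fixed-vector quotient with the displayed isotropy space $P$. Proposition~\ref{prop:intermediate} then treats the intermediate isogeny forms and fixed subgroups.

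The main obstacle is bookkeeping rather than analysis. The restriction $|r-s|>2$ only places the factor outside the nice branch. The things to check are that every nontrivial effective component character is accounted for, and that the residual quotient of Lemma~\ref{lem:spin-components} matches the definition of componentwise regularity in Section~\ref{sec:assembly}.
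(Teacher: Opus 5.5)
Your proposal is correct and follows the paper's proof. Both summands of the index-two decomposition \eqref{eq:index-two} vanish by Theorem~\ref{thm:ambient}(iii), and Lemma~\ref{lem:spin-components} shows that $\chi_0$ is the only nontrivial multiplier character, so the full-space vanishing is the consequent of \eqref{eq:componentwise}. Your two extra checks are accurate elaborations of points the paper leaves implicit or places inside Lemma~\ref{lem:spin-components}: the eigenvalue bookkeeping for the reflection $(\rho,1)$, and the descent of $\chi_0$ through the kernel element $(-I,-I)$.
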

\begin{proof}
Both summands in \eqref{eq:index-two} vanish by
Theorem~\ref{thm:ambient}\textup{(iii)}. Lemma~\ref{lem:spin-components}
identifies this with the only possible nontrivial multiplier character.
We have proved the consequent of \eqref{eq:componentwise}.
\end{proof}

The unequal odd--odd blocks are pleasant by Lemma~\ref{lem:odd-spin}.
The pairs with $|r-s|\leq2$, including the balanced blocks with a
possible further exchange, are treated by the nice-factor argument
below. Thus Rubio's list \cite[Lemma 4.21 and Table 3]{Rubio} and the
two lemmas above give the residual range directly. The accidental
isomorphisms $B_1=A_1$, $B_2=C_2$, $D_2=A_1+A_1$, $D_3=A_3$
reduce the small ranks to the same nice, pleasant, or exchanged blocks.

\input{evii_direct}

\section{Regularity and multiplicity one}\label{sec:consequences}

This section completes the proof of Theorem~\ref{thm:main} and derives
Corollary~\ref{cor:multiplicity}. We first establish the nice-factor input and combine
the factorwise results to prove regularity for connected complex
reductive symmetric pairs.  We then deduce the Gelfand--Kazhdan and
multiplicity-one statements, and finally prove the finite-component
transfer needed for the disconnected case.  The product and
intermediate-component arguments, as well as the Chevalley-involution
and finite central descent steps, follow the framework of
\cite[\S\S3 and~8]{LXYv1}.

\subsection{The nice factors}\label{sec:nice-factors}

Following Sekiguchi \cite{Sek85}, a semisimple
symmetric Lie algebra pair
$\mathfrak g=\mathfrak h\oplus\mathfrak p$
is called \emph{nice} if
\[
  \dim_{\mathbb C}\mathfrak g_\alpha
  +\dim_{\mathbb C}\mathfrak g_{2\alpha}
  \leq 2
  \qquad(\alpha\in\Sigma),
\]
where $\Sigma$ is the restricted root system
associated with a Cartan subspace
$\mathfrak a\subseteq\mathfrak p$, and
$\mathfrak g_\alpha$ denotes the corresponding
restricted root space. We set
$\mathfrak g_{2\alpha}=0$ if $2\alpha\notin\Sigma$.
A $\widetilde\theta$-stable simple factor
of the simply connected derived group
of the canonical cover is called nice
when its associated symmetric Lie algebra
pair is nice.

For a semisimple symmetric Lie algebra $\fg=\fh\oplus\fp$ and a nilpotent $e\in\fp$, choose a normal triple $(e,d,f)$, with $d\in\fh$, $e,f\in\fp$, and $[d,e]=2e$, $[d,f]=-2f$, $[e,f]=d$; see \cite[Lemma 7.1.11]{AG}. Write $\fp^e=\ker(\operatorname{ad}e|_\fp)$. The element $e$ is \emph{$\fp$-distinguished} if $\fp^e$ contains no nonzero semisimple element. Define, using complex dimensions and traces,
\begin{equation}\label{eq:nice-delta}
 \delta_\fp(e):=\tr(\operatorname{ad}d|_{\fp^e})
                  +2\dim\fp^e-\dim\fp.
\end{equation}
Thus if $\lambda_1,\ldots,\lambda_q$ are the highest weights on $\fp^e$, then $\delta_\fp(e)=\sum_i(\lambda_i+2)-\dim\fp$. 

The pairs considered below are nice in Sekiguchi's
sense; see \cite[Lemma~4.21]{Rubio}.
The distinguished-nilpotent inequality needed
for our argument is $\delta_{\mathfrak p}(e)>0$ for every $\mathfrak p$-distinguished nilpotent
element $e$:

\begin{proposition}[Distinguished-nilpotent inequality for the nice factors]\label{prop:nice-inequality}
For each of the pairs
\begin{gather*}
 (\mathfrak{sl}_n,\mathfrak{so}_n)\quad(n\geq2),\qquad
 (\mathfrak{sl}_{2n},\mathfrak{s}(\mathfrak{gl}_n\oplus\mathfrak{gl}_n)),\qquad
 (\mathfrak{sp}_{2n},\mathfrak{gl}_n),\\
 (\mathfrak{so}_{2n+k},\mathfrak{so}_{n+k}\oplus\mathfrak{so}_n)
       \quad(k=0,1,2),\qquad
 (\mathfrak e_7,\mathfrak{sl}_8),
\end{gather*}
with $n\geq1$ in the other classical cases, every $\fp$-distinguished nilpotent $e$ satisfies $\delta_\fp(e)>0$. The toral BDI case $n=1,k=0$ has no such element and contributes zero after passage to $Q(\fp)$.
\end{proposition}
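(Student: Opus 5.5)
The plan is to make $\delta_\fp(e)$ computable from the $\mathfrak{sl}_2$-decomposition of all of $\fg$ under the normal triple, and then to check positivity case by case using the standard parametrizations of nilpotent $H$-orbits in $\fp$.

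\emph{Step 1: a weight formula for $\delta_\fp$.} Decompose $\fg=\bigoplus_\lambda V_\lambda\otimes M_\lambda$ under $\mathfrak s=\langle e,d,f\rangle$. Since $d\in\fh$ and $e,f\in\fp$, the involution $\theta$ commutes with $\operatorname{ad}d$ and anticommutes with $\operatorname{ad}e$ and $\operatorname{ad}f$. Hence $\theta$ preserves each isotypic component and acts on the multiplicity spaces, giving $M_\lambda=M_\lambda^{\fh}\oplus M_\lambda^{\fp}$ according to whether the highest weight vector lies in $\fh$ or $\fp$. On a copy of $V_\lambda$, the $\theta$-parity alternates along the weight string $\lambda,\lambda-2,\dots,-\lambda$. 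Consequently:
\begin{itemize}
\item A copy with highest vector in $\fp$ meets $\fp$ in $\lfloor\lambda/2\rfloor+1$ weights.
\item A copy with highest vector in $\fh$ meets $\fp$ in $\lceil\lambda/2\rceil$ weights.
\item $\fp^e$ is spanned exactly by the highest vectors lying in $\fp$.
\end{itemize}
Writing $m^\fp_\lambda=\dim M_\lambda^\fp$ and $m^\fh_\lambda=\dim M_\lambda^\fh$, this gives
\[
 \delta_\fp(e)=\sum_\lambda\Bigl(m^\fp_\lambda\bigl(\lambda+1-\lfloor\lambda/2\rfloor\bigr)-m^\fh_\lambda\lceil\lambda/2\rceil\Bigr).
\]
So the problem becomes a comparison between the $\fp$-highest multiplicities and the $\fh$-highest multiplicities. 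I would also record what $\fp$-distinguishedness means here. It says that the reductive part of the centralizer of $\mathfrak s$ in $\fp$ has no semisimple elements, i.e. $\fp\cap\fg^{\mathfrak s}$ contains no nonzero semisimple element. This holds in particular when $m^\fp_0=0$, and it is the hypothesis used to exclude the negative contributions of trivial summands.

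\emph{Step 2: the classical pairs.} For $(\mathfrak{sl}_n,\mathfrak{so}_n)$, $(\mathfrak{sl}_{2n},\mathfrak s(\mathfrak{gl}_n\oplus\mathfrak{gl}_n))$, $(\mathfrak{sp}_{2n},\mathfrak{gl}_n)$ and the near-balanced $(\mathfrak{so}_{2n+k},\mathfrak{so}_{n+k}\oplus\mathfrak{so}_n)$, nilpotent $H$-orbits in $\fp$ are classified by (signed) Young diagrams via Kostant--Sekiguchi. The procedure is as follows.
\begin{itemize}
\item Express $\fg$ as a sum of $\mathfrak s$-modules $V_{a-1}\otimes V_{b-1}$ (or its $\Lambda^2$/$\Sym^2$ pieces) over pairs of rows $a,b$. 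Read off the $\theta$-parity of each highest vector from the signs at the row ends.
\item Characterize the distinguished diagrams. Typically these are rows of distinct lengths with restricted sign patterns, e.g. rows of distinct odd lengths for $\mathfrak{so}_n\subset\mathfrak{sl}_n$.
\item Evaluate the formula of Step 1 row-pair by row-pair.
\end{itemize}
In each family I expect the sum to split into positive single-row terms plus pairwise terms. The single-row terms are of order the row length. For the pairwise terms, the $\fp$-highest and $\fh$-highest contributions from the Clebsch--Gordan strings $V_{|a-b|},\dots,V_{a+b-2}$ should cancel up to a nonnegative error. Proving that cancellation is where niceness ($\dim\fg_\alpha+\dim\fg_{2\alpha}\le2$) enters: it forces the allowed sign patterns to be nearly alternating. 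The toral case $n=1$, $k=0$ is handled separately as stated. There $\fp$ consists of semisimple elements, so no distinguished nilpotent exists, and the contribution vanishes on $Q(\fp)$.

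\emph{Step 3: $(\mathfrak e_7,\mathfrak{sl}_8)$.} Here I would use Djokovi\'c's tables of nilpotent $\mathrm{SL}_8/\mu_2$-orbits in $\fp=\Lambda^4\C^8$. From them, extract the orbits whose $\fp$-centralizer is nilpotent. For those, read off $\dim\fp^e$ and the $d$-weights on $\fp^e$ (equivalently the weighted Dynkin diagram and the $\fh$/$\fp$-graded dimensions), and compute $\delta_\fp$ directly; this is a finite check. The main obstacle is Step 2: obtaining a uniform inequality across all distinguished signed diagrams, rather than a list of cases. If the row-pair bookkeeping becomes unwieldy, I would first try induction on the number of rows. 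Removing the longest row changes $\delta_\fp$ by an explicit amount, and I would aim to show that this increment is positive using the distinctness of row lengths.
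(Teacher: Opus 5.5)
Your Step~1 is correct. It is the highest-vector bookkeeping the paper also uses, and your overall route (signed-chain contributions for the classical pairs, a finite computation for $E_7/A_7$) matches the paper's. The gap is that Step~2, which is the whole content of the proposition, is left as an expectation, and the mechanism you expect is false: pair contributions can be negative. In BDI, two odd chains of sizes $2a+1\geq 2b+1$ with the same sign contribute $b-a\le 0$. In equal AIII, two odd chains of sizes $p\geq q$ with equal highest signs contribute $c_{ij}+c_{ji}=q-p\le 0$.

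So positivity cannot be checked row pair by row pair; the negative terms must be absorbed globally, using the ordering of the chains and the dimension constraint. The paper does this in closed form. With plus parameters $A_1\geq\cdots\geq A_r$ and minus parameters $B_1\geq\cdots\geq B_s$ (chains of sizes $2A_i+1$, $2B_j+1$),
\[
 \delta_\fp(e)=\sum_{i=1}^rA_i(A_i+2i-k)+\sum_{j=1}^sB_j(B_j+2j+k)+rs,\qquad r-s=k.
\]
In AIII the odd/odd terms total $\sum_a(2a-1)(P_a+Q_a)$, because the odd plus and odd minus chains are equinumerous. The hypothesis $k\le 2$ enters exactly through $2i-k\geq 0$; it has nothing to do with nearly alternating sign patterns. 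The same quantity $A_1(A_1+2-k)$ also controls the increment in your row-removal induction, not distinctness of row lengths. Without $k\le 2$ the statement fails: for $(\mathfrak{so}_6,\mathfrak{so}_5)$, i.e.\ $n=1$, $k=4$, a nonzero isotropic $e\in\fp=\C^5$ is $\fp$-distinguished with $\delta_\fp(e)=-1$.

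You also need the exact list of $\fp$-distinguished configurations, and your description of it is inaccurate. For $(\mathfrak{sl}_n,\mathfrak{so}_n)$ a $\fp$-distinguished element is a single Jordan block. Distinct odd rows describe distinguished nilpotents of the Lie algebra $\mathfrak{so}_n$, which is a different notion. In CI only even chains occur and in BDI only odd chains, and repeated sizes are allowed provided they carry a single sign. The paper obtains this list from the criterion that $e$ is $\fp$-distinguished iff $\fp^{e,d,f}=0$. You state only the easy implication; the converse needs nondegeneracy of the form on the odd triple centralizer to produce a nonzero odd semisimple element.

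Distinguishedness also does not serve to remove negative contributions from trivial summands: by your own formula, a $\lambda=0$ summand in $\fp$ contributes $+1$.

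For $E_7/A_7$, reading the distinguished orbits from Djokovi\'c's tables and computing $m^\fp_j=\dim\fp(j)-\dim\fh(j+2)$ would be a valid alternative finite check. The paper instead avoids the orbit classification. It shows the dominant labels lie in $\{0,1,2\}$, runs through all $3^7$ label vectors subject to necessary conditions, and bounds $\delta_\fp(e)$ below by $\sum_{i\le q}(\nu_i+2)-70$ with $q=\dim\fp^e$. This gives $\delta_\fp\geq 10$.
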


\begin{proof}
The all-rank classical calculations and the finite root calculation for $E_7/A_7$ are given in Appendix~\ref{app:nice}.
\end{proof}

The positivity assertion is due to Sekiguchi
\cite[Theorem~6.3]{Sek85}. Appendix~\ref{app:nice} provides an
explicit verification of the numerical input used here, including the
AIII formulas \eqref{eq:AIII-contribution}--\eqref{eq:AIII-table}
and the all-rank CI and BDI calculations.

In Aizenbud's notation \cite[Definitions 5.1.33 and 5.1.38]{Aiz},
\[
 \operatorname{def}(e)=\tr(\operatorname{ad}d|_{\fh^e})-\dim\fp
                     =-\delta_\fp(e).
\]
Thus the required positivity $\delta_\fp(e)>0$ is exactly the negative distinguished defect used in the Fourier-support argument. For the sign identity, observe that the maps $\operatorname{ad}e:\fh\to\fp$ and $\operatorname{ad}e:\fp\to\fh$ are transpose up to sign for the invariant form, so
\begin{equation}\label{eq:kernel-difference}
 \dim\fp^e-\dim\fh^e=\dim\fp-\dim\fh.
\end{equation}
Also $\tr(\operatorname{ad}d|_{\fg^e})=\dim\fg-\dim\fg^e$, by the irreducible $\mathfrak{sl}_2$ decomposition. Adding the two definitions and using \eqref{eq:kernel-difference} gives zero. Restriction from complex to real scalars doubles both quantities and preserves their signs. For a reductive pair the calculation is made on the semisimple odd summand, identified with $Q_L(\fp)$; the anti-invariant center has been removed by Lemma~\ref{lem:cover-center}.

\begin{corollary}\label{cor:nice-component}
Let $L$ be the connected effective fixed group of a nontrivial semisimple pair in Proposition~\ref{prop:nice-inequality}, and put $V=Q_L(\fp)$. Let $\beta$ be the restriction to $V$ of the Killing form of $\fg$. We use the real pairing $\operatorname{Re}\beta$ and the nontrivial additive character $\psi(z)=\exp(2\pi i\operatorname{Re}z)$ of $\C$, so that $\mathcal F_\beta$ has kernel $\psi(\beta(v,w))$ and self-dual Haar measure. Then
\begin{equation}\label{eq:nice-Fourier}
 \left.
 \begin{gathered}
 T\in\cS^*(V)^L,\\
 \supp T,\ \supp\mathcal F_\beta T\subseteq\Gamma_L(\fp)
 \end{gathered}
 \right\}\quad\Longrightarrow\quad T=0.
\end{equation}
Consequently, for a finite extension \(J\) of \(L\) preserving
\(\beta\) and a character \(\lambda\) of \(J\) trivial on \(L\),
\[
\mathcal S^*(R_L(\mathfrak p))^{J,\lambda}=0
\Longrightarrow
\mathcal S^*(V)^{J,\lambda}=0.
\]
In particular, for the group
\(J=\operatorname{Ad}(\mathcal A_{\widetilde\theta})|_V\) attached in
Section~\ref{sec:finite-components} to a \(\widetilde\theta\)-stable simple factor of the
canonical cover, the datum $(L,J/L,1,V)$ is componentwise regular.
\end{corollary}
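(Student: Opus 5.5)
The proof has three steps: first the Fourier-support vanishing \eqref{eq:nice-Fourier}, then the reduction from $V$ to $R_L(\fp)$ by descent, and finally the componentwise-regularity statement.

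\textbf{Step 1: Fourier-support vanishing.} We deduce \eqref{eq:nice-Fourier} from Proposition~\ref{prop:nice-inequality} through Aizenbud's distinguished-defect criterion \cite[Definitions 5.1.33 and 5.1.38]{Aiz}, using real homogeneity.

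Let $T\in\cS^*(V)^L$ have both $T$ and $\mathcal F_\beta T$ supported in the nilpotent cone $\Gamma_L(\fp)$. The first task is to show that $T$ is supported on the $\fp$-distinguished nilpotent orbits. Suppose $e$ is a nilpotent element that is not $\fp$-distinguished. Then $\fp^e$ contains a nonzero semisimple element, so $e$ lies in a proper Levi-type symmetric subpair. Induction on dimension along the stratification by nilpotent orbits, together with the standard Harish-Chandra descent used in \cite{AG}, should then remove $T$ near such orbits.

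Next we work at a distinguished orbit $\mathcal O=L\cdot e$ that is open in $\supp T$. We apply the real-homogeneity principle: a distribution supported on $\Gamma$ whose Fourier transform is also supported on $\Gamma$ is annihilated by a homogeneity condition of degree $-\tfrac12\dim_\R V$ under real scalings. The scaling is realized by the one-parameter group $\exp(t\,d)$ from the normal triple, composed with scalar dilation. Transversally to $\mathcal O$, the transverse jets of $T$ have homogeneity degrees that are controlled by the weights of $\operatorname{ad}d$ on $\fp^e$. Comparing these admissible degrees with the forced degree gives exactly $\operatorname{def}(e)=-\delta_\fp(e)$. Proposition~\ref{prop:nice-inequality} gives $\delta_\fp(e)>0$, which makes the degrees incompatible, so $T$ vanishes near $\mathcal O$. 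Descending induction over the finitely many nilpotent orbits then gives $T=0$. The toral case has $Q(\fp)=0$ and is trivial.

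\textbf{Step 2: the implication from $R_L(\fp)$ to $V$.} Take $T\in\cS^*(V)^{J,\lambda}$ and assume $\cS^*(R_L(\fp))^{J,\lambda}=0$. Then $T$ restricted to $R_L(\fp)$ vanishes, so $\supp T\subseteq\Gamma_L(\fp)$.

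Because $J$ preserves $\beta$, the Fourier transform $\mathcal F_\beta$ commutes with the $J$-action up to the same character. It also preserves $\Gamma$ and $R$. Hence $\mathcal F_\beta T$ lies in the same character space, and the same argument puts its support in $\Gamma_L(\fp)$. Since $\lambda$ is trivial on $L$, $T$ is $L$-invariant, so \eqref{eq:nice-Fourier} applies and gives $T=0$.

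\textbf{Step 3: componentwise regularity.} For the specific group $J=\Ad(\mathcal A_{\widetilde\theta})|_V$, we recall the definition of componentwise regularity from Section~\ref{sec:assembly} and \eqref{eq:componentwise}. For each character $\lambda$ of $J/L$, that definition demands exactly the implication from $R_L(\fp)$ to $V$ proved in Step 2, with $J$ preserving the Killing form. The Killing form is $\Ad$-invariant, so $J$ preserves $\beta$.

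\textbf{Main obstacle.} The main difficulty is Step 1: it requires matching the paper's normalization of real homogeneity, with $|z|_\C=z\bar z$, to Aizenbud's defect inequality. It also requires justifying the reduction to distinguished orbits. For that reduction I would first try to cite Aizenbud's theorem \cite{Aiz} directly in the form ``negative defect on all distinguished orbits implies the Fourier-support implication'', and verify only the sign identity $\operatorname{def}=-\delta_\fp$ already established before the statement.
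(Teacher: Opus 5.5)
Your Steps~2 and~3 are exactly the paper's argument. The architecture of Step~1 is also the paper's: Aizenbud's orbitwise defect method with the \emph{real} homogeneity theorem and descending induction over orbits. The step that fails as written is the homogeneity input. It is not true that an individual $L$-invariant $T$ with $\supp T,\supp\mathcal F_\beta T\subseteq\Gamma_L(\fp)$ is homogeneous of degree $-\tfrac12\dim_\R V$. The invariant EVII measure $u$ of Proposition~\ref{prop:C-global-measure} has support and Fourier support in the nilpotent cone, yet $\rho(r)u=r^{84}u$, while the adapted characters there are $r^{54}$ and $r^{55}$.

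What \cite[Theorem~5.1.7]{AG}, applied over $\R$, actually gives is an existence statement. Suppose a nonzero space of distributions supported on $Z(B_\R)=\{v:B_\R(v,v)=0\}$ is stable under $\mathcal F_{B_\R}$ and under multiplication by $B_\R(v,v)$. Then it contains a nonzero adapted element, with $\rho(r)T=r^{n+\epsilon}T$ for $r>0$ (where $\rho(r)v=r^{-1}v$ and $n=\dim_\C V$), and $\epsilon\in\{0,1\}$ is not under your control.

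So you must argue by contradiction on the whole space $\mathcal L$ of $L$-invariant distributions with both supports in $\Gamma_L(\fp)$:
\begin{enumerate}
\item note that $\Gamma_L(\fp)\subseteq Z(B_\R)$ and that $\mathcal F_\beta=\mathcal F_{B_\R}$ for the chosen $\psi$;
\item check that $\mathcal L$ is stable under $\mathcal F_{B_\R}$ and under multiplication by $B_\R(v,v)$, which becomes a constant-coefficient operator on the Fourier side and so enlarges no support;
\item extract an adapted element and run the orbit analysis on \emph{that} element.
\end{enumerate}
Your slice comparison then becomes concrete. On a maximal distinguished orbit, the slice $e+V^f$ has Kazhdan weights summing to $2n-T_e$, where $T_e=\tr(\operatorname{ad}d|_{\fh^e})$. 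The adapted character forces the slice germ to transform by $t^{2T_e-2n-2\epsilon}$ under $\kappa_t=t^2\Ad(D_t^{-1})$. A delta jet of holomorphic and antiholomorphic weighted orders $A,A'\ge0$ transforms by $t^{A+A'}$. Hence $A+A'=2\operatorname{def}(e)-2\epsilon<0$, a contradiction for both values of $\epsilon$.

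Your reduction to distinguished orbits is also stated too strongly. You cannot ``remove $T$ near'' every non-distinguished orbit. The zero orbit is never $\fp$-distinguished in a nontrivial pair, since $\fp$ contains a nonzero Cartan subspace, yet it lies in every orbit closure. What is needed, and what the paper takes from \cite[Corollary~4.3.5]{Aiz}, is weaker. Distinguished points are dense in the support of any distribution whose support and Fourier support lie in the nilpotent cone, so a non-distinguished orbit cannot be a maximal support stratum. This is applied to the adapted element above, not to your original $T$. Your Levi-descent sketch does not establish it, so, as you anticipated, the citation has to carry that step.
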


\begin{proof}
Proposition~\ref{prop:nice-inequality} gives negative distinguished defect. We apply the orbitwise method of \cite[Theorem~5.2.4]{Aiz} to the connected effective group $L$, using the real homogeneity theorem.\footnote{We use the orbitwise argument for the connected effective group $L$, with an adapted distribution supplied by the real homogeneity theorem.} The $H(\C)$-orbit filtration refines to finitely many $L$-orbits. Passing to the effective image preserves the tangent and normal actions and the stabilizer modulus along the normal-triple cocharacter. Regard $V$ as a real vector space and put $B_{\mathbb R}=\operatorname{Re}\beta$.
Let $\mathcal L$ be the space of $L$-invariant distributions $T$ such that
\[
\operatorname{supp}T,\,
\operatorname{supp}\mathcal F_\beta T
\subseteq \Gamma_L(\mathfrak p).
\]
Write
\[
Z(B_{\mathbb R})
=
\{v\in V_{\mathbb R}:B_{\mathbb R}(v,v)=0\}.
\]
Since $\Gamma_L(\mathfrak p)\subseteq Z(B_{\mathbb R})$, we have
$\mathcal L\subset
\mathcal S^*_{V_{\mathbb R}}\bigl(Z(B_{\mathbb R})\bigr)$. Here
\(\mathcal S^*_{V_{\mathbb R}}(Z(B_{\mathbb R}))\)
denotes the space of Schwartz distributions on \(V_{\mathbb R}\)
supported on \(Z(B_{\mathbb R})\).
By our choice of additive character,
$\mathcal F_\beta=\mathcal F_{B_{\mathbb R}}$.
The space $\mathcal L$ is stable under $\mathcal F_{B_{\mathbb R}}$
and under multiplication by $B_{\mathbb R}(v,v)$: after Fourier
transform, the latter becomes a constant-coefficient differential
operator and hence does not enlarge support.
Thus, if $\mathcal L\neq0$, the Archimedean homogeneity theorem
\cite[Theorem~5.1.7]{AG}, applied over $\mathbb R$, gives a nonzero
distribution in $\mathcal L$ adapted to $B_{\mathbb R}$ in the sense of
\cite[Definition~5.1.6]{AG}.
We apply the nilpotent-orbit filtration to this adapted distribution. By \cite[Corollary~4.3.5]{Aiz}, distinguished
points are dense in the support of a distribution
whose support and Fourier support lie in the
nilpotent cone. Hence a non-distinguished orbit
cannot be a maximal support stratum. On a distinguished maximal orbit
let $(e,d,f)$ be a normal triple, put $n=\dim_{\C}V$, and write
$T_e=\tr(\operatorname{ad}d|_{\fh^e})$. For $r>0$, the real
adapted character is
\[
 \rho(r)T=r^{n+\epsilon}T,\qquad \epsilon\in\{0,1\},
 \qquad \rho(r)v=r^{-1}v.
\]
The real Weil factor is one on positive scalars. The slice
$e+V^f$ has positive Kazhdan weights $w_j$ of sum $2n-T_e$.
This follows from \eqref{eq:kernel-difference}, or from the determinant
of $\kappa_t=t^2\Ad(D_t^{-1})$ on the orbit tangent and its normal,
where $D_t$ is the normal-triple cocharacter with differential $d$.
Pulling back the invariant scalar generalized function by action
coordinates, its slice distribution under $\kappa_t$ must have
character
\[
 t^{2T_e-2n-2\epsilon}.
\]
Indeed, the real determinant is $t^{4n}$ and the scalar
character contributes $t^{-2(n+\epsilon)}$, and conversion to slice
density contributes $t^{-2(2n-T_e)}$. A delta jet on the slice,
with holomorphic and antiholomorphic weighted orders $A,A'$, has
character $t^{A+A'}$. Hence a nonzero germ would require
\begin{equation}\label{eq:nice-real-degree}
 A+A'=2(T_e-n-\epsilon)
       =2\operatorname{def}(e)-2\epsilon<0,
 \qquad A,A'\geq0,
\end{equation}
which is impossible. This is the homogeneity--Frobenius argument of
\cite[proof of Proposition 7.3.7]{AG}, applied over $\R$.
No maximal orbit remains, proving \eqref{eq:nice-Fourier}.

In the applications, the central-multiplier group satisfies $J\subseteq\Ad(G)|_\fp$ and therefore preserves the Killing-form restriction $\beta$. Now let $T\in\cS^*(V)^{J,\lambda}$. Since $\lambda|_L=1$, the distribution $T$ is $L$-invariant. If the open $(J,\lambda)$-space vanishes, $T$ is supported in $\Gamma_L(\fp)$. The Fourier transform commutes with $J$ by \cite[Lemma 5.1.8]{AG}, so $\mathcal F_\beta T$ belongs to the same character space and has the same support restriction. Equation~\eqref{eq:nice-Fourier} gives $T=0$.
This is the characterwise argument of \cite[Lemma~3.7]{LXYv1}.
\end{proof}

\input{global_consequences}

\appendix
\input{nice_factors}
\input{extension_obstructions}

\input{homogeneity_obstruction}

\bibliographystyle{amsalpha}
\bibliography{references}
\end{document}

%% file: finite_components.tex
\subsection{Central covers and effective data}\label{sec:finite-components}
The effective-component framework in this subsection is due to Li--Xiao--Yu
\cite[\S3]{LXYv1}. We describe the effective group actions and their behavior under
finite central quotients. Throughout this subsection the ambient
group is connected.

\begin{lemma}\label{lem:cover-center}
Multiplication is a finite central isogeny
\[
 m:\widetilde G=Z(G)^\circ\times G_{\rm der}^{\rm sc}\longrightarrow G.
\]
There is a unique lift on the derived factor of $\theta$, giving an
involution $\widetilde\theta$ for which $m$ is equivariant. Moreover,
if $\fg=\mathfrak z\oplus\fg_{\rm der}$ and
$\fp=\mathfrak z^-\oplus\fp_{\rm der}$, then
\[
 \fp^H=\fp^{H^\circ}=\mathfrak z^-,\qquad
 Q_H(\fp)=Q_{H^\circ}(\fp)\simeq\fp_{\rm der}.
\]
\end{lemma}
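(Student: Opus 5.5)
The plan has three parts: build the isogeny from the structure theory of connected reductive groups, lift the involution to each factor separately, and then compute the fixed vectors on $\fp$ from the decomposition $\fg=\mathfrak z\oplus\fg_{\rm der}$.

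\emph{The isogeny.} Write $G=Z(G)^\circ\cdot G_{\rm der}$. The intersection $Z(G)^\circ\cap G_{\rm der}$ is finite, since it is a diagonalizable subgroup of the centre of a semisimple group. The simply connected cover $G_{\rm der}^{\rm sc}\to G_{\rm der}$ has finite central kernel. So $m(z,g)=z\,\pi(g)$ is a surjective homomorphism, because $Z(G)^\circ$ is central. Its kernel is finite, consisting of pairs $(z,g)$ with $z^{-1}=\pi(g)$ in the finite intersection. The kernel is also central, since it lies in $Z(G)^\circ\times\pi^{-1}(Z(G_{\rm der}))$, which is central in $\widetilde G$.

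\emph{Lifting the involution.} $\theta$ preserves both $Z(G)^\circ$ and $G_{\rm der}$, as these are characteristic subgroups. On $Z(G)^\circ$ take $\theta$ itself. On $G_{\rm der}^{\rm sc}$, the differential $d\theta|_{\fg_{\rm der}}$ integrates uniquely to an automorphism $\theta^{\rm sc}$, because the group is simply connected. This automorphism covers $\theta|_{G_{\rm der}}$, since both are determined by the same Lie algebra map. Its square integrates $d\theta^2=1$, so it is the identity. Uniqueness follows because a lift is determined by its differential, and the differential is forced by equivariance of $m$, as $dm$ is an isomorphism. Equivariance of $m$ then holds on both factors.

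\emph{Fixed vectors.} $B$ can be chosen so that $\mathfrak z\perp\fg_{\rm der}$. Then $\fp=\mathfrak z^-\oplus\fp_{\rm der}$, and $H$ acts trivially on $\mathfrak z^-$ since $\Ad$ is trivial on the centre. It remains to show $\fp_{\rm der}^{H^\circ}=0$; this is the main obstacle. The infinitesimal version suffices: if $X\in\fp_{\rm der}$ satisfies $[\fh,X]=0$, then $[\fh_{\rm der},X]=0$. I expect a contradiction on each simple $\theta$-orbit of factors $\fg_i$ with $\fp_i\neq0$, where the relation $[\fp_i,\fp_i]\subset\fh_i$ is used as follows.
\begin{itemize}
\item Write $\fg_i=\fh_i\oplus\fp_i$ and consider $\fh_i\oplus\C X\oplus\ldots$. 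More precisely, the centralizer $\fg_i^X$ contains $\fh_i$.
\item Since $X\neq0$ is not central, $\fg_i^X$ is a proper subalgebra containing $\fh_i$.
\item The $\fh_i$-module $\fp_i$ is either irreducible, or a sum $V\oplus V^*$ in the Hermitian case. Its trivial isotypic part $\fp_i^{\fh_i}$ is orthogonal to $[\fh_i,\fp_i]$, because $B([h,Y],X)=-B(Y,[h,X])=0$.
\item The subspace $[\fh_i,\fp_i]+[\fp_i,\fp_i]\subset\fg_i$ together with $\fh_i$ forms an ideal. As $\fg_i$ is simple, $[\fh_i,\fp_i]=\fp_i$, which forces $\fp_i^{\fh_i}=0$.
\end{itemize}
For a two-factor orbit with the swap involution, $\fp$ is the antidiagonal copy of $\fg_1$ under the diagonal action, and it has no invariants. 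Hence $\fp^{H^\circ}\subseteq\mathfrak z^-$, and equality holds with $\fp^H$ squeezed between the two. The quotients then identify: $Q_H(\fp)=\fp/\mathfrak z^-\simeq\fp_{\rm der}$.
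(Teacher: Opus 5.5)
Your proof is correct. The isogeny and the lift of $\theta$ match the paper's argument: integrating $d\theta|_{\fg_{\rm der}}$ on the simply connected group is the same as the paper's appeal to functoriality of the simply connected cover.

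For the fixed-vector step you take a genuinely different, essentially dual route. The paper works on all of $\fg_{\rm der}$ at once. If $x\in\fp_{\rm der}$ commutes with $\fh\cap\fg_{\rm der}$, then for $y\in\fp_{\rm der}$ and $a\in\fh\cap\fg_{\rm der}$ invariance gives $B([x,y],a)=-B([x,a],y)=0$. Since $[x,y]$ lies in $\fh\cap\fg_{\rm der}$, where $B$ is nondegenerate, this forces $[x,y]=0$. So $x$ centralizes $\fg_{\rm der}$ and is zero.

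You instead show that $\fh_i\oplus[\fh_i,\fp_i]$ is an ideal, so simplicity gives $[\fh_i,\fp_i]=\fp_i$. Invariant vectors are orthogonal to $[\fh_i,\fp_i]$, so they vanish by nondegeneracy of $B|_{\fp_i}$. This yields the slightly stronger fact $[\fh,\fp]=\fp$. The cost is a decomposition into $\theta$-orbits and a separate check for swapped pairs.

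Two facts are used implicitly and should be stated. First, $\fh_i\neq0$, since otherwise $\fg_i=\fp_i$ would be abelian. Second, $B|_{\fp_i}$ is nondegenerate because $\fh_i\perp\fp_i$.

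Your first two bullets and the irreducible-or-$V\oplus V^*$ classification are never used and can be dropped. You could also avoid the orbit split entirely. The $B$-orthogonal complement of the ideal $\fh_{\rm der}\oplus[\fh_{\rm der},\fp_{\rm der}]$ is a $\theta$-stable ideal meeting $\fh_{\rm der}$ trivially. It therefore lies in $\fp_{\rm der}$, so it is abelian, hence zero.
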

\begin{proof}
These are \cite[Lemmas 3.1--3.2]{LXYv1}; the proofs are as follows. Both characteristic subgroups $Z(G)^\circ$ and $G_{\rm der}$ are
$\theta$-stable. Functoriality of the simply connected semisimple cover
lifts the second restriction uniquely. Its square lifts the identity
and is therefore the identity. The product lift is equivariant, and
$G=Z(G)^\circ G_{\rm der}$ gives surjectivity and a finite central kernel.
Equivariance makes that kernel stable.

Every inner automorphism fixes $\mathfrak z^-$. Conversely, let
$x\in\fp_{\rm der}$ commute with $\fh\cap\fg_{\rm der}$. The Killing
form is nondegenerate on both eigenspaces of the semisimple algebra. For
$y\in\fp_{\rm der}$ and $a\in\fh\cap\fg_{\rm der}$, invariance gives
$B([x,y],a)=-B([x,a],y)=0$. Hence $[x,y]=0$. Thus $x$ is central in
$\fg_{\rm der}$ and is zero. This proves both fixed-vector equalities,
including the equality for the full subgroup.
\end{proof}

Work on $V=\fp_{\rm der}$. Put
\[
 \widetilde H=\widetilde G^{\widetilde\theta},\qquad
 \mathcal A_{\widetilde\theta}
   =\{g\in\widetilde G:g^{-1}\widetilde\theta(g)\in Z(\widetilde G)\},
 \qquad J=\Ad(\mathcal A_{\widetilde\theta})|_V.
\]

By \cite[Theorem~8.1]{Steinberg}, $\left(G_{\mathrm{der}}^{\mathrm{sc}}\right)^{\widetilde\theta}$
is connected, while the central-torus factor acts trivially on \(V\).
Hence
\[
L:=\operatorname{Ad}(\widetilde H)|_V
=
\operatorname{Ad}\!\left(
\left(G_{\mathrm{der}}^{\mathrm{sc}}\right)^{\widetilde\theta}
\right)\big|_V
\]
is connected. We next identify the identity component of \(J\). If \(X\in\operatorname{Lie}(\mathcal A_{\widetilde\theta})\), then differentiating
the defining condition $g^{-1}\widetilde\theta(g)\in Z(\widetilde G)$
at the identity gives $\widetilde\theta(X)-X\in\mathfrak z(\widetilde{\mathfrak g})$.
Writing \(X=X_{\mathfrak z}+X_{\mathrm{der}}\), we obtain $\widetilde\theta(X_{\mathrm{der}})=X_{\mathrm{der}}$. Since the central summand acts trivially on \(V\), $\operatorname{Lie}J
=
\operatorname{ad}
(\mathfrak g_{\mathrm{der}}^{\widetilde\theta})|_V
=
\operatorname{Lie}L$. Thus \(L\) is connected and has the same Lie algebra as \(J\), hence $L=J^\circ$. This $L$ is also the image of $H^\circ$ on $V$, since the differential
of $m$ identifies the fixed Lie algebras. The group $J$ is the effective
image of the fixed subgroup of the involution on the adjoint group;
it is reductive with finitely many components. The multiplier
$c(g)=g^{-1}\widetilde\theta(g)$ is a homomorphism on
$\mathcal A_{\widetilde\theta}$, because its values are central.
Its kernel is $\widetilde H$, so commutators have image in $L$.
Thus $A:=J/L$ is finite abelian. We retain $B_0:=\operatorname{im}(\widetilde H\to A)=1$ in the notation to allow
a specified initial intermediate subgroup. We refer to $(L,J,A,V),\ A=J/L$ as the effective datum. Together with a specified subgroup
\(B_0\le A\), the tuple $(L,A,B_0,V)$
will be called the associated finite-component datum. This construction and the
following quotient bookkeeping follow \cite[Lemmas 3.3--3.5]{LXYv1}.

Write $G_{\mathrm{der}}^{\mathrm{sc}}=\prod_{\alpha}G_\alpha$ and group the simple factors into orbits \(\Omega\) of
\(\widetilde\theta\).  For such an orbit put $G_\Omega=\prod_{\alpha\in\Omega}G_\alpha, V_\Omega:=\mathfrak p_\Omega
=\operatorname{Lie}(G_\Omega)^{-\widetilde\theta}$,
and let $L_\Omega
 =\operatorname{Ad}(G_\Omega^{\widetilde\theta})|_{V_\Omega},
J_\Omega
 =\operatorname{Ad}(A_{\widetilde\theta,\Omega})|_{V_\Omega}$,
where $A_{\widetilde\theta,\Omega}
=
\{g\in G_\Omega:
g^{-1}\widetilde\theta(g)\in Z(G_\Omega)\}$.
Set $A_\Omega=J_\Omega/L_\Omega, B_{0,\Omega}=1$. The tuple
\[
\mathcal D_\Omega
=
(L_\Omega,A_\Omega,B_{0,\Omega},V_\Omega)
\]
is the finite-component datum associated with the
\(\widetilde\theta\)-orbit \(\Omega\).

\begin{lemma}\label{lem:central-interface}
Let $F\subset Z(\widetilde G)$ be finite and $\widetilde\theta$-stable,
and let $G=\widetilde G/F$. The effective image of its fixed
subgroup is $J_B$, the inverse image in $J$ of the subgroup
\[
 B=\operatorname{im}\bigl(\widehat H_F\to A\bigr),
 \]
where $\widehat H_F:=\{g\in\widetilde G:c(g)\in F\}$. In particular $B_0\leq B\leq A$. Grouping the simple factors of
$G_{\rm der}^{\rm sc}$ into $\widetilde\theta$-orbits gives
\[
 V=\bigoplus_iV_i,\qquad L=\prod_iL_i,
 \qquad J=\prod_iJ_i,\qquad A=\prod_iA_i.
\]
The subgroup $B$ need not be a product. Every multiplier element on
$G$ lifts to one on $\widetilde G$.
\end{lemma}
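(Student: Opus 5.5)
The plan is to reduce everything to the identification of fixed subgroups and multipliers under the isogeny $\widetilde G\to G=\widetilde G/F$, and then to read off the product decomposition orbit by orbit.

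\emph{Step 1: the fixed subgroup and multipliers of $G$.} Let $q:\widetilde G\to G$ be the quotient map, and let $\theta$ be the involution induced on $G$, which exists because $F$ is $\widetilde\theta$-stable. For $g\in\widetilde G$ we have $\theta(q(g))=q(g)$ exactly when $g^{-1}\widetilde\theta(g)\in F$, that is, when $c(g)\in F$. Hence
\[
 G^\theta=q(\widehat H_F).
\]
Since $F$ is central, it acts trivially on $V$, so the effective image of $G^\theta$ on $V$ equals $\Ad(\widehat H_F)|_V$. Because $F\subset Z(\widetilde G)$, we have $\widehat H_F\subset\mathcal A_{\widetilde\theta}$, and $\widehat H_F\supset\widetilde H$. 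The image of $\widetilde H$ on $V$ is $L=J^\circ$, so $\Ad(\widehat H_F)|_V$ is the full inverse image in $J$ of its image $B$ in $A=J/L$; this is the group $J_B$. The inequalities $B_0=1\le B\le A$ are then immediate.

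For the lifting claim, take $\bar g\in G$ with $\bar g^{-1}\theta(\bar g)\in Z(G)$ and choose any lift $g\in\widetilde G$. The element $g^{-1}\widetilde\theta(g)$ maps into $Z(G)$, and the preimage of $Z(G)$ under a central isogeny of connected groups is $Z(\widetilde G)$, because an element whose commutators with all of $\widetilde G$ lie in the finite group $F$ is central by connectedness. So $g\in\mathcal A_{\widetilde\theta}$, and every multiplier on $G$ lifts.

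\emph{Step 2: the product decomposition.} Write $\widetilde G=Z(\widetilde G)^\circ\times\prod_\Omega G_\Omega$. Since $\widetilde\theta$ preserves each $G_\Omega$ and the central torus acts trivially on $V$, we get $V=\bigoplus_\Omega V_\Omega$. Steinberg connectedness holds factorwise, so $L=\prod_\Omega L_\Omega$. For $J$, the key point is that $Z(\widetilde G)$ is the product of the centers, so $c(g)$ is central exactly when each component $c(g_\Omega)$ is central in $G_\Omega$; the torus component is unconstrained. Therefore $\mathcal A_{\widetilde\theta}=Z^\circ\times\prod_\Omega A_{\widetilde\theta,\Omega}$. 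Applying $\Ad|_V$ gives $J=\prod_\Omega J_\Omega$, and passing to the quotient gives $A=\prod_\Omega A_\Omega$.

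\emph{Step 3: why $B$ need not be a product.} The group $F$ can be a diagonal central subgroup that is not a product of subgroups of the factor centers. In that case the condition $c(g)\in F$ couples the components across factors, and $B$ is a fiber-product-type subgroup. A single example with two factors suffices here, and one can take the one already noted in \cite{LXYv1}.

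The main obstacle is Step 1's identification of $\Ad(\widehat H_F)|_V$ with the \emph{full} preimage $J_B$. This requires that $\widehat H_F$ contain all of $L$, which holds because $\widetilde H\subset\widehat H_F$, together with the fact that $L$ is connected by Steinberg's theorem. A related point is the centrality of preimages under the isogeny, which uses connectedness of $\widetilde G$.
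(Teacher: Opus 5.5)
Your proposal is correct and follows essentially the same route as the paper: identify $\widehat H_F$ as the full preimage of $G^\theta$, observe that its image on $V$ contains $L=\Ad(\widetilde H)|_V$ and is therefore the full preimage $J_B$, read off the product decomposition from the factorwise multiplier equation, and lift multipliers by the connectedness argument showing that $[y,x]\in F$ forces $y$ to be central. One small point: the full-preimage step needs only the inclusion $\widetilde H\subseteq\widehat H_F$, and the product decomposition of $L$ follows directly from that of $\widetilde H$. Steinberg connectedness is not used in either step; it enters only through the earlier identification $L=J^\circ$.
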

\begin{proof}
The equation $\theta(m(g))=m(g)$ is exactly $c(g)\in F$.
Thus $\widehat H_F$ is the full preimage of $G^\theta$ and contains
$\widetilde H$. Its effective image contains $L$ and has image $B$ in
$A$, so it equals $J_B$: if $jL\in B$, choose $h\in\widehat H_F$
with that class; then $j\Ad(h)^{-1}|_V\in L$ already belongs to
the same image.

The multiplier equation on the cover is factorwise on the indicated
factor orbits. Their effective actions occur on separate summands;
the central torus acts trivially. This proves the product assertions.
A diagonal subgroup $F$ can impose relations between the multiplier
classes, which are retained in the possibly diagonal subgroup $B$.

Finally, if $\bar g\in G$ is a central multiplier and $g$ is any lift,
put $y=g^{-1}\widetilde\theta(g)$. For $x\in\widetilde G$,
$[y,x]\in F$. This is a morphism from a connected group to a finite
set, with value $1$ at the identity; it is identically $1$.
Thus $y$ is central, as required.
\end{proof}

\begin{lemma}\label{lem:finite-nullcone}
For $B\leq A$, one has
\[
 V^{J_B}=V^L=0,\quad Q_{J_B}(V)=Q_L(V)=V,\quad
 \Gamma_{J_B}(V)=\Gamma_L(V),\quad R_{J_B}(V)=R_L(V).
\]
\end{lemma}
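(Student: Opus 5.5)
The plan is to reduce every assertion to the two facts $L\subseteq J_B$ and $J_B/L$ finite. We then use two standard consequences of finiteness: fixed vectors of a finite-index subgroup can be averaged, and invariant rings of finite extensions have the same categorical-quotient geometry.

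\emph{Fixed vectors.} Since $L\subseteq J_B\subseteq J$, we have $V^{J_B}\subseteq V^L$. By Lemma~\ref{lem:cover-center}, applied on $V=\fp_{\rm der}$, the $L$-fixed vectors vanish: $L$ is the image of $H^\circ$, and $\fp^{H^\circ}=\mathfrak z^-$ meets $\fp_{\rm der}$ trivially. The same lemma's argument applies directly, since an element of $\fp_{\rm der}$ annihilated by $\fh\cap\fg_{\rm der}$ is central in $\fg_{\rm der}$ and hence zero. Therefore $V^{J_B}=V^L=0$, and consequently $Q_{J_B}(V)=Q_L(V)=V$.

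\emph{Nullcones.} First, $\C[V]^{J_B}\subseteq\C[V]^L$, so every $L$-invariant vanishing at $v$ forces every $J_B$-invariant without constant term to vanish at $v$. This gives $\Gamma_L(V)\subseteq\Gamma_{J_B}(V)$.

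For the converse, let $f\in\C[V]^L$ be homogeneous of positive degree. Because $L$ is normal in $J_B$ with finite quotient $J_B/L$, the polynomial
\[
\prod_{g\in J_B/L}\bigl(t-g\cdot f\bigr)
\]
has coefficients in $\C[V]^{J_B}$, and these coefficients are homogeneous of positive degree. If $v\in\Gamma_{J_B}(V)$, all these coefficients vanish at $v$. Hence $f(v)^{|J_B/L|}=0$, so $f(v)=0$, which gives $\Gamma_{J_B}(V)\subseteq\Gamma_L(V)$.

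An alternative route to the same conclusion is the Hilbert–Mumford description $\Gamma=\{v:0\in\overline{J_Bv}\}$. Since $J_Bv$ is a finite union of translates $gLv$, the closure of $J_Bv$ contains $0$ exactly when some $\overline{gLv}=g\overline{Lv}$ does, and that happens exactly when $\overline{Lv}$ contains $0$.

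\emph{Regular sets.} $R_{J_B}(V)=R_L(V)$ then follows immediately, since both are complements of equal sets in the same space $V$.

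The only point needing care is normality of $L$ in $J_B$, so that $J_B$ permutes $L$-invariants. This holds because $L=J^\circ$ is characteristic in $J$, as established just before Lemma~\ref{lem:central-interface}. I expect no real obstacle beyond this observation.
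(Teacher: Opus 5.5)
Your proof is correct. The fixed-vector step is the paper's: both arguments get $V^L=0$ from Lemma~\ref{lem:cover-center} and then use $L\subseteq J_B$.

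For the null cones your mechanism differs. The paper argues geometrically. It writes $\C[V]^{J_B}=(\C[V]^L)^B$ and notes that the map $V\sslash L\to(V\sslash L)\sslash B$ has fibres equal to finite $B$-orbits, because invariants separate finite orbits by interpolation followed by averaging. Since the image of $0$ is $B$-fixed, its fibre is a single point, and pulling back gives $\Gamma_{J_B}(V)=\Gamma_L(V)$. You instead use the norm polynomial $\prod_{gL\in J_B/L}(t-g\cdot f)$. It shows directly that every positive-degree $L$-invariant is nilpotent, hence zero, at each point of $\Gamma_{J_B}(V)$. This is integrality of $\C[V]^L$ over $\C[V]^{J_B}$, specialized to the origin.

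What each route buys:
\begin{itemize}
\item Your route is shorter. It needs only that $L$ is normal of finite index in $J_B$. Normality of the identity component in $J$ already suffices, so ``characteristic'' is more than you need.
\item The paper's route proves the stronger fact that every fibre of the finite quotient map is a $B$-orbit. That extra strength is not used here.
\end{itemize}

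Your Hilbert--Mumford alternative is also valid. For the reductive group $J_B$, the null cone is the set of $v$ with $0\in\overline{J_Bv}$, and $\overline{J_Bv}$ is the finite union of the sets $g\overline{Lv}$.
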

\begin{proof}
The fixed-vector assertion follows from Lemma~\ref{lem:cover-center}.
Also $\C[V]^{J_B}=(\C[V]^L)^B$. The quotient
$V\sslash L\to(V\sslash L)\sslash B$ has fibers equal to finite
$B$-orbits: invariant functions separate distinct finite orbits, by
interpolation on their union followed by finite averaging. The image
of $0$ in $V\sslash L$ is fixed by $B$, so the fiber over its image
is that single point. Pulling back proves the null-cone assertion and
then the assertion about complements. This is the argument of
\cite[Lemma 3.5]{LXYv1}.
\end{proof}

\subsection{Componentwise regularity and intermediate components}\label{sec:assembly}
\begin{definition}[\cite{LXYv1}, Definition 3.6]\label{def:componentwise}
The datum $(L,A,B_0,V)$ is \emph{componentwise regular} if, for every
nontrivial character $\lambda:A\to\C^\times$ trivial on $B_0$,
\begin{equation}\label{eq:componentwise}
 \cS^*(R_L(V))^{J,\lambda}=0
 \quad\Longrightarrow\quad \cS^*(V)^{J,\lambda}=0.
\end{equation}
Here $\lambda$ is inflated to $J$.
\end{definition}

\begin{proposition}[Products]\label{prop:component-product}
A product of componentwise-regular data is componentwise regular.
\end{proposition}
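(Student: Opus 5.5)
We work with two data $\mathcal D_1=(L_1,A_1,B_{0,1},V_1)$ and $\mathcal D_2=(L_2,A_2,B_{0,2},V_2)$. Their product is $(L_1\times L_2,\,A_1\times A_2,\,B_{0,1}\times B_{0,2},\,V_1\oplus V_2)$, with $J=J_1\times J_2$. The general case then follows by induction on the number of factors.

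A nontrivial character $\lambda$ of $A$ trivial on $B_0$ has the form $\lambda=\lambda_1\boxtimes\lambda_2$, with each $\lambda_i$ trivial on $B_{0,i}$ and at least one $\lambda_i$ nontrivial. By symmetry we assume $\lambda_1\neq1$. We assume $\cS^*(R_L(V))^{J,\lambda}=0$ and must show $\cS^*(V)^{J,\lambda}=0$.

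\textbf{Step 1 (geometry of the product).} The invariant ring is $\C[V]^L=\C[V_1]^{L_1}\otimes\C[V_2]^{L_2}$, so the null cone is a product, $\Gamma_L(V)=\Gamma_1\times\Gamma_2$. Consequently the regular set decomposes as
\[
R_L(V)=(R_1\times V_2)\cup(V_1\times R_2).
\]
Restricting the hypothesis to the open subset $R_1\times V_2$ gives $\cS^*(R_1\times V_2)^{J,\lambda}=0$.

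\textbf{Step 2 (fibre over $V_2$).} We use the following standard principle for Schwartz distributions on products of Nash manifolds. Suppose that for every $T\in\cS^*(R_1\times V_2)^{J,\lambda}=0$ the character space vanishes after pairing against test functions in the second variable. Then the corresponding statement holds in the first factor. Concretely, for $T\in\cS^*(V)^{J,\lambda}$ and $\phi\in\cS(V_2)^{J_2\text{-fin}}$ with $J_2$-character $\lambda_2^{-1}$, the partial pairing
\[
T_\phi:f\mapsto T(f\otimes\phi)
\]
lies in $\cS^*(V_1)^{J_1,\lambda_1}$. Its restriction to $R_1$ is zero by Step 1. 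Componentwise regularity of $\mathcal D_1$ for the nontrivial character $\lambda_1$ then gives $T_\phi=0$.

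Averaging with the character projector over a compact form of $J_2$ shows that such projected tensors $f\otimes\Pi\phi$ suffice to test $T$. Indeed, $T(f\otimes\phi)=T(f\otimes\Pi_{\lambda_2}\phi)$ by equivariance, after also averaging in $J_1$. Since $\cS(V_1)\otimes\cS(V_2)$ is dense in $\cS(V)$, we conclude $T=0$.

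\textbf{Main obstacle.} The delicate point is that componentwise regularity is an implication requiring the \emph{full} open-set vanishing on $R_1$, whereas Step 1 only supplies the partial pairings $T_\phi$. One must check that each $T_\phi$ is genuinely a Schwartz distribution on $V_1$ with the correct $(J_1,\lambda_1)$-equivariance. The projection in $\phi$ handles the equivariance. Continuity follows from continuity of $\cS(V_1)\to\cS(V)$, $f\mapsto f\otimes\phi$.

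Vanishing of $T_\phi$ on $R_1$ holds because $(f\otimes\phi)$ with $f\in\cS(R_1)$ extends by zero to a Schwartz function supported in $R_1\times V_2$. On that set $T$ vanishes: apply the $\lambda$-projector to $T$ restricted there and use the hypothesis.

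If $\lambda_1$ is trivial but $\lambda_2$ is not, the roles of the two factors are interchanged.
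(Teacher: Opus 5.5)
There is a genuine gap in Step 2. From $T_\phi|_{R_1}=0$ you conclude $T_\phi=0$ ``by componentwise regularity of $\mathcal D_1$''. But componentwise regularity is an implication between \emph{whole} spaces. Its hypothesis is $\cS^*(R_1)^{J_1,\lambda_1}=0$, and it says nothing about an individual $(J_1,\lambda_1)$-equivariant distribution that vanishes on $R_1$, that is, one supported on $\Gamma_1$. If $\cS^*(R_1)^{J_1,\lambda_1}\neq0$, the implication holds vacuously and gives no information. Ruling out null-cone-supported $T_\phi$ would therefore be a strictly stronger property than the one assumed.

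Your ``main obstacle'' paragraph names this issue but then only checks continuity and equivariance of $T_\phi$. Equivariance holds for every $\phi$, since $J_1$ acts only on the first factor, so no projection in $\phi$ is needed.

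Step 1 does not supply the missing vanishing either. What the hypothesis gives directly is that the global $T$ restricts to zero on $R_1\times V_2$, and that is all you actually use. The stronger claim $\cS^*(R_1\times V_2)^{J,\lambda}=0$ would require extending equivariant distributions from $R_1\times V_2$ to $R_L(V)$. Even granting it, passing from $R_1\times V_2$ to $R_1$ alone needs a nonzero partner in $\cS^*(V_2)^{J_2,\lambda_2}$; the ``standard principle'' you invoke is false without one.

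The missing ingredient is what the paper supplies by arguing contrapositively.
\begin{enumerate}
\item If $0\neq T\in\cS^*(V)^{J,\lambda}$, pick $f_1\otimes f_2$ with $T(f_1\otimes f_2)\neq0$. The two slices are then nonzero elements of $\cS^*(V_1)^{J_1,\lambda_1}$ and $\cS^*(V_2)^{J_2,\lambda_2}$.
\item Componentwise regularity of $\mathcal D_1$, used contrapositively, yields $0\neq S_1\in\cS^*(R_1)^{J_1,\lambda_1}$.
\item Localize: multiply $S_1$ by the pullback of an $A_1$-invariant cutoff on the quotient that vanishes near the origin. The estimate $\|p(x)\|\leq C(1+\|x\|)^{d-1}\operatorname{dist}(x,\Gamma_1)$ makes this multiplication continuous $\cS(V_1)\to\cS(R_1)$. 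Hence $S_1$ extends to an equivariant tempered distribution on $V_1$ with closed support disjoint from $\Gamma_1$.
\item Tensor this extension with the nonzero second slice. (The paper uses $S_i$ for every nontrivial $\lambda_i$ and $\delta_0$ for trivial ones.) The result is a nonzero $(J,\lambda)$-equivariant distribution whose support is closed in $V$ and contained in $R_L(V)$, hence a nonzero element of $\cS^*(R_L(V))^{J,\lambda}$.
\end{enumerate}
Without this extension-and-tensor construction, the hypothesis on $R_L(V)$ never reaches the hypothesis of componentwise regularity for the factor.
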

\begin{proof}
This is essentially \cite[Proposition 3.9]{LXYv1}. We give the slicing
argument. For the product, $V=\bigoplus_iV_i$ and
$\Gamma_L(V)=\prod_i\Gamma_{L_i}(V_i)$. Write
$\lambda=\boxtimes_i\lambda_i$. If
$0\ne T\in\cS^*(V)^{J,\lambda}$, density of decomposable Schwartz
tests gives $f_i\in\cS(V_i)$ with
$T(f_1\otimes\cdots\otimes f_m)\ne0$. For each $i$ the continuous
slice
\[
 T_i(\varphi)=T(f_1\otimes\cdots\otimes f_{i-1}\otimes\varphi
                    \otimes f_{i+1}\otimes\cdots\otimes f_m)
\]
is nonzero and belongs to $\cS^*(V_i)^{J_i,\lambda_i}$.
For $\lambda_i\ne1$, the contrapositive of componentwise regularity
supplies a nonzero distribution $S_i$ on $R_{L_i}(V_i)$ in that character.

We may arrange that $S_i$ extends to a tempered distribution on $V_i$
with closed support disjoint from its null cone. Indeed, choose a compact
test on which it is nonzero, embed the affine quotient equivariantly
in a finite-dimensional $A_i$-module using invariant generators
vanishing at $0$, and choose an $A_i$-invariant smooth compact cutoff
equal to one on the image of that test and zero near the quotient
origin. Multiply $S_i$ by the pullback of this cutoff along the quotient map.
Let $p=(p_1,\ldots,p_k)$ be the chosen quotient coordinates on $V_i$,
put $\Gamma_i=\Gamma_{L_i}(V_i)$, and set $d=\max_j\deg p_j$.
On the support of the cutoff pullback, $\|p(x)\|\geq\varepsilon>0$.
Since $0\in\Gamma_i$, a nearest point $z\in\Gamma_i$ satisfies
$\|z\|\leq2\|x\|$. The polynomial mean-value estimate gives
\[
 \|p(x)\|\leq C(1+\|x\|)^{d-1}
                       \operatorname{dist}(x,\Gamma_i).
\]
Thus reciprocal distance to $\Gamma_i$ has polynomial growth on the
cutoff support. All derivatives of the pullback also have polynomial
growth, and the pullback vanishes in a neighborhood of $\Gamma_i$.
The Nash Schwartz estimates therefore give a continuous multiplication
map $\cS(V_i)\to\cS(R_{L_i}(V_i))$, whose transpose defines the
required extension. This localization is
\cite[Lemma 3.8]{LXYv1}; the Schwartz-space properties used here are
recorded in \cite{AGSchwartz}.

For $\lambda_i=1$ use $S_i=\delta_0$ instead. The external tensor
product of these distributions is nonzero and has character
$\lambda$. Since at least one $\lambda_i$ is nontrivial, its support
is closed in $V$ and contained in
$R_L(V)=V\setminus\prod_i\Gamma_{L_i}(V_i)$. Its restriction there
is nonzero. This proves the contrapositive of \eqref{eq:componentwise}.
\end{proof}

\begin{proposition}[Intermediate components]\label{prop:intermediate}
Suppose $(L,A,B_0,V)$ is componentwise regular, $B_0\leq B\leq A$,
and $a\in A$ satisfies $a^2\in B$. Choose a lift $\widetilde a\in J$ of $a$; on the
$J_B$-invariant distribution spaces below, invariance under
$\widetilde a$ is independent of this choice. Then
\[
 \cS^*(R_L(V))^{J_B}\subset\cS^*(R_L(V))^a
 \quad\Longrightarrow\quad
 \cS^*(V)^{J_B}\subset\cS^*(V)^a.
\]
\end{proposition}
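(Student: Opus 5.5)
The plan is to decompose the $J_B$-invariant distribution space under the finite abelian group $A/B$ (or rather under the subgroup $\langle B,a\rangle/B$) and use componentwise regularity character by character. First, independence of the lift: two lifts of $a$ differ by an element of $L\subseteq J_B$, so on $J_B$-invariant spaces the action of $\widetilde a$ depends only on $a$. Since $a^2\in B$, the operator $\widetilde a$ acts as an involution on both $\cS^*(V)^{J_B}$ and $\cS^*(R_L(V))^{J_B}$. It commutes with the restriction map $\cS^*(V)\to\cS^*(R_L(V))$, which is $J$-equivariant because $R_L(V)$ is $J$-stable by Lemma~\ref{lem:finite-nullcone}. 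I would therefore split each space into $\pm1$ eigenspaces for $\widetilde a$.

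Let $T\in\cS^*(V)^{J_B}$ and put $T_-=(T-\widetilde a_*T)/2$. Then $T_-$ is $J_B$-invariant with $\widetilde a_*T_-=-T_-$. Set $B'=\langle B,a\rangle\le A$. If $a\in B$ there is nothing to prove, so assume $a\notin B$. Then $B'/B$ has order two, and $T_-$ lies in $\cS^*(V)^{J_{B'},\lambda'}$, where $\lambda'$ is the character of $B'$ that is trivial on $B$ and takes the value $-1$ on $a$. Because $A$ is finite abelian, $\lambda'$ extends to characters $\lambda$ of $A$; there are $[A:B']$ such extensions. Averaging over the finite group $A/B'$ (lifting representatives to $J$) decomposes
\[
 \cS^*(V)^{J_{B'},\lambda'}=\bigoplus_{\lambda|_{B'}=\lambda'}\cS^*(V)^{J,\lambda},
\]
and the same holds on $R_L(V)$. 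Every $\lambda$ occurring here is nontrivial, since $\lambda(a)=-1$, and is trivial on $B\supseteq B_0$.

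Now the hypothesis says that the restriction of $T_-$ to $R_L(V)$ vanishes, because it is a $J_B$-invariant distribution on $R_L(V)$ and hence $a$-invariant, while also being $a$-anti-invariant. Hence each $\lambda$-component of the restriction vanishes. The argument then requires the full space $\cS^*(R_L(V))^{J,\lambda}$ to vanish, not merely the image of $T$. This is exactly what the hypothesis gives: any element of $\cS^*(R_L(V))^{J,\lambda}$ is $J_B$-invariant because $\lambda|_B=1$, so by hypothesis it is $a$-invariant, but $\lambda(a)=-1$, so it is zero. Componentwise regularity \eqref{eq:componentwise} then yields $\cS^*(V)^{J,\lambda}=0$ for every such $\lambda$. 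Therefore $T_-=0$, and $T=\widetilde a_*T$.

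The main point to check carefully is the character decomposition. This requires that $A/B'$ act on $\cS^*(V)^{J_{B'},\lambda'}$ through a twisted action that is genuinely a representation of a finite abelian group. I would twist the action by a fixed extension $\lambda_0$ so that the elements of $J_{B'}$ act trivially, and then apply the standard isotypic projectors $\frac{1}{|A/B'|}\sum\overline{\mu(x)}\,x$. Continuity of these projectors and their compatibility with restriction to the open set $R_L(V)$ are routine.
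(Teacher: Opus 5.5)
Your proof is correct and follows the paper's argument. You pass to the sign character of $\langle B,a\rangle/B$, decompose its character space into $A$-character spaces extending it, use the hypothesis to see that each such space vanishes on $R_L(V)$, and apply componentwise regularity. The one step you leave implicit is routine: $A$ is abelian, so $J_B$ and $J_{B'}$ are normal in $J$, and hence $\widetilde a$ preserves $J_B$-invariants.
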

\begin{proof}
This is essentially \cite[Proposition 3.10]{LXYv1}. If $a\in B$ the
assertion is immediate. Otherwise let $C=\langle B,a\rangle$.
The quotient $C/B$ has order two. Its sign character $\epsilon$
is trivial on $B$ and takes value $-1$ at $a$. The premise is
equivalent to vanishing of this character space on $R_L(V)$.
Every character of a subgroup of a finite abelian group extends to
the whole group. Adjoin one generator at a time:
if its least positive power lying in the old subgroup is $u^k$,
choose a $k$th root of the already prescribed value on $u^k$.
This extends the character consistently.
Finite character projectors therefore decompose the $\epsilon$-space
as the direct sum of the $A$-character spaces extending $\epsilon$.
Each extension is nontrivial and trivial on $B_0$, so
\eqref{eq:componentwise} annihilates each corresponding space on $V$.
This is precisely the asserted $a$-invariance.
\end{proof}

An element $g$ is admissible in the regularity definition if
$\Ad(g)$ commutes with $\theta$, normalizes the $H$-action with square
in that action, and preserves every closed $H$-orbit in $\fp$;
see \cite[Definitions 6.0.1 and 7.4.1--7.4.2]{AG}. Regularity is
\begin{equation}\label{eq:regularity}
 \cS^*(R_H(\fp))^H\subset\cS^*(R_H(\fp))^{\Ad(g)}
 \quad\Longrightarrow\quad
 \cS^*(Q_H(\fp))^H\subset\cS^*(Q_H(\fp))^{\Ad(g)}.
\end{equation}
By Lemmas~\ref{lem:central-interface}--\ref{lem:finite-nullcone}, an
admissible element on a central quotient gives exactly the situation
of Proposition~\ref{prop:intermediate}, with $a^2\in B$ and the full
fixed image $J_B$. Thus that proposition proves ordinary regularity
for every such quotient, including diagonal central couplings.
A pleasant factor has $J=L$ and hence no nontrivial character to test.

%% file: evii_direct.tex
\section{Direct componentwise regularity for EVII}\label{sec:EVII}
Let $G$ be simply connected of type $E_7$, with the EVII involution.
Write $\fp=J_{27}\oplus J_{27}^*$ and let $B:\mathfrak p\times\mathfrak p\to\mathbf C$ be the \(L\)-invariant symmetric nondegenerate bilinear form $B((x,\phi),(y,\psi))=\phi(y)+\psi(x)$. 
Here $J_{27}$ is the underlying vector space of the complex Albert
algebra, equipped with its $27$-dimensional minuscule $E_6$ action.
The connected effective group is
\[
L=\operatorname{im}(E_6^{\mathrm{sc}}\times\mathbb C^\times)
   \simeq
   (E_6^{\mathrm{sc}}\times\mathbb C^\times)/\mu_3,
\]
where $(h,t)(x,\phi)
=
\bigl(t\rho(h)x,\,
t^{-1}\rho^\vee(h)\phi\bigr)$. More precisely, the kernel is $\bigl\{(z,\rho(z)^{-1}):z\in Z(E_6^{\mathrm{sc}})\bigr\}
\simeq\mu_3$. Let \(C\) be the Chevalley involution
\[
C(H_i)=-H_i,\qquad C(E_\alpha)=E_{-\alpha},
\]
and put $\tau=C|_{\mathfrak p}$. Since \(C\) is inner in type \(E_7\) and commutes with \(\theta\),
the map \(\tau\) preserves \(B\), exchanges the two summands
\(J_{27}\) and \(J_{27}^*\), and represents the nontrivial effective
component.  Thus $J=L\rtimes\langle\tau\rangle, J/L\simeq\mu_2$. Let \(\chi\) denote the nontrivial character of \(J/L\).

\begin{theorem}\label{thm:EVII}
For this effective datum,
\[
 \cS^*(\fp)^{J,\chi}=0.
\]
Consequently, the associated EVII finite-component datum is componentwise regular.
\end{theorem}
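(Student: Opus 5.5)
The argument follows the outline given in the introduction for EVII. It proceeds by induction on support, using the orbit stratification of the nilpotent cone of $\fp$ under $L$.

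\emph{Reduction to the nilpotent cone.} Let $T\in\cS^*(\fp)^{J,\chi}$. First I show that $T$ and $\mathcal F_B T$ are supported on $\Gamma_L(\fp)$. At a semisimple point $s\neq0$, Harish-Chandra descent replaces $T$ near the closed orbit of $s$ by a distribution on the normal slice $\fp^{s}$. Its descendant pairs are proper symmetric pairs of smaller rank. After their own reduction to irreducible factors they are classical, nice, or pleasant, and their relevant component characters vanish by the earlier sections: Theorem~\ref{thm:ambient}, Corollaries~\ref{cor:DC-regular}, \ref{cor:Spin} and~\ref{cor:nice-component}, and Proposition~\ref{prop:intermediate}. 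Some care is needed where $\tau$ fixes the orbit of $s$ only up to $L$. The same argument applies to $\mathcal F_BT$, since the Fourier transform commutes with $J$ and preserves $\chi$.

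\emph{Elimination of orbits.} With both supports nilpotent, I proceed as follows.

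1. I invoke real homogeneity, as in the proof of Corollary~\ref{cor:nice-component}, to assume that $T$ is adapted to $\operatorname{Re}B$.

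2. I run through the $L$-orbits in the nilpotent cone of $\fp$, which correspond to the EVII nilpotent orbits in the Kostant--Sekiguchi table, from the top down. On a maximal orbit $\mathcal O=L\cdot e$ in $\supp T$, I restrict to the slice $e+\fp^f$. By the Frobenius step, $T$ is given there by a $\mathbb C$-linear combination of holomorphic and antiholomorphic delta jets at $e$. Its homogeneity character under $\kappa_t$ is forced to be $t^{2T_e-2n-2\epsilon}$, exactly as in~\eqref{eq:nice-real-degree}.

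3. For the orbits with negative defect this already gives a contradiction.

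For the remaining orbits I need more than the degree count.

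(a) \emph{The nodal orbit, labelled~20.} Its transverse slice has a node whose two branches are exchanged by $\tau$ composed with an element of $L$. Extending homogeneous distributions across the node produces residues. In the $\chi$-isotypic part these residues add rather than cancel, which yields a logarithmic scaling anomaly. No delta jet at the node can absorb this anomaly, so orbit~20 is excluded. The same computation then excludes the two larger orbits whose closures contain it.

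(b) \emph{The $\mathfrak{sl}_2$ step.} Multiplication by the holomorphic quadratic form $\beta$ and the Fourier transform of that operation generate a holomorphic $\mathfrak{sl}_2$; their complex conjugates generate an antiholomorphic one. The two actions commute and are locally finite on the space of distributions with $T$ and $\mathcal F T$ both nilpotently supported. Comparing the highest weights forced by this action with the Kazhdan degrees of the candidate jets eliminates all but three orbits.

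(c) \emph{The last three orbits.} For one of them, I show that a nonzero transverse quadratic symbol lies among the annihilating operators; this kills every leading jet. For the other two, labelled~17 and~18, I write the radial differential operators explicitly in slice coordinates. I then check that the induced linear maps on the finite-dimensional spaces of candidate jets, including mixed holomorphic--antiholomorphic jets of bounded weighted degree, are injective on the $\chi$-part.

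Once every orbit is eliminated, $T=0$. Componentwise regularity follows immediately, because $J/L$ has exactly one nontrivial character and its full-space vanishing is the consequent of~\eqref{eq:componentwise}.

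\emph{Main obstacle.} I expect the hardest step to be the exact transverse calculation for orbits~17 and~18. It requires explicit normal triples in $J_{27}\oplus J_{27}^*$ and slice coordinates adapted to them, together with a complete enumeration of the mixed jets allowed by the degree bounds. I would compute first with the Jordan-algebra model, using the rank stratification of $J_{27}$ and the cubic norm, and check the results against the $E_7$ root data.
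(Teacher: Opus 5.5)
Your outline is essentially the paper's proof, and the steps line up as follows:
\begin{itemize}
\item descent at nonzero semisimple points (using that $\tau$ fixes the Cartan subspace pointwise) forces $T$ and $\mathcal F_BT$ into $\mathcal N$;
\item the sign-odd residue anomaly at the orbit-20 node removes orbits 20--22;
\item the two commuting locally finite $\mathfrak{sl}_2$'s leave only orbits 15, 17, 18;
\item the nonzero transverse quadratic symbol of $\Delta$ contradicts $\Delta^NT=0$ at orbit 15;
\item explicit radial operators dispose of 17 and 18.
\end{itemize}
The paper does the descent more directly, from the centralizer table. There the only non-pleasant pieces are the orthogonal blocks $\operatorname{Mat}_{1,2}$ and $\operatorname{Mat}_{10,2}$, and Theorem~\ref{thm:ambient}(iii) gives full vanishing outright. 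Your appeal to regularity of descendants needs an extra induction, since regularity is only an implication.

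Two corrections are needed. First, the real-homogeneity step is redundant: the $\mathfrak{sl}_2\oplus\mathfrak{sl}_2$ highest-weight count already forces $T(e)\ge54$, and it is also what produces $m\in\{1,3\}$ and the $h_q$-weight-$+1$ test with $f_q^3$. Second, $\tau$ interchanges orbits 17 and 18, so the sign character imposes no local condition there. What must be checked, as the paper does with $M_{17}$ and $M_{18}$, is injectivity of $\Delta^3$ on the full connected-stabilizer candidate space, tensored with arbitrary antiholomorphic jets, not on a ``$\chi$-part''.
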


\begin{remark}
The componentwise-regularity conclusion was stated in
\cite[Corollary~7.12]{LXYv1} using speciality. The revised argument
\cite[Proposition~8.1]{LXYv2} proves the vanishing of the nontrivial character
component under the nilpotent-support conditions.
Theorem~\ref{thm:EVII} establishes the stronger
full-space vanishing without these support restrictions. Appendix~\ref{app:homogeneity} constructs an invariant
tempered distribution with nilpotent support and nilpotent Fourier
support, proving that the EVII isotropy representation is not special.
That distribution lies in the trivial character space, consistently
with the theorem above.
Recall that an $L$-representation $V$ is
\emph{special} in the sense of
\cite[Definition~6.0.6]{AG} if there is no
nonzero $L$-invariant tempered distribution
on $Q_L(V)$ supported on its null cone whose
partial Fourier transforms, for every invariant
decomposition and invariant nondegenerate
symmetric form allowed in that definition,
are also supported on the null cone.

\end{remark}

Let $\mathcal N$ denote the nilpotent cone in $\mathfrak p$.
We write $\mathcal F_B$ for the Fourier transform
\[
(\mathcal F_B f)(w)
=
\int_{\mathfrak p}
f(v)\exp\!\bigl(2\pi i\,\operatorname{Re}B(v,w)\bigr)\,dv,
\]
where $dv$ is the self-dual Haar measure for the real pairing
$\operatorname{Re}B$. We use the same notation for its extension to
tempered distributions by duality.

\subsection{Semisimple slices}
Use the Bourbaki root diagram with edges
$13,34,45,56,67,24$ and for $\alpha=\sum_{i=1}^7 a_i\alpha_i$, set $\theta(E_\alpha)=(-1)^{a_7}E_\alpha$.
The three strongly orthogonal roots
\[
\beta_1=(2,2,3,4,3,2,1),\qquad
\beta_2=(0,1,1,2,2,2,1),\qquad
\beta_3=(0,0,0,0,0,0,1)
\]
give a Cartan subspace
\[
\mathfrak a
=
\operatorname{span}\{E_{\beta_i}+E_{-\beta_i}:1\le i\le3\}.
\]
Then \(\tau\) fixes \(\mathfrak a\)
pointwise. The restricted root system is $C_3$: the roots $\pm2\varepsilon_i$ have multiplicity
$1$, and $\pm\varepsilon_i\pm\varepsilon_j$ have multiplicity $8$.
The zero-restriction roots form $D_4$. These statements follow by
restricting the $126$ roots to the three coroots. For \(s\in\mathfrak a\), write
\[
\mathfrak g^s=\ker(\operatorname{ad}s),\qquad
\mathfrak h^s=\mathfrak g^s\cap\mathfrak h,\qquad
\mathfrak p^s=\mathfrak g^s\cap\mathfrak p.
\]
Thus \(\mathfrak g^s=\mathfrak h^s\oplus\mathfrak p^s\).

Every nonzero semisimple point is conjugate under \(L\) to one of the
following patterns, up to signed permutation. Distinct letters are
nonzero with distinct squares. The table records the derived algebra
\([\mathfrak g^s,\mathfrak g^s]\), the even centralizer
\(\mathfrak h^s\), and the odd centralizer \(\mathfrak p^s\).
Pair notation in the last column denotes the corresponding isotropy
module; the displayed trivial summands are the anti-invariant centers.
\begin{center}\small
\begin{tabular}{llll}
\toprule
Pattern&$[\fg^s,\fg^s]$&$\fh^s$&$\fp^s$\\\midrule
$(c,d,e)$&$D_4$&$D_4$&$\C^3$\\
$(c,d,0)$&$D_4+A_1$&$D_4+\C$&$\C^2\oplus(A_1/\C)$\\
$(c,c,d)$&$D_5$&$B_4$&$\C^2\oplus(D_5/B_4)$\\
$(c,c,0)$&$D_5+A_1$&$B_4+\C$&$\C\oplus(D_5/B_4)\oplus(A_1/\C)$\\
$(c,0,0)$&$D_6$&$D_5+\C$&$\C\oplus(D_6/(D_5+\C))$\\
$(c,c,c)$&$E_6$&$F_4$&$\C\oplus(E_6/F_4)$\\\bottomrule
\end{tabular}
\end{center}
The dimensions $(\dim\fg^s,\dim\fh^s,\dim\fp^s)$ are respectively
\[
 (31,28,3),\ (33,29,4),\ (47,36,11),\
 (49,37,12),\ (67,46,21),\ (79,52,27).
\]
To obtain the second column, retain the restricted roots
vanishing on the pattern and the zero-restriction $D_4$ roots.
Kernels of \(\operatorname{ad}s\) on the even and odd root spaces give
the two remaining dimensions.  Together with the semisimple root
systems in the second column, these fixed dimensions distinguish the
indicated involution types. A central element of $\fg^s$
centralizes the Cartan subspace and the zero-restriction $D_4$;
it therefore lies in that Cartan subspace. The center is odd and
has exactly the displayed dimension. For example, let \(s=(c,0,0)\) with \(c\neq0\).
The vanishing restricted roots are
\[
\pm2\varepsilon_2,\qquad
\pm2\varepsilon_3,\qquad
\pm\varepsilon_2\pm\varepsilon_3.
\]
Together with the zero-restriction \(D_4\)-roots, their ordinary-root
spaces form a root subsystem of type \(D_6\).  Hence
\[
[\mathfrak g^s,\mathfrak g^s]\simeq D_6.
\]
The restricted involution has fixed algebra \(D_5+\mathbb C\).
Since \(\mathfrak g^s\) has rank \(7\), its center is one-dimensional;
by the preceding observation this center lies in \(\mathfrak p^s\).
Therefore
\[
\mathfrak p^s
\simeq
\mathbb C\oplus\bigl(D_6/(D_5+\mathbb C)\bigr),
\]
as in the fifth row of the table. By Chevalley restriction for symmetric spaces,
\[
\mathbf C[\mathfrak p]^L
\simeq
\mathbf C[\mathfrak a]^{W(C_3)}
=
\mathbf C[P_2,P_4,P_6],
\]
where, on \(\mathfrak a\) with coordinates \((c,d,e)\), the basic
invariants may be chosen to restrict to
\[
c^2+d^2+e^2,\qquad
c^2d^2+c^2e^2+d^2e^2,\qquad
c^2d^2e^2,
\]
of degrees \(2,4,6\), respectively; see \cite{KR}.
Their common zero set is the nilpotent cone \(\mathcal N\).

We next determine the component action. The centralizer $G_s$
of a semisimple Lie algebra element in the connected group $G$ is
connected reductive. A representative of a multiplier fixing $s$
lies in $G_s$. Lemma~\ref{lem:central-interface} lifts it to a
central multiplier on the canonical cover of $G_s$. Its inner action
is factorwise on the simple factors above and trivial on the center.
For $E_6/F_4$, the outer involution inverts the center $\mu_3$.
If $\theta(g)=zg$, a central correction $c$ makes $cg$ fixed,
because $c\mapsto\theta(c)c^{-1}=c^{-2}$ is onto $\mu_3$.
Its action is therefore in the connected fixed group $F_4$.
For $D_5/B_4$, the blocks are $(9,1)$ and the same conclusion is
Lemma~\ref{lem:odd-spin}. The two remaining modules are
\[
A_1/\mathbb C:\operatorname{Mat}_{1,2}(\mathbb C),
\qquad
D_6/(D_5+\mathbb C):\operatorname{Mat}_{10,2}(\mathbb C).
\]
Their connected effective groups are the images of
\(SO_r\times SO_2\), for \(r=1,10\), and by
Lemma~\ref{lem:spin-components} a possible extra component has
determinants \((-1,-1)\).  The index-two decomposition \eqref{eq:index-two} reduces
its odd space to the two single-determinant spaces, which vanish by
Theorem~\ref{thm:ambient}\textup{(iii)}.

For each of the six semisimple patterns, the image of \(J_s\) on
\(\mathfrak p^s\) is the product of the connected effective images on
the nonorthogonal factors with, when present, the single orthogonal
block just described.  There is at most one such orthogonal block, and
\(\chi\) is trivial on all connected factors.  Hence, after testing
against decomposable functions in the connected factors, the remaining
distribution lies in the corresponding odd space of the orthogonal
block and is zero.  Thus, for every nonzero semisimple \(s\),
\[
\mathcal S^*(\mathfrak p^s)^{J_s,\chi}=0.
\]
Since $\tau$ fixes our Cartan subspace, it belongs to $J_s$ and
has character $-1$.

\begin{lemma}\label{lem:EVII-open}
One has $\mathcal D'(\fp\setminus\mathcal N)^{J,\chi}=0$ and
$\cS^*(\fp\setminus\mathcal N)^{J,\chi}=0$.
Here $\mathcal D'(X)$ is the continuous dual of $C_c^\infty(X)$.
\end{lemma}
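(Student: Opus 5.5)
The plan is to use Harish-Chandra semisimple descent (the Luna slice theorem) at each point of $\fp\setminus\mathcal N$, and to feed in the vanishing $\mathcal S^*(\fp^s)^{J_s,\chi}=0$ just established for every nonzero semisimple $s$. The distributional statement is local, so we first prove the $\mathcal D'$ assertion. The Schwartz assertion then follows, since $\cS^*(\fp\setminus\mathcal N)$ injects into $\mathcal D'(\fp\setminus\mathcal N)$: $C_c^\infty$ is dense in the Schwartz space of an open Nash subset.

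\emph{Closed orbits.} Every $v\in\fp\setminus\mathcal N$ has Jordan decomposition $v=s+n$ with $s\neq0$ semisimple, $n\in\fp^s$ nilpotent, and $Lv\supset$ closure containing the closed orbit $Ls$. The fiber of the quotient map $\pi:\fp\to\fp\sslash L\simeq\mathfrak a/W(C_3)$ through $v$ therefore contains a unique closed orbit $Ls$ with $s\in\mathfrak a\setminus\{0\}$. Since $\tau$ fixes $\mathfrak a$ pointwise, it preserves each fiber of $\pi$. Hence $J$ preserves every fiber, and the $J$-stable saturated open sets $\pi^{-1}(U)$ cover $\fp\setminus\mathcal N$.

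\emph{Slice step.} Fix $s\in\mathfrak a\setminus\{0\}$ and take the Luna slice at $s$. The stabilizer $J_s$ contains $\tau$, and there is a $J_s$-stable étale (indeed analytic) neighborhood $S\subset s+\fp^s$ such that
\[
 J\times_{J_s}S\longrightarrow\fp
\]
is an analytic isomorphism onto a saturated open neighborhood of $Ls$. This is excision plus Frobenius descent, as in the descent arguments of \cite{AG}. Under this isomorphism, $(J,\chi)$-equivariant distributions on the image correspond to $(J_s,\chi|_{J_s})$-equivariant distributions on $S$. Translating by $s$ and shrinking $S$ to a $J_s$-invariant neighborhood of $0$ in $\fp^s$, we apply the homogeneity-free version of the semisimple result: any $(J_s,\chi)$-distribution on a neighborhood of $0$ extends by invariant cutoff to a tempered one on $\fp^s$. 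Alternatively, we use that vanishing on $\fp^s$ is proved locally by the same slice-level argument (decomposable testing plus Theorem~\ref{thm:ambient}(iii), whose proof via Gaussian lifting is global). This is the point I expect to need care: the stated vanishing is for tempered distributions on all of $\fp^s$, while descent yields a distribution only on an invariant neighborhood of $0$. I would handle it by using the homogeneity of the linear actions in the orthogonal block. Concretely, we first reduce to the case where the neighborhood is $\pi_s^{-1}$ of a ball. We then use the contracting scaling $t\mapsto tX$, which commutes with $J_s$, to transport a nonzero germ to one on all of $\fp^s$; equivalently, dilation invariance of the character space lets us pass from germs to global tempered distributions. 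For the $\C$ and $D_4$ factors no sign is involved, and the decomposable-test argument applies verbatim to germs.

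\emph{Gluing.} Given $T\in\mathcal D'(\fp\setminus\mathcal N)^{J,\chi}$, its restriction to each saturated neighborhood vanishes by the slice step. A $J$-invariant partition of unity pulled back from $\fp\sslash L$ then gives $T=0$. The Schwartz statement follows as noted above.
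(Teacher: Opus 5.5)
Your overall architecture agrees with the paper's. You descend at nonzero semisimple closed orbits, use $\tau\in J_s$ for $s\in\mathfrak a$, glue over a $J$-stable cover of $\fp\setminus\mathcal N$, and deduce the Schwartz statement by density of $C_c^\infty$.

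The gap is the step you flagged. Descent produces an ordinary $(J_s,\chi)$-distribution on a saturated neighbourhood of $0$ in $\fp^s$. The available input, however, is $\cS^*(\fp^s)^{J_s,\chi}=0$, which concerns tempered distributions on all of $\fp^s$. None of your three bridges works as stated:
\begin{enumerate}
\item Multiplying by an invariant cutoff $\phi\circ\pi_s$ gives a distribution on $\fp^s$ whose support is closed but in general noncompact: the fibre over $\pi_s(0)$ is the whole nilpotent cone of $\fp^s$. So the result is only an ordinary distribution, and its temperedness is exactly what must be proved; the cutoff does not supply it.
\item The dilation idea cannot help. A germ is not homogeneous, so the pushforwards $(\delta_t)_*T$ live on the different sets $\delta_t(U)$ and do not assemble into one distribution on $\fp^s$, let alone a tempered one. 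Dilation-stability of the global character space says nothing about germs.
\item The Gaussian lifting of Theorem~\ref{thm:gaussian} is intrinsically global. It tests against $P_0\gamma$ and closes up by Hermite density in $\cS(P)$, so it cannot be localized to distributions defined only near $0$.
\end{enumerate}

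The missing ingredient is an upgrade from Schwartz to ordinary distributions on the normal space. The paper takes it from \cite[Theorem~4.0.2]{AG}: since $J_s$ is reductive and $\chi|_{J_s}$ is algebraic, $\cS^*(\fp^s)^{J_s,\chi}=0$ implies $\mathcal D'(\fp^s)^{J_s,\chi}=0$. With that in hand, your cutoff argument works at the level of ordinary distributions. The product $(\phi\circ\pi_s)T$ extends by zero to an element of $\mathcal D'(\fp^s)^{J_s,\chi}$, because its support is closed in $\fp^s$. Hence it vanishes, and so $T$ vanishes where $\phi\circ\pi_s=1$. The paper packages the remaining descent differently: it applies \cite[Theorem~3.1.1]{AG} to the $J$-stable smooth affine principal opens $\{P_i\neq0\}$, $i=2,4,6$. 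Their closed orbits are nonzero semisimple orbits with normal space $\fp^s$. This route also spares you from justifying the analytic isomorphism $J\times_{J_s}S\to\fp$ by hand.
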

\begin{proof}
For a closed orbit through a nonzero semisimple \(s\), the normal
representation is \(\mathfrak p^s\), with reductive stabilizer \(J_s\).
The character \(\chi|_{J_s}\) is algebraic.  Hence Theorem~4.0.2 of
\cite{AG}, applied to \(X=\mathfrak p^s\), upgrades the vanishing
$\mathcal S^*(\mathfrak p^s)^{J_s,\chi}=0$
proved above to $\mathcal D'(\mathfrak p^s)^{J_s,\chi}=0$.
The basic invariants $P_2,P_4,P_6$ are $\tau$-fixed,
because $\tau$ fixes a Cartan subspace pointwise. Their nonvanishing
principal affine opens cover $\fp\setminus\mathcal N$.
Every closed orbit in such an open is a nonzero semisimple orbit:
the closed orbit in an ambient orbit closure retains the same
nonzero invariant value. Apply \cite[Theorem 3.1.1]{AG} separately
to these smooth affine opens. This proves ordinary-distribution
vanishing on their union, and hence Schwartz vanishing there.
\end{proof}

\subsection{The orbit-20 node}

For a diffeomorphism \(g\), we use the push-forward convention $(g_*T)(\varphi)=T(\varphi\circ g)$.
\begin{lemma}\label{lem:EVII-node}
Let \(\C^\times\) act on \(\C^2\) by $t(u,v)=(tu,t^{-1}v)$, and let $\sigma(u,v)=(v,u)$. Let \(T\) be a distribution defined in a neighborhood of \(0\), with $\operatorname{supp}T\subset\{uv=0\}$ near \(0\).  If, as a germ at \(0\),
\[
t_*T=T\qquad (t\in\C^\times),
\qquad
\sigma_*T=-T,
\]
then \(T=0\) in a neighborhood of \(0\). More generally, the same conclusion holds on
\(\C^2\times\C^m\) when both actions are trivial on the
\(\C^m\)-factor and $\operatorname{supp}T\subset\{uv=0,\ w=0\}$ near the origin.
\end{lemma}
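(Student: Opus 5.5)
The plan is a structure-then-anomaly argument. First I reduce to the case $m=0$. Near the origin $T$ is supported on $\{w=0\}$ and has finite order, so $T=\sum_{\gamma}\partial_w^{\gamma_1}\partial_{\bar w}^{\gamma_2}\delta(w)\otimes T_\gamma$ is a finite sum. Here each $T_\gamma$ is a distribution germ on $\C^2$, obtained by pairing $T$ with test functions $\varphi(u,v)\,w^{\gamma_1}\bar w^{\gamma_2}\eta(w)/(\gamma_1!\gamma_2!)$, where $\eta$ is a cutoff equal to $1$ near $0$. Both actions are trivial on $\C^m$, so every $T_\gamma$ is $\C^\times$-invariant, $\sigma$-odd, and supported in $\{uv=0\}$. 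It therefore suffices to treat $\C^2$.

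\textbf{Step 1: structure on the branches.} On $\{v=0,\ u\neq0\}$ the branch is a smooth submanifold, so a finite-order distribution supported there is locally
\[
 T=\sum_{a+b\le N}\partial_v^a\partial_{\bar v}^b\delta(v)\otimes S_{ab}(u).
\]
These coefficients are unique, and $\C^\times$ acts freely and transitively on $u\neq0$. Invariance $t_*T=T$ separates by $(a,b)$, since $\partial_v^a\partial_{\bar v}^b[\varphi(tu,t^{-1}v)]=t^{-a}\bar t^{-b}(\partial_v^a\partial_{\bar v}^b\varphi)(tu,0)$. It forces $S_{ab}=c_{ab}\,u^{-a-1}\bar u^{-b-1}$ times Lebesgue measure, with $c_{ab}\in\C$; this reduces to an ODE for a homogeneous distribution on $\C^\times$. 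The same description holds on the other branch $\{u=0,\ v\ne 0\}$, with coefficients $c'_{ab}$ in front of $\partial_u^a\partial_{\bar u}^b\delta(u)\otimes v^{-a-1}\bar v^{-b-1}$. The relation $\sigma_*T=-T$ gives $c'_{ab}=-c_{ab}$.

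\textbf{Step 2: the scaling anomaly.} The density $u^{-a-1}\bar u^{-b-1}$ has critical real degree $-2-a-b$. By the Gelfand--Shilov theory of homogeneous distributions on $\C$, any extension $E_{ab}$ across $u=0$ satisfies
\[
 t_*E_{ab}-E_{ab}=\kappa_{ab}\log|t|\,\partial_u^{b}\partial_{\bar u}^{a}\delta(u),
 \qquad \kappa_{ab}\neq0 .
\]
The constant $\kappa_{ab}$ is a residue and can be computed by pairing with $\bar u^{b}u^{a}\eta(|u|)$. Choose extensions $\widetilde E_1=\sum c_{ab}\,\partial_v^a\partial_{\bar v}^b\delta(v)\otimes E_{ab}(u)$ and, symmetrically, $\widetilde E_2=\sigma_*\widetilde E_1\cdot(-1)$. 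Because $t$ acts on the second branch through $t^{-1}$, the anomaly of $\widetilde E_2$ carries $\log|t^{-1}|$ together with the factor $-c_{ab}$. The two anomalies therefore have the same sign, and
\[
 t_*(\widetilde E_1+\widetilde E_2)-(\widetilde E_1+\widetilde E_2)
 =2\log|t|\sum_{a,b}c_{ab}\kappa_{ab}\,
 \partial_u^{b}\partial_{\bar u}^{a}\partial_v^a\partial_{\bar v}^b\delta .
\]
Now set $R=T-\widetilde E_1-\widetilde E_2$. It is supported at the origin, so it is a finite sum of jets $\partial_u^i\partial_{\bar u}^j\partial_v^k\partial_{\bar v}^l\delta$. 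Each such jet is a $t_*$-eigenvector with character $t^{i-k}\bar t^{j-l}$ times a modulus factor. Consequently $t_*R-R$ is a combination of such jets with coefficients polynomial in $t,\bar t$, and it contains no $\log|t|$ term. Comparing both sides of $t_*R-R=-(t_*-1)(\widetilde E_1+\widetilde E_2)$ forces every $c_{ab}=0$.

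\textbf{Step 3: the node itself.} Once all $c_{ab}$ vanish, $T$ is supported at $0$. Invariance keeps only the jets with $i=k$ and $j=l$, and each of these is $\sigma$-fixed, so the odd part vanishes and $T=0$.

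The main obstacle is Step 2. It requires computing the anomaly precisely, confirming that $\kappa_{ab}\neq0$ for every $(a,b)$, and tracking the transformation conventions so that the two branch residues add rather than cancel. I would organize this by pairing both sides with a test function $u^{a}\bar u^{b}\,v^{b}\bar v^{a}$ times a cutoff. The residue then reduces to the one-variable integral $\int_{\C}\eta'(|u|)\,d(\cdot)$ on each branch.
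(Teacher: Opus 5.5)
Your proposal is correct and follows the paper's proof: you expand in $w$-delta derivatives, solve the homogeneity equations to get $u^{-a-1}\bar u^{-b-1}\,dA(u)$ on each punctured branch, and use $\sigma$-oddness to make the two branch anomalies add (the paper's $XP_{k\ell}=-2R_{k\ell}$, $X\operatorname{FP}(\sigma_*A^+_{k\ell})=2R_{k\ell}$, total $-4R$), so no point-supported correction can absorb them; you then note that torus-invariant point jets are $\sigma$-even, and the only difference is that the paper phrases the anomaly infinitesimally, via the real scaling generator $X$ and its zero-weight eigenspace, rather than comparing $\log|t|$ with the characters $t^{p}\bar t^{q}$. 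One slip to fix: the residue of $u^{-a-1}\bar u^{-b-1}$ is proportional to $\partial_u^{a}\partial_{\bar u}^{b}\delta(u)$, not $\partial_u^{b}\partial_{\bar u}^{a}\delta(u)$ (your own test function $u^a\bar u^b$ already says this), so the combined anomaly jet is the $\sigma$-symmetric, torus-invariant $\partial_u^a\partial_{\bar u}^b\partial_v^a\partial_{\bar v}^b\delta$, and your displayed anomaly formula holds for the finite-part extension (with the one-variable character $t^a\bar t^b$ included), not for every extension; your comparison with the point-supported remainder $R$ tolerates that.
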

\begin{proof}
The underlying two-axis mechanism is the complex analogue of
Prasad's two-axis lemma \cite[Lemma~4.6]{P90}; the present statement also allows arbitrary transverse orders and fixed auxiliary coordinates.

On a relatively compact neighborhood, a distribution has finite order.
Hence a distribution supported on the smooth branch \(v=0\), away from
the origin, has a finite transverse expansion
\[
T=\sum_{k,\ell}\mu_{k\ell}(u)\otimes
\partial_v^k\partial_{\bar v}^{\ell}\delta(v).
\]
Indeed, a finite-order distribution supported on \(v=0\) depends only
on a finite normal Taylor jet of the test function along that branch.
Thus, on the punctured \(u\)-axis,
\[
T=\sum_{k,\ell}
\mu_{k\ell}(u)\otimes
\partial_v^k\partial_{\bar v}^{\ell}\delta(v),
\]
with only finitely many nonzero terms.  The coefficients
\(\mu_{k\ell}\) are uniquely determined.

For the term $\mu_{k\ell}(u)\otimes
\partial_v^k\partial_{\bar v}^{\ell}\delta(v)$,
infinitesimal invariance under the complex torus gives, in the
distributional sense,
\[
u\partial_u\mu_{k\ell}=-(k+1)\mu_{k\ell},
\qquad
\bar u\partial_{\bar u}\mu_{k\ell}=-(\ell+1)\mu_{k\ell}.
\]
On the connected punctured disc these equations have, up to a scalar,
the solution
\[
\mu_{k\ell}(u)
=
u^{-k}\bar u^{-\ell}|u|^{-2}\,dA(u).
\]
Hence the restriction to the punctured \(u\)-axis is a finite linear
combination of
\[
A^+_{k\ell}
=
u^{-k}\bar u^{-\ell}|u|^{-2}dA(u)
\otimes
\partial_v^k\partial_{\bar v}^{\ell}\delta(v),
\qquad k,\ell\ge0.
\]
Oddness under \(\sigma\) determines the corresponding terms on the
punctured \(v\)-axis with opposite coefficients.
Regularize $A^+_{k\ell}$ by $|u|^{2z}$. Taylor subtraction gives a
simple pole at $z=0$, with residue
\begin{equation}\label{eq:EVII-residue}
 R_{k\ell}=\frac{\pi(-1)^{k+\ell}}{k!\ell!}
 \partial_u^k\partial_{\bar u}^{\ell}
 \partial_v^k\partial_{\bar v}^{\ell}\delta_{(0,0)}.
\end{equation}
Indeed, for \(\Re z\gg0\), let $A^+_{k\ell}(z)=|u|^{2z}A^+_{k\ell}$. If \(\varphi\) is a test function, put $F_{k\ell}(u) =
\partial_v^k\partial_{\bar v}^{\ell}\varphi(u,0)$.
Then
\[
\langle A^+_{k\ell}(z),\varphi\rangle
=
(-1)^{k+\ell}
\int_{\C}
u^{-k}\bar u^{-\ell}|u|^{2z-2}
F_{k\ell}(u)\,dA(u).
\]
Taylor-expand \(F_{k\ell}\) at \(0\).  For a term
\(u^p\bar u^q\), polar coordinates give the factor
\[
r^{2z+p+q-k-\ell-1}
e^{i((p-k)-(q-\ell))\theta}.
\]
The angular integral is nonzero only if $p-q=k-\ell$, while a pole at \(z=0\) requires $p+q=k+\ell$. Hence only \(p=k,\ q=\ell\) contributes to the residue.
Since
\[
\int_0^\varepsilon r^{2z-1}\,dr
=
\frac{\varepsilon^{2z}}{2z},
\]
the radial residue is \(1/2\), and the angular integral contributes
\(2\pi\).

Let \(X\) generate push-forward by $r(u,v)=(ru,r^{-1}v),\qquad r>0$. The two regularized branches transform by \(r^{-2z}\) and \(r^{2z}\),
respectively.  Write
\[
A^+_{k\ell}(z)
=
\frac{R_{k\ell}}{z}
+
P_{k\ell}
+
O(z),
\qquad
P_{k\ell}:=\operatorname{FP}A^+_{k\ell}.
\]
Since $r^{-2z}=1-2z\log r+O(z^2)$, comparison of constant Laurent coefficients gives $r_*P_{k\ell}-P_{k\ell}
=
-2(\log r)R_{k\ell}$. Differentiating at \(r=1\) yields
\[
XP_{k\ell}=-2R_{k\ell}.
\]
Likewise, for the exchanged branch,
\[
X\,\operatorname{FP}(\sigma_*A^+_{k\ell})
=
2R_{k\ell}.
\]
An odd branch combination with residue $R$ has anomaly $-4R$.
Every other extension differs by a finite delta jet \(Q\) at the origin.
A monomial point-supported jet $\partial_u^a\partial_v^b
\partial_{\bar u}^c\partial_{\bar v}^d\delta_0$ has \(X\)-weight $a+c-b-d$. Hence \(X\) acts diagonally on the space of point-supported jets, and
its image has zero projection to the zero-weight eigenspace.
On the other hand, by \eqref{eq:EVII-residue}, each \(R_{k\ell}\) has \(X\)-weight 0. Therefore the equation \(XQ=4R\) can hold only if \(R=0\). Since the residues are linearly independent,
\(R=0\) forces every punctured-branch coefficient to vanish.

It remains to treat a distribution supported at the origin.  A monomial
jet
\[
J_{a,b,c,d}
=
\partial_u^a\partial_v^b
\partial_{\bar u}^c\partial_{\bar v}^d\delta_0
\]
transforms under \(t\in\C^\times\) by the character $t^{\,a-b}\bar t^{\,c-d}$. Hence invariance under the full complex torus forces $a=b$ and $c=d$. For these indices one has
\[
\sigma_*J_{a,a,c,c}=J_{a,a,c,c},
\]
so every torus-invariant point-supported jet is \(\sigma\)-even.
Thus no nonzero \(\sigma\)-odd point-supported distribution remains.

If additional coordinates fixed by both actions are present and are
constrained to vanish on the support, expand in their holomorphic and
antiholomorphic delta derivatives.  Each coefficient is a distribution
in the \((u,v)\)-variables satisfying the same hypotheses, so the
preceding argument applies coefficientwise.  This proves the lemma.
\end{proof}

For the nilpotent orbits we use the connected complex isotropy-orbit
classification of \cite[Theorem 3 and Table XIII]{Dok88}.
We label an orbit by the Kostant--Rallis characteristic \(d\) of a
normal \(\mathfrak{sl}_2\)-triple, chosen \(W(E_6)\)-dominant.
Orbit numbers below refer to Table XIII; these are \(L\)-orbits on
\(\mathfrak p\).  Explicit triples can also be compared with
\cite[Table 6]{Dok99}.

We first single out orbit \(20\), a \(50\)-dimensional orbit whose
transverse slice contains the two \(51\)-dimensional branches \(21\)
and \(22\).  The complete orbit and weight table is given in Section~\ref{subsec:EVII-quadratic}.

\begin{lemma}\label{lem:EVII-slice20}
At orbit 20 there is a normal $\mathfrak{sl}_2$-triple $(e,d,f)$,
so that
\[
[d,e]=2e,\qquad [d,f]=-2f,\qquad [e,f]=d,
\]
with $e,f\in\mathfrak p$ and $d\in\mathfrak h$, and a representative
$\sigma_e\in J\setminus L$ fixing the triple for which the slice
$e+\mathfrak p^f$ has coordinates $(b,u,v,a)$. Let
\[
A:=Z\bigl(Z_L(e,d,f)\bigr)^\circ .
\]
This is a one-dimensional torus.  After identifying its effective
image on the slice with \(\mathbb C^\times\), write its parameter as
\(t\). Then
\[
 t(b,u,v,a)=(b,tu,t^{-1}v,a),\quad
 \sigma_e(b,u,v,a)=(b,v,u,a),\quad
 (e+\fp^f)\cap\mathcal N=\{a=b=uv=0\}.
\]
For the normal triple, let $\lambda_d:\mathbb C^\times\longrightarrow L$
be the associated cocharacter, normalized by $\operatorname{Ad}(\lambda_d(s))x=s^j x$ if $[d,x]=jx$. Put \(D_s=\lambda_d(s)\), and let
\[
\kappa_s(x)=s^2\operatorname{Ad}(D_s^{-1})x,
\qquad s\in\mathbb C^\times,
\]
be the Kazhdan action.  Its weights on the displayed slice coordinates
are \((8,6,6,4)\). The two punctured branches
\[
\{a=b=v=0,\ u\neq0\},
\qquad
\{a=b=u=0,\ v\neq0\},
\]
are the two nilpotent orbits \(21\) and \(22\).
\end{lemma}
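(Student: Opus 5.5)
We plan to prove Lemma~\ref{lem:EVII-slice20} by an explicit root-theoretic construction in the Bourbaki realization of $E_7$ fixed above, with $\theta(E_\alpha)=(-1)^{a_7}E_\alpha$. The first step is to take orbit~20 from \cite[Table XIII]{Dok88} together with its $W(E_6)$-dominant characteristic $d\in\fh$. We then write $e$ as a sum of root vectors $E_\alpha$ with $a_7=\pm1$ and $\langle\alpha,d\rangle=2$, using the explicit triples of \cite[Table 6]{Dok99}, and choose the coefficients so that $e$ is $C$-symmetric up to the Weyl element used below. We put $f=C(e)$ up to sign, so that $f\in\fp$ and $[e,f]=d$. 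This can be checked in a small number of root strings, and it also shows that $C$, composed with a suitable element $w\in N_L(\mathfrak t)$, fixes the triple.

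Next we compute $\fp^f$. Since $\dim\mathcal O_{20}=50$ and $\dim\fp=54$, we have $\dim\fp^f=\dim\fp-\dim(L\cdot e)=4$. We decompose $\fp$ under the $\mathfrak{sl}_2$-triple into irreducibles. The lowest-weight vectors, of $d$-weights $-\lambda_i$, span $\fp^f$, and the Kazhdan weights are $\lambda_i+2$. We expect the $d$-eigenvalue count on $\fp$ to give $\lambda=(6,4,4,2)$, hence Kazhdan weights $(8,6,6,4)$ on coordinates $(b,u,v,a)$. The reductive centralizer $Z_L(e,d,f)$ acts on $\fp^f$ and commutes with $\kappa$. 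The two weight-$6$ lines $u,v$ should be exchanged by $\sigma_e:=w\circ C$, while $a$ and $b$ are fixed. We identify $A$ as the connected center of $Z_L(e,d,f)$ by computing the Lie algebra $\fh^{e,d,f}$ from the root data and checking that its center is one-dimensional and acts with opposite weights on $u,v$ and trivially on $a,b$. Normalizing this torus gives the displayed $t$-action. Since $\sigma_e$ normalizes $A$ and swaps $u,v$, it inverts $t$. It lies in $J\setminus L$ because it is a Chevalley involution composed with an element of $L$.

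For the nilcone, we restrict $P_2,P_4,P_6$ to the slice. They are $A$- and $\sigma_e$-invariant and $\kappa$-homogeneous of degrees $4,8,12$. The $A$-invariants on the slice are generated by $a,b,uv$, with respective weights $4,8,12$. Therefore
\[
P_2|=c_1a,\qquad P_4|=c_2b+c_3a^2,\qquad P_6|=c_4uv+c_5ab+c_6a^3 .
\]
Nondegeneracy of the transverse slice to the nilcone, that is, smoothness of the Chevalley quotient map restricted to a transversal, forces $c_1,c_2,c_4\neq0$. This gives $(e+\fp^f)\cap\mathcal N=\{a=b=uv=0\}$. Finally, the points $e+u$ with $u\ne0$ are nilpotent, and their orbits have dimension $50+1=51$ because the slice meets orbits transversally. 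By the closure diagram in \cite[Table XIII]{Dok88}, the only $51$-dimensional orbits containing $\mathcal O_{20}$ in their closure are $21$ and $22$. The two branches are not $L$-conjugate, since they are swapped by the outer element $\sigma_e$ and are distinct orbits in the table. We match each branch by its characteristic.

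The main obstacle is the explicit verification that $Z_L(e,d,f)^\circ$ has a one-dimensional central torus acting by $(t,t^{-1})$ on two distinct lines, together with the construction of $\sigma_e$ fixing the triple. This requires honest root computations in $E_7$. We would first try to do this by computing the weights of $\mathfrak t^{d}\cap\mathfrak z(\fh^{e,d,f})$ on the lowest-weight vectors of $\fp$, and fall back on a computer check of the $126$ roots if needed.
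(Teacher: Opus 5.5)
Apart from one step, your plan is the paper's computation. It uses an explicit normal triple of characteristic $(2,2,0,0,0,2,-6)$ and a basis of $\mathfrak p^f$ with $\operatorname{ad}d$-weights $(-6,-4,-4,-2)$. It uses the one-dimensional center of the $9$-dimensional triple centralizer, acting with weights $(0,1,-1,0)$. And it builds $\sigma_e$ from $C$ and a representative of $w_0(E_6)$; the paper also composes with $\theta$ and a toral sign, both acting through $L$.

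\textbf{The nilcone step is a gap.} Your shapes $c_1a$, $c_2b+c_3a^2$, $c_4uv+c_5ab+c_6a^3$ for the restricted invariants are correct. But the reason you give for $c_1,c_2,c_4\neq0$ fails. The quotient map restricted to $e+\mathfrak p^f$ is never smooth at $e$: by your own formulas its differential there is $(c_1\,da,\,c_2\,db,\,0)$, of rank at most two. That rank drop is exactly the node.

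What works is a dimension count.
\begin{itemize}
\item By Kostant--Rallis, $\dim\mathcal N=\dim\mathfrak p-\dim\mathfrak a=51$.
\item The map $L\times(e+\mathfrak p^f)\to\mathfrak p$ is a submersion at $(1,e)$ and $\mathcal N$ is $L$-stable. Hence $(e+\mathfrak p^f)\cap\mathcal N$ has dimension $51-50=1$ near $e$.
\item This holds everywhere, because every component of this $\kappa$-conic set passes through $e$.
\item If $c_1=0$, two equations in $\mathbb C^4$ leave dimension at least two. Once $a=0$, the locus $\{c_2b=c_4uv=0\}$ is one-dimensional only if $c_2c_4\neq0$.
\end{itemize}
One can also see $c_1\neq0$ directly, because $n_a$ is proportional to $f$ and $B(e,f)\neq0$. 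The paper avoids all this by evaluating explicit invariants on the slice: $\operatorname{tr}D_x=-960a$, $\operatorname{tr}D_x^2=3072b+\tfrac{413696}{9}a^2$, $N_+=-16u$, $N_-=16v$.

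\textbf{The branch separation does not follow as stated.} You claim the two branches are not $L$-conjugate ``since they are swapped by the outer element $\sigma_e$.'' An outer element can preserve an $L$-orbit, so this is not a reason. You need an $L$-invariant that separates the branches. The paper's values give one: the zero loci of the relative invariants $N_\pm$ are $L$-stable, $N_+$ vanishes on the $v$-branch but not on the $u$-branch, and $N_-$ does the reverse. Your fallback of computing the characteristics of $e+n_u$ and $e+n_v$ would also work.

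\textbf{Minor points.} Your transversality count $50+1=51$ for the branch orbits is a fine substitute for the paper's stabilizer dimension $28$. The normalization $f=\pm C(e)$ is not automatic: for the paper's $e$, the root support of $C(e)$ differs from that of $f$. Simply solve $[e,f]=d$ in $\mathfrak p_{-2}(d)$.
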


\begin{proof}
Here are explicit data for the calculation, with the root-basis
convention specified at the end of this section. Number Cartan
vectors $0,\ldots,6$ and root vectors $7,\ldots,132$; write $v_i$
for the resulting basis. A triple has
\begin{align*}
 e={}&-v_{56}+v_{111}-4v_{62}+4v_{101}-4v_{63}+4v_{103}
             -2v_{106}+2v_{59},\\
 f={}&\tfrac38v_{33}-\tfrac34v_{38}-v_{40}+\tfrac32v_{43}
       -\tfrac32v_{122}+\tfrac34v_{125}+v_{126}-\tfrac38v_{127},\\
 d={}&6v_0+8v_1+10v_2+14v_3+10v_4+6v_5.
\end{align*}
Its characteristic is $(2,2,0,0,0,2,-6)$. This characteristic determines the displayed \(d\) in the fixed
Cartan.  The displayed \(e\) and \(f\) lie in the \(+2\) and
\(-2\) \(\operatorname{ad}d\)-weight spaces of \(\mathfrak p\), respectively, and direct calculation gives $[e,f]=d$. Thus they form a normal \(\mathfrak{sl}_2\)-triple. An ordered basis of
$\fp^f$ is
\begin{align*}
 n_b={}&v_7+v_{132},\qquad
 n_u=\tfrac14v_{14}-\tfrac12v_{20}+v_{26},\\
 n_v={}&-v_{129}-\tfrac12v_{130}+\tfrac14v_{131},\\
 n_a={}&-v_{33}+2v_{38}+\tfrac83v_{40}-4v_{43}
          +4v_{122}-2v_{125}-\tfrac83v_{126}+v_{127}.
\end{align*}
Solving $[f,n]=0,\ n\in\mathfrak p$ shows that these four vectors form a basis of \(\mathfrak p^f\).
Their \(\operatorname{ad}d\)-weights are
\[
-6,\,-4,\,-4,\,-2,
\]
so their Kazhdan weights are
\[
8,\,6,\,6,\,4.
\] 
The triple centralizer has dimension $9$. Its one-dimensional connected center
acts on this basis with weights $(0,1,-1,0)$. To obtain the component,
apply the Chevalley map $C(v_i)=-v_i$ on the Cartan,
$C(E_\alpha)=E_{-\alpha}$, then a representative of $w_0(E_6)$,
then $\theta$, and finally the toral sign
$E_\alpha\mapsto(-1)^{\alpha_4}E_\alpha$.
The representative is the product of the simple root representatives
in the word
\[
 (123456)^4(12345)(1234)(13)(1),\qquad
 n_i=\exp(\operatorname{ad}E_{\alpha_i})
      \exp(\operatorname{ad}E_{-\alpha_i})
      \exp(\operatorname{ad}E_{\alpha_i}),
\]
applied from left to right. Substitution in the brackets shows that
the composite fixes $e,d,f$, squares to one, exchanges the two
$27$-spaces, and has on the displayed normal basis the matrix
$\diag(1,\left(\begin{smallmatrix}0&1\\1&0\end{smallmatrix}\right),1)$.
Its conjugation inverts the central torus.

For $x=e+b n_b+u n_u+v n_v+a n_a$, write \(x=x_++x_-\) according to $\mathfrak p=J_{27}\oplus J_{27}^*$ and put
\[
D_x=\operatorname{ad}(x_+)\operatorname{ad}(x_-)|_{J_{27}}.
\]
Let \(N_+\) and \(N_-\) denote the invariant cubic norms on
\(J_{27}\) and \(J_{27}^*\), normalized as below.  Direct calculation
in the root basis gives
\begin{equation}\label{eq:EVII-slice-invariants}
 \tr D_x=-960a,\quad
 \tr D_x^2=3072b+\tfrac{413696}{9}a^2,\quad
 N_+(x_+)=-16u,\quad N_-(x_-)=16v.
\end{equation}
The cubic norms are the unique, up to scalar, $E_6$-invariant
cubics on the two minuscule modules. The restrictions show that
\[
\operatorname{tr}D_x,\qquad
\operatorname{tr}D_x^2,\qquad
N_+N_-
\]
are independent.  They have degrees \(2,4,6\), respectively, so
Chevalley restriction shows that they form basic invariants, up to an
invertible weighted triangular change of generators.  Hence the
nilpotent cone cuts the slice out by
\[
a=0,\qquad b=0,\qquad uv=0.
\]

On the two punctured branches \(u\ne0,\ v=0\) and
\(u=0,\ v\ne0\), exactly one of the two cubic norms is nonzero.
The stabilizer has dimension \(28\); since \(\dim L=79\), both
branches are \(51\)-dimensional orbits.  The classification identifies
them with orbits \(21\) and \(22\).
\end{proof}

\begin{lemma}\label{lem:EVII-exclude20}
A distribution in $\cS^*(\fp)^{J,\chi}$ supported on $\mathcal N$
has support disjoint from orbits $20,21,22$.
\end{lemma}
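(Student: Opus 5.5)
The plan is to localize at orbit~20 and reduce to the nodal Lemma~\ref{lem:EVII-node} on the transverse slice of Lemma~\ref{lem:EVII-slice20}. Let $T\in\cS^*(\fp)^{J,\chi}$ have $\supp T\subseteq\mathcal N$. The set $U=\mathcal O_{20}\cup\mathcal O_{21}\cup\mathcal O_{22}$ is open in $\overline{\mathcal O_{21}}\cup\overline{\mathcal O_{22}}$. I would argue by contradiction: suppose $\supp T$ meets $U$. The two branches $21,22$ are exchanged by $\tau$ (its representative $\sigma_e$ swaps $u,v$), so $\supp T$ meets $\mathcal O_{21}$ if and only if it meets $\mathcal O_{22}$. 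Choose $\mathcal O\subseteq U$ of maximal dimension among the orbits in $U$ meeting $\supp T$. If $\supp T$ meets $\mathcal O_{21}$ or $\mathcal O_{22}$, the localization first takes place near a point of $\mathcal O_{20}$. This is legitimate because $\mathcal O_{20}\subset\overline{\mathcal O_{21}}\cap\overline{\mathcal O_{22}}$ and the branches are the only larger orbits through the slice. Otherwise $\supp T\cap U\subseteq\mathcal O_{20}$, and the same slice handles that case with support at the node. In every case we work in a neighborhood of $e\in\mathcal O_{20}$ in which $\supp T\cap(e+\fp^f)\subseteq\{a=b=uv=0\}$, by the nilpotent-cone description of the slice.

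Next, restrict to the slice. The map $L\times(e+\fp^f)\to\fp$ is submersive near $(1,e)$, by the Slodowy transversality of the normal triple. Frobenius descent, in the form of \cite[Theorem~3.1.1]{AG} or Bernstein localization for distributions, therefore identifies germs of $T$ near $e$ with germs on $e+\fp^f$ at $e$. These germs are invariant under the stabilizer $Z_L(e,d,f)$ and $\chi$-equivariant under $\sigma_e\in J\setminus L$. In particular the slice germ $S$ is invariant under the central torus $A$, which acts by $t(b,u,v,a)=(b,tu,t^{-1}v,a)$, and satisfies $(\sigma_e)_*S=-S$. Here we use that $\sigma_e$ fixes the triple, so it preserves the slice and normalizes $Z_L(e,d,f)$, hence $A$. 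Moreover $\supp S\subseteq\{a=b=0,\ uv=0\}$, so Lemma~\ref{lem:EVII-node}, with the auxiliary coordinates $w=(a,b)$ fixed by both actions, gives $S=0$ near $e$.

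Thus $T$ vanishes near every point of $\mathcal O_{20}$. Then $\supp T\cap U$ avoids $\mathcal O_{20}$. But a nonzero germ on $\mathcal O_{21}$ would be detected near $e$, since $\mathcal O_{20}$ lies in the closure of $\mathcal O_{21}$, and the branch structure in the slice is part of what Lemma~\ref{lem:EVII-node} kills, because it allows support on the punctured branches. So the support is disjoint from all three orbits.

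The main obstacle is justifying the slice reduction with the correct transformation behavior. Pulling back along the action map produces a Jacobian/density factor, and we must check that $A$ acts on the slice density trivially, or at least with a character invisible to Lemma~\ref{lem:EVII-node}. I expect this because $A$ is reductive and preserves the invariant form, so it is unimodular on the normal space: its weights $(0,1,-1,0)$ sum to zero. Equally, $\sigma_e$ must enter with the sign $\chi(\sigma_e)=-1$, with no extra sign from orientation. Complex linear maps preserve real orientation, so this should hold. I would verify both points first, and also confirm that the cited result allows nonclosed orbits, using the Luna-type localization along $e$ with the Kazhdan contraction as in the nice-factor argument.
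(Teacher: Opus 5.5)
Your proposal is correct and follows essentially the paper's proof: pull $T$ back along the local action submersion $L\times(e+\fp^f)\to\fp$ using an invariant ambient density, observe that the slice germ is invariant under the triple-center torus (weights $(0,1,-1,0)$, so no density character) and odd under $\sigma_e$, apply Lemma~\ref{lem:EVII-node} with auxiliary coordinates $(a,b)$, and then exclude orbits $21,22$ because a closed $L$-invariant support meeting either one would contain $\mathcal O_{20}$. The case split by a maximal-dimensional orbit and the appeal to Frobenius descent or Luna-type localization are unnecessary, since injectivity of pullback under a submersion needs no closedness of the orbit; also, for $\sigma_e$ the relevant fact is $|\det\sigma_e|_{\C}=1$ on the slice, not orientation.
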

\begin{proof}
Use invariant ambient density and pull the scalar generalized
function back under the local action submersion
$L\times(e+\fp^f)\to\fp$. Connected invariance makes it independent
of the group variable. The slice germ is invariant under the
triple-center torus and odd under $\sigma_e$. The effective torus action has determinant \(1\) on the slice, so the
constant slice density is invariant and no additional density character
appears when passing from scalar generalized functions to distributions. Its support is $a=b=uv=0$.
Hence lemma~\ref{lem:EVII-node} shows that the slice germ vanishes. Injectivity of submersion
pullback excludes orbit $20$. Both other orbits have orbit $20$
in their closure by Lemma~\ref{lem:EVII-slice20}; a closed invariant
support meeting either would contain that orbit. They are excluded too.
\end{proof}

\subsection{Quadratic operators and normal degrees}\label{subsec:EVII-quadratic}
Let
\[
 \mathcal M_\chi=\{T\in\cS^*(\fp)^{J,\chi}:
    \supp T,\supp\mathcal F_BT\subseteq\mathcal N\}.
\]
Lemma~\ref{lem:EVII-open} and equivariance of Fourier transform give
\begin{equation}\label{eq:EVII-double-support}
 \cS^*(\fp)^{J,\chi}=\mathcal M_\chi.
\end{equation}
In particular both support conditions follow from global sign
equivariance. Put $q(z)=\frac{1}{2}B(z,z)$ and let \(\Delta\) be the holomorphic constant-coefficient Laplacian
defined by the inverse form.  Write $E_z=\sum_{i=1}^{54}z_i\partial_{z_i}$
for the holomorphic Euler field.  On scalar generalized functions set
\[
e_q=q,\qquad
f_q=-\frac12\Delta,\qquad
h_q=E_z+27.
\]
The identities
\[
[E_z,q]=2q,\qquad
[E_z,\Delta]=-2\Delta,\qquad
[\Delta,q]=2E_z+54
\]
show that \(e_q,f_q,h_q\) satisfy the \(\mathfrak{sl}_2\)-relations.
Their antiholomorphic counterparts
\[
\bar e_q=\bar q,\qquad
\bar f_q=-\frac12\bar\Delta,\qquad
\bar h_q=\bar E_z+27
\]
commute with the holomorphic triple, and hence give an action of
$\mathfrak{sl}_2\oplus\mathfrak{sl}_2$
on \(\mathcal M_\chi\).

Both triples act locally finitely.  Indeed, since $\operatorname{supp}T\subset\mathcal N$
and \(q,\bar q\) vanish on \(\mathcal N\), finite order of \(T\)
implies
\[
q^N T=\bar q^{\,N}T=0
\]
for \(N\) sufficiently large.  Applying the same argument to
\(\mathcal F_B T\), whose support is also contained in \(\mathcal N\),
and conjugating by Fourier transform gives
\[
\Delta^N T=\bar\Delta^{\,N}T=0
\]
for \(N\) sufficiently large.  Thus Lemma~C.0.3 of \cite{AG}
applied to the two commuting triples gives a locally finite
\(\mathfrak{sl}_2\oplus\mathfrak{sl}_2\)-module.

Let \(V_m\) denote the irreducible \(\mathfrak{sl}_2\)-module of
highest weight \(m\).  Every finite-dimensional simple constituent is
therefore of the form
\[
V_m\otimes V_n,\qquad m,n\ge0.
\]
Every two nonzero vectors in such a constituent have the same support:
each generates the constituent under polynomial differential
operators, and such operators do not enlarge support.

Now let \(L\cdot e\) be a maximal orbit in the support and choose a
normal triple \((e,d,f)\).  Put
\[
T(e)=\operatorname{tr}(\operatorname{ad}d\mid\mathfrak h^e),
\qquad
T_{\mathfrak p}(e)
=
\operatorname{tr}(\operatorname{ad}d\mid\mathfrak p^e).
\]

If $[d,n_j]=-\ell_j n_j,\ n_j\in\mathfrak p^f$, then the corresponding Kazhdan weight is \(w_j=\ell_j+2\).
The invariant form pairs \(\mathfrak p^e(r)\) perfectly with
\(\mathfrak p^f(-r)\). Hence
\[
\sum_j\ell_j=T_{\mathfrak p}(e),
\qquad
\dim\mathfrak p^f=\dim\mathfrak p^e,
\]
and therefore
\[
\sum_jw_j
=
T_{\mathfrak p}(e)+2\dim\mathfrak p^e.
\]

On the other hand, each irreducible summand \(V_r\) contributes
\(r\) both to
\(\operatorname{tr}(\operatorname{ad}d\mid\mathfrak g^e)\)
and to
\(\dim\mathfrak g-\dim\mathfrak g^e\).  Thus
\[
T(e)+T_{\mathfrak p}(e)
=
\dim\mathfrak g-\dim\mathfrak g^e.
\]
Using $\dim\mathfrak p^e-\dim\mathfrak h^e
=
\dim\mathfrak p-\dim\mathfrak h$ (see \ref{eq:kernel-difference}), we obtain
\[
\begin{aligned}
\sum_j w_j
&=
\dim\mathfrak g-\dim\mathfrak g^e
-T(e)+2\dim\mathfrak p^e \\
&=
2\dim_{\mathbb C}\mathfrak p-T(e)
=
108-T(e).
\end{aligned}
\] Let $E_{\mathrm{wt}}
=
\sum_j w_j x_j\partial_{x_j}$
be the weighted Euler field on the normal slice.  The infinitesimal
Kazhdan action is the sum of \(2E_z\) and an orbit-tangent vector
field.  Hence, on an \(L\)-invariant scalar slice germ, $E_z\equiv \frac12E_{\mathrm{wt}}$
modulo orbit-tangent vector fields.

For a holomorphic normal delta jet $\partial_{x_1}^{k_1}\cdots\partial_{x_r}^{k_r}\delta_0$, define its weighted derivative degree by $D=\sum_j k_jw_j$.
Since $E_{\mathrm{wt}}\delta_0
=
-\Bigl(\sum_jw_j\Bigr)\delta_0$, such a jet has \(E_{\mathrm{wt}}\)-eigenvalue
\[
-\Bigl(\sum_jw_j+D\Bigr)
=
-\bigl(108-T(e)+D\bigr).
\]
Thus the action induced by the operator
\(h_q=E_z+27\) on this transverse jet has eigenvalue $\frac{T(e)-54-D}{2}$.
For a simultaneous highest-weight vector in a simple constituent
\(V_m\otimes V_n\), comparison with $h_qT=mT,\ \bar h_qT=nT$ gives
\begin{equation}\label{eq:EVII-degree}
 D=T(e)-54-2m,\qquad
D'=T(e)-54-2n,
\qquad D,D'\ge0.
\end{equation}

For reference, the entire trace and normal-degree table is below.
In the characteristic column the first six digits are
$\alpha_1(d),\ldots,\alpha_6(d)$; the last entry is $\alpha_7(d)$.
Exponents in the last column denote multiplicities.
\begin{center}\small
\begin{tabular}{rcrl}
\toprule
Orbit&Characteristic&$T(e)$&Kazhdan weights\\\midrule
1&$(100000;0)$&16&$2^{20},3^{16},4$\\
2&$(000001;-2)$&16&$2^{20},3^{16},4$\\
3&$(000001;0)$&16&$2^2,3^{16},4^{10}$\\
4&$(100000;-2)$&16&$2^2,3^{16},4^{10}$\\
5&$(100001;-2)$&32&$2^{10},3^{16},4^2$\\
6&$(000000;2)$&0&$4^{27}$\\
7&$(000000;-2)$&0&$4^{27}$\\
8&$(000002;-2)$&32&$2^{16},4^{11}$\\
9&$(200000;-2)$&32&$2^{16},4^{11}$\\
10&$(020000;-2)$&44&$2^{10},4^{11}$\\
11&$(010010;-2)$&48&$3^8,4^4,5^4$\\
12&$(011000;-3)$&48&$3^8,4^4,5^4$\\
13&$(300001;-2)$&52&$2^2,4,5^8,8$\\
14&$(100003;-6)$&52&$2^2,4,5^8,8$\\
15&$(200002;-4)$&60&$2^2,4^8,6^2$\\
16&$(200002;-2)$&44&$4^2,6^8,8$\\
17&$(400000;-2)$&60&$4^{10},8$\\
18&$(000004;-6)$&60&$4^{10},8$\\
19&$(200002;-6)$&44&$4^2,6^8,8$\\
20&$(220002;-6)$&84&$4,6^2,8$\\
21&$(400004;-6)$&84&$4,8,12$\\
22&$(400004;-10)$&84&$4,8,12$\\\bottomrule
\end{tabular}
\end{center}
The orbit list, numbering, and characteristics are taken from
\cite[Theorem 3 and Table XIII]{Dok88}; the values of \(T(e)\) and the
Kazhdan weights in the last two columns are recomputed here from the
\(\operatorname{ad}d\)-weight multiplicities. For \(j\ge 0\), let
$h_j=\dim\mathfrak h(j),\
p_j=\dim\mathfrak p(j)$, where \(\mathfrak h(j)\) and \(\mathfrak p(j)\) are the
\(\operatorname{ad}d\)-weight-\(j\) spaces. Let
\(m_j^{\mathfrak h}\) and \(m_j^{\mathfrak p}\) denote the
multiplicities of the irreducible \(\mathfrak{sl}_2\)-summands of
highest weight \(j\) whose highest-weight lines lie in
\(\mathfrak h\) and \(\mathfrak p\), respectively.  Since
\(\operatorname{ad}e\) interchanges \(\mathfrak h\) and
\(\mathfrak p\),
\[
m_j^{\mathfrak h}=h_j-p_{j+2},
\qquad
m_j^{\mathfrak p}=p_j-h_{j+2}.
\]
Consequently,
\[
T(e)=\sum_{j\ge0}j\,m_j^{\mathfrak h},
\]
and the Kazhdan weight \(j+2\) occurs in \(\mathfrak p^f\) with
multiplicity \(m_j^{\mathfrak p}\). The zero orbit is excluded separately: on a holomorphic delta jet
\(\partial^\alpha\delta_0\), the operator
\(h_q=E_z+27\) has eigenvalue $-27-|\alpha|<0$, whereas a finite-dimensional \(\mathfrak{sl}_2\)-highest weight is
nonnegative. For a nonzero maximal support orbit, (\ref{eq:EVII-degree}) implies \(T(e)\ge54\). Hence the table leaves only $15,\ 17,\ 18,\ 20,\ 21,\ 22$ as possible maximal support orbits.  Lemma~\ref{lem:EVII-exclude20} has already excluded
orbits \(20,21,22\) from the support, so it remains to treat
\(15,17,18\).

\subsection{The remaining transverse equations}
After the node argument and the normal-degree reduction, only
orbits~$15$,~$17$, and~$18$ remain as possible maximal support orbits.
At orbit~$15$ the transverse quadratic symbol contradicts local
nilpotence of the Laplacian. At orbits~$17$ and~$18$ we compute the
third iterate of the radial Laplacian on the complete candidate
jet spaces.

Orbit 15 has a two-dimensional
\(\operatorname{ad}d\)-weight-zero conormal subspace. Using the invariant form \(B\), the conormal space to
\(L\cdot e\) at \(e\) is identified with $N_e^*(L\cdot e)\simeq \mathfrak p^e$ since $B(z,[X,e])=-B([z,e],X)$.
Moreover, \(B\) pairs \(\operatorname{ad}d\)-weight \(j\) with
weight \(-j\).  Since the weights occurring in \(\mathfrak p^e\)
are nonnegative, the restriction of the inverse quadratic symbol
to the conormal is supported on the weight-zero part.
In the explicit basis \eqref{eq:EVII-15-conormal} below, the Gram
matrix of the restricted inverse
quadratic symbol is
\[
 \begin{pmatrix}0&-3\\-3&0\end{pmatrix},\qquad\det=-9.
\]
The corresponding principal transverse symbol of \(\Delta\) is therefore
nonzero.  By \(L\)-equivariance, the quadratic symbol satisfies
\[
b_{\ell\cdot e}
=
\operatorname{Ad}(\ell)_* b_e,
\qquad \ell\in L,
\]
and is therefore nonzero along the whole orbit.

Near a maximal support orbit, the germ is supported on the smooth
orbit and has finite transverse order. If \(s\neq0\) is its highest transverse symbol, then the highest
transverse symbol of \(\Delta^kT\) is $b^k s$. Fiberwise multiplication by \(b^k\) is injective on the
symmetric algebra.  Since this bundle map has constant rank along the
orbit, it admits locally a smooth left inverse; hence it is also
injective on distribution-valued symbols, and remains so after tensoring
with antiholomorphic derivatives. Thus
\[
\Delta^kT\neq 0
\qquad\text{for every }k\geq 1.
\]
On the other hand, the Fourier-support condition, as established
above, implies that \(\Delta\) acts locally nilpotently on \(\mathcal M_\chi\).
Hence
\[
\Delta^N T=0
\]
for some \(N\), a contradiction. Therefore orbit \(15\) cannot be a
maximal support orbit of a nonzero simple constituent of \(\mathcal M_\chi\).

At orbits \(17,18\), we have $T(e)=60$ and the Kazhdan weights on the normal slice are $(8,4^{10})$.
Hence, for a holomorphic highest-weight vector of highest weight \(m\),
formula~\eqref{eq:EVII-degree} gives
\[
D=60-54-2m=6-2m.
\]
Since \(D\ge0\) and every holomorphic weighted degree is divisible by
\(4\), necessarily
\[
m\in\{1,3\}.
\]
The same argument applies antiholomorphically.

Both \(V_1\) and \(V_3\) contain an \(h_q\)-weight-\(+1\) vector, and $f_q^3=\left(-\frac12\Delta\right)^3$ annihilates that weight space.  For an \(h_q\)-weight-\(+1\) vector,
formula~\eqref{eq:EVII-degree} gives
\[
D=60-54-2=4.
\]
Since the normal Kazhdan weights are \(8,4,\ldots,4\), such a
holomorphic jet is necessarily linear in the ten weight-\(4\)
coordinates. Its ordinary order is therefore one.
The connected-stabilizer equations leave a two-dimensional candidate
space, on which we compute the complete radial action.

We now compute the radial operator.
Let $n_0,\ldots,n_{10}$ be the ordered normal basis, of weights
$(-6,-2,\ldots,-2)$, and choose an \(\operatorname{ad}d\)-graded pivot-column complement
\[
\mathfrak h=\mathfrak h^e\oplus\mathfrak m,
\qquad
\mathfrak m=\langle m_1,\ldots,m_{43}\rangle .
\] Write $s=\sum s_jn_j$ and form
\[
 W(s)=([m_1,e+s],\ldots,[m_{43},e+s],n_0,\ldots,n_{10}),
 \quad U=W^{-1},\quad P=U_{\rm last\ 11\ rows},\quad K=UQU^t,
\]
where $Q$ is the inverse ambient form matrix. Twice differentiating
the inverse of the local action coordinates
$\exp(\operatorname{ad}\sum t_im_i)(e+s)$ gives
\begin{equation}\label{eq:EVII-radial}
 \Delta_{\rm rad}=\sum_{i,j}g_{ij}(s)\partial_i\partial_j
                         +\sum_ih_i(s)\partial_i,\qquad
 g=PQP^t,
\end{equation}
with
\[
 h=-P\left(\sum_{a,b=1}^{43}K_{ab}[m_a,[m_b,e+s]]
       +2\sum_{a=1}^{43}\sum_{j=0}^{10}K_{a,44+j}[m_a,n_j]\right).
\]
The column index $44+j$ here is one-based. For an \(L\)-invariant generalized function, pullback under the local
action map is constant in the orbit variables.  Hence
\(\Phi^*(\Delta T)\) is likewise independent of those variables, and
evaluation at the identity orbit coordinate gives the displayed
radial operator.  Thus \eqref{eq:EVII-radial} holds for invariant distributions as well
as for functions. Since
\[
W(s)=W_0+\sum_k s_kW_k,
\qquad
U(s)=W(s)^{-1},
\]
differentiating \(U(s)W(s)=I\) at \(s=0\) gives
\[
\left.\partial_{s_k}U\right|_{0}
=
-U_0W_kU_0.
\]
The linear terms of $g(s)=P(s)QP(s)^t$
follow by differentiating this identity at \(s=0\). The Kazhdan degrees of $g_{ij},h_i$ are respectively
$w_i+w_j-4,w_i-4$. Thus $g$ has ordinary degree at most three and
$h$ at most one. The resulting finite Taylor expansion determines
the complete radial operator. In this normalization, writing $q_0(s)=B(e+s,e+s)$,
\[
 \Delta_{\rm rad}q_0=108,\qquad g\,dq_0=(w_i s_i)_i.
\]

Encode a finite holomorphic normal jet by $\partial_s^\alpha\delta_0 \longleftrightarrow x^\alpha$, thus \(x_j\) records one application of the normal derivative
\(\partial_{s_j}\). Under this identification,
\[
\partial_{s_j}\longleftrightarrow x_j,
\qquad
s_j\longleftrightarrow-\partial_{x_j},
\]
since $s_j\partial_s^\alpha\delta_0
=
-\alpha_j\partial_s^{\alpha-e_j}\delta_0$. Therefore the full radial operator \eqref{eq:EVII-radial} acts on the polynomial
encoding \(F(x)\) of a normal jet by
\begin{equation}\label{eq:EVII-jet-action}
 \mathcal R F=\sum_{i,j}g_{ij}(-\partial_x)(x_ix_jF)
                    +\sum_i h_i(-\partial_x)(x_iF).
\end{equation}
The derivatives act on the displayed products, giving the full
radial action, including all terms from derivatives of the coefficients.

For \(z\in\mathfrak h^e\), put $v_z(s)=P(s)[z,e+s]$. This is the vector field induced on the slice by the infinitesimal
action of the connected stabilizer \(L_e^\circ\).  Hence an invariant
transverse germ satisfies
\[
v_z(s)\cdot\partial_s T=0,
\qquad z\in\mathfrak h^e.
\]
On the holomorphic weighted-degree-\(4\) jet space $\mathcal J_4=\operatorname{span}\{x_1,\ldots,x_{10}\}$, the common kernel of these equations is two-dimensional at each of
orbits \(17\) and \(18\).

At orbit \(17\), a basis of this common kernel is
\[
 A=x_{10}+x_2+3x_3+3x_4+3x_5-x_6+x_7+x_8+x_9,
 \qquad B_0=x_1.
\]
The coefficient functionals $[x_2^4]$ and $[x_1^3x_2]$ on
$(\mathcal R^3A,\mathcal R^3B_0)$ give the matrix
\begin{equation}\label{eq:EVII-M17}
 M_{17}=\begin{pmatrix}3/500&0\\286/9&10\end{pmatrix},
 \qquad\det M_{17}=3/50.
\end{equation}
The entries admit the following calculation.
Write $g_{ij}^{(1)}(s)=\sum_k c_{ij}^k s_k$ for the linear part of \(g_{ij}\), and let \(h_i^{(0)}\) denote
the constant part of \(h_i\).  Only these terms contribute to the
highest ordinary degree.  Hence that part of \(\mathcal R\) has the form
\[
\mathcal T=M_{\mathcal T(1)}+\mathscr D,
\qquad
\mathscr D
=
-\sum_{i,j,k}c_{ij}^k x_ix_j\partial_{x_k},
\]
where \(\mathscr D\) is a derivation. Here $M_f$ denotes multiplication by $f$.
For orbit \(17\), the exact coefficients give $\mathcal T(1)=2B_0$, so $\mathcal T=M_{2B_0}+\mathscr D$.
Put
\[
C_0
=
B_0\left(
x_3+x_4+\frac34x_5-\frac12x_6+\frac12x_7
\right)
+\frac34\left(
x_{10}x_2-x_3x_5+x_3x_9-x_4x_5-x_4x_8+x_6x_7
\right).
\]
Then \(\mathbb C[A,B_0,C_0]\) is stable under \(\mathscr D\), and
\[
\mathscr D(A)
=
\frac1{10}A^2+\frac52AB_0-10C_0,
\]
\[
\mathscr D(B_0)
=
\frac34AB_0-\frac23B_0^2-3C_0,
\]
\[
\mathscr D(C_0)
=
\frac7{80}A^2B_0
+\frac7{24}AB_0^2
-\frac14AC_0
-\frac{11}{6}B_0C_0.
\]
Since \(A\) and \(B_0\) have ordinary degree \(1\), three
applications of \(\mathcal T\) give the degree-\(4\) parts of
\(\mathcal R^3A\) and \(\mathcal R^3B_0\).  Extracting the coefficients
\([x_2^4]\) and \([x_1^3x_2]\) gives precisely the two rows of
\(M_{17}\) in \eqref{eq:EVII-M17}.

At orbit \(18\), a basis of the corresponding common kernel is
\begin{align*}
 A&=\tfrac32x_{10}+x_2-\tfrac12x_3+\tfrac12x_4-x_5
                  -\tfrac32x_6-x_7+\tfrac12x_9,\\
 B_0&=x_1-\tfrac12x_{10}-\tfrac12x_3+\tfrac12x_4
                  +\tfrac12x_6-x_8+\tfrac12x_9.
\end{align*}
The coefficient functionals $[x_1^4]$ and $[x_3^4]$ give
\begin{equation}\label{eq:EVII-M18}
 M_{18}=\begin{pmatrix}
 -354111/125000&-763911/125000\\
 -192/15625&-192/15625
 \end{pmatrix},\qquad
 \det M_{18}=-393408/9765625.
\end{equation}
The explicit triples, normal bases, and pivot-column complements
specified in Section~\ref{sec:EVII-coordinates}, together with
\eqref{eq:EVII-radial}--\eqref{eq:EVII-jet-action}, determine the
two coefficient maps represented by \(M_{17}\) and \(M_{18}\) over
\(\mathbb Q\). All nonlinear stabilizer equations vanish on the
displayed candidate jets and on their first two
radial images.  At both orbits, the zero-weight part of
\(\mathfrak h^e\) has dimension \(21\) and the positive-weight part
has dimension \(15\).  Thus the two-dimensional spaces displayed
above are the complete connected-stabilizer candidate spaces, and the
matrices \eqref{eq:EVII-M17}--\eqref{eq:EVII-M18} test all possible holomorphic
\(h_q\)-weight-\(+1\) jets. After complexifying the real stabilizer action, the holomorphic and
antiholomorphic infinitesimal actions split and impose the stabilizer
equations separately on the two factors. The mixed
weight-\(+1\) jet space satisfying the connected-stabilizer equations
is contained in $\operatorname{span}\{A,B_0\}
\otimes
\mathbb C[\bar x_0,\ldots,\bar x_{10}]$. 
Here the variables $\bar x_j$ encode
antiholomorphic normal derivatives of the
delta distribution on the slice. Consequently, injectivity of \(\Delta^3\) on
\(\operatorname{span}\{A,B_0\}\) is preserved after tensoring with
arbitrary antiholomorphic jets.

Since the determinants in \eqref{eq:EVII-M17} and \eqref{eq:EVII-M18} are nonzero,
\(\Delta^3\) is injective on the entire holomorphic
\(h_q\)-weight-\(+1\) candidate space at either orbit. The operator $f_q^3=\left(-\frac12\Delta\right)^3$
annihilates the \(h_q\)-weight-\(+1\) space of both \(V_1\) and
\(V_3\). Thus no nonzero such germ exists, and neither orbit \(17\)
nor orbit \(18\) can be a maximal support orbit.

\begin{proof}[Proof of Theorem~\ref{thm:EVII}]
By \eqref{eq:EVII-double-support}, a nonzero sign distribution would
give a nonzero simple quadratic constituent in $\mathcal M_\chi$.
The orbit list is finite, so its support has a maximal orbit.
Formula~\eqref{eq:EVII-degree} excludes every orbit with $T(e)<54$.
Lemma~\ref{lem:EVII-exclude20} excludes $20,21,22$ from the support.
The transverse quadratic symbol excludes $15$, and
\eqref{eq:EVII-M17}--\eqref{eq:EVII-M18} exclude $17,18$.
There is no remaining maximal orbit, a contradiction. Since
$\fp^L=0$ and $J/L\simeq\mu_2$, this proves componentwise regularity.
\end{proof}

\subsection{Exact coordinates for the finite calculations}
\label{sec:EVII-coordinates}
We fix the root and sign conventions for the rational maps.
If $C_{E_7}$ is
the Cartan matrix of the diagram above, let
$\alpha_1,\ldots,\alpha_7$ be the simple roots and generate the
positive roots by adjoining a simple root when the inner product is
$-1$. Order them first by height and then, within each height, in ascending
lexicographic order. Number their
vectors $7,\ldots,69$, and their negatives in the same order
$70,\ldots,132$. Write
\[
 \alpha=\sum_i a_i\alpha_i,\qquad
 \beta=\sum_i b_i\alpha_i,
\]
and let $v_0,\ldots,v_6=H_1,\ldots,H_7$. We use
\[
 [H_i,E_\alpha]=(\alpha_i,\alpha)E_\alpha,\qquad
 [E_\alpha,E_{-\alpha}]=-\sum_i a_iH_i.
\]
For $\beta\ne-\alpha$,
\[
 [E_\alpha,E_\beta]=
 (-1)^{\sum_i a_i b_i+
              \sum_{i<j,\,(C_{E_7})_{ij}=-1}a_i b_j}
 E_{\alpha+\beta}
\]
when $\alpha+\beta$ is a root, and the bracket is zero otherwise.
Normalize the invariant form by $B(E_\alpha,E_{-\alpha})=-1$.
For the orbit-$15$ symbol calculation, use
\begin{align*}
 e_{15}={}&-v_{56}-v_{59}+v_{62}-v_{64}+v_{65}+v_{66}+v_{67}+v_{68}\\
         &-v_{77}-v_{83}+v_{89}-v_{95}+v_{96}+v_{101}-v_{106}+v_{111},\\
 d_{15}={}&4v_0+4v_1+6v_2+8v_3+6v_4+4v_5.
\end{align*}
The weight-zero kernel of $\operatorname{ad}e_{15}$ in $\fp$ has basis
\begin{equation}\label{eq:EVII-15-conormal}
 \begin{split}
 z_1&=2v_{40}+v_{53}+v_{57}-v_{60}+v_{63}+v_{115},\\
 z_2&=v_{52}+\tfrac12v_{103}-\tfrac12v_{108}
                    -\tfrac12v_{112}+\tfrac12v_{117}+v_{126}.
 \end{split}
\end{equation}
Under the invariant-form identification the conormal is $\fp^{e_{15}}$.
The root-pairing rule gives $B(z_1,z_1)=B(z_2,z_2)=0$ and
$B(z_1,z_2)=-3$, proving the asserted symbol calculation.

For the characteristics of orbits $17,18$, choose the triples specified
in the following coordinate record. For each $d$-weight in increasing
order, take the nullspace of $\operatorname{ad}f$ on the odd
root-space basis in increasing index order, using reduced row echelon
form with free coordinates successively equal to $1$. These are the
normal vectors $n_0,\ldots,n_{10}$. Take pivot columns of
$a\mapsto[a,e]$ on the ordered even basis for the $m_i$.
This specifies every entry in \eqref{eq:EVII-radial}.

\input{evii_coordinate_record}

Exact coordinate data and verification scripts for the finite
computations in this section are provided in the ancillary material. The exhaustiveness of
the orbit list follows from the classification theorem
\cite[Theorem~3]{Dok88}.

%% file: evii_coordinate_record.tex
With this numbering the two triples are as follows. The subscripts
$17,18$ denote orbit numbers, not basis indices.
\begin{align*}
e_{17}={}&-v_{40}-v_{45}+v_{49}-v_{53}+v_{54}+v_{56}+v_{57}\\
&+v_{59}-v_{60}-v_{62}+v_{63}-v_{64}+v_{65}+v_{66}\\
&-v_{67}+v_{68}+v_{70}-v_{77}-v_{83}+v_{89}-v_{95}\\
&-v_{96}-v_{101}-v_{106}+v_{111}-v_{115},\\
d_{17}={}&6v_{0}+5v_{1}+8v_{2}+10v_{3}+7v_{4}+4v_{5}+v_{6},\\
f_{17}={}&-\tfrac{4}{3}v_{7}+\tfrac{1}{3}v_{14}+\tfrac{2}{3}v_{20}+\tfrac{2}{3}v_{26}+\tfrac{1}{3}v_{32}+\tfrac{4}{3}v_{33}+\tfrac{1}{3}v_{38}\\
&-\tfrac{1}{3}v_{43}-\tfrac{4}{3}v_{48}+\tfrac{1}{3}v_{52}+\tfrac{1}{2}v_{103}-\tfrac{3}{2}v_{108}-\tfrac{3}{2}v_{112}+\tfrac{3}{2}v_{116}\\
&-\tfrac{1}{2}v_{117}-\tfrac{1}{2}v_{119}-\tfrac{1}{2}v_{120}-\tfrac{3}{2}v_{122}-\tfrac{1}{2}v_{123}-\tfrac{3}{2}v_{125}-\tfrac{3}{2}v_{126}\\
&+\tfrac{1}{2}v_{127}-\tfrac{1}{2}v_{128}-\tfrac{1}{2}v_{129}-\tfrac{1}{2}v_{130}-\tfrac{1}{2}v_{131}.
\end{align*}
\begin{align*}
e_{18}={}&-v_{52}-v_{56}+v_{59}-v_{62}+v_{64}+v_{65}+v_{66}\\
&+v_{67}-v_{68}-v_{69}+v_{77}-v_{83}+v_{89}+v_{95}\\
&-v_{96}+v_{101}+v_{103}-v_{106}-v_{108}+v_{111}-v_{112}\\
&-v_{116}-v_{117}-v_{120}+v_{123}-v_{126},\\
d_{18}={}&2v_{0}+3v_{1}+4v_{2}+6v_{3}+5v_{4}+4v_{5}-v_{6},\\
f_{18}={}&-\tfrac{9}{10}v_{14}+\tfrac{3}{10}v_{20}-\tfrac{3}{10}v_{26}-\tfrac{9}{10}v_{32}+\tfrac{3}{10}v_{33}+\tfrac{3}{10}v_{38}-\tfrac{9}{10}v_{40}\\
&+\tfrac{9}{10}v_{43}-\tfrac{3}{10}v_{45}-\tfrac{3}{2}v_{48}-\tfrac{3}{10}v_{49}+\tfrac{3}{10}v_{53}+\tfrac{3}{10}v_{54}-\tfrac{3}{10}v_{57}\\
&+\tfrac{3}{10}v_{60}+\tfrac{9}{10}v_{63}+\tfrac{7}{5}v_{115}+\tfrac{1}{5}v_{119}-\tfrac{4}{5}v_{122}+\tfrac{4}{5}v_{125}-\tfrac{1}{5}v_{127}\\
&+\tfrac{2}{5}v_{128}-\tfrac{1}{5}v_{129}-\tfrac{7}{5}v_{130}+\tfrac{4}{5}v_{131}-\tfrac{2}{5}v_{132}.
\end{align*}
Direct brackets give $[e_i,f_i]=d_i$, $[d_i,e_i]=2e_i$,
and $[d_i,f_i]=-2f_i$. Together with the stated row-reduction rule,
these data determine the normal bases used in the two matrices.

%% file: global_consequences.tex
\subsection{Global regularity}\label{sec:global-regularity}
\begin{proof}[Proof of Theorem~\ref{thm:main}\textup{(i)}]
Take the canonical cover of Lemma~\ref{lem:cover-center} and group
its simple factors into orbits of the lifted involution. We use
pleasantness for the simply connected factors of this cover, while
niceness depends only on the symmetric Lie algebra. A two-factor
orbit gives a diagonal pair, which is pleasant
\cite[Example 4.3]{Rubio}; a factor with trivial involution has zero
odd space. The anti-invariant center disappears in $Q_H(\fp)$.

For the remaining simple factors, Rubio's Tables~2--4
\cite[\S4]{Rubio} give the following alternatives according to
pleasantness and niceness.
AII and unequal AIII are pleasant by \cite[Proposition 4.14]{Rubio},
unequal CII by \cite[Proposition 4.16]{Rubio}, and all exceptional
factors other than $E_7/A_7$ and EVII by \cite[Theorem 4.20]{Rubio}.
Unequal odd--odd Spin blocks are pleasant by Lemma~\ref{lem:odd-spin}.
Among the nonpleasant factors, the nice ones, including $E_7/A_7$ and Spin blocks with
$|r-s|\leq2$, satisfy \eqref{eq:componentwise} by
Corollary~\ref{cor:nice-component}. The remaining classical factors
are covered by Corollaries~\ref{cor:DC-regular} and \ref{cor:Spin},
and EVII by Theorem~\ref{thm:EVII}. 

Proposition~\ref{prop:component-product} gives componentwise regularity
on the product. Lemma~\ref{lem:central-interface} identifies the
full fixed image of the original group with the intermediate
subgroup $J_B$, and Proposition~\ref{prop:intermediate} proves
\eqref{eq:regularity}. This product and central-quotient argument
follows \cite[\S3]{LXYv1}; its ingredients were established in
Section~\ref{sec:assembly}.
\end{proof}

\subsection{Gelfand--Kazhdan consequence}\label{sec:gk-consequence}
We follow the canonical-cover argument of \cite[\S8.3]{LXYv1},
applying the regularity theorem together with results of Steinberg
and Aizenbud--Gourevitch.

\begin{proof}[Proof of Theorem~\ref{thm:main}\textup{(ii)}]
Let $\widetilde g$ have a closed $\widetilde H\times\widetilde H$
orbit, and set $x=\widetilde g\widetilde\theta(\widetilde g^{-1})$.
By \cite[Proposition 7.2.1 and Definition 7.2.2]{AG}, $x$ is
semisimple and the descendant is
$(\widetilde G_x,\widetilde H_x,\widetilde\theta|_{\widetilde G_x})$.
Write $x=(z,s)$. The torus is central, so
\[
 \widetilde G_x=Z(G)^\circ\times(G_{\rm der}^{\rm sc})_s.
\]
Steinberg's connected-centralizer theorem \cite{Steinberg} makes
this group connected reductive. Since $\widetilde\theta(x)=x^{-1}$,
the involution preserves it, and its full fixed subgroup is exactly
$\widetilde H_x$. Thus every descendant satisfies
Theorem~\ref{thm:main}\textup{(i)}. The cover is a connected complex
symmetric pair and hence is good by \cite[Corollary 7.1.7]{AG}. 
Here a symmetric pair is good if every
closed $H\times H$-orbit in $G$ is preserved by
the anti-involution
$\sigma(g)=\theta(g^{-1})$. Thus \cite[Theorem 7.4.5]{AG} gives the Gelfand--Kazhdan property.
\end{proof}

\subsection{Multiplicity one for connected groups}
\label{sec:multiplicity-consequence}
For a Casselman--Wallach representation $E$, write $E^\vee$ for
its smooth contragredient. GP2 is the inequality
\[
 \dim\Hom_H(E,\C)\dim\Hom_H(E^\vee,\C)\leq1;
\]
GP1 bounds the first dimension alone.

\begin{lemma}\label{lem:compatible-C}
On $\widetilde G=Z\times G_{\rm ss}$, with $G_{\rm ss}$ simply
connected semisimple, there is a Chevalley involution $C$ commuting
with $\widetilde\theta$. It preserves $\widetilde H$ and satisfies
$E^\vee\simeq E^C$ for every irreducible Casselman--Wallach $E$.
Here $E^C$ means the representation precomposed with $C$.
\end{lemma}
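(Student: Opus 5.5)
The plan is to build $C$ separately on the two factors of $\widetilde G=Z\times G_{\rm ss}$, check that it commutes with $\widetilde\theta$ on each, and then obtain the contragredient statement from the Zhelobenko--Langlands classification of irreducible Casselman--Wallach representations of complex groups. Here $Z=Z(G)^\circ$ is a torus and $\widetilde\theta$ preserves both factors by Lemma~\ref{lem:cover-center}. On $Z$ we take $C(z)=z^{-1}$. Inversion commutes with every automorphism of an abelian group, and in particular with $\widetilde\theta|_Z$.

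On $G_{\rm ss}$ we first work at the Lie algebra level. Choose a $\theta$-stable Cartan subalgebra $\mathfrak c=\mathfrak t\oplus\mathfrak a$, where $\mathfrak a\subset\fp$ is a Cartan subspace and $\mathfrak t\subset\fh$ is a Cartan subalgebra of $\fz_{\fh}(\mathfrak a)$. Pick any Chevalley involution $C_0$ of $\fg$ acting by $-1$ on $\mathfrak c$. Since $\theta$ preserves $\mathfrak c$ and commutes with $-1$ there, $C_1:=\theta C_0\theta^{-1}$ is again an automorphism acting by $-1$ on $\mathfrak c$. Any two such automorphisms differ by an element $\Ad(t)$ with $t\in T_{\mathfrak c}$, the adjoint torus, so $\theta C_0\theta^{-1}=C_0\Ad(t)$. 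The next step is to solve $\Ad(s)C_0\Ad(s)^{-1}=\Ad(s^2)C_0$ (using $C_0(s)=s^{-1}$ on the torus) in a way compatible with $\theta$: we seek $s\in T_{\mathfrak c}$ such that $C=\Ad(s)C_0\Ad(s)^{-1}$ satisfies $\theta C\theta^{-1}=C$. This reduces to an equation of the form $s^{2}\,\theta(s)^{-2}=t^{\pm1}$ on the torus, after using the cocycle identity obtained by applying $\theta$ twice, which constrains $t$.

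I expect solving this torus equation to be the main obstacle. The map $s\mapsto s^2\theta(s)^{-2}$ is onto the identity component of the $\theta$-anti-invariant part of $T_{\mathfrak c}$, so the question is whether $t$ lies in that part. I would try first to control $t$ by choosing $C_0$ through a $\theta$-adapted Chevalley basis; alternatively, one can cite the standard fact (Adams) that every involution of a complex reductive group commutes with some Chevalley involution. Once $C$ is found on $\fg_{\rm der}$, simple connectivity lifts it uniquely to $G_{\rm ss}$, and uniqueness of lifts gives $C\widetilde\theta=\widetilde\theta C$ on the group. Because $C$ commutes with $\widetilde\theta$, it preserves $\widetilde H=\widetilde G^{\widetilde\theta}$.

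For $E^\vee\simeq E^C$, regard $\widetilde G$ as a real group. Every irreducible Casselman--Wallach representation of a complex group is the unique irreducible subquotient $X(\lambda,\mu)$ of a principal series induced from a character $(\lambda,\mu)$ of the complex torus $T_{\mathfrak c}$, and the pair $(\lambda,\mu)$ is determined up to the diagonal Weyl action. The contragredient of $X(\lambda,\mu)$ is $X(-\lambda,-\mu)$, by dualizing the induced representation. Twisting by $C$, which inverts the torus and maps the Borel subgroup to the opposite one, also gives $X(-\lambda,-\mu)$ up to $W$-conjugacy. Hence $E^C\simeq E^\vee$. On the central factor $Z$ both operations invert the character, so the identification holds on the product $\widetilde G=Z\times G_{\rm ss}$.
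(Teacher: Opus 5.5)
\paragraph{The gap: existence of $C$.} The proposal never establishes that a Chevalley involution commuting with $\widetilde\theta$ exists, and that is precisely the step you flag.

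For your maximally split Cartan $\mathfrak c=\mathfrak t\oplus\mathfrak a$, the cocycle identity only places $t$ in $T_{\mathfrak c}^{-\theta}=\{u:\theta(u)=u^{-1}\}$. The map $s\mapsto s^{2}\theta(s)^{-2}$ reaches only the identity component of that group. The quotient $\pi_0(T_{\mathfrak c}^{-\theta})$ is a genuinely nonzero $2$-group whenever the $\theta$-lattice of characters has a trivial summand. This already happens for the split Cartan of $(\mathfrak{so}_5,\mathfrak{so}_4)$, where $\theta$ acts by $\operatorname{diag}(-1,1)$ on $\mathbb Z e_1\oplus\mathbb Z e_2$. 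Nothing in your argument determines the class of $t$ in this group. The class does vanish, but the proof I know passes through a fundamental Cartan and Cayley transforms by noncompact imaginary roots, and the proposal contains none of this. Saying ``a $\theta$-adapted Chevalley basis'' does not amount to an argument.

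\paragraph{The missing idea.} The paper works with a $\widetilde\theta$-stable Borel subgroup and maximal torus $T$ from the outset. On each stable simple factor, Steinberg's structure theorem gives $\widetilde\theta=\operatorname{Int}(t_0)\delta$, where $\delta$ is the pinned diagram involution and $t_0\delta(t_0)$ is central. The pinned Chevalley involution $C_0$ inverts $T$ and commutes with $\delta$. Then $C=\operatorname{Int}(t_0)C_0$ satisfies $C^2=1$ and $C\widetilde\theta C^{-1}=\operatorname{Int}(\delta(t_0)^{-1})\delta=\widetilde\theta$.

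In your notation, on this torus the obstruction is the explicit coboundary $t_0^{2}=t_0\,\delta(t_0)^{-1}$ in the adjoint torus, and your $s$ is a square root of $t_0$. Exchanged pairs of simple factors are treated separately, by transporting a pinning along the exchange.

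Your fallback, citing Adams' theorem that every involution commutes with some Chevalley involution, does close the gap logically. The lemma needs no particular torus to be inverted. But the torus computation then plays no role, and you are citing the main content of the lemma rather than proving it.

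\paragraph{The remaining steps.} The rest is sound. Inversion on $Z$, lifting from $\mathfrak g_{\rm der}$ to the simply connected $G_{\rm ss}$, and preservation of $\widetilde H$ all agree with the paper.

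For $E^\vee\simeq E^C$ you take a genuinely different route.
\begin{itemize}
\item The paper observes that $\sigma=C\circ\mathrm{inv}$ fixes a maximal torus pointwise, hence preserves every closed conjugacy class. It then applies the Aizenbud--Gourevitch duality theorem for admissible anti-involutions.
\item You use Zhelobenko's classification: $X(\lambda,\mu)^\vee\simeq X(-\lambda,-\mu)\simeq X(\lambda,\mu)^C$, up to the diagonal Weyl action.
\end{itemize}
Your route is more explicit but specific to complex groups. It needs some care: choose $K$ stable under $C$, and follow the lowest $K$-type through the twist. The paper's route is uniform and uses nothing about $C$ beyond its inverting some maximal torus. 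Either works once $C$ has been constructed.
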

\begin{proof}
The construction is essentially \cite[Proposition 8.1]{LXYv1}.
On $Z$ use inversion, which commutes with every torus automorphism.
On an exchanged pair write
$\widetilde\theta(x,y)=(\alpha^{-1}y,\alpha x)$.
Choose a pinning on the first factor and transport it by $\alpha$
to the second. If $C_1$ is the first Chevalley involution, use
$C_2=\alpha C_1\alpha^{-1}$. Their product commutes with the exchange.

On a stable simple factor choose a $\widetilde\theta$-stable Borel
subgroup and maximal torus, and then a pinning. The structure theorem
for semisimple automorphisms \cite{Steinberg} gives
\[
 \widetilde\theta=\operatorname{Int}(t)\delta,
 \qquad t\in T,\qquad t\delta(t)\in Z(G_{\rm ss}),
\]
where $\delta$ is the pinned diagram involution. The pinned Chevalley
involution $C_0$ inverts $T$ and commutes with $\delta$.
Set $C=\operatorname{Int}(t)C_0$. Then $C^2=1$, and
\[
 C\widetilde\theta C^{-1}
 =\operatorname{Int}(\delta(t)^{-1})\delta
 =\operatorname{Int}(t)\delta=\widetilde\theta.
\]
Their product gives the involution on the cover.

The anti-automorphism $\sigma=C\circ\mathrm{inv}$ is involutive,
normalizes conjugation, and fixes the chosen maximal torus pointwise.
Every closed conjugacy class is semisimple and meets that torus;
hence $\sigma$ is $\Ad(\widetilde G)$-admissible. It preserves
$\widetilde H$, as does $C$. The duality theorem
\cite[Theorem 8.2.1]{AG} now gives $E^\vee\simeq E^C$.
A character proof using Harish--Chandra regularity is described in
\cite[Remark 8.2.2]{AG}.
\end{proof}

\begin{proposition}\label{prop:connected-GP1}
If $G$ is connected, $(G,G^\theta)$ satisfies GP1.
\end{proposition}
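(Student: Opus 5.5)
We first prove GP1 on the canonical cover $\widetilde G=Z(G)^\circ\times G_{\rm der}^{\rm sc}$ and then descend along the finite central isogeny $m:\widetilde G\to G$.

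\textbf{Step 1: GP2 on the cover.} By Theorem~\ref{thm:main}(ii), every $\widetilde H$-bi-invariant Schwartz distribution on $\widetilde G$ is fixed by $\widetilde g\mapsto\widetilde\theta(\widetilde g^{-1})$. The Gelfand--Kazhdan criterion in the Archimedean Casselman--Wallach form of \cite[Theorem~2.3.2 / \S8]{AG} then gives GP2 for $(\widetilde G,\widetilde H)$:
\[
\dim\Hom_{\widetilde H}(E,\C)\cdot\dim\Hom_{\widetilde H}(E^\vee,\C)\le1
\]
for every irreducible Casselman--Wallach $E$ of $\widetilde G$.

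\textbf{Step 2: GP1 on the cover.} Take the Chevalley involution $C$ of Lemma~\ref{lem:compatible-C}. It preserves $\widetilde H$, and $E^\vee\simeq E^C$. Precomposition with $C$ gives a bijection
\[
\Hom_{\widetilde H}(E,\C)\simeq\Hom_{\widetilde H}(E^C,\C),
\]
because $C(\widetilde H)=\widetilde H$. Hence $\dim\Hom_{\widetilde H}(E,\C)=\dim\Hom_{\widetilde H}(E^\vee,\C)$. The GP2 inequality then forces each factor to be at most one, which is GP1 on the cover.

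\textbf{Step 3: descent to $G$.} Let $E$ be an irreducible Casselman--Wallach representation of $G$, and let $\widetilde E=E\circ m$. Since $m$ is surjective, $\widetilde E$ is irreducible. The subtlety is that $G^\theta$ pulls back to $\widehat H=\{g:c(g)\in F\}$, where $F=\ker m$. This group contains $\widetilde H$ possibly with finite index, and $m(\widetilde H)$ may be strictly smaller than $G^\theta$. Still, $G^\theta=m(\widehat H)$, and an $\widehat H$-invariant functional is in particular $\widetilde H$-invariant. This gives an injection
\[
\Hom_{G^\theta}(E,\C)=\Hom_{\widehat H}(\widetilde E,\C)\hookrightarrow\Hom_{\widetilde H}(\widetilde E,\C),
\]
and the right side has dimension at most one by Step 2. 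This is the finite central descent of \cite[\S8.3]{LXYv1}.

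The main obstacle I expect is the bookkeeping in Step 2 rather than Step 3. I must check that $C$ really preserves $\widetilde H$ on the product cover (including the central torus and exchanged pairs) and that \cite[Theorem~8.2.1]{AG} applies to the possibly non-semisimple $\widetilde G$. I would handle both factorwise, as in Lemma~\ref{lem:compatible-C}, using inversion on $Z(G)^\circ$. Irreducible representations of the torus are characters, so $E^\vee\simeq E^C$ is immediate there. Step 3 only requires noting that the inclusion $\widetilde H\subseteq\widehat H$ goes in the helpful direction.
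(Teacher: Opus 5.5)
Your proposal is correct and follows the paper's proof step by step. The paper takes GP2 on the cover from the Gelfand--Kazhdan criterion, uses the compatible Chevalley involution of Lemma~\ref{lem:compatible-C} to equalize the two period dimensions and obtain GP1, and descends via $\Hom_{G^\theta}(E,\C)\subseteq\Hom_{\widetilde H}(E\circ m,\C)$. The one point you omit, which the paper checks explicitly, is that $E\circ m$ is again a Casselman--Wallach representation of $\widetilde G$: smoothness and moderate growth follow from comparable group norms, and admissibility and finite generation from the preimage of a maximal compact subgroup; this is needed before GP1 on the cover can be applied to it.
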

\begin{proof}
The Gelfand--Kazhdan criterion
\cite[Theorem 8.1.5 and Corollary 8.1.6]{AG} gives GP2 on the cover.
Lemma~\ref{lem:compatible-C} makes its two period dimensions equal:
the underlying continuous functional is unchanged by twisting because
$C(\widetilde H)=\widetilde H$. GP2 therefore gives GP1 there.

For the finite descent, following \cite[\S8.3]{LXYv1}, pull an
irreducible Casselman--Wallach representation $E$ back along $m$.
Surjectivity preserves irreducibility. The covering is a local
diffeomorphism with finite kernel, so smoothness is unchanged.
Algebraic group norms on the two groups are polynomially comparable,
which preserves moderate growth. The inverse image of a maximal
compact subgroup is maximal compact and its representations with
trivial covering kernel are precisely those of the original compact
group. Thus admissibility and finite generation of the compact-finite
module are preserved, with the same Fr\'echet topology.
Since $m(\widetilde H)\subseteq G^\theta$,
\[
 \Hom_{G^\theta}(E,\C)
 \subseteq\Hom_{m(\widetilde H)}(E,\C)
 \simeq\Hom_{\widetilde H}(E\circ m,\C).
\]
The last space has dimension at most one.
\end{proof}

\subsection{Finite components and the full GP1 conclusion}
\label{sec:disconnected-GP1}
For disconnected ambient groups, we use restriction to the identity
component and continuity of periods.

\begin{lemma}\label{lem:CW-restriction}
Let $N=G^\circ$ for a possibly disconnected complex reductive group
$G$. Every irreducible Casselman--Wallach representation $E$ of $G$
contains a nonzero closed irreducible Casselman--Wallach
$N$-subrepresentation.
\end{lemma}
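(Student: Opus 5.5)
The plan is to reduce to the Harish-Chandra module and use finiteness of the component group $G/N$. Let $E$ be an irreducible Casselman--Wallach representation of $G$. Fix a maximal compact subgroup $K\subset G$ that meets every component, and put $K_N=K\cap N$, a maximal compact subgroup of $N$. The space of $K$-finite vectors $E_K$ is an irreducible $(\mathfrak g,K)$-module. Here $\mathfrak g=\Lie N$ is the complexified real Lie algebra as in Section~\ref{sec:conventions}. Since $K/K_N\simeq G/N$ is finite, $K_N$-finite vectors are $K$-finite. Hence $E_K$ is an admissible $(\mathfrak g,K_N)$-module. It is also finitely generated over $\mathfrak g$: it is finitely generated over $(\mathfrak g,K)$ by one vector $v$, and $\mathcal U(\mathfrak g)\cdot\{kv:k\in K\}$ is already $\mathcal U(\mathfrak g)$ applied to the finite-dimensional span of $K_N\cdot\{k_iv\}$, with $k_i$ running over representatives of $K/K_N$.

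First I will show that $E_K$ has finite length as an $(\mathfrak g,K_N)$-module. A finitely generated admissible module with an infinitesimal character has finite length. The center $\mathcal Z(\mathfrak g)$ acts on the irreducible $(\mathfrak g,K)$-module $E_K$ through a finite-dimensional algebra, because the $G/N$-conjugates of a single character are finite in number. So $E_K$ is $\mathcal Z(\mathfrak g)$-finite. Harish-Chandra's finiteness theorem then gives finite length over $(\mathfrak g,K_N)$. Choose an irreducible $(\mathfrak g,K_N)$-submodule $M\subset E_K$; one exists because the length is finite.

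Next I pass to the globalization. Let $\overline M$ be the closure of $M$ in $E$. The restriction of $E$ to $N$ is a smooth Fréchet representation of moderate growth, and its $K_N$-finite vectors are $E_K$, which is admissible and finitely generated. So $E|_N$ is a Casselman--Wallach representation of $N$. By the Casselman--Wallach globalization theorem, closed $N$-invariant subspaces correspond bijectively to $(\mathfrak g,K_N)$-submodules via $W\mapsto W_{K_N}$ and $M\mapsto\overline M$. Hence $\overline M$ is a closed Casselman--Wallach $N$-subrepresentation with $K_N$-finite part $M$. It is irreducible, because any closed invariant subspace of $\overline M$ is determined by its $K_N$-finite part, which is a submodule of the irreducible module $M$. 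It is nonzero because $M\ne0$.

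The main obstacle is the finite-length step together with the precise form of the Casselman--Wallach correspondence under restriction to $N$. I will cite Harish-Chandra's theorem in the form ``finitely generated, admissible and $\mathcal Z(\mathfrak g)$-finite implies finite length'' and Casselman--Wallach's exactness and equivalence of categories. An alternative is to argue via Clifford theory, decomposing $E_K$ as a finite sum of $G/N$-conjugates of one irreducible module. I would first try the direct finite-length argument, since it avoids needing semisimplicity of the restriction.
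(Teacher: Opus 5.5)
Your proof is correct, but it obtains the key finiteness by a different mechanism from the paper.

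\textbf{Common frame.} Both arguments show that $K$-finite and $K_N$-finite vectors coincide and that $E|_N$ is Casselman--Wallach for the connected group $N$. Both then use the Casselman--Wallach correspondence for $N$ to turn a simple Harish--Chandra submodule of $V_0$ into the desired closed irreducible $S$.

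\textbf{Where the routes differ.} You produce the simple submodule from finite length. For this you use $\mathcal Z(\mathfrak g)$-finiteness together with Harish--Chandra's theorem that finitely generated $\mathcal Z(\mathfrak g)$-finite modules have finite length. The paper avoids both inputs:
\begin{enumerate}
\item Noetherianity of $U(\mathfrak n_\C)$ gives a maximal proper closed $N$-subrepresentation $U$.
\item The $[G:N]$ translates $g_jU$ are permuted by $G$, so by $G$-irreducibility their intersection is zero.
\item Hence $V_0$ embeds in the finite direct sum of the simple modules $(E/U_j)_{K_N\text{-fin}}$.
\end{enumerate}
The paper's route is more elementary and gives more: $V_0$ is semisimple of length at most $[G:N]$. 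However, it uses the irreducibility of $E$ in an essential way. Your route uses irreducibility only to get $\mathcal Z(\mathfrak g)$-finiteness, so it would apply to any finite-length $E$. The price is the finite-length theorem.

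\textbf{Two steps to tighten.}
\begin{itemize}
\item Admissibility over $K_N$ does not follow from the equality of finite vectors alone. It follows from Frobenius reciprocity: a $K_N$-type $\tau$ occurs only in the finitely many $K$-types contained in $\operatorname{Ind}_{K_N}^K\tau$, each with finite multiplicity.
\item For $\mathcal Z(\mathfrak g)$-finiteness, argue as follows. Since $\operatorname{Ad}(N)$ fixes $\mathcal Z(\mathfrak g)$, the group $K$ acts on it through the finite group $K/K_N$. The invariants $\mathcal Z(\mathfrak g)^K$ act by scalars on the irreducible admissible $(\mathfrak g,K)$-module $E_K$, by Schur's lemma. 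Finally, $\mathcal Z(\mathfrak g)$ is a finite module over $\mathcal Z(\mathfrak g)^K$.
\end{itemize}
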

\begin{proof}
Choose a maximal compact subgroup $C\subset G$ and put $C_N=C\cap N$.
The Cartan decomposition for disconnected complex reductive groups
\cite[Theorem 0.1 and \S1.3.2, equation (5)]{JRT} gives
$G=NC$, with $C_N$ maximal compact in $N$ and
$[C:C_N]=[G:N]<\infty$. Finite coset representatives show that
$E_{C\text{-fin}}=E_{C_N\text{-fin}}=:V_0$.
This is the same finitely generated module for
$\mathfrak n_{\C}:=\Lie_{\R}(N(\C))\otimes_{\R}\C$.
For an irreducible compact type $\tau$ of $C_N$,
\[
 \Hom_{C_N}(\tau,V_0)
 \simeq\Hom_C(\operatorname{Ind}_{C_N}^C\tau,V_0)
\]
is finite-dimensional, since the induced compact representation is
finite-dimensional. Smoothness and moderate growth restrict to $N$.
Hence $E|_N$ is Casselman--Wallach for the connected group $N$.

By the Casselman--Wallach equivalence for the connected group \(N\),
Harish--Chandra submodules correspond to closed subrepresentations
\cite[\S3, Theorem~1 and the following discussion]{Wallach14}.
Since \(U(\mathfrak n_{\mathbb C})\) is noetherian and \(V_0\) is
finitely generated, its submodules satisfy the ascending chain
condition.  Hence the closed \(N\)-subrepresentations of \(E\) do
as well, and we may choose a maximal proper one \(U\subset E\).

Let \(g_1=1,\ldots,g_d\) represent \(G/N\), and put
\(U_j=g_jU\).  Each \(U_j\) is a maximal proper closed
\(N\)-subrepresentation.  Since \(G\) permutes the \(U_j\), $\bigcap_{j=1}^d U_j$
is a proper closed \(G\)-subrepresentation of \(E\), and hence is
zero.  Therefore
\[
E|_N\longrightarrow\bigoplus_{j=1}^d E/U_j,
\qquad
v\longmapsto(v+U_j)_j
\]
is injective.

Passing to \(C_N\)-finite vectors embeds \(V_0\) into a finite
direct sum of simple Harish--Chandra modules.  Hence \(V_0\)
contains a simple Harish--Chandra submodule \(W\).
Globalizing the inclusion \(W\hookrightarrow V_0\) gives a nonzero
closed irreducible Casselman--Wallach \(N\)-subrepresentation
\(S\subset E\).
\end{proof}

\begin{proposition}[Finite-component transfer of GP1]\label{prop:GP1-transfer}
Let $G$ be a possibly disconnected complex reductive algebraic group,
$N=G^\circ$, and $H$ a closed algebraic subgroup such that $G=NH$.
Put $K=H\cap N$. If
\[
 \dim\Hom_K(S,\C)\leq1
\]
for every irreducible Casselman--Wallach representation $S$ of $N$,
then $\dim\Hom_H(E,\C)\leq1$ for every irreducible
Casselman--Wallach representation $E$ of $G$.
\end{proposition}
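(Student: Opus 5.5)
The plan is to pass from $E$ to a single irreducible $N$-constituent, using Lemma~\ref{lem:CW-restriction}, and then to exploit the finiteness of $G/N$ together with $G=NH$, which lets us choose coset representatives of $G/N$ inside $H$. Fix an irreducible Casselman--Wallach representation $E$ of $G$. By Lemma~\ref{lem:CW-restriction}, $E$ contains a nonzero closed irreducible Casselman--Wallach $N$-subrepresentation $S$. Since $G=NH$, we can pick representatives $h_1=1,\dots,h_d\in H$ of $G/N$. Each translate $h_jS$ is again a closed irreducible Casselman--Wallach $N$-subrepresentation, namely the twist of $S$ by $\operatorname{Int}(h_j)$. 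The sum $\sum_j h_jS$ is $G$-stable, and we want it to be dense, in fact closed. Closedness should follow by working with $C_N$-finite vectors: the sum of the Harish--Chandra modules of the $h_jS$ is a $(\mathfrak g,C)$-submodule of the simple module $V_0$ considered as a $G$-module, so it equals $V_0$; globalization then gives $E=\sum_j h_jS$ by Casselman--Wallach.

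Next take $\lambda\in\Hom_H(E,\C)$. Restricting $\lambda$ to $S$ produces an element of $\Hom_K(S,\C)$, and we consider the map
\[
 r:\Hom_H(E,\C)\to\Hom_K(S,\C),\qquad \lambda\mapsto\lambda|_S .
\]
We claim $r$ is injective. Suppose $\lambda|_S=0$. For $v\in S$ we have $\lambda(h_jv)=\lambda(v)=0$ by $H$-invariance, so $\lambda$ vanishes on every $h_jS$. Since these subspaces span a dense subspace of $E$ and $\lambda$ is continuous, $\lambda=0$. Injectivity of $r$ combined with the hypothesis $\dim\Hom_K(S,\C)\le1$ gives $\dim\Hom_H(E,\C)\le1$. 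The argument applies to $S$ directly, with no twisting needed, because the hypothesis is imposed on every irreducible $S$.

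The main obstacle is the spanning step, $\overline{\sum_j h_jS}=E$. If the Harish--Chandra module argument is awkward, a cleaner alternative is available: the closure of $\sum_j h_jS$ is a closed subspace that is stable under $N$ (because $N$ is normal, each $h_jS$ is $N$-stable) and under each $h_j$ (because $h_ih_j\in h_kN$ for some $k$, which permutes the summands). Hence it is a nonzero closed $G$-subrepresentation, and irreducibility of $E$ forces it to be $E$. Continuity of $\lambda$ then finishes the argument, and this avoids any need for closedness of the finite sum itself.
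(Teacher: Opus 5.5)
Your proposal is correct and follows the paper's proof essentially verbatim. You restrict $H$-invariant functionals to the irreducible $N$-subrepresentation $S$ from Lemma~\ref{lem:CW-restriction}, and get injectivity because the closed span of the $H$-translates of $S$ is a nonzero closed $G$-subrepresentation (using $G=NH$ and normality of $N$), hence all of $E$. Your fallback argument in the last paragraph is exactly the paper's, so you can drop the Harish--Chandra-module detour, which would in any case additionally need the closed-range property of morphisms of Casselman--Wallach representations.
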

\begin{proof}
Choose $S\subset E$ as in Lemma~\ref{lem:CW-restriction}.
Restriction gives a map
$\Hom_H(E,\C)\to\Hom_K(S,\C)$ of spaces of continuous functionals.
If $\lambda$ vanishes on $S$, its $H$-invariance makes it vanish on
every $hS$. Since $N$ is normal, $G=HN$, and $S$ is $N$-stable,
\[
 \operatorname{span}_{h\in H}hS
 =\operatorname{span}_{g\in G}gS.
\]
The closure is a nonzero closed $G$-subrepresentation, hence all of
$E$. Continuity gives $\lambda=0$. Restriction is injective, proving
the bound.
\end{proof}

\begin{proof}[Proof of Corollary~\ref{cor:multiplicity}]
Set $H=G^\theta$, $N=G^\circ$, and let $p:G\to F=G/N$ be the
finite component map. The surjection
\[
 G/H\longrightarrow F/p(H)
\]
has the finite discrete coset space $F/p(H)$ as its target.
Connectedness of $G/H$ forces
that space to be a point, so $G=NH$. Over $\C$ the equality also
holds on complex points. The involution preserves $N$ and
$H\cap N=N^{\theta|_N}$. Proposition~\ref{prop:connected-GP1}
therefore supplies the hypothesis of Proposition~\ref{prop:GP1-transfer},
which proves part~\textup{(i)}. Finally
\cite[Proposition 8.1.3]{AG} gives
$\mathrm{GP1}\Rightarrow\mathrm{GP2}\Rightarrow\mathrm{GP3}$. Since \(\mathrm{GP}3\) is van Dijk's conjecture in this setting,
part~(ii) follows.
\end{proof}

%% file: nice_factors.tex
\section{Distinguished-nilpotent calculations}\label{app:nice}

We give explicit calculations for Sekiguchi's inequality
\cite[Theorem 6.3]{Sek85} in the cases used in
Proposition~\ref{prop:nice-inequality}. For a normal triple $(e,d,f)$, write
\[
\mathfrak p^{e,d,f}
=
\{x\in\mathfrak p:[e,x]=[d,x]=[f,x]=0\}.
\]
First recall the criterion:
\begin{equation}\label{eq:distinguished-zero}
 e\text{ is }\fp\text{-distinguished}
 \quad\Longleftrightarrow\quad \fp^{e,d,f}=0.
\end{equation}
Indeed, the invariant form is nondegenerate on the triple centralizer and on its odd part. If the latter is nonzero, it contains $z$ with $\beta(z,z)\ne0$; the nonzero semisimple part of $z$ is odd and centralizes the triple. Conversely, when the odd triple centralizer is zero, every weight on $\fp^e$ is positive. Each element of $\fp^e$ then strictly raises the finite $d$-grading of $\fg$ and is nilpotent.

\subsection{AI}
On $V=\C^n$ with a nondegenerate symmetric form $B$, the space $\fp$ consists of the self-adjoint trace-zero endomorphisms. A self-adjoint nilpotent $e$ is an orthogonal sum of nondegenerate cyclic Jordan blocks. To see this directly, let $m$ be its largest block size. The symmetric form $B(v,e^{m-1}w)$ is nonzero, so choose $v$ with $B(v,e^{m-1}v)\ne0$. The Gram matrix of $v,ev,\ldots,e^{m-1}v$ has zero entries beyond the anti-diagonal and nonzero entries on it. This cyclic subspace is nondegenerate; its orthogonal complement is $e$-stable, and induction gives the decomposition.

If there is more than one block, a nonzero trace-zero linear combination of two orthogonal block projections is semisimple, self-adjoint, and commutes with $e$. Thus a distinguished $e$ has one block. Its endomorphism centralizer is $\C[e]$, so
\[
 \fp^e=\langle e,e^2,\ldots,e^{n-1}\rangle,
 \qquad \lambda_j=2j\quad(1\leq j<n).
\]
Consequently
\begin{equation}\label{eq:AI-delta}
 \delta_\fp(e)=\sum_{j=1}^{n-1}(2j+2)
       -\left(\frac{n(n+1)}2-1\right)
       =\frac{n(n+1)}2-1>0.
\end{equation}

\subsection{Equal AIII}
Let $V=V_+\oplus V_-$, with both dimensions $n$, and let $J$ be the grading involution. Use graded Jordan chains $W_{p_i}^{\varepsilon_i}$ with basis $v_0,\ldots,v_{p_i-1}$ and
\begin{equation}\label{eq:graded-chain}
 ev_t=v_{t-1},\qquad dv_t=(p_i-1-2t)v_t,
 \qquad Jv_t=\varepsilon_i(-1)^t v_t.
\end{equation}
Here $v_{-1}=0$ and $fv_t=(t+1)(p_i-1-t)v_{t+1}$. Passing from $\End V$ to $\mathfrak{sl}(V)$ removes only an even scalar and does not change $\fp$.

For the ordered pair of blocks $i,j$, let $U_{ij}$ be the odd part of $\Hom(W_{p_j}^{\varepsilon_j},W_{p_i}^{\varepsilon_i})$ and $V_{ij}=U_{ij}^e$. Put $t_i=0$ for even $p_i$ and $t_i=\varepsilon_i$ for odd $p_i$. Then $\dim U_{ij}=(p_ip_j-t_it_j)/2$. Write $p=p_i$, $q=p_j$, and denote the target and source chain bases by $u_0,\ldots,u_{p-1}$ and $v_0,\ldots,v_{q-1}$, respectively, with the normalization of \eqref{eq:graded-chain}. The $e$-intertwiners have a basis $T_r$, $0\leq r<\min(p,q)$, determined by
\[
 T_r(v_{q-1})=u_r,\qquad T_r(v_t)=e^{q-1-t}u_r.
\]
The image of the cyclic generator can be any vector killed by $e^q$, so these are all intertwiners. Subtracting the weights and signs in \eqref{eq:graded-chain} gives
\[
 \lambda_r=p+q-2-2r,\qquad
 \operatorname{parity}(T_r)=\varepsilon_i\varepsilon_j(-1)^{q-1+r}.
\]
The block contribution is therefore exactly
\begin{equation}\label{eq:AIII-contribution}
 c_{ij}=\sum_{\substack{0\leq r<\min(p,q)\\
                  \varepsilon_i\varepsilon_j(-1)^{q-1+r}=-1}}
               (p+q-2r)-\frac{pq-t_it_j}{2},
 \qquad \delta_\fp(e)=\sum_{i,j}c_{ij}.
\end{equation}
For equal parities this gives the following values, with $p\geq q$; in the mixed case the table gives the sum of the two ordered contributions:
\begin{equation}\label{eq:AIII-table}
\begin{array}{c|c|c}
 \text{block sizes}&\text{highest signs}&\text{contribution}\\ \hline
 p,q\text{ even}&\text{equal}&c_{ij}=c_{ji}=q\\
 p,q\text{ even}&\text{opposite}&c_{ij}=c_{ji}=0\\
 p,q\text{ odd}&\text{equal}&c_{ij}=c_{ji}=(q-p)/2\\
 p,q\text{ odd}&\text{opposite}&c_{ij}=c_{ji}=(p+q)/2\\
 \text{mixed}&\text{either}&c_{ij}+c_{ji}=\min(p,q)
\end{array}
\end{equation}
For $p=2a$, $q=2b$, $a\geq b$, equal signs select even $r$, whose sum is $2b(a+1)$; opposite signs select odd $r$, whose sum is $2ab$. Subtracting $pq/2=2ab$ gives the first two rows. In particular, an even diagonal block contributes $c_{ii}=p_i$. For $p=2a+1$, $q=2b+1$, equal signs select odd $r$, with sum $2b(a+1)$ and odd-space dimension $2ab+a+b$; opposite signs select even $r$, with sum $2(a+1)(b+1)$ and dimension $2ab+a+b+1$. These give the third and fourth rows, including $c_{ii}=0$ on an odd diagonal block. In the mixed case, reversing the ordered pair reverses the parity condition. Together the two sums include each $r$ once, so their sum before subtracting dimensions is
\[
 \sum_{r=0}^{m-1}(p+q-2r)=m(p+q-m+1),\qquad m=\min(p,q).
\]
Since $pq=m(p+q-m)$, subtracting the sum $pq$ of the two odd-space dimensions gives $c_{ij}+c_{ji}=m$.

For comparison with the block notation in
\cite[Lemma~6.9]{Sek85}, our contribution $c_{ij}$ subtracts
$\dim U_{ij}$, the dimension of the full odd block space.
Formula~\eqref{eq:AIII-contribution} and the table above give the
values directly: an even diagonal block contributes $p_i$, and an
opposite-sign odd pair contributes $(p_i+p_j)/2$ in each ordered
direction.

Let the odd plus sizes be $P_1\geq\cdots\geq P_b$ and the odd minus sizes be $Q_1\geq\cdots\geq Q_b$. The counts agree because $\dim V_+=\dim V_-$. All odd/odd contributions sum to
\begin{align*}
 &\sum_{a<c}(P_c-P_a)+\sum_{a<c}(Q_c-Q_a)
                          +\sum_{a,c}(P_a+Q_c)\\
 &\hspace{35mm}=\sum_{a=1}^b(2a-1)(P_a+Q_a).
\end{align*}
This is positive if $b>0$. The even diagonals are positive, and all unordered even/even and mixed contributions are nonnegative. If $b=0$, some even block is present. Thus $\delta_\fp(e)>0$ for every nilpotent in equal AIII, in particular for every distinguished one.

\subsection{CI}
Let $V$ be symplectic of dimension $2n$, with form $B$, and let $J^2=1$ satisfy $B(Ju,Jv)=-B(u,v)$. Its eigenspaces are Lagrangian and the fixed Lie algebra is $\mathfrak{gl}_n$. Decompose the standard module for the normal triple as
\[
 V=\bigoplus_{m\geq1} W_m\otimes M_m.
\]
Here $W_m$ is the $m$-dimensional irreducible
$\mathfrak{sl}_2$-module, and $M_m$ is its multiplicity space. On $W_m$ in the normalization \eqref{eq:graded-chain}, the invariant form $b_m$ has anti-diagonal entries $b_m(v_i,v_{m-1-i})=(-1)^i$. Its symmetry is $(-1)^{m-1}$, and the grading $P_mv_i=(-1)^iv_i$ changes the form by the same factor. Writing $B=b_m\otimes b_{M_m}$ and $J=P_m\otimes K_m$ gives
\begin{equation}\label{eq:CI-multiplicity}
 b_{M_m}(v,u)=(-1)^{m}b_{M_m}(u,v),
 \qquad b_{M_m}(K_mu,K_mv)=(-1)^m b_{M_m}(u,v).
\end{equation}
The triple centralizer acts through the isometry algebras of these multiplicity spaces.

If $m$ is odd, $M_m$ is symplectic with anti-symplectic grading. When it is nonzero, its odd symplectic algebra contains the semisimple matrix $\left(\begin{smallmatrix}0&I\\I&0\end{smallmatrix}\right)$ in a Lagrangian splitting. Thus a distinguished element has no odd-sized blocks. If $m$ is even, $M_m$ is orthogonal and $K_m$ is an isometry. If both signs occur, an off-diagonal skew matrix on one nondegenerate line of each sign is a nonzero semisimple odd element. If only one sign occurs, the odd centralizer is zero. By \eqref{eq:distinguished-zero}, the distinguished configurations are therefore sums of even symplectic chains $W_{2a_i}^{\varepsilon_i}$, with one sign for each repeated size.

Under $\mathfrak{sp}(V)\simeq\Sym^2V$, the involution is $-J\otimes J$: this follows from $uv\mapsto[w\mapsto B(v,w)u+B(u,w)v]$ and the anti-symplectic property of $J$. Thus $\fp$ is the $+1$-eigenspace of $J\otimes J$. The symmetric square of a chain of size $2a$ has highest weights $4a-2,4a-6,\ldots,2$, all of tensor sign $+1$. Its contribution is
\[
 \sum_{j=1}^a4j-a(a+1)=a(a+1).
\]
On $W_p^\varepsilon\otimes W_q^\eta$ the highest weights are $p+q-2-2r$, $0\leq r<\min(p,q)$, with highest sign $\varepsilon\eta(-1)^r$. Indeed, a highest vector is a linear combination of $v_t\otimes w_{r-t}$; the equation $e\xi=0$ gives alternating coefficients. For equal factors, transposition acts by $(-1)^r$, so symmetric squares select even $r$ and exterior squares select odd $r$.

For two even chains of sizes $2a\geq2b$, the $\fp$-dimension of the cross term is $2ab$. Equal signs select even $r$ and give a sum of $2b(a+1)$; opposite signs select odd $r$ and give $2ab$. Their contributions are respectively $2b$ and zero. Hence
\begin{equation}\label{eq:CI-delta}
 \delta_\fp(e)=\sum_i a_i(a_i+1)
       +\sum_{\substack{i<j\\\varepsilon_i=\varepsilon_j}}
                      \min(2a_i,2a_j)>0.
\end{equation}
The first sum is nonempty because $n\geq1$.

\subsection{BDI with block difference at most two}
Now let $V$ be orthogonal of dimension $2n+k$, and let $J$ be an isometric involution with eigenspace dimensions $n+k,n$. In the same multiplicity decomposition,
\begin{equation}\label{eq:BDI-multiplicity}
 b_{M_m}(v,u)=(-1)^{m-1}b_{M_m}(u,v),
 \qquad b_{M_m}(K_mu,K_mv)=(-1)^{m-1}b_{M_m}(u,v).
\end{equation}
For even $m$, the multiplicity space is symplectic with anti-symplectic grading; the first semisimple matrix used above excludes it. For odd $m$, it is orthogonal with isometric grading, and the second matrix excludes mixed signs at a fixed size. Thus the distinguished configurations are precisely sums of odd chains $W_{2a_i+1}^{\varepsilon_i}$, $a_i\geq0$, with one sign at each repeated size.

Here $\mathfrak{so}(V)\simeq\Lambda^2V$ and the involution is $J\otimes J$, so $\fp$ is the $-1$-eigenspace of $J\otimes J$. In $\Lambda^2W_{2a+1}$ the highest weights are $4a-2,4a-6,\ldots,2$, with odd highest vectors, and the odd-space dimension is $a(a+1)$. Its contribution is $a(a+1)$, including zero when $a=0$. For two chains of sizes $2a+1\geq2b+1$, the highest weights are $2a+2b-2r$, $0\leq r\leq2b$. Equal signs select odd $r$ and give
\[
 2b(a+1)-(2ab+a+b)=b-a;
\]
opposite signs select even $r$ and give
\[
 2(a+1)(b+1)-(2ab+a+b+1)=a+b+1.
\]

List the plus parameters as $A_1\geq\cdots\geq A_r\geq0$ and the minus parameters as $B_1\geq\cdots\geq B_s\geq0$. The trace of $J$ on an odd chain is its highest sign, so $r-s=k$. Adding the contributions just computed gives the closed formula
\begin{equation}\label{eq:BDI-delta}
\begin{aligned}
 \delta_\fp(e)
 &=\sum_i A_i(A_i+1)+\sum_j B_j(B_j+1)\\
 &\quad+\sum_{i<j}(A_j-A_i)+\sum_{i<j}(B_j-B_i)
   +\sum_{i,j}(A_i+B_j+1)\\
 &=\sum_{i=1}^r A_i(A_i+2i-k)
    +\sum_{j=1}^s B_j(B_j+2j+k)+rs.
\end{aligned}
\end{equation}
For $k=0,1,2$, every term in the last line is nonnegative, and every nonzero $A_i$ or $B_j$ contributes positively; when $k=2$ the first term is $A_1^2$. If $s>0$, then $rs>0$. If $s=0$, then $r=k$ and $\sum A_i=n\geq1$, so a positive term remains. This covers all nontrivial ranks. For $n=1,k=0$, the odd line in the toral algebra $\mathfrak{so}_2$ contains semisimple elements, so its zero nilpotent is not distinguished; its quotient $Q(\fp)$ is zero.

\subsection{The pair \texorpdfstring{$E_7/A_7$}{E7/A7}}\label{app:E7}
We use exact root arithmetic to obtain a positive lower bound for every possible distinguished characteristic. The root numbering in this subsection is independent of that used in
Section~\ref{sec:EVII}. In the hyperplane $\sum x_i=0$ in $\mathbb Q^8$, take the roots
\[
 \{u_i-u_j:i\ne j\}\ \cup\
 \left\{\tfrac12(\pm u_1\pm\cdots\pm u_8):
                              \text{four plus signs}\right\},
\]
where $u_i$ are the coordinate vectors. The difference-root spaces together with the Cartan subalgebra form
$\mathfrak{sl}_8$, while the remaining $70$ root spaces form
$\mathfrak p\simeq \Lambda^4\C^8$. Choose
\[
 \alpha_i=u_i-u_{i+1}\ (1\leq i\leq6),\qquad
 \alpha_7=\tfrac12(-u_1-u_2-u_3+u_4+u_5+u_6+u_7-u_8).
\]
The edges of the Dynkin diagram are $12,23,34,45,56,37$.

Move the characteristic $d$ to the dominant chamber of $E_7$. Its labels $\alpha_i(d)$ are nonnegative integers, being weights of the adjoint $\mathfrak{sl}_2$-module, and they belong to $\{0,1,2\}$. To check the upper bound, delete any nodes whose labels exceed two. All roots of weight two lie in the remaining Levi, so $e,f$, and hence $d=[e,f]$, lie in its derived algebra. The inverse Cartan matrix on each remaining component has nonnegative entries, so the coefficients of the dominant $d$ in that component's simple coroots are nonnegative. A deleted node then takes a nonpositive value on $d$, a contradiction.

For each of the $3^7$ label vectors $a$, let $c_j$ be the multiplicity of weight $j$ in the adjoint root character, including seven Cartan zero weights, and put
\[
 m_j=c_j-c_{j+2}\quad(j\geq0),\qquad
 p=\sum_{j\geq0}m_j,\qquad q=(p+7)/2.
\]
For a triple, the $m_j$ are its highest-weight multiplicities. They are nonnegative, and $m_j$ is even for odd $j$, since the adjoint form is symmetric and an odd-highest-weight irreducible has alternating invariant form. These tests leave $99$ candidates. Equation~\eqref{eq:kernel-difference} gives $q=\dim\fp^e$, because $\dim\fp-\dim\fh=70-63=7$. Distinguishedness requires at least $q$ positive highest weights. This leaves $79$ candidates. If $\nu_1\leq\cdots\leq\nu_q$ are the $q$ smallest positive highest weights, counted with multiplicity, then
\begin{equation}\label{eq:E7-bound}
 \delta_\fp(e)\geq b(a):=\sum_{i=1}^q(\nu_i+2)-70.
\end{equation}

The finite calculation is recorded below, grouped by $q$. The middle column counts the necessary candidates; the last is the minimum of $b(a)$ in that group.
\begin{center}
\small
\begin{tabular}{r|r|r@{\qquad}r|r|r@{\qquad}r|r|r}
 $q$&number&$\min b$&$q$&number&$\min b$&$q$&number&$\min b$\\ \hline
 7&1&70 &15&6&20 &23&3&24\\
 8&2&44 &16&6&16 &24&2&28\\
 9&3&30 &17&3&22 &25&1&24\\
 10&3&22&18&5&15 &28&3&42\\
 11&6&26&19&4&14 &29&2&30\\
 12&2&14&20&3&20 &32&1&42\\
 13&7&34&21&2&20 &35&1&50\\
 14&9&10&22&3&26 &38&1&54
\end{tabular}
\end{center}
The calculation uses the Cartan matrix $C$ with diagonal entries $2$ and entries $-1$ on the six listed edges. Start with the seven unit vectors and close under
\[
 r\longmapsto r-\left(\sum_j C_{ij}r_j\right)e_i
 \qquad(1\leq i\leq7).
\]
The resulting set $R$ has $126$ roots in simple-root coordinates. For each $a\in\{0,1,2\}^7$, compute
\[
 c_j=\#\{r\in R:r\cdot a=j\}+7\,\mathbf1_{j=0}.
\]
Discard $a$ if some $m_j<0$, if an odd $j$ has odd $m_j$, or if $\sum_{j>0}m_j<q$. Expanding the multiset with $m_j$ copies of each positive $j$ and summing its first $q$ entries gives exactly the displayed table. Every distinguished characteristic satisfies these necessary conditions, so the table gives $\delta_\fp(e)\geq10>0$.

%% file: extension_obstructions.tex
\section{Local extension obstructions and a distinguished CII orbit}\label{app:obstructions}

\subsection{The two extension steps}\label{app:extension-steps}
The comparisons in this appendix concern the arguments of
\cite{LXYv1}. The following calculation gives the local extension
obstruction in the DIII and even Spin constructions of that version.
The revised classical proofs use Gaussian test functions and moment
quotients instead \cite[\S\S3--4]{LXYv2}.

\begin{lemma}\label{lem:axes}
Let $\C^\times$ act on $\C^2$ by $t(b,c)=(tb,t^{-1}c)$, and let $\tau(b,c)=(c,b)$. On $\Omega=\C^2\setminus\{0\}$ the distribution
\[
 T^\circ(f)=
 \int_{\C^\times}f(z,0)\,d^\times z
 -
 \int_{\C^\times}f(0,z)\,d^\times z,\qquad
 d^\times z=\frac{dr}{r}\frac{d\vartheta}{2\pi},
\]
where $z=re^{i\vartheta}$, is torus-invariant and $\tau$-odd. It has no torus-invariant distributional extension to $\C^2$.
\end{lemma}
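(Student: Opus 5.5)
The first two assertions are direct verifications; the non-extension assertion will come from a scaling-anomaly argument that parallels the proof of Lemma~\ref{lem:EVII-node}.

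\emph{Invariance and oddness.} For $t\in\C^\times$, the test function $f\circ t$ restricted to the $b$-axis is $z\mapsto f(tz,0)$. Since $d^\times z$ is invariant under multiplication in $\C^\times$, the first integral is unchanged. The same holds on the $c$-axis, where $t$ acts by $z\mapsto t^{-1}z$ and inversion also preserves $d^\times z$. The map $\tau$ exchanges the two axis integrals, so $\tau_*T^\circ=-T^\circ$. Convergence on $\Omega$ is clear, because test functions supported in $\Omega$ vanish near the origin of each axis.

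\emph{Non-extension.} Suppose $T$ is a torus-invariant distribution on $\C^2$ extending $T^\circ$. We may replace $T$ by $(T-\tau_*T)/2$. This is still an extension, because $T^\circ$ is odd, and it is still torus-invariant, because $\tau$ normalizes the torus action ($\tau t\tau=t^{-1}$). So we may also assume $T$ is $\tau$-odd.

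Next, compare $T$ with the explicit finite-part extension. Regularize with $|z|^{2s}$: put
\[
T^\circ_s(f)=\int f(z,0)|z|^{2s}\,d^\times z-\int f(0,z)|z|^{2s}\,d^\times z .
\]
This converges for $\operatorname{Re}s>0$. Each branch has a simple pole at $s=0$ with residue $\pm\tfrac12 f(0,0)$, so the two residues cancel. Hence $P:=\lim_{s\to0}T^\circ_s$ exists and extends $T^\circ$. This is the $k=\ell=0$ case of the regularization in \eqref{eq:EVII-residue}, but here taken with the opposite relative sign. Write $P$ for this extension.

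Now let $X$ be the generator of the real one-parameter group $r(b,c)=(rb,r^{-1}c)$, $r>0$. The two branches transform by $r^{-2s}$ and $r^{2s}$. Taking constant Laurent terms as in the proof of Lemma~\ref{lem:EVII-node}, each branch contributes $-2\log r$ times its residue, with opposite signs from the two scaling laws. Since the residues themselves also have opposite signs, the two contributions add. Therefore
\[
XP = c\,\delta_0 ,
\]
with $c\neq0$; I expect $c=\pm2$ up to normalization.

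The difference $Q:=T-P$ is supported at the origin, so it is a finite jet $\sum a_{\alpha\beta\gamma\eta}\partial_b^\alpha\partial_c^\beta\partial_{\bar b}^\gamma\partial_{\bar c}^\eta\delta_0$. Torus invariance of $T$ gives $XT=0$, hence $XQ=-c\,\delta_0$. However, $X$ acts diagonally on monomial jets with weight $\alpha+\gamma-\beta-\eta$. Its image therefore has zero component in the weight-zero span, and $\delta_0$ lies in that span. This contradiction shows that no torus-invariant extension exists.

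The main care point is the sign bookkeeping in the computation of $XP$. One must confirm that the two branch anomalies add rather than cancel. This is exactly the distinction between the $\tau$-odd case, which produces an anomaly, and the $\tau$-even case, where the measure $\int f(z,0)\,d^\times z+\int f(0,z)\,d^\times z$ has a pole with no cancellation and a different analysis applies. A direct check on a test function equal to $\phi(|b|)\phi(|c|)$ will pin down the sign.
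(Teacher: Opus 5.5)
Your proof is correct, but it takes a different route from the paper's.

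\textbf{Your route.} You construct an explicit extension $P$ as the finite part of the $|z|^{2s}$-regularization; the two simple poles cancel because of the relative minus sign. You then compute its scaling anomaly. With $A_s,B_s$ the two regularized branches, $r_*T^\circ_s=r^{-2s}A_s-r^{2s}B_s$ has constant Laurent term $A_0-B_0-4(\log r)\cdot\tfrac12\delta_0$. So the two branch contributions do add, and $XP=-2\delta_0$, i.e.\ $c=-2$ for the push-forward generator. Finally you exclude correction by a point-supported jet using the diagonal $X$-weights $\alpha+\gamma-\beta-\eta$.

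\textbf{The paper's route.} The paper avoids both the regularization and the structure theorem for point-supported distributions. It takes $F(b,c)=\varphi(b)\varphi(c)$ with $\varphi$ radial and equal to $1$ near $0$. Then $DF=(E_b-E_c)F$ vanishes near the origin. Hence every extension, invariant or not, takes the value $T^\circ(DF)=2\int_0^\infty\varphi'(r)\,dr=-2$ on it, whereas torus invariance forces the value $0$.

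\textbf{How they compare.} The two arguments compute the same number: $XP(F)=P(DF)=T^\circ(DF)=-2$. Your weight-zero step is exactly what this single test function detects, since $Q(DF)=0$ for every $Q$ supported at the origin.

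Your version displays the obstruction as a residue anomaly in the form used in Lemma~\ref{lem:EVII-node}, and that form generalizes to higher transverse orders. The paper's version is shorter and does not need an extension to exist. It also applies verbatim to Schwartz distributions on the Nash open $\Omega$, whose test functions are only flat at $0$; your convergence remark covers compactly supported tests, though flatness gives convergence there as well.

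Your preliminary reduction to a $\tau$-odd extension is harmless but never used.
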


\begin{proof}
The integrals define a Schwartz distribution on the Nash open $\Omega$; its test functions are flat at the deleted point and rapidly decreasing at infinity. Invariance and oddness follow by change of variables. Choose a radial $\varphi\in C_c^\infty(\C)$ equal to one near zero and put $F(b,c)=\varphi(b)\varphi(c)$. Let $E_b$ and $E_c$ be the real Euler vector fields in the two
complex coordinates; for example,
\[
(E_bF)(b,c)
=
\left.\frac{d}{dt}\right|_{t=0}F(e^t b,c).
\]
For the radial infinitesimal generator $D=E_b-E_c$, the function $DF$ vanishes near the origin, whereas
\[
 T^\circ(DF)=2\int_0^\infty\varphi'(r)\,dr=-2.
\]
An invariant extension must annihilate $DF$. Since this test vanishes near the origin, a distribution supported there cannot alter the displayed value.
\end{proof}

In the DIII construction of \cite[\S4.3]{LXYv1}, write $V=V_0\oplus P$, with $\dim P=2$ and $\dim V_0=r-2$ even. Choose standard skew matrices $J_0,J_2$ with square $-I$, and take the boundary point
\[
 x_0=(J_0\oplus0,J_0\oplus0),\qquad J_0^2=-I.
\]
In the fibre $BC=\diag(-I_{V_0},0_P)$, the $P$-blocks are
\begin{equation}\label{eq:D-active-axes}
 B_{22}=bJ_2,\qquad C_{22}=cJ_2,\qquad bc=0.
\end{equation}
The group $\GL(P)$ acts by $(b,c)\mapsto(\det(g)b,\det(g)^{-1}c)$. In the semisimple slice $x_0+\fp^{x_0}$, the $V_0$-block with skew coordinate $K$ is
\[
 (J_0+K,J_0-J_0KJ_0).
\]
Since $(J_0+K)(J_0-J_0KJ_0)=-I+2KJ_0-KJ_0KJ_0$,
the fibre equation is $KJ_0(2I-KJ_0)=0$, which forces $K=0$ near the origin. Thus the transverse support is precisely the two axes. The boundary and punctured-axis stabilizers are $\Sp(V_0)\times\GL(P)$ and $\Sp(V_0)\times\Sp(P)$; they are unimodular, and $\GL(P)/\Sp(P)\simeq\C^\times$. Frobenius descent identifies the orbital difference with the distribution in Lemma~\ref{lem:axes}, so the restriction isomorphism asserted in \cite[Proposition 4.13]{LXYv1} fails for that difference.

In the Spin construction of \cite[Proposition 6.26]{LXYv1}, a self-adjoint two-dimensional Gram block in a hyperbolic basis has the form
\[
 S=\lambda I_2+\begin{pmatrix}a&b\\c&a\end{pmatrix},\qquad \lambda\ne0.
\]
Fixing both eigenvalues at $\lambda$ gives $a=0$, $bc=0$. Conjugation by $\diag(t,t^{-1})\in\SO_2$ has weights $2,-2$ on $(b,c)$, and a reflection exchanges the axes. The invertible Gram map is a submersion, so descent gives the same obstruction, with Euler value $-4$.

These extension steps in \cite{LXYv1} invoke the finite-dimensional criterion in the published \cite[Theorem 1.15]{CS}, whose proof uses Proposition 7.11 there. The corrected version of 11 September 2026 introduces non-occurrence hypotheses involving the Levi and unipotent-radical Schwartz homology in the specified total degree \cite[Proposition 7.11]{CScorr}, and also in the next degree for an isomorphism \cite[Theorem 1.15]{CScorr}. For a degree-zero isomorphism, the degree-one conditions are new in the corrected Chen--Sun version. The radial calculation above shows that the hypotheses of the corrected isomorphism criterion cannot all hold for this local model.

\subsection{A distinguished orbit in balanced CII}\label{app:CII-orbit}
Proposition 5.12 of \cite{LXYv1} cites \cite[Proposition 3.8]{Bul09}, whose printed statement says that all odd-length rows begin with the same letter. The preceding fixed-length centralizer calculation gives
\[
 \forall\,d\text{ odd},\qquad a_db_d=0,
\]
where $a_d,b_d$ count the rows of length $d$ beginning with the two letters. This fixed-length condition does not imply the global condition:
\[
\begin{aligned}
 &\forall\,d\text{ odd},\quad (a_d=0)\ \text{or}\ (b_d=0)\\
 &\qquad\not\Longrightarrow\quad
 (\forall\,d\text{ odd},\,a_d=0)
 \ \text{or}\ (\forall\,d\text{ odd},\,b_d=0).
\end{aligned}
\]
The allowed letter can depend on the odd length.

For the pair $(\Sp_{12},\Sp_6\times\Sp_6)$, the signed diagram
\[
 5_+^2\,1_-^2
\]
gives a $\fp$-distinguished orbit which is not block-swap-stable. One realization is
$E=(L_5\otimes M)\oplus M'$, where $L_5$ is the five-dimensional irreducible $\mathfrak{sl}_2$-module and $M,M'$ are symplectic planes. In a basis $v_0,\ldots,v_4$ of $L_5$, put
\[
 ev_k=v_{k+1},\qquad dv_k=(2k-4)v_k,\qquad
 f v_k=k(5-k)v_{k-1},\qquad
 Q(v_k,v_l)=(-1)^k\delta_{k+l,4}.
\]
Out-of-range vectors are zero. Give $E$ the symplectic form
$Q\otimes\omega_M\oplus\omega_{M'}$ and the grading
$s(v_k\otimes m)=(-1)^kv_k\otimes m$, $s|_{M'}=-1$.
Its eigenspaces $U,W$ both have dimension six, and $e\in\fp$.

The highest-weight vectors of $\mathfrak{sp}(E)$ under this triple that lie in $\fp$ have $d$-weights
$6,4,4,4,4,2$. This follows from
\[
 \mathfrak{sp}(E)\simeq\Sym^2E,\qquad
 \Sym^2(L_5\otimes M)
 \simeq(\Sym^2L_5\otimes\Sym^2M)
       \oplus(\Lambda^2L_5\otimes\Lambda^2M):
\]
the $\Lambda^2L_5$ summand contributes odd highest vectors of weights $6,2$, and $L_5\otimes M\otimes M'$ contributes four of weight $4$. The highest vectors in $\Sym^2L_5$, of weights $8,4,0$, and those in $\Sym^2M'$ are even. Hence every element of $\fp^e$ strictly raises the finite $d$-grading of $E$ and is nilpotent, proving distinguishedness directly. On the other hand,
\[
 \bigl(\rk(e^2|_U),\rk(e^2|_W)\bigr)=(4,2).
\]
This ordered pair is constant on an $H$-orbit and is reversed by the block swap. Consequently, the block-swap-stability assertion in
\cite[Corollary~5.13]{LXYv1} fails for this orbit.

%% file: homogeneity_obstruction.tex
\section{Complex homogeneity and failure of EVII speciality}
\label{app:homogeneity}

\subsection{The complex homogeneity assertion}
In arXiv:0812.5063v5 of \cite{AG}, Theorem 5.1.7 states the
Archimedean homogeneity theorem, and Definition 5.1.6 specifies
adapted distributions. Over \(\mathbb C\), if \(V\) has complex dimension \(n\), the two
asserted scalar characters are
\[
 |t|_{\C}^{n/2},\qquad t|t|_{\C}^{n/2},
\]
where \(\rho(t)v=t^{-1}v\).
The first version \cite[Lemma~2.2 and Sections~7.2--7.6]{LXYv1}
uses this two-character assertion in its EVII speciality argument.
The example below concerns the formulation in arXiv:0812.5063v5
of \cite{AG}. Version~6 enlarges the list of complex adapted
characters \cite[Definition~5.1.6]{AGcorr}; the example is compatible
with that revised definition. The revised LXY argument also retains
holomorphic and antiholomorphic characters
\cite[Lemma~5.2]{LXYv2}. The real homogeneity theorem used in
Corollary~\ref{cor:nice-component} is unaffected by the example.

\subsection{A hyperbolic plane counterexample}
Let $V=\C^2$, with
\[
 B((x,y),(u,v))=xv+yu,\qquad q(x,y)=B((x,y),(x,y))=2xy.
\]
We write $\mathcal F_B$ for the Fourier transform with kernel
$\exp(2\pi i\operatorname{Re}(xv+yu))$ and self-dual measure
$dA(x)dA(y)$, where $dA$ denotes ordinary area measure on $\C$. Put
\[
 T(f)=\int_{\C}\bar y f(0,y)\,dA(y),\qquad
 \nu=\partial_{\bar x}\delta_0(x)\otimes dA(y),
 \qquad \partial_{\bar x}=\tfrac12(\partial_{\Re x}
                                      +i\partial_{\Im x}).
\]
These are nonzero tempered distributions supported on $x=0$.
With distributional derivatives acting by minus differentiation
of the test function, one has exactly
\begin{equation}\label{eq:C-hyperbolic-Fourier}
 \mathcal F_BT=\frac1{\pi i}\nu=-\frac i\pi\nu,
 \qquad \mathcal F_B\nu=-\pi iT.
\end{equation}
Indeed,
$\partial_{\bar u}\exp(2\pi i\operatorname{Re}(yu))
=\pi i\bar y\exp(2\pi i\operatorname{Re}(yu))$ and
the Fourier transform of $dA(y)$ is $\delta_0(u)$.
For the second identity, differentiating in $\bar x$ contributes
$-\pi i\bar v$ and integration in $y$ again contributes
$\delta_0(u)$. These calculations also give
$\mathcal F_B^2=-1$ on the span of $T,\nu$, consistently with
push-forward by $-1$.

Thus $\mathcal L=\C T\oplus\C\nu$ is Fourier-stable and both
it and its Fourier transform are supported on $q=0$.
Multiplication by $q$ kills both vectors. Even if one also requires
stability under $\bar q$, it holds: $\bar q T=0$ and
$\bar q\nu=-2T$. Direct change of variables gives
\begin{equation}\label{eq:C-hyperbolic-characters}
 \rho(t)T=\bar t|t|_{\C}T,
 \qquad \rho(t)\nu=t\nu.
\end{equation}
The two characters are distinct, and neither is $|t|_{\C}$ or
$t|t|_{\C}$. Hence no nonzero vector of $\mathcal L$ has either
of the complex adapted characters asserted in version~5. This
contradicts Theorem~5.1.7 and its restatement as Theorem~C.0.1
in arXiv:0812.5063v5 of \cite{AG}, with the auxiliary manifold
a point. The character $\bar t|t|_{\C}$ is permitted by the
revised definition in \cite{AGcorr}.

The source of the distinction is radial versus angular homogeneity.
The real $\mathfrak{sl}_2$ mechanism controls positive real scaling;
it does not fix the $U(1)$ character. For $r>0$, $T$ has character
$r^3$, which is an allowed adapted character for the real
four-dimensional space. The complexified real quadratic action has
two commuting $\mathfrak{sl}_2$ factors, and this example is
$V_0\otimes {V_1}$. In Appendix~C of arXiv:0812.5063v5 of
\cite{AG}, Fact~(1)(ii) asserts vanishing of the imaginary root
operators. This is incompatible with the complex Weil action,
since the bracket of the two imaginary root elements is minus
the real diagonal element. That assertion is removed in
\cite{AGcorr}.
The two commuting factors used in Section~\ref{sec:EVII} retain
the angular information.

\subsection{An invariant EVII distribution with nilpotent Fourier support}
We construct a nonzero invariant tempered distribution on an EVII
nilpotent orbit closure and prove that its Fourier transform is
supported on the same closure. This proves failure of speciality.
Use the notation of Section~\ref{sec:EVII}, and take
$P_2=\tr D_x$, $P_4=\tr D_x^2$, with the cubic norms $N_+,N_-$.
Their product is a degree-six basic invariant, after a weighted
triangular change of quotient coordinates. Put
\[
 Z=V(P_2,P_4,N_+,N_-)\subset\fp.
\]

\begin{proposition}\label{prop:C-global-measure}
The scheme $Z$ is a reduced irreducible complete intersection of
dimension $50$, with underlying variety $\overline{\mathcal O_{20}}$.
Fix a nonzero constant holomorphic volume form $\Omega$ on $\fp$.
The residue form on $Z_{\rm sm}$, specified by
\[
 dP_2\wedge dP_4\wedge dN_+\wedge dN_-\wedge\omega
       =\Omega\quad\hbox{along }Z,
\]
defines a nonzero $L$-invariant tempered distribution
\begin{equation}\label{eq:C-global-measure}
 u(\phi)=\int_{Z_{\rm sm}}\phi\,|\omega|^2.
\end{equation}
It has support $Z$ and character
$\rho(z)u=z^{42}\bar z^{42}u$.
\end{proposition}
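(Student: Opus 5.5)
The plan is to establish the scheme-theoretic statement first, then build the measure and check temperedness, invariance and homogeneity.

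\emph{Complete intersection.} By Chevalley restriction, $V(P_2,P_4,N_+N_-)=\mathcal N$, so $Z\subset\mathcal N$. The nilpotent orbits of dimension $51$ are $\mathcal O_{21}$ and $\mathcal O_{22}$, and $N_+$ is nonzero on one of them while $N_-$ is nonzero on the other (Lemma~\ref{lem:EVII-slice20}). Hence both are excluded from $Z$. The slice computation of Lemma~\ref{lem:EVII-slice20} gives $(e+\fp^f)\cap Z=\{a=b=u=v=0\}$, a reduced point cut out transversally: by \eqref{eq:EVII-slice-invariants}, the differentials $dP_2,dP_4,dN_+,dN_-$ restrict on the slice to nonzero multiples of $da,db,du,dv$.

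It follows that $Z$ is smooth of codimension $4$ along $\mathcal O_{20}$. Since $Z$ is cut out by four equations in the $54$-dimensional space $\fp$, each of its components has dimension at least $50$. From Table XIII of \cite{Dok88} I would check that every nilpotent orbit other than $\mathcal O_{21},\mathcal O_{22}$ either lies in $\overline{\mathcal O_{20}}$ or has dimension below $50$. This gives $Z_{\rm red}=\overline{\mathcal O_{20}}$ and shows that $Z$ is a complete intersection, hence Cohen--Macaulay. Being generically reduced along the dense orbit, $Z$ is reduced by Serre's criterion ($R_0+S_1$). This orbit-closure bookkeeping, namely confirming that no other component of dimension $\ge50$ hides in $V(P_2,P_4,N_+,N_-)$, is the first potential obstacle.

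\emph{The measure.} On $Z_{\rm sm}\supset\mathcal O_{20}$ the Gelfand--Leray residue $\omega$ is a nowhere-vanishing holomorphic $50$-form, so $|\omega|^2$ is a positive smooth density. Invariance follows because $E_6^{\rm sc}$ preserves $\Omega$ and the four invariants. The torus factor $t$ scales $J_{27}$ by $t$ and $J_{27}^*$ by $t^{-1}$, so it preserves $\Omega$, $P_2$ and $P_4$, while multiplying $N_\pm$ by $t^{\pm3}$; the product $dN_+\wedge dN_-$ is therefore unchanged, and so is $\omega$.

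For homogeneity, use $\rho(z)v=z^{-1}v$. Then $\Omega$ scales by $z^{-54}$ and the four differentials together by $z^{-(2+4+3+3)}=z^{-12}$, so $\omega$ scales by $z^{-42}$. Pushing forward then produces the character $z^{42}\bar z^{42}$, up to checking sign conventions against \eqref{eq:actions}.

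\emph{Local finiteness and temperedness.} This is the main analytic obstacle: showing that $|\omega|^2$ has finite mass near the singular locus $Z\setminus\mathcal O_{20}$, and hence defines a distribution. The approach I would try first is a monotonicity argument for the regularized family $|P_2|^{2s_1}\cdots$. Equivalently, $u$ is the limit as $\epsilon\to0$ of $\epsilon^{-8}\mathbf 1_{\{|P_2|,|P_4|,|N_\pm|<\epsilon\}}$, suitably weighted, integrated against $\Omega\wedge\bar\Omega$. Because $Z$ is a reduced complete intersection, the fibres of $\Phi=(P_2,P_4,N_+,N_-):\fp\to\C^4$ near $0$ have integrable fibre integrals.

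More concretely, I would apply the coarea formula to compare $u$ with the pushforward $\Phi_*(\phi\,|\Omega|^2)$. That pushforward is an $L^1_{\rm loc}$ function on $\C^4$ that is continuous at $0$ for complete intersections with rational singularities. As a fallback, a weaker bound suffices: since $Z$ is a homogeneous cone, it is enough to show finiteness on the unit sphere and then use the $z^{42}\bar z^{42}$ scaling. Finiteness on the sphere reduces, by induction over the smaller orbits in $\overline{\mathcal O_{20}}$, to checking the Kazhdan-weight inequality at each boundary orbit, using the table in Section~\ref{subsec:EVII-quadratic}.

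Once the mass is locally finite, polynomial growth of the mass follows directly from the homogeneity $|z|^{84}$. Hence $u$ is tempered, nonzero, and supported on $\overline{Z_{\rm sm}}=Z$.
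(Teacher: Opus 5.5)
Your treatment of the scheme structure and of invariance and homogeneity follows the paper's proof essentially step for step. The gap is the step you flag yourself: you never establish local finiteness of $|\omega|^2$ near $Z\setminus\mathcal O_{20}$, and without it $u$ is not even a distribution.

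\emph{Your first route fails.} It rests on the idea that a reduced complete intersection has locally integrable residue measures. That is false, and a counterexample sits inside this problem. Near $\mathcal O_{20}$ the nilpotent cone $V(P_2,P_4,N_+N_-)$ is a reduced complete intersection with slice ideal $(a,b,uv)$. Since $d(N_+N_-)=-256(u\,dv+v\,du)$, its slice residue form on the branch $\{a=b=v=0\}$ is a multiple of $du/u$. The density $|u|^{-2}dA(u)$ is not integrable at the node; it is the $d^\times z$ density of Lemma~\ref{lem:axes}. The rational-singularities variant would require proving that $\overline{\mathcal O_{20}}$ has rational singularities, a substantial input you do not supply.

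\emph{Your fallback omits the computation that makes it work.} It points to the right strategy, which is the paper's. Note first that reducing to the unit sphere by homogeneity only handles the origin. Every nonzero boundary orbit is a cone meeting the sphere and needs its own local argument. The missing input is the homogeneity of the slice residue form:
\begin{enumerate}
\item At $e$ in a boundary orbit, the slice $e+\fp^f$ has positive Kazhdan weights summing to $108-T(e)$.
\item The four restricted equations have total $\kappa_t$-weight $4+8+6+6=24$; the $D_t$-shifts of $N_\pm$ cancel because their relative characters are inverse.
\item Hence $\kappa_t^*\sigma_e=t^{84-T(e)}\sigma_e$.
\item Every strict boundary orbit, including $0$, has $T(e)\le 60$ by the table in Section~\ref{subsec:EVII-quadratic}, so the exponent is positive.
\end{enumerate}
With this in hand, Tonelli in local action coordinates reduces finiteness to the slice. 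A compact weighted annulus in a small slice has finite mass by downward induction on orbit dimension. Dyadic $\kappa$-contractions of that annulus then give the convergent series $\sum_{k\ge0}2^{-2k(84-T(e))}$.

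The same count for $\mathcal N$ at orbit $20$ gives exponent $24-24=0$, which is exactly the logarithmic divergence above. It is this exponent, not reducedness or the complete-intersection property, that decides finiteness.
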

\begin{proof}
The four equations imply the three quotient equations, so $Z$ lies
in the nilpotent cone. By \eqref{eq:EVII-slice-invariants} their
slice ideal at orbit $20$ is $(a,b,u,v)$. They exclude the two
regular orbits $21,22$, on each of which one cubic is nonzero.
The classification \cite[Theorem 3 and Table XIII]{Dok88} has no
other orbit of dimension at least $50$. For clarity, its orbit
dimensions are obtained from the normal dimensions in the table in
Section~\ref{sec:EVII}: they are
\[
 17,17,26,26,26,27,27,27,27,33,38,38,42,42,42,43,43,43,43,50,51,51.
\]
Every component of a four-equation locus in dimension $54$ has
dimension at least $50$, and a component of a finite-orbit variety
has a dense orbit. Therefore $Z$ has just the component
$\overline{\mathcal O_{20}}$. The ideal has height four and is
generated by four elements, so it is a complete intersection and
has no embedded associated primes. At a point \(e\in\mathcal O_{20}\), use the local action chart
\[
\Phi:\mathfrak m\times(e+\mathfrak p^f)\longrightarrow\mathfrak p,
\qquad
(X,e+s)\longmapsto \exp(\operatorname{ad}X)(e+s).
\]
Since \(Z\) is \(L\)-stable, the pullback of its defining ideal is
the extension of the slice ideal \((a,b,u,v)\) to the local action
coordinates. Thus the pullback of \(Z\) is reduced along
\(\{s=0\}\).  As \(\Phi\) is a local submersion (indeed a local
coordinate chart after choosing \(\mathfrak m\)), reducedness descends
to \(Z\) at the generic point of \(\mathcal O_{20}\).  Hence \(Z\) is
generically reduced.  Since \(Z\) is an unmixed complete intersection
with a single irreducible component, it is reduced.

The determinant character of $L$ on $\fp$ is trivial. The two
cubic relative characters are inverse, so the residue form and its
positive density are $L$-invariant. Ordinary dilation
$s_r:x\mapsto rx$ satisfies
\begin{equation}\label{eq:C-residue-scaling}
 s_r^*\omega=r^{54-(2+4+3+3)}\omega=r^{42}\omega.
\end{equation}
We now prove local finiteness of the smooth-locus integral.

At $e\in Z$ choose a normal triple and the slice $S_e=e+\fp^f$.
Its positive Kazhdan weights have sum $108-T(e)$, as computed in
\ref{subsec:EVII-quadratic}. Under $\kappa_t=t^2\Ad(D_t^{-1})$ the
four restricted equations have total weight $24$. The individual
cubic weights may differ from $6$, but their shifts under $D_t$
cancel. Hence the slice residue form $\sigma_e$ satisfies
\[
 \kappa_t^*\sigma_e=t^{84-T(e)}\sigma_e.
\]
In local action coordinates a complement to $\fh^e$ gives a
smooth orbit-coordinate factor. The ambient residue form is a
nonvanishing analytic factor times the product of a smooth volume
form there and $\sigma_e$. On compact coordinate boxes the factor
is bounded above and below. Tonelli's theorem, first for extended
nonnegative integrals, reduces local finiteness to the slice.

Induct down the dimensions of the finitely many orbits in $Z$.
The measure is smooth along the open orbit $20$. Every strict
boundary orbit has $T(e)\leq60$, including $T(0)=0$, by the trace
table. A sufficiently small slice meets no other orbit of dimension
at most $\dim Le$. Indeed, there are finitely many such orbits;
their closures do not contain $e$, and contraction of a slice point
to $e$ preserves its nilpotent orbit. A compact weighted annulus
in that slice therefore has finite mass by induction and a finite
cover. Choose a radius satisfying $r(\kappa_t s)=t r(s)$, for
example $r(s)=(\sum|s_i|^{2/w_i})^{1/2}$. Dyadic contractions of
the annulus have total mass bounded by a constant times
\[
 \sum_{k\geq0}2^{-2k(84-T(e))}<\infty.
\]
This proves local finiteness at every boundary orbit, including zero.

The positive measure is nonzero on the dense orbit, so its support
is $Z$. Equation~\eqref{eq:C-residue-scaling} gives
$u(B_R)=R^{84}u(B_1)$ on Euclidean balls. Summation over ordinary
dyadic shells proves continuity on Schwartz functions, using a
weighted supremum seminorm of order greater than $84$. Thus it
is tempered, and a change of variables in convergent integrals
gives the stated character and invariance.
\end{proof}

\begin{lemma}\label{lem:C-quadratic-annihilation}
For the holomorphic Laplacian of the inverse form, one has
$\Delta^{16}u=\bar\Delta^{16}u=0$ globally on $\fp$.
\end{lemma}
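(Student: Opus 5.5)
We must show $\Delta^{16}u=0$, where $u$ is the residue measure of Proposition~\ref{prop:C-global-measure}. The identity $\bar\Delta^{16}u=0$ then follows by complex conjugation: $u$ is a positive measure, so $\bar u=u$, and $\bar\Delta=\overline{\Delta}$ as operators on real tests.

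\textbf{Step 1: reduce to a support statement.} The distribution $\Delta^ku$ is $L$-invariant and homogeneous. Indeed, $\Delta$ has constant coefficients of holomorphic degree $-2$, so by \eqref{eq:C-residue-scaling}
\[
\rho(z)\Delta^ku=z^{42-2k}\bar z^{42}\,\Delta^ku,
\]
with the sign of the exponent shift fixed by the convention $\rho(t)v=t^{-1}v$. The supports of the iterates are nested inside $Z=\overline{\mathcal O_{20}}$, since constant-coefficient operators do not enlarge support. It therefore suffices to show that each application of $\Delta$ strictly shrinks the support stratum, or lowers the transverse order, until nothing remains.

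\textbf{Step 2: the open orbit.} First I handle $\mathcal O_{20}$ itself. Near $e\in\mathcal O_{20}$, $u$ is a smooth density times $\delta$ along the smooth complete intersection. The transverse quadratic symbol of $\Delta$ restricted to the conormal $\fp^e$ is supported on the $\operatorname{ad}d$-weight-zero part, as in the orbit-$15$ argument. For orbit $20$, the Kazhdan weights $(8,6,6,4)$ give $\operatorname{ad}d$-weights $(-6,-4,-4,-2)$ on $\fp^f$, so $\fp^e$ has no weight-zero part and the restricted symbol vanishes. I would compute $\Delta u$ near $\mathcal O_{20}$ using the radial operator \eqref{eq:EVII-radial} on the slice $e+\fp^f$. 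Acting on $\delta_0$ in the four slice variables $(b,u,v,a)$, it produces a jet whose Kazhdan weight is increased by $4$ at each step. This follows from the Kazhdan degrees $w_i+w_j-4$ and $w_i-4$ of the coefficients $g_{ij}$ and $h_i$.

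\textbf{Step 3: bound the jet growth.} Along $\mathcal O_{20}$, the $L$-invariant slice jets compatible with the homogeneity character $z^{42-2k}\bar z^{42}$ have holomorphic weighted degree $D$ determined as in \eqref{eq:EVII-degree}. Running the Kazhdan eigenvalue computation with $T(e)=84$ shows that the holomorphic weighted degree of $\Delta^ku$ on the slice is $4k$ relative to the degree of $u$. A nonzero jet must also be invariant under the triple-center torus, which has weights $(0,1,-1,0)$ on $(b,u,v,a)$. This caps $k$: the $\mathfrak{sl}_2$ generated by $q$, $\Delta$, $E_z$ acting on $u$ must be finite-dimensional with lowest weight read from $h_q=E_z+27$. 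Since $E_zu=-(54-42)u$ in the appropriate sign convention, the $h_q$-eigenvalue of $u$ is $15$, so $u$ is a highest-weight vector with $qu=0$ (because $q$ vanishes on $Z$). Provided the module is finite-dimensional, $f_q^{16}u=0$, that is, $\Delta^{16}u=0$, exactly from $V_{15}$. The exponent $16$ is consistent with this.

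\textbf{Step 4 (main obstacle): finite-dimensionality of the $\mathfrak{sl}_2$-module.} Because $qu=0$ and $h_qu=15u$, the module generated by $u$ is a quotient of the Verma module $M(15)$. It is finite-dimensional exactly when the singular vector $f_q^{16}u$ vanishes. Note that $f_q^{16}u$ is killed by $q$, since $e_qf_q^{16}v_{15}=0$ in the Verma module, and it has $h_q$-weight $-17$.

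I would show that no nonzero $L$-invariant distribution supported on $Z$ has $h_q$-eigenvalue $-17$ and is killed by $q$. To do this, I would use the orbitwise degree formula \eqref{eq:EVII-degree} generalized to arbitrary $h_q$-weight. On a maximal support orbit with trace $T(e)$, a jet of weighted degree $D$ has $h_q$-eigenvalue $(T(e)-54-D)/2$. Combined with the antiholomorphic eigenvalue, which remains $15$, the antiholomorphic degree gives $D'=T(e)-84\geq0$. This forces $T(e)\geq84$, hence the support is $\mathcal O_{20}$ again. There the holomorphic condition gives $D=84-54+34=64$ on a four-dimensional slice. I would then need $q$-annihilation (together with the torus weights) to rule out such jets. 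Concretely, $q$ restricted to the slice has a nonzero linear term in $a$, of weight $4$, and multiplication by it acts injectively on $\delta$-jets in $a$, which gives a contradiction.

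I expect this last step, verifying that $q\cdot(\text{jet})=0$ forces vanishing, to be the hard part. If it fails, the fallback is to compute $\Delta^{16}u$ directly via the Fourier-dual description of $u$.
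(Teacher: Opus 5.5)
Your treatment of the boundary matches the paper's. Holomorphic operators leave the normalized antiholomorphic exponent $15$ unchanged, so a maximal support orbit of $\Delta^{16}u$ must satisfy $D'=T(e)-84\ge 0$, and hence must be $\mathcal O_{20}$. Your $\mathfrak{sl}_2$ bookkeeping is also correct: $qu=0$, $h_qu=15u$, $f_q^{16}u$ is singular of weight $-17$, and on the slice $D=64$, $D'=0$. But this only shows that the lemma is equivalent to finite-dimensionality of the module generated by $u$. The real content is that the germ of $\Delta^{16}u$ along $\mathcal O_{20}$ vanishes, and your Step~4 does not establish this.

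\textbf{The gap in Step~4.} On the slice $e+\fp^f$ all $\operatorname{ad}d$-weights are negative. Hence $B$ vanishes on $\fp^f\times\fp^f$, and $q$ restricts to a multiple of the weight-$4$ coordinate $s_3$; the paper records $q_0(s)=-80s_3/3$. Multiplication by $s_3$ sends $\partial^\alpha\delta_0$ to $-\alpha_3\partial^{\alpha-e_3}\delta_0$, so it kills every jet with no $\partial_{s_3}$. For example, $\partial_{s_0}^8\delta_0$, $\partial_{s_0}^5(\partial_{s_1}\partial_{s_2})^2\delta_0$ and $\partial_{s_0}^2(\partial_{s_1}\partial_{s_2})^4\delta_0$ each have weighted degree $64$, are invariant under the triple-center torus with weights $(0,1,-1,0)$, and are annihilated by $q$. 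So $q$-annihilation together with the torus weights cannot exclude a nonzero singular vector of weight $-17$.

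Closing the gap along your route would require at least the full nonlinear connected-stabilizer equations, which you do not analyze. You also have no a priori local nilpotence of $\Delta$ on $u$. The nilpotent Fourier support of $u$ is deduced \emph{from} this lemma in Proposition~\ref{prop:C-not-special}, so your Fourier-dual fallback is circular.

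\textbf{How the paper closes it.} The paper supplies the missing input by an explicit computation. It writes the exact radial operator $\mathcal R_{20}$ on the four-dimensional slice, listing the coefficients $g_{ij}$ and $h$. It then iterates the jet recurrence from $F_0=1$ and finds $F_{16}=0$ while $F_{15}\neq0$. This is a genuine identity that weight and degree bookkeeping alone cannot recover. Only after this vanishing on $\mathcal O_{20}$ does the paper apply the antiholomorphic-degree argument to $w=\Delta^{16}u$, now supported on the strict boundary of $Z$, to finish the proof.
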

\begin{proof}
We first calculate on orbit $20$. Identify distributions with scalar
generalized functions using the invariant ambient density. Pullback
under the local action submersion is then constant in the group
variable. The restriction of the orbital measure is of order zero
and is supported at the origin of the normal slice, so its transverse
scalar germ is $c\delta_0$ for some $c\ne0$. The inverse-coordinate
radial operator acts on this germ. Since $\Delta$ preserves invariance,
successive applications are computed by iterating this radial operator.

\input{evii_twenty_record}

Apply the inverse-coordinate formula \eqref{eq:EVII-radial} with
$50$ orbit-coordinate columns and four normal columns. It gives
$\mathcal R_{20}=\sum g_{ij}\partial_i\partial_j+\sum h_i\partial_i$,
where $g$ is symmetric and its upper-triangular entries are
\begin{align*}
 g_{00}&=\tfrac{14}{15}s_0s_3-\tfrac32s_1s_2-\tfrac{64}{15}s_3^3,
 &g_{01}&=-\tfrac{26}{15}s_1s_3,\\
 g_{02}&=-\tfrac{26}{15}s_2s_3,
 &g_{03}&=-\tfrac3{10}s_0,\\
 g_{11}&=g_{22}=0,
 &g_{12}&=-\tfrac12s_0-\tfrac{16}{9}s_3^2,\\
 g_{13}&=-\tfrac9{40}s_1,
 &g_{23}&=-\tfrac9{40}s_2,\\
 g_{33}&=-\tfrac3{20}s_3,
 &h&=(-\tfrac{52}{15}s_3,0,0,-\tfrac{81}{20}).
\end{align*}
Here $q_0(s)=B(e'+s,e'+s)=-80s_3/3$;
$g\,dq_0=(8s_0,6s_1,6s_2,4s_3)^t$ and
$\mathcal R_{20}q_0=108$. Using
\[
 s^\gamma\partial^\alpha\delta_0
 =(-1)^{|\gamma|}\frac{\alpha!}{(\alpha-\gamma)!}
       \partial^{\alpha-\gamma}\delta_0\quad(\gamma\leq\alpha),
\]
and zero otherwise, successive applications give
$\mathcal R_{20}^{16}\delta_0=0$. To record the finite recurrence
completely, start with $F_0=1$ and use
\[
 F_{j+1}=\sum g_{ik}(-\partial_x)(x_ix_kF_j)
              +\sum h_i(-\partial_x)(x_iF_j).
\]
The numbers of nonzero monomials in $F_1,\ldots,F_{16}$ are
\[
 1,2,3,4,5,7,8,10,11,13,14,16,17,19,20,0.
\]
The recurrence has rational coefficients. Complex conjugation gives
the second local identity.

Consequently $w=\Delta^{16}u$ is supported on the strict boundary
of $Z$. It is invariant and tempered. A holomorphic differential
operator changes only the holomorphic scalar exponent, so its
normalized antiholomorphic exponent remains $15$, since
$z^{42}\bar z^{42}=z^{15}\bar z^{15}|z|_{\C}^{27}$.
At a maximal boundary orbit, the same slice-character calculation
as \eqref{eq:EVII-degree} forces an antiholomorphic normal degree
\[
 A'=T(e)-54-2\cdot15=T(e)-84\leq-24.
\]
This follows directly from the scalar character and the slice
transformation law. A finite-order germ on
the orbit has nonnegative normal degrees, so no such maximal orbit
can occur. The finite orbit filtration gives $w=0$. The conjugate
argument uses the unchanged holomorphic exponent and gives
$\bar\Delta^{16}u=0$.
\end{proof}

\begin{proposition}\label{prop:C-not-special}
The EVII isotropy representation is not special in the sense of
\cite[Definition 6.0.6]{AG}.
\end{proposition}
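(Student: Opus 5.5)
The plan is to exhibit $u$ from Proposition~\ref{prop:C-global-measure} as a witness against speciality. The target is a nonzero $L$-invariant tempered distribution on $Q_L(\fp)=\fp$ (here $\fp^L=0$ by Lemma~\ref{lem:cover-center}) that is supported in the nilpotent cone $\mathcal N$ and whose Fourier transforms allowed in \cite[Definition~6.0.6]{AG} are also supported in $\mathcal N$. Proposition~\ref{prop:C-global-measure} already gives that $u$ is nonzero, $L$-invariant and tempered, with support $Z=\overline{\mathcal O_{20}}\subset\mathcal N$. So everything reduces to controlling the support of $\mathcal F_Bu$, together with its partial analogues.

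For the full transform, the key input is Lemma~\ref{lem:C-quadratic-annihilation}, which gives $\Delta^{16}u=\bar\Delta^{16}u=0$. Conjugating by $\mathcal F_B$ turns $\Delta$ and $\bar\Delta$ into multiplication by nonzero multiples of $q$ and $\bar q$. Hence $q^{16}\mathcal F_Bu=0$, so $\mathcal F_Bu$ is supported on $\{q=0\}$, but this alone is not $\mathcal N$.

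To reach $\mathcal N$ I would use invariance. $\mathcal F_Bu$ is $L$-invariant, and Lemma~\ref{lem:EVII-open}-style semisimple descent applies: at a nonzero semisimple $s$ with $P_2(s)=0$ but $(P_4,P_6)(s)\neq0$, one localizes to the slice $\fp^s$. On that slice I would need that the Fourier dual of $u$ restricts to something annihilated by a power of the Laplacian of $\fp^s$, and then show no such invariant distribution lives at $s$. That route is awkward.

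A cleaner route is to annihilate $u$ by all invariant constant-coefficient operators. Concretely, I would show $P_k(\partial)^Nu=0$ for $k=2,4,6$, where $P_k(\partial)$ are the invariant differential operators dual via $B$ to the basic invariants. Their Fourier transforms are then multiplications by $P_k$, so $\mathcal F_Bu$ is killed by powers of $P_2,P_4,P_6$ and is therefore supported on their common zero set, which is $\mathcal N$.

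To prove this annihilation I would repeat the argument of Lemma~\ref{lem:C-quadratic-annihilation}. Since $P_k(\partial)u$ is invariant, homogeneous with character $z^{42-k}\bar z^{42}$, and supported in $Z$, the first task is to exclude orbit~$20$ by a slice calculation. Then, at any maximal boundary orbit, the antiholomorphic exponent stays $15$ while $T(e)\le60$, which gives a negative normal degree $A'=T(e)-84<0$. This argument actually does not depend on the holomorphic operator at all: it shows $w:=D\,u$ vanishes once $w$ is supported on the strict boundary. So the only real work is to show that each such $D$ kills the top slice germ at orbit~$20$ after enough iterations. Iterating handles this: the holomorphic exponent decreases with each application, and eventually the holomorphic normal degree $D=T(e)-54-2m$ for the relevant $\mathfrak{sl}_2$-type bound becomes negative, forcing the germ at orbit~$20$ to vanish. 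I expect this exponent-counting at orbit~$20$ to be the main obstacle, and I would first try the explicit radial iteration as in the displayed recurrence.

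Partial Fourier transforms for invariant decompositions $\fp=V_1\oplus V_2$ are then handled the same way. The only invariant decomposition is $J_{27}\oplus J_{27}^*$, since the two summands are irreducible and non-isomorphic. The corresponding partial transform intertwines multiplication by invariant polynomials with mixed operators, and the same homogeneity and boundary-degree argument applies. This yields the nonzero witness, and hence non-speciality.
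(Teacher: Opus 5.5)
There is a genuine gap at the step you yourself flag as the main obstacle, and the mechanism you propose for it runs in the wrong direction.

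\textbf{What works.} Your reduction to orbit~20 is fine. Any $L$-invariant $P(\partial)u$ supported on the strict boundary of $Z$ vanishes, because the normalized antiholomorphic exponent stays $15$, so $A'=T(e)-84<0$ there.

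\textbf{The failure at orbit~20.} At orbit~20 itself, applying a holomorphic operator of order $k$ lowers the normalized holomorphic exponent $m$ by $k$. By \eqref{eq:EVII-degree}, after $N$ applications the holomorphic normal degree is $D_{\mathrm{hol}}=84-54-2(15-kN)=2kN\geq 0$. It grows; it never becomes negative. So exponent counting yields no vanishing at the node. Indeed, the recurrence in the proof of Lemma~\ref{lem:C-quadratic-annihilation} shows $\mathcal R_{20}^j\delta_0\neq0$ for $1\le j\le 15$. Even for $\Delta$, the vanishing $\mathcal R_{20}^{16}\delta_0=0$ comes from an explicit radial computation, not from homogeneity. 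For $P_4(\partial)$ and $P_6(\partial)$ you would need the radial parts of quartic and sextic operators on the slice, which you do not have. The identities $P_k(\partial)^Nu=0$ do hold a posteriori, since a finite-order distribution supported on $\mathcal N$ is killed by a power of $P_k$. But that uses the conclusion you are trying to prove.

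\textbf{The missing idea.} You never need $P_4$ or $P_6$. You already have two facts, with $Q(x)=\operatorname{Re}B(x,x)$ and $D=\mathcal F^{-1}Q\mathcal F$ (a multiple of $\Delta+\bar\Delta$):
\begin{itemize}
\item $Qu=0$, because $u$ is an order-zero measure on $Z\subset\mathcal N$;
\item $D^{31}u=0$, by Lemma~\ref{lem:C-quadratic-annihilation} and the binomial formula.
\end{itemize}
The paper then uses the chirp factorization $\mathcal F=CSC$, where $C$ is multiplication by $e^{\pi iQ}$ and $S=e^{\pi iD}$. This is the Weil-representation form of the Weyl element as a product of three unipotents. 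Since $Cu=u$ and $Su=\sum_{j\le 30}\frac{(\pi i)^j}{j!}D^ju$, one gets $\mathcal Fu=e^{\pi iQ}\sum_{j\le30}\frac{(\pi i)^j}{j!}D^ju$. This is supported in $Z$ directly, with no descent and no need to pass from $\{q=0\}$ to $\mathcal N$. Equivalently, the finite-dimensional quadratic $\mathfrak{sl}_2$-module generated by $u$ is Fourier-stable and consists of derivatives of $u$.

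\textbf{Partial transforms.} Your treatment here is also off. The summands $J_{27}$ and $J_{27}^*$ are totally isotropic and not self-dual, so neither carries an invariant nondegenerate symmetric form. Hence $J_{27}\oplus J_{27}^*$ is not an admissible decomposition in \cite[Definition~6.0.6]{AG}. The only test is the full transform for scalar multiples of $B$, and these merely rescale the Fourier variable, which preserves $Z$.
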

\begin{proof}
Put $b=\operatorname{Re}B$, $Q(x)=b(x,x)$, and let $\mathcal F$
be the positive real Fourier transform for $b$. Its inverse-form
differential operator is $\Delta_b=2(\Delta+\bar\Delta)$, and
\[
 D=\mathcal F^{-1}Q\mathcal F=-\frac1{4\pi^2}\Delta_b.
\]
Lemma~\ref{lem:C-quadratic-annihilation} and the binomial formula
give $D^{31}u=0$. Also $Qu=0$, because $u$ is an order-zero
measure on $Z\subset\mathcal N$.

Let \(C\) denote multiplication by \(e^{\pi iQ}\), and put $S=\mathcal F^{-1}C\mathcal F$.
The real signature of \(b\) is \((54,54)\), so the Gaussian Fourier
phase is \(1\). The kernel identity for \(S\) is understood
distributionally: after inserting a Gaussian damping factor
\(e^{-\pi\varepsilon\|z\|^2}\), \(\varepsilon>0\), the relevant
integrals are absolutely convergent and the usual Gaussian
completion-of-squares computation gives the kernel $e^{-\pi iQ(x-y)}$. Letting \(\varepsilon\downarrow0\) in \(\mathcal S'\) yields the
undamped identity. Since multiplication by \(e^{\pi iQ}\) is a
continuous Schwartz multiplier, multiplication by the two chirps
therefore gives $CSC=\mathcal F$
on \(\mathcal S(\mathfrak p)\), and hence by continuity on
\(\mathcal S'(\mathfrak p)\). Since \(Q u=0\), we have \(Cu=u\).  Moreover,
\[
S
=
\mathcal F^{-1}e^{\pi iQ}\mathcal F
=
e^{\pi iD},
\qquad
D=\mathcal F^{-1}Q\mathcal F.
\]
As \(D^{31}u=0\),
\[
Su
=
\sum_{j=0}^{30}\frac{(\pi i)^j}{j!}D^ju.
\]
Applying \(CSC=\mathcal F\) to \(u\) therefore gives
\begin{equation}\label{eq:C-actual-Fourier}
 \mathcal F u=e^{\pi iQ}\sum_{j=0}^{30}\frac{(\pi i)^j}{j!}D^ju.
\end{equation}
To justify the truncation distributionally, write
\[
e^{\pi i w}
-
\sum_{j=0}^{30}\frac{(\pi i w)^j}{j!}
=
w^{31}g(w),
\]
where \(g\) extends to an entire function whose derivatives are
bounded on \(\mathbb R\).  Since \(Q\) is quadratic, \(g\circ Q\)
and all of its derivatives have polynomial growth; hence $g\circ Q\in\mathcal O_M(\mathfrak p)$
and multiplication by \(g(Q)\) is a continuous Schwartz multiplier.
Now
\[
D^{31}u=0
\quad\Longleftrightarrow\quad
Q^{31}\mathcal Fu=0,
\]
so
\[
Q^{31}g(Q)\mathcal Fu=0.
\]
Thus the exponential may indeed be replaced, on \(\mathcal Fu\), by
its Taylor polynomial of degree \(30\), giving \eqref{eq:C-actual-Fourier} as an identity
of tempered distributions.
Every differential term on the right is supported on $Z$, and
multiplication by the chirp does not enlarge support. Thus
$\supp\mathcal F u\subseteq Z\subseteq\mathcal N$.
This proves actual Fourier compatibility for the nonzero
distribution \eqref{eq:C-global-measure}.

Finally, the two $L$-irreducible summands $J_{27},J_{27}^*$ have
opposite nontrivial central characters. Each is totally isotropic,
and an invariant nondegenerate symmetric form can exist only on
their sum. Such a form is a nonzero scalar multiple of $B$.
There is therefore no proper nonzero invariant orthogonal summand
to test in the definition of speciality. Replacing $B$ by a scalar
multiple rescales the Fourier variable, which preserves the cone
$Z$. 
Thus, since $\mathfrak p^L=0$, speciality
of the EVII isotropy representation is
equivalent to
\[
\left\{
T\in\mathcal S^*(\mathfrak p)^L:
\operatorname{supp}T\subseteq\mathcal N,\quad
\operatorname{supp}\mathcal F_BT
\subseteq\mathcal N
\right\}=\{0\}.
\]
The nonzero distribution $u$ constructed above
violates this vanishing, proving that the EVII
isotropy representation is not special.
\end{proof}
\begin{remark}
The counterexample above is in fact invariant
under the full effective multiplier group $J$.
Indeed, $\tau$ preserves $Z$. Since \(\tau\) fixes the
Cartan subspace pointwise, Chevalley restriction gives $\tau^*P_2=P_2$ and $\tau^*P_4=P_4$. Moreover, \(\tau\) exchanges \(J_{27}\) and \(J_{27}^*\); by uniqueness
of the invariant cubic on each summand, $\tau^*N_+\in\mathbb C^\times N_-$ and $\tau^*N_-\in\mathbb C^\times N_+$. Hence the ideal $(P_2,P_4,N_+,N_-)$
is \(\tau\)-stable, and so \(\tau(Z)=Z\). On the dense orbit \(\mathcal O_{20}\), the form
\((\tau^{-1})^*\omega\) is again \(L\)-invariant, since \(\tau\)
normalizes \(L\).  As \(L\) acts transitively on
\(\mathcal O_{20}\), there is a scalar \(\lambda\in\mathbb C^\times\)
such that $(\tau^{-1})^*\omega=\lambda\omega$.
Consequently, $\tau_*u=|\lambda|^2u$.
Since \(\tau^2\in L\) and \(u\) is \(L\)-invariant, $u=\tau_*^2u=|\lambda|^4u$.
Thus \(|\lambda|=1\), and therefore $\tau_*u=u$. Hence \(u\) is \(J\)-invariant, and the same obstruction applies to
every full fixed image between \(L\) and \(J\). The distribution $u$
belongs to the trivial character space; Theorem~\ref{thm:EVII}
concerns the nontrivial character $\chi$.
\end{remark}

%% file: evii_twenty_record.tex
For this radial calculation use the following representative and
ordered normal basis in the root convention of Section~\ref{sec:EVII-coordinates}:
\begin{align*}
e'={}&-v_{56}-v_{59}+v_{62}-v_{63}+v_{64}+v_{96}+v_{101}\\
&+v_{103}-v_{106}-v_{111},\\
d'={}&6v_{0}+8v_{1}+10v_{2}+14v_{3}+10v_{4}+6v_{5},\\
f'={}&-\tfrac{3}{2}v_{33}-\tfrac{3}{2}v_{38}-4v_{40}+\tfrac{3}{2}v_{43}+\tfrac{3}{2}v_{48}+\tfrac{3}{2}v_{119}+\tfrac{3}{2}v_{122}\\
&-\tfrac{3}{2}v_{125}+4v_{126}-\tfrac{3}{2}v_{127},\\
n_0\,={}&v_{7}+v_{132},\\
n_1\,={}&v_{14}+v_{20}-v_{26}+v_{32},\\
n_2\,={}&-v_{128}+v_{129}+v_{130}+v_{131},\\
n_3\,={}&v_{33}+v_{38}+\tfrac{8}{3}v_{40}-v_{43}-v_{48}-v_{119}-v_{122}\\
&+v_{125}-\tfrac{8}{3}v_{126}+v_{127}.
\end{align*}
Let
\[
L^d:=Z_L(d),\qquad
\mathfrak p_2(d):=\{x\in\mathfrak p:[d,x]=2x\}.
\]
Since \(d'=d\), and both \(e'\) and the representative \(e\) of
Lemma~\ref{lem:EVII-slice20} lie in the unique open \(L^{d,\circ}\)-orbit in
\(\mathfrak p_2(d)\), they are \(L\)-conjugate. Hence \(e'\) also
represents orbit \(20\). For the following radial calculation, write $s=\sum_{i=0}^3 s_i n_i$
with coordinate order \((b,u,v,a)\).